\theoremstyle{plain}
\newtheorem{theorem}{Theorem}[section]
\newtheorem{lemma}[theorem]{Lemma}
\newtheorem{proposition}[theorem]{Proposition}
\newtheorem{corollary}[theorem]{Corollary}
\newtheorem{definition}[theorem]{Definition}
\theoremstyle{remark}
\newtheorem{remark}[theorem]{Remark}
\numberwithin{equation}{section}
\numberwithin{paragraph}{section}
\renewcommand{\)}{\textup{)}}
\DeclareMathOperator{\Frac}{Frac}
\DeclareMathOperator{\Hom}{Hom}
\DeclareMathOperator{\rec}{rec}
\DeclareMathOperator{\Gal}{Gal}
\DeclareMathOperator{\Aut}{Aut}
\DeclareMathOperator{\Irr}{Irr}
\DeclareMathOperator{\ad}{ad}
\DeclareMathOperator{\Spf}{Spf}
\DeclareMathOperator{\Res}{Res}
\DeclareMathOperator{\Fitt}{Fitt}
\DeclareMathOperator{\Ind}{Ind}
\DeclareMathOperator{\val}{val}
\DeclareMathOperator{\tr}{tr}
\DeclareMathOperator{\St}{St}
\DeclareMathOperator{\diag}{diag}
\DeclareMathOperator{\Art}{Art}
\DeclareMathOperator{\End}{End}
\DeclareMathOperator{\Frob}{Frob}
\DeclareMathOperator{\Ann}{Ann}
\DeclareMathOperator{\Spec}{Spec}
\newcommand{\cD}{{\mathcal D}}
\newcommand{\cF}{{\mathcal F}}
\newcommand{\cH}{{\mathcal H}}
\newcommand{\cJ}{{\mathcal J}}
\newcommand{\cL}{{\mathcal L}}
\newcommand{\cM}{{\mathcal M}}
\newcommand{\cO}{{\mathcal O}}
\newcommand{\cS}{{\mathcal S}}
\newcommand{\cT}{{\mathcal T}}
\newcommand{\fra}{{\mathfrak a}}
\newcommand{\ffrm}{{\mathfrak m}}
\newcommand{\frp}{{\mathfrak p}}
\newcommand{\frq}{{\mathfrak q}}
\newcommand{\bbA}{{\mathbb A}}
\newcommand{\bbC}{{\mathbb C}}
\newcommand{\bbF}{{\mathbb F}}
\newcommand{\bbG}{{\mathbb G}}
\newcommand{\bbM}{{\mathbb M}}
\newcommand{\bbN}{{\mathbb N}}
\newcommand{\bbP}{{\mathbb P}}
\newcommand{\bbQ}{{\mathbb Q}}
\newcommand{\bbR}{{\mathbb R}}
\newcommand{\bbT}{{\mathbb T}}
\newcommand{\bbZ}{{\mathbb Z}}
\newcommand{\GL}{\mathrm{GL}}
\newcommand{\PGL}{\mathrm{PGL}}
\newcommand{\ilim}{\mathop{\varinjlim}\limits}
\newcommand{\plim}{\mathop{\varprojlim}\limits}
\newcommand{\barqp}{{\overline{\bbQ}_p}}
\newcommand{\qp}{{\bbQ_p}}
\newcommand{\barfp}{{\overline{\bbF}_p}}
\newcommand{\fp}{{\bbF_p}}
\newcommand{\CNL}{{\mathrm{CNL}}}
\newcommand{\Sets}{{\mathrm{Sets}}}
\title{Automorphy of some residually dihedral Galois representations}
\author{Jack A. Thorne\footnote{\textsc{Department of Pure Mathematics and Mathematical Statistics, Wilberforce Road, Cambridge, United Kingdom.} \textit{Email address:} \texttt{thorne@dpmms.cam.ac.uk}}}
\begin{document}
\maketitle
\begin{abstract}
We establish the automorphy of some families of 2-dimensional representations of the absolute Galois group of a totally real field, which do not satisfy the so-called `Taylor--Wiles hypothesis'. We apply this to the problem of the modularity of elliptic curves over totally real fields.\footnote{\textit{2010 Mathematics Subject Classification:} 11F41, 11F80.}
\end{abstract}
\tableofcontents

\section{Introduction}
In this paper, we prove a new automorphy theorem for 2-dimensional Galois representations, and apply it to the problem of modularity of elliptic curves. We recall that Wiles first proved that all semi-stable elliptic curves over $\mathbb{Q}$ are modular by applying an automorphy lifting theorem to the Galois representations associated to their 3-adic and 5-adic Tate modules \cite{Wil95}. In order to be able to do this, he first showed that for a semi-stable elliptic curve over $\bbQ$, at least one of the associated mod 3 and mod 5 Galois representations satisfies a certain non-degeneracy condition (the so-called Taylor--Wiles hypothesis, as discussed below). 

More recently, Freitas, Le Hung and Siksek have used similar methods to study the problem of modularity of elliptic curves over general totally real fields \cite{Fre13}. By studying certain modular curves, they show in particular that there are only finitely many non-automorphic elliptic curves over any given totally real field, which necessarily do not satisfy the Taylor--Wiles hypothesis at the primes 3, 5, and 7. (They then study directly the rational points on these modular curves in order to prove the striking result that all elliptic curves defined over real quadratic fields are modular.)  In this paper, we prove a new automorphy lifting theorem which does not require the Taylor--Wiles hypothesis to hold, and apply it to prove the following theorem, which cuts down further the list of possible non-modular elliptic curves:
\begin{theorem}[Theorem \ref{thm_main_thm_applied_to_elliptic_curves}]\label{thm_curve_intro_thm}
Let $F$ be a totally real number field, and let $E$ be an elliptic curve over $F$. Suppose that:
\begin{enumerate}
\item 5 is not a square in $F$;
\item and that $E$ has no $F$-rational 5-isogeny.
\end{enumerate}
Then $E$ is modular.
\end{theorem}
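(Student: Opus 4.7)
The plan is to apply an automorphy lifting theorem to the 5-adic Galois representation $\rho_{E,5}\colon G_F \to \GL_2(\bbZ_5)$ attached to $E$. Set $\bar\rho = \bar\rho_{E,5}$. By hypothesis (ii), $\bar\rho$ is absolutely irreducible. I would first dichotomize according to whether $\bar\rho|_{G_{F(\zeta_5)}}$ is absolutely irreducible, i.e., whether the Taylor--Wiles hypothesis holds at 5.

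If the Taylor--Wiles hypothesis holds, then a standard automorphy lifting theorem (of Kisin--Gee type) applies to $\rho_{E,5}$, and it only remains to verify the residual modularity of $\bar\rho$; this can be arranged via Taylor's potential modularity techniques combined with a solvable base change argument. If the Taylor--Wiles hypothesis fails, then $\bar\rho$ becomes reducible over $G_{F(\zeta_5)}$ while remaining absolutely irreducible over $G_F$, so its projective image normalizes a maximal torus $T$ in $\PGL_2$ without lying inside it. Consequently $\bar\rho \cong \Ind_{G_K}^{G_F}\chi$ for some quadratic extension $K/F$ and character $\chi\colon G_K \to \overline{\bbF}_5^\times$, where $K$ is the fixed field of the quadratic sign character $G_F \to N(T)/T \cong \{\pm 1\}$ cut out by $\bar\rho$. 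Failure of the Taylor--Wiles hypothesis is equivalent to the condition $K \subseteq F(\zeta_5)$; since assumption (i) forces $[F(\zeta_5):F] = 4$, the only possibility is $K = F(\sqrt{5})$.

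In this residually dihedral case I would apply the new automorphy lifting theorem of the paper, which is designed for precisely this type of representation. Residual modularity of $\bar\rho$ comes for free: the character $\chi$ corresponds by class field theory to a finite-order Hecke character of $K$, and automorphic induction from $K$ to $F$ (classical for quadratic extensions) yields a cuspidal automorphic representation of $\GL_2(\bbA_F)$ whose associated Galois representation has residual reduction $\bar\rho$. The main obstacle will be checking the local hypotheses of the new ALT at the primes of $F$ above 5 and at the primes of bad reduction of $E$, and, more fundamentally, adapting the Taylor--Wiles patching argument to the dihedral setting where the usual supply of auxiliary primes needed to kill the dual Selmer group is not available — the principal innovation must be an alternative mechanism for producing enough deformation-theoretic input, compensating for the degenerate residual image with a refined choice of auxiliary prime conditions tied to the quadratic extension $F(\sqrt{5})/F$.
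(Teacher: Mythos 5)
Your overall approach is the same as the paper's: you separate into the case where the Taylor--Wiles hypothesis at $5$ holds (handled by citing Freitas--Le Hung--Siksek, or as you suggest by Kisin--Gee-type lifting plus potential modularity) and the residually dihedral case, where the crucial observation that hypothesis (i) forces $[F(\zeta_5):F]=4$ and pins down the inducing field as $K=F(\sqrt 5)$, which is \emph{totally real}, is exactly the point that lets one invoke Theorem~\ref{thm_main_theorem}.

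Two corrections, though. First, you never verify the full hypothesis (4) of Theorem~\ref{thm_main_theorem}: it is required that $\overline\rho|_{G_{F(\zeta_5)}}$ is a direct sum of \emph{two distinct} characters, not merely reducible, i.e.\ one must rule out the scalar case. The paper does this by citing \cite[Prop.~9.1(b)]{Fre13} to identify the projective image as $(\bbZ/2)^2$, which is not cyclic and therefore cannot factor through the cyclic group $\Gal(F(\zeta_5)/F)$. (One can also argue directly: if $\overline\rho|_{G_{F(\zeta_5)}}$ were scalar the projective image of $\overline\rho$ would be a quotient of the cyclic group $\Gal(F(\zeta_5)/F)$, forcing the image of $\overline\rho$ to be abelian and contradicting absolute irreducibility.) You should include one of these arguments. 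Second, your final paragraph is a red herring as a proof of this corollary: Theorem~\ref{thm_main_theorem} requires \emph{no} residual automorphy hypothesis and has no nontrivial local conditions at bad primes or at $p$ beyond the de Rham/weight condition, which holds automatically for the Tate module of an elliptic curve. The discussion of automorphic induction of $\chi$, of checking local hypotheses at bad primes, and of how the Taylor--Wiles patching must be adapted is internal to the \emph{proof} of Theorem~\ref{thm_main_theorem}, not something you need to redo here; once hypothesis (4) is verified as above, the corollary follows immediately by citing Theorem~\ref{thm_main_theorem}.
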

We deduce Theorem \ref{thm_curve_intro_thm} from the following result:
\begin{theorem}[Theorem \ref{thm_main_theorem}]\label{thm_main_intro_thm}
Let $F$ be a totally real number field, let $p$ be a prime, and let $\barqp$ be an algebraic closure of $\qp$. Let $\rho : G_F \to \GL_2(\barqp)$ be a continuous representation satisfying the following conditions.
\begin{enumerate}
\item The representation $\rho$ is unramified almost everywhere.
\item For each place $v | p$ of $F$, $\rho|_{G_{F_v}}$ is de Rham. For each embedding $\tau : F \hookrightarrow \barqp$, we have $\mathrm{HT}_\tau(\rho) = \{ 0, 1 \}$.
\item The representation $\rho$ is totally odd: for each complex conjugation $c \in G_F$, we have $\det \rho(c) = -1$. 
\item The residual representation $\overline{\rho} : G_F \to \GL_2(\barfp)$ is irreducible, while the restriction $\overline{\rho}|_{G_{F(\zeta_p)}}$ is a direct sum of two distinct characters. Moreover, the unique quadratic subfield $K/F$ of $F(\zeta_p)$ is \emph{totally real}.
\end{enumerate}
Then $\rho$ is automorphic: there exists a cuspidal automorphic representation $\pi$ of $\GL_2(\bbA_F)$ of weight 2 and an isomorphism $\iota : \barqp \to \bbC$ such that $\rho \cong r_\iota(\pi)$.
\end{theorem}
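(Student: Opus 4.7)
The proof will follow the usual strategy for automorphy lifting: establish an $R=T$ theorem between a suitable deformation ring of $\overline{\rho}$ and a Hilbert modular Hecke algebra via Taylor--Wiles patching, then deduce automorphy of $\rho$ from the fact that it corresponds to a closed point of $R$. The novelty forced by the hypotheses is that the Taylor--Wiles hypothesis fails. Writing $K$ for the totally real quadratic subfield of $F(\zeta_p)$, the irreducibility of $\overline{\rho}$ together with the reducibility of $\overline{\rho}|_{G_{F(\zeta_p)}}$ into distinct constituents forces an isomorphism $\overline{\rho}\cong\Ind_{G_K}^{G_F}\overline{\chi}$ for some character $\overline{\chi}$ of $G_K$ with $\overline{\chi}\ne\overline{\chi}^c$, where $c$ generates $\Gal(K/F)$.

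The first step is to produce an automorphic lift $\rho_0$ of $\overline{\rho}$ of the same weight and local type as $\rho$. Because $K$ is totally real and $\overline{\chi}\ne\overline{\chi}^c$, class field theory yields an algebraic Hecke character $\chi$ of $K$ lifting $\overline{\chi}$ whose infinity type realises the Hodge--Tate weights $\{0,1\}$; automorphically inducing $\chi$ to $F$ gives a CM Hilbert modular form whose associated Galois representation $\rho_0$ has the required shape. Via Jacquet--Langlands I would transfer the corresponding automorphic representation to a definite quaternion algebra $D/F$ split at every place of $F$ above $p$, to have a convenient setting for the patching argument. I would then define a global deformation problem $\cS$ for $\overline{\rho}$ imposing de Rham with Hodge--Tate weights $\{0,1\}$ at places above $p$ and matching the ramification of $\rho$ elsewhere. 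Standard methods yield a surjection $R_\cS \twoheadrightarrow T_\cS$ from the deformation ring onto the associated Hecke algebra acting on $D^\times$-automorphic forms, and $\rho_0$ determines a prime of $T_\cS$.

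The main obstacle is the Taylor--Wiles patching itself. The usual argument relies on the vanishing of a dual Selmer group, which in turn requires $\overline{\rho}|_{G_{F(\zeta_p)}}$ to have no invariant line --- exactly what fails here, since $\ad^0\overline{\rho}$ acquires a $G_{F(\zeta_p)}$-fixed line from the dihedral decomposition $\ad^0\overline{\rho}\cong \epsilon_{K/F}\oplus \Ind_{G_K}^{G_F}(\overline{\chi}/\overline{\chi}^c)$ (with $\epsilon_{K/F}$ the quadratic character cutting out $K/F$). My plan is to restrict attention to Taylor--Wiles primes $v$ of $F$ that split completely in $K$ --- so that $\overline{\rho}|_{G_{F_v}}$ is a sum of two distinct characters and the local picture is classical --- and to exploit the $\Gal(K/F)$-action on $R_\cS$ to carry out patching after decomposing along the $\Gal(K/F)$-isotypic pieces of the relevant cohomology, handling the troublesome fixed line separately.

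The technical heart of the argument, and what I expect to be the most delicate point, is a refined Chebotarev density statement ensuring that enough split Taylor--Wiles primes exist to annihilate the Selmer group modulo the obstruction contributed by $\epsilon_{K/F}$. This requires a careful analysis of the Galois cohomology of each factor in the decomposition of $\ad^0\overline{\rho}$, where the totally real hypothesis on $K$ is essential: it ensures that the characters entering via induction remain totally odd and that the Poitou--Tate Euler characteristic computations give the expected dimensions. Once sufficiently many primes are produced, the standard patching formalism together with Kisin's analysis of the local deformation rings yields $R_\cS[1/p]=T_\cS[1/p]$, and evaluating at the $\barqp$-point of $R_\cS$ corresponding to $\rho$ produces the desired cuspidal automorphic representation $\pi$ with $r_\iota(\pi)\cong\rho$.
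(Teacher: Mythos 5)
Your proposal correctly identifies the dihedral structure $\overline{\rho}\cong\Ind_{G_K}^{G_F}\overline{\chi}$ and the decomposition $\ad^0\overline{\rho}\cong k(\delta_{K/F})\oplus\Ind_{G_K}^{G_F}\overline{\gamma}$ (call these $M_0\oplus M_1$), and the general $R=T$ strategy is the right framework. However, there are two related gaps that, in my view, make the proposal unworkable as written.

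First, you have the obstruction backwards. You propose to handle the ``troublesome fixed line'' $M_0 = k(\delta_{K/F})$ separately, using Taylor--Wiles primes split in $K$ for the rest. In fact $M_0$ is the \emph{tame} piece: at a Taylor--Wiles prime $v$ split in $K$ with $q_v\equiv 1\bmod p$, the Frobenius acts trivially on $M_0(1)=k(\delta_{K/F}\epsilon)$, so $H^1_{\mathrm{ur}}(F_v,M_0(1))\ne 0$ and such primes can kill $M_0(1)$-classes in the usual way (this is Proposition \ref{prop_existence_of_auxiliary_taylor_wiles_primes} in the paper). The piece that defeats standard patching is $M_1=\Ind_{G_K}^{G_F}\overline{\gamma}$: at your split Taylor--Wiles prime $v$, $\Frob_v$ acts on $M_1(1)$ with eigenvalues $\overline{\gamma}(\Frob_v),\overline{\gamma}^{-1}(\Frob_v)$, and the defining requirement that $\overline{\rho}(\Frob_v)$ have distinct eigenvalues is precisely $\overline{\gamma}(\Frob_v)\ne 1$, so $H^1_{\mathrm{ur}}(F_v,M_1(1))=0$. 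No amount of Chebotarev cleverness can help: at any legitimate Taylor--Wiles prime, the unramified local $H^1$ in the $M_1(1)$-direction vanishes identically, so the local condition you add imposes no constraint on $H^1_{\cS,T}(M_1(1))$. Your ``refined Chebotarev density statement'' would be searching for primes that do not exist.

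Second, and consequently, the paper's actual resolution involves an idea you have not found: one must use auxiliary places $v$ with $q_v\equiv -1\bmod p^{N_0}$ and $\rho(\Frob_v)\equiv\rho(c)\bmod p^{N_0}$, and impose a Steinberg-type local deformation condition there (Proposition \ref{prop_existence_of_auxiliary_ramakrishna_primes}). For these places $\Frob_v$ acts trivially on $M_1(1)$ while the Steinberg condition carves out a nontrivial $\cL_v^\perp$, so the $M_1(1)$-classes can be killed. But this congruence on $\rho(\Frob_v)$ is only satisfied by $\rho\bmod p^{N_0}$, never by the honest $\rho$ (which is unramified at $v$ with no reason for its Frobenius eigenvalues to have exact ratio $q_v$). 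So the deformation problem one can control does \emph{not} contain $\rho$ as a $\barqp$-point, and your final step of ``evaluating at the point corresponding to $\rho$'' is unavailable. Instead the paper proves automorphy of $\rho\bmod p^{N_0}$ at the augmented level for each $N_0$, then uses a level-lowering argument (a version of Mazur's principle combined with an idea of Fujiwara; Theorem \ref{thm_main_level_lowering} and Corollary \ref{cor_automorphy_by_successive_approximation}) to descend to a fixed level uniform in $N_0$, and finally passes to the limit $N_0\to\infty$. Without this ``automorphy by successive approximation'' step, even a correct $R=T$ statement would not yield the theorem.
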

(For our conventions regarding the Galois representations associated to automorphic forms, we refer to \S \ref{sec_normalizations} below. When we say that an elliptic curve $E$ is modular, we mean simply that for any prime $p$, the 2-dimensional Galois representation associated to the first \'etale cohomology group $H^1(E_{\overline{F}}, \barqp)$ is automorphic. We use the symbol $G_F = \Gal(\overline{F}/F)$ to denote to the absolute Galois group of the field $F$.) 

Theorem \ref{thm_main_intro_thm} is new even when $F = \bbQ$, and can be viewed as a special case of the Fontaine--Mazur conjecture \cite{Fon95} which falls outside the cases covered by \cite{Kis09a}. (Note that we need make no hypothesis of residual automorphy.) In \emph{loc. cit.}, Kisin proves the automorphy of suitable Galois representations under the assumption that the residual representation $\overline{\rho} : G_F \to \GL_2(\barfp)$ remains irreducible even after restriction to $G_{F(\zeta_p)}$. This so-called `Taylor--Wiles hypothesis' goes back to the pioneering work of Wiles \cite{Wil95} and Taylor--Wiles \cite{Tay95}, and the novelty of the present work is that we can prove a result without making this assumption. (If $\overline{\rho}$ is irreducible, but becomes reducible upon restriction to $G_{F(\zeta_p)}$, then it is necessarily induced from the unique index 2 subgroup of $G_F$ which contains $G_{F(\zeta_p)}$. We thus say in this case that $\rho$ is residually dihedral.)

\paragraph*{Methods.} Skinner and Wiles have successfully removed the Taylor--Wiles hypothesis on $\overline{\rho}$ in the case where $\rho$ is ordinary using Hida's theory of ordinary automorphic forms, cf. \cite{Ski99}, \cite{Ski01}. These automorphic forms fit into positive-dimensional $p$-adic families. Roughly speaking, by moving in a generic direction over weight space in these families one can replace $\overline{\rho}$ with a residual representation which does satisfy the Taylor--Wiles hypothesis. Similar techniques have recently been applied by Allen to prove automorphy theorems in the 2-adic residually dihedral case \cite{All13}. 

By contrast, the Galois representations that we consider are often (potentially) supersingular, and a good integral theory of variable weight $p$-adic automorphic forms is no longer available. We use a technique which is inspired by works of Khare \cite{Kha03}, \cite{Kha04}. In the article \cite{Kha04}, the author proves an automorphy lifting theorem by first proving an $R = \bbT$ theorem with some additional ramification conditions imposed. These conditions are not satisfied by $\rho$, but they are satisfied by $\rho$ modulo $p^{N_0}$ for some $N_0$. In fact, by choosing these conditions carefully, one can force $R = W(k)$ (i.e.\ the Witt vectors of a finite field) which immediately implies that $R = \bbT$. In particular, one shows $R = \bbT$ in this instance without using the Taylor--Wiles argument. One can then apply Mazur's principle (cf. \cite{Jar99}) to show that $\rho \text{ mod }p^{N_0}$ appears in the \'etale cohomology of a modular curve at a fixed level $N$, independent of $N_0$. Taking the limit $N_0 \to \infty$, one finds that $\rho$ itself appears in the \'etale cohomology of this modular curve, and hence itself is automorphic. 

Our method is what one obtains upon re-introducing the Taylor--Wiles argument into Khare's proof. The obstruction to applying the Taylor--Wiles argument when $\overline{\rho}|_{G_{F(\zeta_p)}}$ is reducible is the dual Selmer group of the adjoint representation $\ad^0 \overline{\rho}$. This is the group that we will denote $H^1_{\cS, T}(\ad^0 \overline{\rho}(1))$ below. We first show that by imposing auxiliary ramification conditions, one can kill off the troublesome part of the dual Selmer group. These conditions are again satisfied by $\rho \text{ mod }p^{N_0}$ but not by $\rho$ itself. (More precisely, we impose a Steinberg-type condition at places $v$ such that $q_v \equiv -1 \text{ mod }p^{N_0}$, and $\rho(\Frob_v)$ agrees modulo $p^{N_0}$ with the image under $\rho$ of complex conjugation.) This ramification condition has the desired effect on the dual Selmer group only when $c \in G_K$, where $K$ denotes the field from which $\overline{\rho}$ is induced. This is the main reason for imposing the condition that $K$ is totally real in the statement of Theorem \ref{thm_main_intro_thm}.

Having killed off the dual Selmer group, we can prove an $R = \bbT$ theorem using the techniques of Taylor, Wiles and Kisin, cf. \cite{Kis09}. This shows the automorphy of the representation $\rho \text{ mod }p^{N_0}$. We can then use a version of Mazur's principle to deduce from this the automorphy of the representation $\rho$. (In lowering the level, we also make essential use of an idea of Fujiwara \cite{Fuj06}.)

\paragraph*{Structure of this paper.} We now describe the organization of this paper. In \S \ref{sec_normalizations} below, we describe our conventions and normalizations. In \S \ref{sec_boston_lenstra_ribet}, we prove a slight extension of a result of Boston, Lenstra and Ribet about 2-dimensional Galois representations that plays an important role in our version of Mazur's principle. In \S \ref{sec_shimura_curves_and_varieties}, we study the cohomology of Shimura curves and their 0-dimensional analogues, and prove a result of `automorphy by successive approximation' (Corollary \ref{cor_automorphy_by_successive_approximation}).

In \S \ref{sec_galois_theory}, we recall some elements of the deformation theory of Galois representations. We also carry out our analysis of the dual Selmer groups of the residually dihedral Galois representations described above and make some remarks about ordinary automorphic forms and Galois representations. (We caution the reader that we use the word `ordinary' where some authors use `nearly ordinary'.) In \S \ref{sec_r_equals_t}, we prove our $R = \bbT$ type theorem. Finally, in \S \ref{sec_main_theorem} we combine everything to deduce Theorems \ref{thm_curve_intro_thm} and \ref{thm_main_intro_thm}.

\section{Notation and normalizations}\label{sec_normalizations}

A base number field $F$ having been fixed, we will also choose algebraic closures $\overline{F}$ of $F$ and $\overline{F}_v$ of $F_v$ for every finite place $v$ of $F$. The algebraic closure of $\bbR$ is $\bbC$. If $p$ is a prime, then we will also write $S_p$ for the set of places of $F$ above $p$, $\overline{\bbQ}_p$ for a fixed choice of algebraic closure of $\bbQ_p$, and $\val_p$ for the $p$-adic valuation on $\barqp$ normalized so that $\val_p(p) = 1$. These choices define the absolute Galois groups $G_F = \Gal(\overline{F}/F)$ and $G_{F_v} = \Gal(\overline{F}_v/F_v)$.  We write $I_{F_v} \subset G_{F_v}$ for the inertia subgroup. We also fix embeddings $\overline{F} \hookrightarrow \overline{F}_v$, extending the canonical embeddings $F \hookrightarrow F_v$. This determines for each place $v$ of $F$ an embedding $G_{F_v} \to G_{F}$. We write $\bbA_F$ for the adele ring of $F$, and $\bbA_F^\infty = \prod'_{v \nmid \infty} F_v$ for its finite part. If $v$ is a finite place of $F$, then we write $k(v)$ for the residue field at $v$, $\kappa(v)$ for the residue field of $\overline{F}_v$, and $q_v = \# k(v)$. If we need to fix a choice of uniformizer of $\cO_{F_v}$, then we will denote it $\varpi_v$.

If $S$ is a finite set of finite places of $F$, then we write $F_S$ for the maximal subfield of $\overline{F}$ unramified outside $S$, and $G_{F, S} = \Gal(F_S/F)$; this group is naturally a quotient of $G_F$. If $v \not\in S$ is a finite place of $F$, then the map $G_{F_v} \to G_{F, S}$ factors through the unramified quotient of $G_{F_v}$, and we write $\Frob_v \in G_{F, S}$ for the image of a \emph{geometric} Frobenius element. We write $\epsilon : G_F \to \bbZ_p^\times$ for the $p$-adic cyclotomic character; if $v$ is a finite place of $F$, not dividing $p$, then $\epsilon(\Frob_v) = q_v^{-1}$. If $\rho : G_F \to \GL_n(\barqp)$ is a continuous representation, we say that $\rho$ is de Rham if for each place $v | p$ of $F$, $\rho|_{G_{F_v}}$ is de Rham. In this case, we can associate to each embedding $\tau : F \hookrightarrow \barqp$ a multiset $\mathrm{HT}_\tau(\rho)$ of Hodge--Tate weights, which depends only on $\rho|_{G_{F_v}}$, where $v$ is the place of $F$ induced by $\tau$. This multiset has $n$ elements, counted with multiplicity. There are two natural normalizations for $\mathrm{HT}_\tau(\rho)$ which differ by a sign, and we choose the one with $\mathrm{HT}_\tau(\epsilon) = \{ -1 \}$ for every choice of $\tau$.

We use geometric conventions for the Galois representations associated to automorphic forms, which we now describe. First, we use the normalizations of the local and global Artin maps $\Art_{F_v} : F_v^\times \to W_{F_v}^\text{ab}$ and $\Art_F : \bbA_F^\times \to G_F^\text{ab}$ which send uniformizers to geometric Frobenius elements. If $v$ is a finite place of $F$, then we write $\rec_{F_v}$ for the local Langlands correspondence for $\GL_2(F_v)$, normalized as in Henniart \cite{Hen93} and Harris-Taylor \cite{Tay01} by a certain equality of $\epsilon$- and L-factors. We recall that $\rec_{F_v}$ is a bijection between the set of isomorphism classes of irreducible admissible representations $\pi$ of $\GL_2(F_v)$ over $\bbC$ and set of isomorphism classes of 2-dimensional Frobenius--semi-simple Weil--Deligne representations $(r, N)$ over $\bbC$. We define $\rec_{F_v}^T(\pi) = \rec_{F_v}(\pi \otimes | \cdot |^{-1/2})$. Then $\rec_{F_v}^T$ commutes with automorphisms of $\bbC$, and so makes sense over any field $\Omega$ which is abstractly isomorphic to $\bbC$ (e.g.\ $\barqp$).

If $v$ is a finite place of $F$ and $\chi : W_{F_v} \to \Omega^\times$ is a character with open kernel, then we write $\St_2(\chi \circ \Art_{F_v})$ for the inverse image under $\rec_{F_v}^T$ of the Weil--Deligne representation 
\begin{equation}\label{eqn_definition_of_steinberg_representation}
\left( \chi  \oplus  \chi | \cdot |^{-1} , \left(\begin{array}{cc} 0 & 1 \\ 0 & 0 \end{array}\right) \right).
\end{equation}
If $\Omega = \bbC$, then we call $\St_2 = \St_2(1)$ the Steinberg representation; it is the unique generic subquotient of the normalized induction $i_B^{\GL_2} |\cdot|^{1/2} \otimes |\cdot|^{-1/2}$. If $(r, N)$ is any Weil--Deligne representation, we write $(r, N)^\text{F-ss}$ for its Frobenius--semi-simplification. If $v$ is a finite place of $F$ and $\rho : G_{F_v} \to \GL_n(\barqp)$ is a continuous representation, which is de Rham if $v | p$, then we write $\mathrm{WD}(\rho)$ for the associated Weil--Deligne representation, which is uniquely determined, up to isomorphism. (If $v \nmid p$, then the representation $\mathrm{WD}(\rho)$ is defined in \cite[\S 4.2]{Tat79}. If $v | p$, then the definition of $\mathrm{WD}(\rho)$ is due to Fontaine, and is defined in e.g. \cite[\S 2.2]{Bre02}.)

In this paper, the only automorphic representations we consider are cuspidal automorphic representations $\pi = \otimes'_v \pi_v$ of $\GL_2(\bbA_F)$ such that for each $v | \infty$, $\pi_v$ is the lowest discrete series representation of $\GL_2(\bbR)$ of trivial central character. (The one exception is in the proof of Lemma \ref{lem_existence_of_ordinary_automorphic_lifts}.) In particular, any such $\pi$ is unitary. We will say that $\pi$ is a cuspidal automorphic representation of $\GL_2(\bbA_F)$ of weight 2. If $\pi$ is such a representation, then for every isomorphism $\iota : \barqp \to \bbC$, there is a continuous representation $r_\iota(\pi) : G_F \to \GL_2(\barqp)$ satisfying the following conditions:
\begin{enumerate}
\item The representation $r_\iota(\pi)$ is de Rham and for each embedding $\tau : F \hookrightarrow \barqp$, $\mathrm{HT}_\tau(\rho) = \{ 0, 1 \}$.
\item Let $v$ be a finite place of $F$. Then $\mathrm{WD}(r_\iota(\pi)|_{G_{F_v}})^\text{F-ss} \cong \rec^T_{F_v}(\iota^{-1} \pi_v)$.
\item Let $\omega_\pi$ denote the central character of $\pi$; it is a character $\omega_\pi : F^\times \backslash \bbA_F^\times \to \bbC^\times$ of finite order. Then $\det r_\iota(\pi) = \epsilon^{-1} \iota^{-1} (\omega_\pi \circ \Art_F^{-1})$.
\end{enumerate}
For concreteness, we spell out this local-global compatibility at the unramified places. Let $v \nmid p$ be a prime such that $\pi_v \cong i_B^{\GL_2} \chi_1 \otimes \chi_2$, where $\chi_1, \chi_2 : F_v^\times \to \bbC^\times$ are unramified characters and $i^{\GL_2}_B$ again denotes normalized induction from the upper-triangular Borel subgroup $B \subset \GL_2$. Then the representation $r_\iota(\pi)$ is unramified at $v$, and the characteristic polynomial of $\iota r_\iota(\pi)(\Frob_v)$ is $(X - q^{1/2} \chi_1(\varpi_v))(X - q^{1/2} \chi_2(\varpi_v))$. If $T_v$, $S_v$ are the unramified Hecke operators defined in \S \ref{sec_hecke_operators} below, and we write $t_v,$ $s_v$ for their respective eigenvalues on $\pi_v^{\GL_2(\cO_{F_v})}$, then we have
\begin{equation}\label{eqn_local_global_compatibility_at_unramified_places}
(X - q^{1/2} \chi_1(\varpi_v))(X - q^{1/2} \chi_2(\varpi_v)) = X^2 - t_v X + q_v s_v.
\end{equation}
With the above notations, the pair $(\pi, \omega_\pi)$ is a cuspidal regular algebraic polarized automorphic representation in the sense of \cite{Bar13}, and our representation $r_\iota(\pi)$ coincides with the one defined there. On the other hand, if $\sigma(\pi) : G_F \to \GL_2(\barqp)$ denotes the representation associated to $\pi$ by Carayol \cite{Car86}, then there is an isomorphism $\sigma(\pi) \cong r_\iota(\pi) \otimes (\iota^{-1} \omega_\pi \circ \Art_F^{-1})^{-1}.$

We will call a finite extension $E/\bbQ_p$ inside $\barqp$ a coefficient field. A coefficient field $E$ having been fixed, we will write $\cO$ or $\cO_E$ for its ring of integers, $k$ or $k_E$ for its residue field, and $\lambda$ or $\lambda_E$ for its maximal ideal. If $A$ is a complete Noetherian local $\cO$-algebra with residue field $k$, then we write $\ffrm_A \subset A$ for its maximal ideal, and $\CNL_A$ for the category of complete Noetherian local $A$-algebras with residue field $k$. We endow each object $R \in \CNL_A$ with its profinite ($\ffrm_R$-adic) topology. 

If $\Gamma$ is a profinite group and $\rho : \Gamma \to \GL_n(\barqp)$ is a continuous representation, then we can assume (after a change of basis) that $\rho$ takes values in $\GL_n(\cO)$, for some choice of coefficient field $E$. The semi-simplification of the composite representation $\Gamma \to \GL_n(\cO) \to \GL_n(k)$ is independent of choices, up to isomorphism, and we will write $\overline{\rho} : \Gamma \to \GL_n(\barfp)$ for this semi-simplification.

If $E$ is a coefficient field and $\overline{\rho} : \Gamma \to \GL_2(k)$ is a continuous representation, then we write $\ad \overline{\rho}$ for $\End_k(\overline{\rho})$, endowed with its structure of $k[\Gamma]$-module. We write $\ad^0 \overline{\rho} \subset \ad \overline{\rho}$ for the submodule of trace 0 endomorphisms, and (if $\Gamma = G_F$) $\ad^0 \overline{\rho}(1)$ for its twist by the cyclotomic character. If $M$ is a discrete $\bbZ[G_F]$-module (resp. $\bbZ[G_{F, S}]$-module), then we write $H^1(F, M)$ (resp. $H^1(F_S/F, M)$) for the continuous Galois cohomology group with coefficients in $M$. Similarly, if $M$ is a discrete $\bbZ[G_{F_v}]$-module, then we write $H^1(F_v, M)$ for the continuous Galois cohomology group with coefficients in $M$. If $M$ is a discrete $k[G_F]$-module (resp. $k[G_{F, S}$]-module, resp. $k[G_{F_v}]$-module), then $H^1(F, M)$ (resp. $H^1(F_S/F, M)$, resp. $H^1(F_v, M)$) is a $k$-vector space, and we write $h^1(F, M)$ (resp. $h^1(F_S/F, M)$, resp. $h^1(F_v, M)$) for the dimension of this $k$-vector space, provided that it is finite.

Suppose that a coefficient field $E$ has been fixed. If $M$ is a topological $\cO$-module (i.e.\ an $\cO$-module endowed with a topology making the natural map $\cO \times M \to M$ continuous), then we define $M^\vee = \Hom_\cO(M, E/\cO)$, the Pontryagin dual of $M$. If furthermore $R$ is an $\cO$-algebra and $M$ is an $R$-module, then $M^\vee$ is naturally an $R$-module: $r \in R$ acts by the transpose $r^\vee \in \End_\cO(M^\vee)$.

\section{A result on group representations of dimension 2}\label{sec_boston_lenstra_ribet}

Let $E$ be a coefficient field, and let $A \in \mathrm{CNL}_\cO$. We suppose given a group $G$ and a representation $\rho : G \to \Aut_A(V)$, where $V$ is a free $A$-module of rank 2, such that $\overline{\rho} = \rho \otimes_A k$ is absolutely irreducible. We extend $\rho$ to an algebra homomorphism $\rho : A[G] \to \Aut_A(V)$, and write $\tr : A[G] \to A$ and $\det : A[G] \to A$ for the maps obtained by composing $\rho$ with the usual trace and determinant.

\begin{theorem}\label{thm_eicher_shimura_ring_case}
Let $W$ be a finitely generated $A$-module, and let $\sigma : G \to \Aut_A(W)$ a representation such that for all $g \in G$, $\sigma(g)$ satisfies the characteristic polynomial of $\rho(g)$, i.e.\ the relation
\begin{equation}
 \sigma(g)^2 - (\tr g) \sigma(g) + \det g = 0
 \end{equation}
holds in $\End_A(W)$ for each $g \in G$. Then there exists an $A$-module $U_0$ and an isomorphism $\sigma \cong \rho \otimes_A U_0$ of $A[G]$-modules.
\end{theorem}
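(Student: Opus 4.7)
My plan is to exhibit $W$ as a module over the matrix algebra $M_2(A) = \End_A(V)$ and then invoke Morita equivalence. Once the $A[G]$-action on $W$ factors through the map $\rho : A[G] \to M_2(A)$, I can set $U_0 := e W$ for any rank one idempotent $e \in M_2(A)$ (for instance $e_{11}$); Morita equivalence then yields an isomorphism $V \otimes_A U_0 \cong W$ of $M_2(A)$-modules, and hence of $A[G]$-modules since the $G$-action on $V \otimes_A U_0$ is through the first factor via $\rho$.

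To achieve this factorization, observe first that the map $\rho : A[G] \to M_2(A)$ is surjective: the Jacobson density theorem applied to the absolutely irreducible representation $\overline{\rho}$ gives $\overline{\rho}(k[G]) = M_2(k)$, and Nakayama's lemma applied to the finitely generated $A$-module $M_2(A)/\rho(A[G])$ upgrades this to the ring $A$. The hypothesis says that $\sigma(g)$ vanishes on each generator $g^2 - (\tr g) g + \det g$ of the two-sided ideal $I \subset A[G]$ consisting of Cayley-Hamilton relations, hence $\sigma$ vanishes on $I$. By the classical Cayley-Hamilton theorem, the same holds for $\rho$, so both $\rho$ and $\sigma$ factor through $R := A[G]/I$ and we obtain a surjection $R \twoheadrightarrow M_2(A)$. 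The problem reduces to showing this surjection is an isomorphism.

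The heart of the argument is therefore to prove $R \cong M_2(A)$. Modulo $\ffrm_A$ this becomes the assertion $k[G]/I_k \cong M_2(k)$, which is the theorem of Boston--Lenstra--Ribet over a field; it can be proved by applying Brauer--Nesbitt to the absolutely irreducible $\overline{\rho}$ to show that $k[G]/I_k$ is a simple $k$-algebra of dimension $4$. To lift from $k$ to $A$, I exploit that $M_2(A)$ is $A$-free, so the surjection $R \twoheadrightarrow M_2(A)$ splits as a map of $A$-modules, giving $R \cong N \oplus M_2(A)$ where $N$ is the kernel. Tensoring with $k$ and applying the field case gives $N \otimes_A k = 0$, hence $N = \ffrm_A N$, and Nakayama's lemma yields $N = 0$ provided $N$ is a finitely generated $A$-module.

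The main obstacle is therefore to establish that $R$ (equivalently $N$) is finitely generated over $A$. Since the Cayley-Hamilton relations defining $I$ are imposed only on individual elements of $G$, rather than on their $A$-linear combinations, reducing an arbitrary element of $R$ to an $A$-combination of finitely many fixed generators $1, \bar g_1, \ldots, \bar g_4$ (with $\rho(g_1), \ldots, \rho(g_4)$ an $A$-basis of $M_2(A)$) requires careful iterative use of the Cayley-Hamilton identities for longer products $g_{i_1} \cdots g_{i_m}$, which are themselves group elements, together with the linear identities $\rho(g_{i_1} \cdots g_{i_m}) = \sum_j a_j \rho(g_j)$ in $M_2(A)$. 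I anticipate this spanning argument to be the most delicate step in the proof.
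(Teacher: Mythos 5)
Your reduction via Morita equivalence to showing that $R = A[G]/J \to \End_A(V)$ is an isomorphism, and your proof of surjectivity (Burnside plus Nakayama), agree with the paper. But you have not proved injectivity, and the Nakayama route you sketch stalls exactly where you say it does: you need $R$ to be a finitely generated $A$-module, you correctly observe that this is nontrivial because $J$ is generated only by Cayley--Hamilton elements for \emph{group} elements rather than for arbitrary elements of $A[G]$, and you leave the ``delicate iterative spanning argument'' unfinished. This is a genuine gap, not a technicality to be postponed. Showing that each $\bar g$ lies in the $A$-span of $\bar g_0,\ldots,\bar g_3$ amounts to transporting the relation $\rho(g)=\sum_j a_j\rho(g_j)$, which holds tautologically in $M_2(A)$, back up to $R$ --- and that is essentially the injectivity you are trying to prove, so the spanning step cannot simply be taken for granted. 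Iterated Cayley--Hamilton for products $g_{i_1}\cdots g_{i_m}$ only lets you reduce \emph{squares} $h^2$ to $A$-combinations of $h$ and $1$; it gives you no handle on $\bar g$ itself, and the ingredients in your sketch do not obviously close the loop.

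The paper's proof hinges on a device absent from your proposal: the $A$-linear anti-involution $\ast$ of $A[G]$ determined on group elements by $g^\ast = (\det g)\,g^{-1}$. The map $x\mapsto x + x^\ast - \tr x$ is $A$-linear, and the Cayley--Hamilton relation for $g\in G$ (multiplied by $g^{-1}$) shows it sends each $g$ into $J$; by $A$-linearity it therefore sends \emph{all} of $A[G]$ into $J$. One deduces $J^\ast = J$, so $\ast$ descends to $R$, and the identities $x + x^\ast = \tr x$ and (after a parallelogram-identity argument using $(x+y)(x+y)^\ast = xx^\ast + yy^\ast + \tr(xy^\ast)$) $x x^\ast = \det x$ hold for \emph{every} $x\in R$. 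This is precisely the upgrade your approach lacks: it extends the Cayley--Hamilton constraint from group elements to all of $R$, and yields the criterion that $x\in R$ is a unit if and only if $\det x\in A^\times$. Injectivity then falls out directly: if $\rho(x)=0$, the involution identities show $\mathrm{Ann}(x)$ is a two-sided ideal of $R$ containing every commutator $yz-zy$; by surjectivity and Nakayama its image in $\End_A(V)$ is everything, so $\mathrm{Ann}(x)$ contains a unit and $x=0$. Either incorporate the anti-involution, or supply a genuinely independent proof that $R$ is a finitely generated $A$-module; as written, your proof is incomplete at its crucial step.
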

In the case $A = k$, Theorem \ref{thm_eicher_shimura_ring_case} is a well-known result of Boston, Lenstra and Ribet \cite{Bos91}. We give a proof that follows \cite{Bos91}. 
\begin{proof}[Proof of Theorem \ref{thm_eicher_shimura_ring_case}]
Let $J \subset A[G]$ denote the 2-sided ideal generated by the elements $g^2 - (\tr g) g + \det g$, $g \in G$, and $R = A[G]/J$; thus both $\sigma$ and $\rho$ factor through the quotient $A[G] \to R$. By Morita equivalence, to prove the theorem it suffices to show that the natural map $R \to \End_A(V)$ is an isomorphism.

Let $\ast : A[G] \to A[G]$ denote the $A$-linear anti-involution defined on group elements by $g^\ast = g^{-1} \det g$. Using the identities $g g^\ast = \det g$ and $g^2 - (\tr g) g + \det g \in J$, we deduce that $g + g^\ast \equiv \tr g \text{ mod }J$ for all $g \in G$, hence $x + x^\ast \equiv \tr x \text{ mod }J$ for all $x \in A[G]$. This implies that $J = J^\ast$, and hence $\ast$ descends to an anti-involution of $R$, which we denote by the same symbol.

It follows that $x + x^\ast = \tr x$ for all $x \in R$. We now show that $x x^\ast = \det x$ for all $x \in R$. Indeed, we have for any $x, y \in R$, $(x + y) (x + y)^\ast = x x^\ast + y y^\ast + \tr x y^\ast$, and the same identity holds for the images of $x, y$ in $\End_A(V)$. Consequently, the set $\{ x \in R \mid x x^\ast \in A \text{ and }x x^\ast = \det x \}$ is stable under addition. It contains every $A$-multiple of an element $g \in G$, so equals the whole of $R$. In particular, we see that an element $x \in R$ is a unit if and only if $\det x \in A^\times$.

We can now show that the natural map $R \to \End_A(V)$ is an isomorphism. It is surjective, as follows from the lemmas of Burnside and Nakayama, so it is enough to show that it is also injective. Choose an element $x \in R$ such that $\rho(x) = 0$. Then $x = -x^\ast$, hence $yx = -yx^\ast$ for all $y \in R$. We also have $x y^\ast + y x^\ast = \tr x y^\ast = 0$, hence $yx = x y^\ast$. Applying this last identity repeatedly, we find that for any $y, z \in R$, we have $yzx = yxz^\ast = x (zy)^\ast = zyx$, hence $(yz-zy)x = 0$. Consequently, $\Ann x = \{ r \in R \mid rx = 0 \}$ is a 2-sided ideal of $R$ which  contains the set $\{ yz - zy \mid y, z \in R \}$. 

We thus see that the image of $\Ann x$ in $\End_A(V)$ is non-trivial, and contains all the elements $ef - fe$ for $e, f \in \End_A(V)$. It follows from Nakayama's lemma that the image of $\Ann x$ in $\End_A(V)$ actually equals $\End_A(V)$, and hence there exists $w \in \Ann x$ such that $\rho(w) = 1$. In particular, $w$ is a unit and $wx = 0$. This implies that $x = 0$, and completes the proof.
\end{proof}

\section{Shimura curves and Hida varieties}\label{sec_shimura_curves_and_varieties}

Let $F$ be a totally real number field. We write $\tau_1, \dots, \tau_d$ for the distinct real embeddings of $F$, and assume that $d$ is even. In this section, we define Shimura curves and their 0-dimensional analogues which, following \cite{Fuj06}, we call Hida varieties.

Let $Q$ be a finite set of finite places of $F$. We fix for each such $Q$ a choice of quaternion algebra $B_Q$ over $F$, as follows:
\begin{itemize}
\item If $\# Q$ is odd, then $B_Q$ is ramified exactly at $Q \cup \{ \tau_2, \dots, \tau_d \}$.
\item If $\# Q$ is even, then $B_Q$ is ramified exactly at $Q \cup \{ \tau_1, \dots, \tau_d \}$.
\end{itemize}
Thus $B_Q$ is uniquely determined, up to isomorphism. We also fix for each such $Q$ a maximal order $\cO_Q \subset B_Q$ and an isomorphism $\cO_Q \otimes_{\cO_F} \prod_{v \nmid Q \infty} \cO_{F_v} \cong \prod_{v \nmid Q \infty} M_2(\cO_{F_v})$.  We write $G_Q$ for the associated reductive group over $\cO_F$; its functor of points is $G_Q(R) = (\cO_Q \otimes_{\cO_F} R)^\times$. We thus obtain for each finite place $v \not\in Q$ of $F$ an isomorphism $G_Q(\cO_{F_v}) \cong \GL_2(\cO_{F_v})$.

Let $v$ be a finite place of $F$. We define for each $n \geq 1$ a sequence of open compact subgroups of $\GL_2(\cO_{F_v})$
\begin{equation}\label{eqn_definition_of_open_compact_subgroups} U_0(v^n) \subset U_1(v^n) \subset U_1^1(v^n), 
\end{equation}
where:
\begin{itemize}
\item $U_0(v^n) = \left\{ \left( \begin{array}{cc} \ast & \ast \\ 0 & \ast \end{array} \right) \text{ mod } \varpi_v^n \right \}$;
\item $U_1(v^n) = \left\{ \left( \begin{array}{cc} a & \ast \\ 0 & d \end{array} \right) \text{ mod }\varpi_v^n \middle| ad^{-1} = 1 \right\}$;
\item and $U_1^1(v^n) = \left\{ \left( \begin{array}{cc} 1 & \ast \\ 0 & 1 \end{array} \right) \text{ mod }\varpi_v^n \right \}$.
\end{itemize}
If $n = 1$, then we omit it from the notation and write $U_0(v) \subset U_1(v) \subset U_1^1(v)$. By abuse of notation, when $v \not \in Q$ we will use the same symbols to denote the open compact subgroups of $G_Q(F_v)$ induced by our identifications $G_Q(\cO_{F_v}) \cong \GL_2(\cO_{F_v})$.

We now fix for the rest of \S \ref{sec_shimura_curves_and_varieties} a finite place $a$ of $F$ satisfying $q_a > 4^d$. In what follows, we will assume that our sets $Q$ are chosen so that $a \not\in Q$. This being the case, we now single out a convenient class of open compact subgroups of $G_Q(\bbA^\infty_F)$. We say that $U  \subset G_Q(\bbA^\infty_F)$ is good if it satisfies the following conditions.
\begin{itemize}
\item $U = \prod_v U_v$ for open compact subgroups $U_v \subset G_Q(F_v)$.
\item For each $v \in Q$, $U_v$ is the unique maximal compact subgroup of $G(F_v)$.
\item We have $U_a = U_1^1(a)$.
\end{itemize}
We will write $\cJ_Q$ for the set of good subgroups $U \subset G_Q(\bbA_F^\infty)$. (Note that this set depends on the choice of auxiliary place $a$, and not just on $Q$; since the choice of $a$ is supposed fixed, we omit it from the notation.)

\subsection{Hecke operators}\label{sec_hecke_operators}

We continue with the notation of the previous section. Let $Q$ be a finite set of finite places of $F$, with $a \not \in Q$, and let $U \in \cJ_Q$. If $v$ is a finite place of $F$, then we write $\cH(G_Q(F_v), U_v)$ for the $\bbZ$-algebra of compactly supported $U_v$-biinvariant functions $f : G(F_v) \to \bbZ$. A basis of this algebra as $\bbZ$-module is given by the characteristic functions $[ U_v g_v U_v ]$ of double cosets, and the identity is the function $[U_v]$. The multiplication law is given explicitly as follows: if $U_v g_v U_v = \sqcup_i h_i U_v$ and $U_v g'_v U_v = \sqcup_i h_i' U_v$, then $[U_v g_v U_v] \cdot [U_v g'_v U_v]$ is the characteristic function of the double coset $\sqcup_i h_i h_i' U_v$. (This algebra structure is the same as the convolution algebra structure that results from the choice of Haar measure on $G(F_v)$ that gives $U_v$ measure 1.) 

If $M$ is a smooth $\bbZ[G_Q(F_v)]$-module, then $M^{U_v}$ is canonically a $\cH(G_Q(F_v), U_v)$-module. The action of the element $[U_v g_v U_v]$ can be given explicitly as follows: write $U_v g_v U_v = \sqcup_i h_i U_v$. Then for any $x \in M^{U_v}$, $[U_v g_v U_v] \cdot x = \sum_i h_i \cdot x$. 

We now define some elements of these algebras that will be of particular interest. If $v \not \in Q$ and $U_v = \GL_2(\cO_{F_v})$, then we have $T_v, S_v \in  \cH(\GL_2(F_v), \GL_2(\cO_{F_v})) \cong \cH(G_Q(F_v), U_v)$, where
\[ T_v = [ \GL_2(\cO_{F_v}) \left( \begin{array}{cc} \varpi_v & 0 \\ 0 & 1 \end{array} \right) \GL_2(\cO_{F_v}) ]\text{ and }S_v = [ \GL_2(\cO_{F_v}) \left( \begin{array}{cc} \varpi_v & 0 \\ 0 & \varpi_v \end{array} \right) \GL_2(\cO_{F_v}) ], \]
and $\varpi_v \in \cO_{F_v}$ is a uniformizer. If $v \not\in Q$, and $U_1^1(v^n) \subset U_v \subset U_0(v^n)$, then we write
\[ \mathbf{U}_v = [ U_v \left( \begin{array}{cc} \varpi_v & 0 \\ 0 & 1 \end{array} \right) U_v ]. \]
It is an abuse of notation to denote elements of different algebras by the same symbol; however, if $M$ is a smooth $\bbZ[G_Q(F_v)]$-module, then the action of $\mathbf{U}_v$ commutes with all of the inclusions ($n \geq m \geq 1$):
\[ \xymatrix{M^{U_0(v^m)} \ar[r] \ar[d] & M^{U_v}  \ar[r] & \ar[d] M^{U_1^1(v^m)} \\ M^{U_0(v^n)} \ar[rr] &  & M^{U_1^1(v^n)}.} \]
(For the justification of this fact, see \cite[\S 3.1]{Clo08}.) If $v \in Q$, then $U_v$ is the unique maximal compact subgroup of $(B_Q \otimes_F F_v)^\times$ (since $U$ is a good subgroup, by assumption) and we have $\cH(G_Q(F_v), U_v) = \bbZ[\mathbf{U}_v, \mathbf{U}_v^{-1}]$, where
\[ \mathbf{U}_v = [ U_v \widetilde{\varpi}_v U_v ], \]
and $\widetilde{\varpi}_v \in \cO_Q \otimes_{\cO_F} \cO_{F_v}$ is a uniformizer. We note that $\widetilde{\varpi}_v$ normalizes $U_v$, so the action of $\mathbf{U}_v$ on $M^{U_v}$ is given simply by the action of $\widetilde{\varpi}_v$.

The use of the symbol $\mathbf{U}_v$ is again an abuse of notation, which is justified by the following lemma.
\begin{lemma}\label{lem_u_eigenvalue_on_twist_of_steinberg}
Let $v \in Q$, let $\chi : F_v^\times \to \bbC^\times$ be an unramified character, and let $\pi = \St_2(\chi)$, an irreducible admissible representation of $\GL_2(F_v)$. Let $\mathrm{JL}(\pi) = \chi \circ \det$ denote the 1-dimensional representation of $G_Q(F_v)$ associated to $\pi$ by the local Jacquet--Langlands correspondence. Then \[ \rec^T_{F_v}(\pi) = \left( \chi\oplus\chi | \cdot |^{-1}, \left(  \begin{array}{cc} 0 & 1 \\ 0 & 0 \end{array} \right) \right). \]
The $\mathbf{U}_v$-eigenvalues on $\pi^{U_0(v)}$ and $\mathrm{JL}(\pi)^{U_v}$ coincide, and are both equal to the eigenvalue of $\Frob_v$ on $\rec^T_{F_v}(\pi)^{N = 0}$.
\end{lemma}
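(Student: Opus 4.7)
The plan is to first observe that the displayed formula for $\rec^T_{F_v}(\pi)$ is essentially the definition of $\St_2(\chi)$ given in (\ref{eqn_definition_of_steinberg_representation}), after identifying characters of $F_v^\times$ and of $W_{F_v}^{\mathrm{ab}}$ via $\Art_{F_v}$. The kernel of the monodromy $N$ in $(\chi \oplus \chi|\cdot|^{-1}, N)$ is the first coordinate line, on which $W_{F_v}$ acts through $\chi$; since the paper normalizes $\Art_{F_v}$ so that $\Art_{F_v}(\varpi_v) = \Frob_v$, the geometric Frobenius eigenvalue on $\rec^T_{F_v}(\pi)^{N=0}$ equals $\chi(\varpi_v)$. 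It thus remains to show that $\mathbf{U}_v$ acts as $\chi(\varpi_v)$ on both $\mathrm{JL}(\pi)^{U_v}$ and $\pi^{U_0(v)}$.

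The Jacquet--Langlands side is immediate: as was noted in the text, $\widetilde{\varpi}_v$ normalizes the unique maximal compact subgroup $U_v \subset G_Q(F_v)$, so $\mathbf{U}_v$ acts on the one-dimensional space $\mathrm{JL}(\pi) = \chi \circ \det$ simply via the scalar $\chi(\det(\widetilde{\varpi}_v))$. Since $\widetilde{\varpi}_v$ is a uniformizer of the quaternion division algebra $B_Q \otimes_F F_v$, its reduced norm is a uniformizer of $F_v$ up to a unit; as $\chi$ is unramified, this evaluates to $\chi(\varpi_v)$, as required.

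For $\pi^{U_0(v)}$ I would compare $\pi$ with the reducible principal series $I = i_B^{\GL_2}(\chi|\cdot|^{1/2} \otimes \chi|\cdot|^{-1/2})$, whose two Jordan--H\"older constituents are $\St_2(\chi)$ and the one-dimensional representation $\chi \circ \det$. Since $U_0(v)$-invariants are exact on smooth complex $\GL_2(F_v)$-modules, the composition series of $I$ restricts to a short exact sequence $0 \to A^{U_0(v)} \to I^{U_0(v)} \to B^{U_0(v)} \to 0$ of Hecke-modules with $\{A,B\} = \{\St_2(\chi),\chi\circ\det\}$ in some order, $I^{U_0(v)}$ two-dimensional and each end term one-dimensional. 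Using the coset decomposition
\[ U_0(v) \left( \begin{array}{cc} \varpi_v & 0 \\ 0 & 1 \end{array} \right) U_0(v) \;=\; \bigsqcup_{x} \left( \begin{array}{cc} \varpi_v & x \\ 0 & 1 \end{array} \right) U_0(v) \]
(with $x$ running over a set of lifts of $k(v)$ in $\cO_{F_v}$) together with the fact that each of the $q_v$ representatives has determinant $\varpi_v$, a direct calculation shows that $\mathbf{U}_v$ acts on $(\chi\circ\det)^{U_0(v)}$ by $q_v\chi(\varpi_v)$. On the other hand, a standard Iwahori--Hecke calculation (or equivalently reading off the Satake parameters $q_v^{1/2}\chi_i(\varpi_v)$ of $I$, where $\chi_1=\chi|\cdot|^{1/2}$, $\chi_2=\chi|\cdot|^{-1/2}$) identifies the characteristic polynomial of $\mathbf{U}_v$ on $I^{U_0(v)}$ as $(X - \chi(\varpi_v))(X - q_v\chi(\varpi_v))$. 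Since $q_v\chi(\varpi_v)$ already occurs on $(\chi \circ \det)^{U_0(v)}$, the remaining eigenvalue $\chi(\varpi_v)$ must act on $\St_2(\chi)^{U_0(v)} = \pi^{U_0(v)}$, completing the proof.

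The one step that requires real care is computing the characteristic polynomial of $\mathbf{U}_v$ on the two-dimensional space $I^{U_0(v)}$. This can be done by a direct matrix computation in the basis consisting of the spherical vector and its translate by an Atkin--Lehner element, but the bookkeeping around the normalizations ($\Art_{F_v}(\varpi_v) = \Frob_v$ and $|\varpi_v| = q_v^{-1}$) needs attention so that the Satake parameters are correctly apportioned between the two Jordan--H\"older constituents of $I$.
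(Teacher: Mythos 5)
Your argument is correct, and it fills in explicitly what the paper dispatches with a citation: the published proof simply points to the direct computation in \cite[Lemma 3.1.5]{Clo08} after matching notations ($\St_2(\chi) = \mathrm{Sp}_2(\chi|\cdot|^{-1/2})$), whereas you reconstruct the calculation from scratch. The pieces about $\rec^T_{F_v}(\pi)$ being literally the displayed Weil--Deligne representation, the Frobenius eigenvalue on $\ker N$ being $\chi(\varpi_v)$, and the Jacquet--Langlands side reducing to $\chi(\nu(\widetilde\varpi_v)) = \chi(\varpi_v)$ via unramifiedness, are exactly as in the paper's setup.

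Where you genuinely deviate, and in a pleasant way, is the treatment of $\pi^{U_0(v)}$. Rather than computing $\mathbf{U}_v$ directly on a model of the Steinberg (which is what the cited CHT lemma does), you embed $\St_2(\chi)$ in the reducible principal series $I = i_B^{\GL_2}(\chi|\cdot|^{1/2}\otimes\chi|\cdot|^{-1/2})$, compute the characteristic polynomial of $\mathbf{U}_v$ on the two-dimensional space $I^{U_0(v)}$ as $(X-\chi(\varpi_v))(X - q_v\chi(\varpi_v))$, and peel off the eigenvalue $q_v\chi(\varpi_v)$ belonging to the one-dimensional constituent $(\chi\circ\det)^{U_0(v)}$ (which you compute in one line from the coset decomposition). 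Additivity of trace over the two Jordan--H\"older factors then forces the remaining eigenvalue $\chi(\varpi_v)$ onto $\St_2(\chi)^{U_0(v)}$. The gain here is that you never need to decide which constituent is the submodule and which is the quotient, nor to write down an explicit Iwahori-invariant vector in the Steinberg itself; the cost is that you do need the standard fact that the $\mathbf{U}_v$-eigenvalues on Iwahori-invariants of an unramified principal series $i_B^{\GL_2}(\chi_1\otimes\chi_2)$ are $q_v^{1/2}\chi_1(\varpi_v)$ and $q_v^{1/2}\chi_2(\varpi_v)$, which as you yourself flag is the one step requiring a real Iwahori--Hecke computation. Both routes are correct; yours trades an explicit computation on the Steinberg for an explicit computation on the full induced representation plus the (trivial) one-dimensional constituent.
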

\begin{proof}
This follows from the definition (\ref{eqn_definition_of_steinberg_representation}) of $\St_2(\chi)$ and a direct calculation: see the proof of \cite[Lemma 3.1.5]{Clo08} and note that, in the notation there, we have $\St_2(\chi) = \mathrm{Sp}_2(\chi|\cdot|^{-1/2})$.
\end{proof}
Let $\pi$ be a cuspidal automorphic representation of $\GL_2(\bbA_F)$ of weight 2. Let $p$ be a prime and let $\iota : \barqp \to \bbC$ be an isomorphism, and let $v$ be a place of $F$ dividing $p$. For each $n \geq 1$, the eigenvalues of the ${\mathbf{U}_v}$ operator on $\iota^{-1} \pi_v^{U_1^1(v^n)}$ lie in $\overline{\bbZ}_p$. We say that $\pi_v$ is $\iota$-ordinary if there exists $n \geq 1$ such that $\iota^{-1}\pi_v^{U_1^1(v^n)} \neq 0$ and ${\mathbf{U}_v}$ has an eigenvalue on this space which is a $p$-adic unit, i.e.\ lies in $\overline{\bbZ}_p^\times$. 
\subsection{Hida varieties}

We now suppose that $\# Q$ is even. For each $U \in \cJ_Q$, there is an associated finite double quotient: 
\[ X_Q(U) = G_Q(F) \backslash G_Q(\bbA_F^\infty) / U. \]
For any $g \in G_Q(\bbA_F^{a, \infty})$, $g^{-1} U g$ is also a good subgroup, and there is a map:
\[ X_Q(U) \to X_Q(g^{-1} U g) \]
induced by right multiplication on $G(\bbA_F^\infty)$. This gives rise to a right action of the group $G_Q(\bbA_F^{a, \infty})$ on the projective system of sets $\{ X_Q(U) \}_{U \in \cJ_Q}$.
\begin{lemma}\label{lem_triviality_of_isotropy_groups_definite_case}
Let $U \in \cJ_Q$. For each $g \in G_Q(\bbA_F^\infty)$, we have $g G_Q(F) g^{-1} \cap U \subset F^\times  \subset G_Q(\bbA_F^\infty)$.
\end{lemma}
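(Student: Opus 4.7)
Writing $\gamma := g \delta g^{-1} \in U$, the plan is to apply the product formula to the reduced discriminant $D := t^2 - 4n \in F$, where $t = \tr(\delta)$ and $n = \det(\delta)$ are the reduced trace and reduced norm of $\delta \in G_Q(F) = B_Q^\times$. Since $D = 0$ is equivalent to $\delta \in F$, it suffices to derive a contradiction from $D \neq 0$, using the standing hypothesis $q_a > 4^d$.

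I would first establish global integrality. At each finite $v$, the open compact $U_v \subset G_Q(F_v)$ is contained in a conjugate of the unique maximal compact subgroup --- namely $\GL_2(\cO_{F_v})$ for $v \notin Q$ and the unit group of the maximal order in the division algebra $B_Q \otimes_F F_v$ for $v \in Q$. Since the reduced trace and reduced norm are conjugation-invariant and send these maximal compacts into $\cO_{F_v}$ and $\cO_{F_v}^\times$ respectively, and since $t$ and $n$ can be read off from $\gamma_v$ at each finite $v$, we obtain $t \in \cO_F$ and $n \in \cO_F^\times$. Consequently $|D|_v \leq 1$ at every finite $v$, and the product formula applied to the global unit $n$ yields $\prod_{i=1}^d n_{\tau_i} = 1$ (recalling that $n_{\tau_i} > 0$ because $B_Q \otimes_F F_{\tau_i}$ is a real division algebra).

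Two sharper local bounds then drive the argument. At $a$, the shape of $U_1^1(a)$ forces both roots $\alpha, \beta$ of $X^2 - tX + n$ to lie in $1 + \mathfrak{m}_{\overline{F}_a}$, so $D = (\alpha - \beta)^2 \in F_a$ has strictly positive $a$-adic valuation, giving $|D|_a \leq q_a^{-1}$. At each real place $\tau_i$, the hypothesis that $\# Q$ is even makes $B_Q \otimes_F F_{\tau_i} \cong \bbH$, and the image of $\delta$ there has complex-conjugate eigenvalues of absolute value $\sqrt{n_{\tau_i}}$, so $D_{\tau_i} \leq 0$ and $|D|_{\tau_i} \leq 4 n_{\tau_i}$. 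Assuming for contradiction $D \neq 0$, the product formula now reads
$$1 \;=\; \prod_v |D|_v \;\leq\; q_a^{-1} \cdot \prod_{i=1}^d (4 n_{\tau_i}) \;=\; \frac{4^d}{q_a},$$
contradicting $q_a > 4^d$. Hence $D = 0$ and, since $\delta$ is invertible, $\delta \in F^\times$.

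The main subtlety is the archimedean bound $|D|_{\tau_i} \leq 4 n_{\tau_i}$: it requires $B_Q$ to be ramified at every real place of $F$, which is exactly the reason for the choice made in defining $B_Q$ when $\# Q$ is even. The $a$-adic bound and the integrality setup are routine once the definitions are unwound, and the hypothesis $q_a > 4^d$ is calibrated to overwhelm the archimedean product $4^d$ against the single local saving $q_a^{-1}$.
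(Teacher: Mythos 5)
Your proof is correct and follows essentially the same argument as the paper's (which is copied from Jarvis, \cite[Lemma 12.1]{Jar99}): integrality of the reduced trace and norm at all finite places, a saving of $q_a^{-1}$ forced by $U_a = U_1^1(a)$, an archimedean bound of $4^d$ coming from ramification of $B_Q$ at all real places, and a product-formula contradiction with $q_a > 4^d$. The one cosmetic difference is that the paper works with the normalized invariant $\mu(\gamma) = (t^2 - 4n)/n$ (a global unit times your $D$), which satisfies $-4 \le \tau_i(\mu) \le 0$ outright and so bounds $|\bbN_{F/\bbQ}\,\mu|$ without the separate step $\prod_i n_{\tau_i} = 1$ that you need for $D$.
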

\begin{proof}
We copy the proof of \cite[Lemma 12.1]{Jar99}. Let $\gamma \in G_Q(F)$, $u \in U$, and suppose that $g \gamma g^{-1} = u$. We define $\mu(\gamma) = (t(\gamma)^2 - 4 \nu(\gamma))/\nu(\gamma) \in F$ (where $t, \nu$ denote reduced trace and norm, respectively). We must show that $\mu(\gamma) = 0$. Since $G_Q(F \otimes_\bbQ \bbR)$ is compact modulo centre, we have $-4 \leq \tau_i(\mu(\gamma)) \leq 0$ for each $i = 1, \dots, d$, hence $| \bbN_{F/\bbQ}\mu(\gamma) | \leq 4^d$. 

On the other hand, we have $U_a = U_1^1(a)$, which implies that $\mu(\gamma) \equiv 0 \text{ mod } \varpi_a^2 \cO_{F_a}$. We then see that either $\mu(\gamma) = 0$, or $| \bbN_{F/\bbQ} \mu(\gamma) | \geq q_a > 4^d$. The latter possibility leads immediately to a contradiction. This completes the proof.
\end{proof}
\subsection{Shimura curves}\label{sec_shimura_curves}

We now suppose that $\# Q$ is odd. We remind the reader that there are two possible conventions regarding the definition of canonical models of the Shimura curves associated to the group $G_Q$; the difference between these is discussed carefully in \cite[\S 3.3.1]{Cor05}. In order that we can refer easily to \cite{Car86}, we follow the convention of Carayol \cite{Car86a}, which we now describe. We fix an isomorphism $B_Q \otimes_{F, \tau_1} \bbR \cong M_2(\bbR)$, and write $X$ for the $G_Q(F \otimes_\bbQ \bbR)$-conjugacy class of the homomorphism $h : \mathbb{S} = \Res_{\bbC/\bbR} \bbG_m \to (\Res_{F / \bbQ} G_Q)_\bbR$ which sends $z = x + iy \in \bbC^\times = \mathbb{S}(\bbR)$ to the element
\[ \left( \left( \begin{array}{cc} x & y \\ -y & x \end{array} \right)^{-1} , 1, \dots, 1 \right) \in \prod_{i=1}^d G_Q(F \otimes_{F, \tau_i} \bbR). \]
For each $U \in \cJ_Q$, there is an associated topological space:
\[ M_Q(U)(\bbC) = G_Q(F) \backslash ( G_Q(\bbA_F^\infty) / U \times X ). \]
According to the theory of Shimura varieties, there exists a canonical projective algebraic curve $M_Q(U)$ over $F$ such that $M_Q(U)(\bbC)$ is its set of complex points (for the embedding $\tau_1 : F \hookrightarrow \bbC$). There is a natural right action of the group $G_Q(\bbA_F^{a,\infty})$ on the projective system $\{ M_Q(U)(\bbC) \}_{U \in \cJ_Q}$, given by the system of isomorphisms
\[ M_Q(U)(\bbC) \to M_Q(g^{-1}U g)(\bbC). \]
induced by right multiplication in $G_Q(\bbA_F^\infty)$. This extends uniquely to a right action of $G_Q(\bbA_F^{a,\infty})$ on the projective system $\{ M_Q(U) \}_{U \in \cJ_Q}$ of curves over $F$. For each prime $p$, we obtain a left action of this group on the \'etale cohomology groups:
\[ \ilim_{U \in \cJ_Q} H^i(M_Q(U)_{\overline{F}}, \barqp). \]
For each $U \in \cJ_Q$ and for each isomorphism $\iota : \barqp \to \bbC$, we then have (cf. \cite[\S 2]{Car86}) an explicit decomposition of $\cH(G_Q(\bbA_F^{a, \infty}), U^a) \otimes \barqp[G_F]$-modules:
\begin{equation}\label{eqn_decomposition_of_shimura_curve_cohomology}
 H^1(M_Q(U)_{\overline{F}}, \barqp) \cong \bigoplus_{\pi \in A_Q(U)} \iota^{-1}\mathrm{JL}(\pi)^U \otimes ( r_\iota(\pi) \otimes (\iota^{-1} \omega_\pi \circ \Art_F^{-1})^{-1} ), 
\end{equation}
where $A_Q(U)$ denotes the set of cuspidal automorphic representations $\pi$ of $\GL_2(\bbA_F)$ of weight 2 such that for each $v \in Q$, $\pi_v$ is an unramified twist of the Steinberg representation, and $\mathrm{JL}(\pi)^U \neq 0$. (Here $\mathrm{JL}$ denotes the global Jacquet--Langlands correspondence.)

\subsection{Integral models, case $v \not\in Q$}
Let $v \neq a$ be a finite place of $F$, and let $\overline{Q}$ be a finite set of finite places of $F$, not containing $v$ or $a$, and of odd cardinality.
(The reason for using the notation $\overline{Q}$ is that we want to reserve the notation $Q$ for later use as the enlarged set $Q = \overline{Q} \cup \{ v \}$.)
\begin{theorem}\label{thm_existence_of_carayol_integral_models}
Let ${}_v\cJ_{\overline{Q}}$ denote the set of good subgroups $U = \prod_w U_w \subset G_{\overline{Q}}(\bbA_F^\infty)$ such that $U_{v} = \GL_2(\cO_{F_v})$. Let $U \in {}_v\cJ_{\overline{Q}}$, and let $U' = \prod_w U'_w$ be the subgroup defined by $U'_w = U_w$ if $w \neq v$ and $U'_v = U_0(v)$. Then the morphisms 
\begin{gather*}
M_{\overline{Q}}(U)_{F_v} \to \Spec F_v \\ 
M_{\overline{Q}}(U')_{F_v} \to \Spec F_v
\end{gather*}
extend canonically to flat projective morphisms 
\begin{gather*} {}_v \bbM_{\overline{Q}}(U) \to \Spec \cO_{F_v} \\ 
{}_v \bbM_{\overline{Q}}(U') \to \Spec \cO_{F_v}.
\end{gather*}
 Moreover, ${}_v \bbM_{\overline{Q}}(U)$ is smooth over $\cO_{F_v}$, and ${}_v \bbM_{\overline{Q}}(U')$ is regular and semi-stable over $\cO_{F_v}$. The actions of the group $G_{{\overline{Q}}}(\bbA_F^{a, v, \infty})$ on the projective systems $\{ M_{\overline{Q}}(U)_{F_v} \}_{U \in {}_v\cJ_{\overline{Q}}}$ and  $\{ M_{\overline{Q}}(U')_{F_v} \}_{U' \in {}_v\cJ_{\overline{Q}}}$ extend canonically to actions on the projective systems $\{ {}_v \bbM_{\overline{Q}}(U) \}_{U \in {}_v\cJ_{\overline{Q}}}$ and $\{ {}_v \bbM_{\overline{Q}}(U') \}_{U \in {}_v\cJ_{\overline{Q}}}$.
\end{theorem}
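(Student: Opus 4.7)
The plan is to invoke the integral model constructions of Carayol in \cite{Car86a}. Since $v \notin \overline{Q}$ and $v \neq a$, the quaternion algebra $B_{\overline{Q}}$ is split at $v$, so PEL-type moduli problems attached to $G_{\overline{Q}}$ can be used to construct models over $\cO_{F_v}$. Because $\#\overline{Q}$ is odd, $G_{\overline{Q}}$ is indefinite at exactly one archimedean place ($\tau_1$), so $M_{\overline{Q}}(U)$ is a proper smooth curve over $F$, and Carayol's setup applies.

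First, I would reinterpret $M_{\overline{Q}}(U)_{F_v}$ as a moduli space. Following \cite[\S\S 2--5]{Car86a}, one introduces an auxiliary imaginary quadratic extension $E/F$ and forms a similitude unitary group $G'$ attached to $B_{\overline{Q}} \otimes_F E$, whose Shimura variety is of PEL type. The quaternionic Shimura curve $M_{\overline{Q}}(U)$ is identified with a disjoint union of geometric components of this PEL Shimura variety, and thus inherits a moduli interpretation in terms of abelian schemes equipped with an $\cO_{B_{\overline{Q}}}$-action, a polarization, and prime-to-$v$ level structure. Taking the associated moduli problem over $\cO_{F_v}$ defines ${}_v\bbM_{\overline{Q}}(U)$; flatness and projectivity are automatic, while smoothness follows from Serre--Tate theory, since the $\varpi_v$-divisible $\cO_{F_v}$-module attached to the universal abelian scheme has height $2$ and unobstructed deformations.

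Next, ${}_v\bbM_{\overline{Q}}(U')$ is obtained by adjoining to the moduli data a finite flat subgroup scheme $H$ of the $\varpi_v$-torsion of rank $q_v$. This is the quaternionic analogue of Drinfeld $\Gamma_0(p)$-level structure, and the standard local model computation shows that the resulting scheme is regular and semi-stable over $\cO_{F_v}$: \'etale locally at an ordinary point the two degeneracy maps to ${}_v\bbM_{\overline{Q}}(U)$ are \'etale, while near a supersingular point the model is \'etale-locally isomorphic to $\Spec \cO_{F_v}[x,y]/(xy - \varpi_v)$. The $G_{\overline{Q}}(\bbA_F^{a,v,\infty})$-action extends uniquely to the integral models, since it acts on the moduli data by altering only the prime-to-$\{a,v\}$ level structure, and a flat projective extension of the generic fiber is unique once it exists.

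The principal obstacle is the careful bookkeeping involved in passing between the PEL Shimura variety for $G'$ and the quaternionic Shimura curve, and in ensuring that Carayol's choice of Hodge cocharacter matches the convention fixed in \S\ref{sec_shimura_curves}. Since we have explicitly followed Carayol's conventions in that section, this verification reduces essentially to citing the relevant results of \cite{Car86a}.
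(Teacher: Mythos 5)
Your overall strategy — quote Carayol's integral-model construction and the local-model analysis of $\Gamma_0(v)$-level structure — is the right source, and the description of the PEL reinterpretation and the local structure (\'etale at ordinary points, $xy = \varpi_v$ at supersingular points) is accurate. But there is a genuine gap: Carayol's construction requires the prime-to-$v$ level $U^v$ to be \emph{sufficiently small} so that the relevant moduli problem is rigid/representable, and a general good subgroup $U \in {}_v\cJ_{\overline{Q}}$ (with only the constraint $U_a = U_1^1(a)$) need not satisfy this. You pass directly from ``Carayol's setup applies'' to the existence of the canonical model without saying how to handle this. The paper's actual proof addresses precisely this point by invoking Jarvis \cite[Lemma 12.2]{Jar99}: one chooses a normal subgroup $V \subset U$ of sufficiently small level, constructs ${}_v\bbM_{\overline{Q}}(V)$ via Carayol, and then takes the quotient by the finite group $U/V$; the crucial fact that this quotient is well-behaved (in particular that one does not destroy regularity and semi-stability, and that the quotient is independent of $V$) depends on the triviality of isotropy groups, which is what the condition $U_a = U_1^1(a)$ guarantees (in the spirit of Lemma \ref{lem_triviality_of_isotropy_groups_definite_case}).

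A second, smaller issue: the claim that ``a flat projective extension of the generic fiber is unique once it exists'' is false in general — there are many non-isomorphic flat projective models of a given curve over a DVR. What one actually needs is that the specific Carayol/Jarvis construction is canonical and functorial, so that the $G_{\overline{Q}}(\bbA_F^{a,v,\infty})$-action on the tower of generic fibers extends to the tower of integral models by functoriality of the moduli description (together with the quotient construction). You should replace the uniqueness claim with this functoriality argument, and you should add the Jarvis quotient step before asserting the existence of the models for arbitrary $U \in {}_v\cJ_{\overline{Q}}$. Semi-stability is then most cleanly checked via \cite[2.16]{Jon96} — reduced special fiber with transversally intersecting components — using the computation of the special fiber in \cite[\S\S 10, 12]{Jar99}, rather than by a direct local-model argument for the quotient scheme.
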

\begin{proof}
Let $U \in {}_v\cJ_{\overline{Q}}$. The integral models ${}_v \bbM_{\overline{Q}}(U)$ and ${}_v \bbM_{\overline{Q}}(U')$ were constructed by Carayol in the case that the prime-to-$v$ level $U^v$ is sufficiently small \cite{Car86a}. It has been shown by Jarvis \cite[Lemma 12.2]{Jar99} that a model with the desired properties can be constructed for any $U \in {}_v\cJ_{\overline{Q}}$ as follows: let $V \in {}_v\cJ_{\overline{Q}}$ be a normal subgroup to which Carayol's construction applies. Then ${}_v \bbM_{\overline{Q}}(U)$ is the quotient of the action of $U$ on ${}_v \bbM_{\overline{Q}}(V)$, and similarly for ${}_v \bbM_{\overline{Q}}(U')$. (This uses the fact that $U_a = U_1^1(a)$, in a similar way to Lemma \ref{lem_triviality_of_isotropy_groups_definite_case}.) To show that ${}_v \bbM_{\overline{Q}}(U')$ is semi-stable over $\cO_{F_v}$, we must check (\cite[2.16]{Jon96}) that the special fiber ${}_v \bbM_{\overline{Q}}(U')_{\kappa(v)}$ is reduced and that its irreducible components intersect transversely. This follows directly from the results of  \cite[\S 10, 12]{Jar99}.
\end{proof}
Let $Q = \overline{Q} \cup \{ v \}$ and let $\overline{U} = \prod_w \overline{U}_w \in {}_v \cJ_{\overline{Q}}$. We now define good subgroups $\overline{U}' = \prod_w \overline{U}_w \subset G_{\overline{Q}}(\bbA_F^\infty)$ and $U = \prod_w U_w \subset G_Q(\bbA_F^\infty)$ by the following formulae:
\begin{itemize}
\item If $w \neq v$, then $U_w = \overline{U}'_w = \overline{U}_w$.
\item If $w = v$, then $\overline{U}'_w = U_0(v)$ and $U_w$ is the unique maximal compact subgroup of $G_Q(F_v)$.
\end{itemize}
We now describe the geometry of the special fiber ${}_v \bbM_{\overline{Q}}(\overline{U}')_{\kappa(v)}$ of ${}_v \bbM_{\overline{Q}}(\overline{U}')$. (We recall, cf. \S \ref{sec_normalizations}, that $\kappa(v)$ is the residue field of the fixed algebraic closure $\overline{F}_v$.)  The special fiber ${}_v \bbM_{\overline{Q}}(\overline{U}')_{\kappa(v)}$ is reduced, because ${}_v \bbM_{\overline{Q}}(\overline{U}')$ is semi-stable over $\cO_{F_v}$. If $\overline{U} \in {}_v\cJ_{\overline{Q}}$, then we write ${}_v \widetilde{\bbM}_{\overline{Q}}(\overline{U}')_{\kappa(v)}$ for the normalization of this special fiber, and $\mathrm{Sing}_{\overline{Q}}(\overline{U})$ for the set of singular points of the special fiber.
\begin{proposition}\label{prop_special_fiber_of_carayol_integral_model} There is an isomorphism of projective systems of smooth $\kappa(v)$-schemes with right $G_{\overline{Q}}(\bbA_F^{a,v,\infty})$-action
\[ \{ {}_v \widetilde{\bbM}_{\overline{Q}}(\overline{U}')_{ \kappa(v)} \}_{\overline{U} \in {}_v\cJ_{\overline{Q}}} \cong \{ {}_v \bbM_{\overline{Q}}(\overline{U})_{ \kappa(v)} \sqcup {}_v \bbM_{\overline{Q}}(\overline{U})_{ \kappa(v)} \}_{\overline{U} \in {}_v \cJ_{\overline{Q}}}. \]
Similarly, there is an isomorphism of projective systems of sets with right $G_{\overline{Q}}(\bbA_F^{a,v,\infty})$-action 
\[ \{ \mathrm{Sing}_{\overline{Q}}(\overline{U}) \}_{\overline{U} \in {}_v \cJ_{\overline{Q}}} \cong \{ X_Q(U) \}_{\overline{U} \in {}_v\cJ_{\overline{Q}}}. \]
\end{proposition}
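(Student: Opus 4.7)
The plan is to combine Carayol's moduli-theoretic description of the integral models at $v$ with the parametrization of the supersingular locus by a suitable quaternionic double coset, passing from small prime-to-$v$ level to general level via the quotient procedure of Jarvis.

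First, I would restrict to the cofinal subsystem of $\overline{U} \in {}_v\cJ_{\overline{Q}}$ for which the prime-to-$v$ level $\overline{U}^v$ is small enough that Carayol's construction \cite{Car86a} represents ${}_v \bbM_{\overline{Q}}(\overline{U})$ and ${}_v \bbM_{\overline{Q}}(\overline{U}')$ as moduli spaces of certain abelian schemes equipped with an action of $\cO_{\overline{Q}}$, a polarization, and appropriate level structure (with the latter including a $\Gamma_0(v)$-structure, i.e.\ a cyclic $\cO_{F_v}$-stable subgroup of order $q_v$). The two forgetful maps on ${}_v \bbM_{\overline{Q}}(\overline{U}')_{\kappa(v)}$, namely ``forget the subgroup'' and ``quotient by the subgroup,'' each factor through a closed immersion onto one of the two irreducible components of the special fiber and identify that component with ${}_v \bbM_{\overline{Q}}(\overline{U})_{\kappa(v)}$. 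Taking the disjoint union yields the claimed description of the normalization. Both maps are tautologically equivariant for the $G_{\overline{Q}}(\bbA_F^{a, v, \infty})$-action (which acts on the prime-to-$av$ level structure and does not interact with the $\Gamma_0(v)$-structure).

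Next, the singular points of ${}_v \bbM_{\overline{Q}}(\overline{U}')_{\kappa(v)}$ are precisely the points where the two components meet, namely the supersingular points. By the Honda--Tate / Deuring--Serre parametrization in the form adapted to Carayol's Shimura curves, the endomorphism algebra of the abelian variety at a supersingular point is (up to isomorphism) the quaternion algebra over $F$ obtained from $B_{\overline{Q}}$ by interchanging local invariants at $v$ and $\tau_1$ -- this is precisely $B_Q$ with $Q = \overline{Q} \cup \{v\}$, since $B_Q$ has ramification set $\overline{Q} \cup \{v\} \cup \{\tau_1, \dots, \tau_d\}$. Choosing a base supersingular point and comparing prime-to-$v$ Tate modules, one identifies the supersingular locus with the double coset
\[ G_Q(F) \backslash G_Q(\bbA_F^\infty) / U = X_Q(U), \]
where the factor at $v$ comes from the unique maximal order in the division quaternion algebra $B_Q \otimes_F F_v$. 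The identification of Hecke modules away from $\{a, v, \infty\}$ follows from the canonical isomorphism $G_Q(\bbA_F^{a,v,\infty}) \cong G_{\overline{Q}}(\bbA_F^{a,v,\infty})$ (both quaternion algebras are split at finite places outside $\overline{Q} \cup \{v\}$).

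Finally, I would extend these two identifications to arbitrary $\overline{U} \in {}_v\cJ_{\overline{Q}}$ by the quotient procedure of Jarvis \cite[Lemma 12.2]{Jar99}: write $\overline{U}$ as a quotient of a normal subgroup $V$ satisfying Carayol's smallness hypothesis, and observe that both the formation of the normalization and the passage to the singular locus commute with the free quotient action. Here the condition $\overline{U}_a = U_1^1(a)$ with $q_a > 4^d$ is exactly what is needed, via the argument of Lemma \ref{lem_triviality_of_isotropy_groups_definite_case} (transported to the indefinite setting), to ensure that these quotient actions are free, so that the quotient of the normalization is again the normalization and likewise for the singular set. The principal obstacle is the $G_{\overline{Q}}(\bbA_F^{a,v,\infty})$-equivariance of the bijection with $X_Q(U)$: one must trace carefully through the switch of local invariants at $v$ and $\tau_1$, fix a compatible system of base points across the tower, and verify that the identifications of prime-to-$v$ Tate modules intertwine the right actions of $G_{\overline{Q}}(\bbA_F^{a,v,\infty})$ and $G_Q(\bbA_F^{a,v,\infty})$ under the canonical isomorphism of these two groups.
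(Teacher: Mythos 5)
Your proposal is correct and follows essentially the same route the paper takes: the paper's proof simply cites Jarvis (Theorem 10.2 and Lemma 12.2) for the description of the normalized special fiber and Carayol (\S 11.2) for the supersingular locus, and your argument correctly unpacks exactly what those references establish — the two degeneracy maps identifying the components in the moduli-theoretic setting, the Deuring--Serre/Honda--Tate identification of the supersingular locus with the quaternionic double coset for $B_Q$ (swapping the invariant at $v$ and $\tau_1$), and the Jarvis quotient descent from small prime-to-$v$ level to general $\overline{U} \in {}_v\cJ_{\overline{Q}}$ using the freeness guaranteed by $U_a = U_1^1(a)$ with $q_a > 4^d$.
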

\begin{proof}
The first part follows from \cite[Theorem 10.2]{Jar99} and \cite[Lemma 12.2]{Jar99}. The second part follows from the determination of the set $\mathrm{Sing}_{\overline{Q}}(\overline{U})$ in terms of supersingular points; see \cite[\S 11.2]{Car86a}.
\end{proof}

\subsection{Integral models, case $v \in Q$}
Let $v \neq a$ be a finite place of $F$, and let $Q$ be a finite set of finite places of $F$ with $v \in Q$ but $a \not\in Q$, and of odd cardinality. In this case, integral models of the curves $M_Q(U)$ over $\cO_{F_v}$ can be constructed using their uniformization by the $v$-adic Drinfeld upper half plane, as we now describe. 

Let $\check{F}$ denote the completion of the maximal unramified extension of $F_v$ inside $\overline{F}_v$, and let $\check{\cO}$ denote its ring of integers. The $F_v$-adic Drinfeld upper half plane is a formal scheme $\Omega^2$ over $\Spf \cO_{F_v}$, formally locally of finite type, and with connected special fiber. It is equipped with a left action of the group $\PGL_2(F_v)$; see \cite{Bou91}. We write $\Omega^2_{\check{\cO}} \to \Spf \check{\cO}$ for the base extension to $\check{\cO}$, and define
\begin{equation} \cM = \Omega^2_{\check{\cO}} \times \left[ (B_Q \otimes_F F_v)^\times / (\cO_Q \otimes_{\cO_F} \cO_{F_v})^\times \right].
\end{equation}
(As a formal scheme over $\Spf \check{\cO}$, $\cM$ is thus isomorphic to a countable disjoint union of copies of $\Omega^2_{\check{\cO}}$.) Let $\overline{Q} = Q - \{ v \}$. We have already fixed isomorphisms $G_Q(\bbA_F^{Q, \infty}) \cong \GL_2(\bbA_F^{Q, \infty})$ and $G_{\overline{Q}}(\bbA_F^{\overline{Q}, \infty}) \cong \GL_2(\bbA_F^{\overline{Q}, \infty})$; we now fix an isomorphism $G_Q(\bbA_F^{v, \infty}) \cong G_{\overline{Q}}(\bbA_F^{v, \infty})$ compatible with these identifications.

The reduced norm gives an isomorphism $(B_Q \otimes_F F_v)^\times / (\cO_Q \otimes_{\cO_F} \cO_{F_v})^\times \cong F_v^\times / \cO_{F_v}^\times$, and we let $G_{\overline{Q}}(F_v) \cong \GL_2(F_v)$ act on $\cM$ on the left via its usual action via $\PGL_2(F_v)$ on the first factor, and by the determinant and this isomorphism on the second factor. We make $G_Q(F_v) = (B_Q \otimes_{F} F_v)^\times$ act on $\cM$ on the right via the trivial action on the first factor, and by multiplication on the second factor.

\begin{theorem}\label{thm_models_of_shimura_curves_fujiwara_case}
Let $U \in \cJ_Q$. \(Thus $U_v = (\cO_Q \otimes_{\cO_F} \cO_{F_v})^\times$.\) Then the morphism $M_Q(U)_{\check{F}} \to \Spec \check{F}$ extends canonically to a flat projective morphism ${}_v \bbM_Q(U) \to \check{\cO}$ with ${}_v\bbM_Q(U)$ regular and semi-stable over $\check{\cO}$, and there is an isomorphism of formal schemes over $\Spf \check{\cO}$:
\begin{equation}\label{eqn_uniformization_of_shimura_curves} {}_v \widehat{\bbM}_Q(U) \cong G_{\overline{Q}}(F) \backslash ( \cM \times G_Q(\bbA_F^{v,\infty}) / U^v). 
\end{equation}
\(On the left hand side, the hat denotes $\varpi_v$-adic completion.\) There is a canonical right $G_Q(\bbA_F^{a,  \infty})$-action on the projective system $\{ {}_v \bbM_Q(U) \}_{U \in \cJ_Q}$ of $\check{\cO}$-schemes, extending the action on $\{ M_Q(U)_{\check{F}} \}_{U \in \cJ_Q}$, and making the system of isomorphisms (\ref{eqn_uniformization_of_shimura_curves}) $ G_Q(\bbA_F^{a,  \infty})$-equivariant.
\end{theorem}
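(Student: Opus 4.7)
The plan is to obtain this as a direct instance of the Čerednik--Drinfeld uniformization theorem, realized as a formal-algebraic quotient in the style already used by Carayol \cite{Car86a} and Jarvis \cite{Jar99} in the previous subsection. The strategy is to first build the right-hand side of (\ref{eqn_uniformization_of_shimura_curves}) as a well-behaved formal scheme, then algebrize to produce the integral model, and finally transport the Hecke action.

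First I would construct the quotient formal scheme. By \cite{Bou91}, the $F_v$-adic Drinfeld upper half plane $\Omega^2 \to \Spf \cO_{F_v}$ is regular and semi-stable with connected special fiber, and these properties are preserved after base change to $\check\cO$. Since $G_Q(\bbA_F^{v,\infty})/U^v$ is a countable discrete set, the formal scheme $\cM \times G_Q(\bbA_F^{v,\infty}) / U^v$ is regular and semi-stable over $\Spf \check\cO$. Let $\overline{Q} = Q - \{v\}$; since $\#\overline{Q}$ is even, $B_{\overline{Q}}$ is ramified at every archimedean place of $F$, so $G_{\overline{Q}}(F \otimes_\bbQ \bbR)$ is compact modulo centre. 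Together with the standing condition $U_a = U_1^1(a)$, this puts us in exactly the situation of Lemma \ref{lem_triviality_of_isotropy_groups_definite_case}: any element of $G_{\overline{Q}}(F)$ that stabilizes a point of $\cM \times G_Q(\bbA_F^{v,\infty})/U^v$ lies in $F^\times$, where the action on $\cM$ is trivial and the action on the second factor is already divided out. Consequently the formal quotient is a regular, semi-stable formal scheme over $\Spf \check\cO$.

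Next I would algebrize. The rigid-analytic generic fiber of the quotient is, by the classical Čerednik--Drinfeld theorem applied to the pair $(B_Q, B_{\overline{Q}})$ (interchanging the ramification at $v$ and at $\tau_1$), canonically identified with the rigid analytification of $M_Q(U)_{\check F}$. Because $M_Q(U)_{\check F}$ is projective, Grothendieck's formal existence theorem promotes the formal quotient to a projective $\check\cO$-scheme ${}_v \bbM_Q(U)$ with the prescribed generic fiber, and the formal completion along the special fiber reproduces (\ref{eqn_uniformization_of_shimura_curves}). Flatness, regularity, and semi-stability persist from the formal model.

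Finally, the $G_Q(\bbA_F^{a,\infty})$-action is constructed by hand on the right-hand side of (\ref{eqn_uniformization_of_shimura_curves}): on $\cM$ the group $G_Q(F_v)$ acts through its prescribed right action on the second factor, while $G_Q(\bbA_F^{v,\infty})$ acts on $G_Q(\bbA_F^{v,\infty})/U^v$ by right multiplication (using the fixed isomorphism with $G_{\overline{Q}}(\bbA_F^{v,\infty})$ to commute with the $G_{\overline{Q}}(F)$-action). One checks this descends to the formal quotient and is compatible with the known right action on the generic fiber, uniquely extending it; equivariance of the uniformization isomorphism then follows by construction. The main obstacle throughout is confirming that the formal quotient inherits regularity and semi-stability, which forces the careful stabilizer analysis above; this is precisely the purpose of the auxiliary place $a$ with $q_a > 4^d$ and of the level condition $U_a = U_1^1(a)$ built into $\cJ_Q$.
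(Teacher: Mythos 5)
Your proof follows essentially the same approach as the paper: invoke the Čerednik--Drinfeld uniformization of the Shimura curve (which in the totally real case is precisely what Boutot--Zink prove in \cite{Bou95}), and use Lemma~\ref{lem_triviality_of_isotropy_groups_definite_case} to guarantee the formal quotient is naïve. The paper's proof is a direct citation to \cite[Theorem 3.1]{Bou95}, whereas you attempt to sketch the uniformization and algebrization steps yourself; these are reasonable but loose (for instance, the algebrization really rests on formal properness of the quotient --- which holds because the special fiber is a finite union of projective curves --- plus Grothendieck's existence theorem, and your phrase ``Flatness, regularity, and semi-stability persist from the formal model'' would need to be replaced by the observation that $\Omega^2_{\check\cO}$ has reduced special fiber with transversally intersecting irreducible components).

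The one genuine point you overlook is the one the paper is most careful about: there are two opposite conventions for the canonical model of $M_Q(U)$ (the issue discussed in \cite[\S 3.3.1]{Cor05} and flagged at the start of \S\ref{sec_shimura_curves}), and \cite{Bou95} uses the one opposite to Carayol's, which this paper follows. The two projective systems of curves over $F$ are related by a Galois twist and only become isomorphic after base change to $\check{F}$; this is precisely why the theorem asserts the uniformization over $\check{\cO}$ and explicitly disclaims any statement about descent data. When you invoke ``the classical Čerednik--Drinfeld theorem'' for $M_Q(U)_{\check F}$, you are implicitly using this fact, and it should be spelled out; without it, the identification of the rigid generic fiber of the formal quotient with $M_Q(U)^{\mathrm{an}}_{\check F}$ in your chosen normalization is not immediate.
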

It follows from Lemma \ref{lem_triviality_of_isotropy_groups_definite_case} that the quotient  (\ref{eqn_uniformization_of_shimura_curves}) can be formed in the following na\"ive sense. It is a finite disjoint union of quotients of the form $\Gamma \backslash \Omega^2_{\check{\cO}}$, where $\Gamma \subset G_{\overline{Q}}(F)$ is a subgroup whose image $\overline{\Gamma}$ in $\PGL_2(F_v)$ has the following property: the formal scheme $\Omega^2_{\check{\cO}}$ has a covering by Zariski opens $\Omega_i$ such that for all $\gamma \in \overline{\Gamma}$, if $\gamma \Omega_i \cap \Omega_i \neq \emptyset$, then $\gamma = 1$. Compare \cite[Lemma 3.2]{Tho12a}.
\begin{proof}[Proof of Theorem \ref{thm_models_of_shimura_curves_fujiwara_case}]
See \cite[Theorem 3.1]{Bou95}, and the remark immediately afterwards. We caution the reader that this reference uses the opposite convention for the definition of the curves $M_Q(U)$, cf. the discussion at the beginning of \S \ref{sec_shimura_curves}. However, one can easily check using the results of \cite[\S 3.3.1]{Cor05} that the two projective systems of curves over $F$ with $G_Q(\bbA_F^\infty)$-action become isomorphic over $\check{F}$, so the results of \cite{Bou95} apply without modification. For this reason, we do not make any assertion about the compatibility of descent data on either side of the isomorphism (\ref{eqn_uniformization_of_shimura_curves}). The scheme ${}_v\bbM_Q(U)$ is semi-stable over $\check{\cO}$ because the formal scheme $\Omega^2_{\check{\cO}}$ has a reduced special fiber, with irreducible components which intersect transversely.
\end{proof}
We write $\mathrm{BT}$ for the Bruhat-Tits tree of the group $\PGL_2(F_v)$; it is an infinite tree with $\PGL_2(F_v)$-action, with vertex set $\mathrm{BT}(0)$ equal to the set of $\cO_{F_v}$-lattices $M \subset F_v^2$, taken up to $F_v^\times$-multiple. Two homothety classes $[M_0]$ and $[M_1]$ are joined by an edge if and only if we can choose representatives $M_0, M_1$ such that $M_0 \subset M_1$ and $[ M_0 : M_1 ] = q_v$. The irreducible components of the special fiber of $\Omega^2$ are in canonical bijection with the set $\mathrm{BT}(0)$. Consequently the irreducible components of the special fiber of $\cM$ are in bijection with the set $\mathrm{BT}(0) \times \GL_2(F_v) / \GL_2(F_v)^\circ$, where $\GL_2(F_v)^\circ \subset \GL_2(F_v)$ is the open subgroup consisting of matrices with determinant in $\cO_{F_v}^\times \subset F_v^\times$. Moreover, this bijection is $\GL_2(F_v)$-equivariant.

If $U \in \cJ_Q$, we write $\Irr_Q(U)$ for the set of irreducible components of the special fiber ${}_v {\bbM}_Q(U)_{\kappa(v)}$. We write $\overline{U} = \prod_w \overline{U}_w \subset G_{\overline{Q}}(\bbA_F^\infty)$ for the good subgroup with $\overline{U}_w = U_w$ if $w \neq v$ and $\overline{U}_v = \GL_2(\cO_{F_v})$.
\begin{corollary}\label{cor_special_fiber_of_drinfeld_model}
There is an isomorphism of projective systems of sets with right $G_{\overline{Q}}(\bbA_F^{a, v, \infty})$-action:
\[ \{  \Irr_Q(U) \}_{U \in \cJ_Q} \cong \{ X_{\overline{Q}}(\overline{U}) \sqcup X_{\overline{Q}}(\overline{U}) \}_{U \in \cJ_Q}. \]
\end{corollary}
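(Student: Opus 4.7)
The plan is to deduce the corollary directly from the uniformization isomorphism of Theorem \ref{thm_models_of_shimura_curves_fujiwara_case} by computing the set of irreducible components on the right-hand side and identifying the $G_{\overline{Q}}(F)$-quotient with the claimed disjoint union of Hida sets. Since passing to the $\varpi_v$-adic completion of ${}_v\bbM_Q(U)$ does not change the underlying set of irreducible components of the special fiber, and the components of the special fiber of a finite disjoint union of quotients $\Gamma\backslash \Omega^2_{\check\cO}$ (in the na\"ive sense following Lemma \ref{lem_triviality_of_isotropy_groups_definite_case} and the remark after Theorem \ref{thm_models_of_shimura_curves_fujiwara_case}) are simply the $\Gamma$-orbits on the components of $\Omega^2_{\check\cO}$, one finds
\[ \Irr_Q(U) \cong G_{\overline{Q}}(F) \backslash \bigl( \Irr(\cM_{\kappa(v)}) \times G_Q(\bbA_F^{v,\infty}) / U^v \bigr). \]

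Next I will input the explicit description, recalled in the paragraph just before the corollary, of $\Irr(\cM_{\kappa(v)})$ as a $\GL_2(F_v)$-set. Using the reduced norm identification in the definition of $\cM$, we have a $\GL_2(F_v)$-equivariant bijection
\[ \Irr(\cM_{\kappa(v)}) \cong \mathrm{BT}(0) \times \GL_2(F_v)/\GL_2(F_v)^\circ, \]
where $\GL_2(F_v)$ acts on $\mathrm{BT}(0)$ via its quotient $\PGL_2(F_v)$ and on the second factor by left multiplication (equivalently, through $\val\circ\det$). The key point is now a purely local count: I would check that the diagonal $\GL_2(F_v)$-action on this product has exactly two orbits, parametrized by the parity of $\val\circ\det$ modulo the image of the centre, each orbit being isomorphic to $\GL_2(F_v)/\GL_2(\cO_{F_v})$. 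Taking the standard vertex $[M_0]\in\mathrm{BT}(0)$ and the two classes represented by $1$ and $\diag(\varpi_v,1)$ as orbit representatives, the stabilizer in each case is $F_v^\times\GL_2(\cO_{F_v}) \cap \GL_2(F_v)^\circ = \GL_2(\cO_{F_v})$, giving the desired identification.

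Combining these two steps and using the fixed identification $G_{\overline Q}(\bbA_F^{v,\infty}) \cong G_Q(\bbA_F^{v,\infty})$ together with $\overline{U}_v = \GL_2(\cO_{F_v})$ and $\overline{U}^v = U^v$, I obtain
\[ \Irr_Q(U) \cong G_{\overline{Q}}(F) \backslash \bigl( G_{\overline{Q}}(\bbA_F^\infty)/\overline{U} \bigr) \sqcup G_{\overline{Q}}(F) \backslash \bigl( G_{\overline{Q}}(\bbA_F^\infty)/\overline{U} \bigr) = X_{\overline{Q}}(\overline{U}) \sqcup X_{\overline{Q}}(\overline{U}). \]
I expect the main obstacle to be not the computation itself but the bookkeeping: one must verify that these identifications are compatible as $U$ varies through $\cJ_Q$ and that they intertwine the right $G_{\overline{Q}}(\bbA_F^{a,v,\infty})$-actions on both sides. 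This compatibility is immediate from the $G_Q(\bbA_F^{a,\infty})$-equivariance of the uniformization (\ref{eqn_uniformization_of_shimura_curves}) asserted in Theorem \ref{thm_models_of_shimura_curves_fujiwara_case}, combined with the fact that the action away from $v$ touches only the factor $G_Q(\bbA_F^{v,\infty})/U^v$ and hence does not interfere with the local analysis at $v$ which produced the two copies.
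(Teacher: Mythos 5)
Your proposal is correct and follows essentially the same argument as the paper: pass from the uniformization $(\ref{eqn_uniformization_of_shimura_curves})$ to the identification $\Irr_Q(U) \cong G_{\overline{Q}}(F) \backslash( \mathrm{BT}(0) \times \GL_2(F_v) / \GL_2(F_v)^\circ \times G_{Q}(\bbA_F^{v, \infty}) / U^v )$, observe that $\GL_2(F_v)$ has two orbits on the first two factors with representatives $(\cO_{F_v}^2,1)$ and $(\cO_{F_v}^2,\diag(\varpi_v,1))$ and common stabilizer $\GL_2(\cO_{F_v})$, and then check Hecke equivariance. The paper's proof is terser but makes the same identifications, down to the same choice of orbit representatives.
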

\begin{proof}
By the above, there is a natural bijection
\[ \Irr_Q(U) \cong G_{\overline{Q}}(F) \backslash( \mathrm{BT}(0) \times \GL_2(F_v) / \GL_2(F_v)^\circ \times G_{Q}(\bbA_F^{v, \infty}) / U^v ). \]
The group $\GL_2(F_v)$ has two orbits on the set $\mathrm{BT}(0) \times \GL_2(F_v) / \GL_2(F_v)^\circ$, representatives being given by $(\cO_{F_v}^2, 1)$ and $(\cO_{F_v}^2, \diag(\varpi_v, 1))$. Both of these points have stabilizer $\GL_2(\cO_{F_v})$, so we obtain an isomorphism
\[ \Irr_Q(U) \cong G_{\overline{Q}}(F) \backslash( (\GL_2(F_v) / \overline{U}_v)^2 \times G_{\overline{Q}}(\bbA_F^{v, \infty}) / \overline{U}^v ). \]
It is easy to see that this system of isomorphisms is $G_{\overline{Q}}(\bbA_F^{a, v, \infty})$-equivariant, and we therefore recover the statement of the corollary.
\end{proof}

\subsection{Hecke algebras and modules}\label{sec_hecke_algebras_and_modules}

Let $Q$ be a finite set of finite places of $F$ not containing $a$. We now fix a prime $p$ and a coefficient field $E$ for the rest of \S \ref{sec_shimura_curves_and_varieties}, and define collections of Hecke algebras and modules for these algebras with $\cO$-coefficients. Let $S$ be a finite set of finite places of $F$ containing $Q$. We write $\bbT^{S,\text{univ}}$ for the polynomial algebra over $\cO$ in the infinitely many indeterminates $T_v, S_v$ for all finite places $v \not\in S$ of $F$. We write $\bbT_Q^{S,\text{univ}}$ for the polynomial algebra over $\bbT^{S,\text{univ}}$ in the indeterminates $\mathbf{U}_v,$ $v\in Q$. Thus $\bbT_\emptyset^{S, \text{univ}} = \bbT^{S, \text{univ}}$.

Let $U \in \cJ_Q$. If $\# Q$ is odd, then we define $H_Q(U) = H^1(M_Q(U)_{\overline{F}}, \cO)$. If $\# Q$ is even, then we define $H_Q(U) = H^0(X_Q(U), \cO)$. In either case, $H_Q(U)$ is a finite free $\cO$-module. Let $S$ be a finite set of places of $F$, containing $Q$, such that for all $v \not\in S$, $U_v = \GL_2(\cO_{F_v})$. Then the Hecke algebra $\bbT_Q^{S,\text{univ}}$ acts on $H_Q(U)$, each Hecke operator $T_v,$ $S_v$ ($v \not \in S$) or ${\mathbf{U}_v}$ ($v \in Q)$ acting by the element of the local Hecke algebra $\cH(G_Q(F_v), U_v)$ of the same name.

If $\# Q$ is odd, then this action commutes with the action of the group $G_F$, and the Eichler--Shimura relation holds: for all finite places $v \not\in S \cup S_p$ of $F$, the action of $G_{F_v}$ is unramified and we have the relation 
\begin{equation}\label{eqn_eichler_shimura_relation}
 \Frob_v^2 - S_v^{-1} T_v \Frob_v + q_v S_v^{-1} = 0
 \end{equation}
in $\End_\cO(H_Q(U))$. This can be deduced from local-global compatibility and the decomposition (\ref{eqn_decomposition_of_shimura_curve_cohomology}), but of course a more geometric version of this statement plays a key role in the proof of (\ref{eqn_decomposition_of_shimura_curve_cohomology}); see \cite[1.6.4]{Car86}. (Compare also the corresponding relation (\ref{eqn_local_global_compatibility_at_unramified_places}) for the representations $r_\iota(\pi)$.)

If $M$ is any $\bbT^{S, \text{univ}}$-module, then we write $\bbT^S(M)$ for the image of $\bbT^{S, \text{univ}}$ in $\End_\cO(M)$. If $M$ is a $\bbT^{S, \text{univ}}_Q$-module, then we define $\bbT^S_Q(M)$ similarly.  We note in particular that if $U \subset G_Q(\bbA_F^\infty)$ is a good subgroup, then (regardless of the parity of $Q$) the algebras $\bbT^S(H_Q(U))$ and $\bbT^S_Q(H_Q(U))$ are reduced and $\cO$-torsion free.

Let $\ffrm \subset \bbT^{S, \text{univ}}$ be the kernel of a homomorphism $\bbT^{S,\text{univ}} \to k'$, where $k'/k$ is a finite extension. If $\ffrm$ is in the support of $H_Q(U)$ for some $Q$ and some $U \in \cJ_Q$ (equivalently: if the image of $\ffrm$ in $\bbT^S(H_Q(U))$ is not the unit ideal) then there is an associated semi-simple Galois representation $\overline{\rho}_\ffrm : G_F \to \GL_2(\bbT^{S, \text{univ}}/\ffrm)$, uniquely determined up to isomorphism by the following relation: for all $v \not\in S \cup S_p$, $\overline{\rho}_\ffrm|_{G_{F_v}}$ is unramified, and $\overline{\rho}_\ffrm(\Frob_v)$ has characteristic polynomial $X^2 - T_v X + q_v S_v\in (\bbT^{S, \text{univ}}/\ffrm)[X]$. If $\overline{\rho}_\ffrm$ is absolutely reducible, we say that $\ffrm$ is Eisenstein; otherwise, we say that $\ffrm$ is non-Eisenstein. If $\ffrm$ is the kernel of a homomorphism $\bbT_Q^{S, \text{univ}} \to k'$, we say that $\ffrm$ is Eisenstein (resp. non-Eisenstein) if $\ffrm \cap \bbT^{S, \text{univ}}$ is Eisenstein (resp. non-Eisenstein). In particular, in either case there is a Galois representation $\overline{\rho}_\ffrm$ valued in $\GL_2(\bbT_Q^{S, \text{univ}}/\ffrm)$.

\begin{proposition}\label{prop_carayol_and_blr}
Let $\#Q$ be odd, and let $\ffrm \subset \bbT^S(H_Q(U))$ be a non-Eisenstein maximal ideal. Then we can find the following data.
\begin{enumerate} \item A continuous representation $\rho_\ffrm : G_F \to \GL_2(\bbT^S(H_Q(U))_\ffrm)$ lifting $\overline{\rho}_\ffrm$ and satisfying the following condition: for each finite place $v \not\in S \cup S_p$ of $F$, $\rho_\ffrm|_{G_{F_v}}$ is unramified, and $\rho_\ffrm(\Frob_v)$ has characteristic polynomial
\[ X^2 -  T_v X + q_v S_v. \]
\item A finite $\bbT^S(H_Q(U))_\ffrm$-module $M$, together with an isomorphism of $\bbT^S(H_Q(U))_\ffrm[G_F]$-modules 
\[ H_Q(U)_\ffrm \cong \rho_\ffrm \otimes_{\bbT^S(H_Q(U))_\ffrm} (\epsilon \det \rho_\ffrm)^{-1} \otimes_{\bbT^S(H_Q(U))_\ffrm} M \]
\end{enumerate}
\end{proposition}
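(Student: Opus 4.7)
The plan is to split the problem into two standard steps: first construct $\rho_\ffrm$ from the Hecke data by pseudo-character methods, and then identify the shape of $H_Q(U)_\ffrm$ by applying Theorem \ref{thm_eicher_shimura_ring_case} to a suitable twist of $\rho_\ffrm$. The main subtlety is the initial assembly of $\rho_\ffrm$; everything else is bookkeeping with twists and a Chebotarev density argument.

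For part (1), I would set $A = \bbT^S(H_Q(U))_\ffrm$, and note that $A$ is reduced and $\cO$-torsion free (as recorded in \S\ref{sec_hecke_algebras_and_modules}), so it embeds into a product $\prod_\pi A_\pi$ of its localizations at minimal primes. Each minimal prime corresponds to a cuspidal automorphic representation $\pi$ contributing to $H_Q(U)_\ffrm \otimes_\cO \barqp$ via the decomposition (\ref{eqn_decomposition_of_shimura_curve_cohomology}), and for each such $\pi$ the Galois representation $r_\iota(\pi)$ satisfies $\tr r_\iota(\pi)(\Frob_v) = T_v$ and $\det r_\iota(\pi)(\Frob_v) = q_v S_v$ for $v \notin S \cup S_p$ by (\ref{eqn_local_global_compatibility_at_unramified_places}). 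Packaging these together yields a continuous two-dimensional pseudo-character on $G_{F, S \cup S_p}$ valued in $A$, with residual pseudo-character $\tr \overline{\rho}_\ffrm$ absolutely irreducible since $\ffrm$ is non-Eisenstein. By the theorem of Nyssen--Rouquier, this pseudo-character lifts uniquely to a continuous representation $\rho_\ffrm : G_{F, S \cup S_p} \to \GL_2(A)$, which I inflate to $G_F$ to obtain the representation of part (1).

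For part (2), set $\rho' = \rho_\ffrm \otimes (\epsilon \det \rho_\ffrm)^{-1}$. Evaluating the twist character at $\Frob_v$ gives $\epsilon(\Frob_v)^{-1} \det\rho_\ffrm(\Frob_v)^{-1} = q_v \cdot (q_v S_v)^{-1} = S_v^{-1}$, so a short computation shows $\tr\rho'(\Frob_v) = S_v^{-1} T_v$ and $\det\rho'(\Frob_v) = q_v S_v^{-1}$, i.e.\ $\rho'(\Frob_v)$ has characteristic polynomial exactly the polynomial appearing in the Eichler--Shimura relation (\ref{eqn_eichler_shimura_relation}). Letting $\sigma : G_F \to \Aut_A(H_Q(U)_\ffrm)$ denote the natural Galois action, the relation (\ref{eqn_eichler_shimura_relation}) says precisely that the Cayley--Hamilton identity $\sigma(g)^2 - \tr\rho'(g)\,\sigma(g) + \det\rho'(g) = 0$ holds in $\End_A(H_Q(U)_\ffrm)$ for $g = \Frob_v$, $v \notin S \cup S_p$. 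Both sides are continuous in $g$ for the $\ffrm_A$-adic topology on $\End_A(H_Q(U)_\ffrm)$, and the Frobenius elements are dense in $G_{F, S \cup S_p}$ by Chebotarev, so the identity extends to all of $G_F$. The residual representation associated to $\rho'$ is $\overline{\rho}_\ffrm$ tensored with a character, hence still absolutely irreducible, so Theorem \ref{thm_eicher_shimura_ring_case} applies to $(\rho', \sigma)$ and yields an $A$-module $M$ together with an isomorphism of $A[G_F]$-modules $H_Q(U)_\ffrm \cong \rho' \otimes_A M = \rho_\ffrm \otimes_A (\epsilon \det \rho_\ffrm)^{-1} \otimes_A M$, which is exactly the claimed identification.
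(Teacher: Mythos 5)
Your proposal is correct and follows essentially the same route as the paper: the paper proves part (1) by citing Carayol's gluing argument in [Car94, \S 2], which is the same result you obtain via the pseudo-character formulation and Nyssen--Rouquier, and part (2) by the identical computation of the characteristic polynomial of the twist $\rho_\ffrm \otimes (\epsilon\det\rho_\ffrm)^{-1}$, followed by the Eichler--Shimura relation, Chebotarev, and Theorem~\ref{thm_eicher_shimura_ring_case}. One small imprecision worth noting: since $A$ is merely reduced and $\cO$-flat rather than a domain, it embeds into the product of its quotients $A/\frp$ by minimal primes (equivalently its total ring of fractions), not its \emph{localizations} at minimal primes in the sense of completions; this does not affect the argument, as either formulation gives a faithful family of characteristic-zero points at which the desired traces are realized by the $r_\iota(\pi)$.
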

\begin{proof}
The existence of $\rho_\ffrm$ follows from a well-known argument of Carayol; see \cite[\S 2]{Car94}. For each $v \not\in S \cup S_p$, $(\rho_\ffrm \otimes_{\bbT^S(H_Q(U))_\ffrm} (\epsilon \det \rho_\ffrm)^{-1})(\Frob_v)$ has characteristic polynomial $ X^2 -  S_v^{-1} T_v  X + q_v S^{-1}_v$. 
The second part of the proposition now follows from the Chebotarev density theorem, the Eichler--Shimura relation (\ref{eqn_eichler_shimura_relation}) and Theorem \ref{thm_eicher_shimura_ring_case}.
\end{proof}

\subsection{Relations between Hecke modules}

Let $Q$ be a finite set of finite places of $F$ not containing $a$. Let $v \in Q$ be prime to $p$, and define $\overline{Q} = Q - \{ v \}$. If $U = \prod_w U_w \in \cJ_Q$, then we define $\overline{U} = \prod_w \overline{U}_w,$ $\overline{U}' = \prod_w \overline{U}'_v \in \cJ_{\overline{Q}}$ by the following formulae.
\begin{itemize}
\item If $w \neq v$, then $\overline{U}_w=\overline{U}'_w= \overline{U}_w$.
\item $\overline{U}_v = \GL_2(\cO_{F_v})$ and $\overline{U}_v' = U_0(v)$.
\end{itemize}
Let $\ffrm$ be a non-Eisenstein maximal ideal of $\bbT_Q^{S, \text{univ}}$ which is in the support of $H_Q(U)$, and let $\overline{\ffrm} = \ffrm \cap \bbT_{\overline{Q}}^{S, \text{univ}}$. In this section, we discuss relations between the modules $H_Q(U)$ and $H_{\overline{Q}}(\overline{U})$. 
\begin{theorem}\label{thm_jarvis_exact_sequence}
Suppose that $\# Q$ is even. Then there is an exact sequence of $\bbT_{Q}^{S, \text{univ}}[G_{F_{v}}]$-modules, finite free as $\cO$-modules:
\begin{equation}\label{eqn_jarvis_exact_sequence} 0 \to H_Q(U)_\ffrm \to H_{\overline{Q}}(\overline{U}')_\ffrm^{I_{F_v}} \to H_{\overline{Q}}(\overline{U})_\ffrm^2 \to 0. 
\end{equation}
The action of $\Frob_v^{-1}$ on $H_Q(U)_\ffrm$ is equal to the action of ${\mathbf{U}_v}$; the action of ${\mathbf{U}_v} \in \bbT_Q^{S, \text{univ}}$ on the middle term in the sequence is by the element of $\cH(G_{\overline{Q}}(F_v), \overline{U}'_v)$ of the same name. Furthermore, for any $N \geq 1$ the natural map 
\[ H_{\overline{Q}}(\overline{U}')_\ffrm^{I_{F_v}} \otimes_\cO \cO/\lambda^N \to ( H_{\overline{Q}}(\overline{U}')_\ffrm \otimes_\cO \cO/\lambda^N)^{I_{F_v}} \]
is an isomorphism.
\end{theorem}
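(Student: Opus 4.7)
The plan is to read the exact sequence off from the geometry of the semi-stable integral model $X := {}_v\bbM_{\overline{Q}}(\overline{U}')$ over $\cO_{F_v}$ provided by Theorem \ref{thm_existence_of_carayol_integral_models}, following Jarvis. Since $v \nmid p$, the nearby-cycles machinery with $\cO$-coefficients applies, and the edge map in the corresponding spectral sequence furnishes an identification of $\bbT_{\overline{Q}}^{S,\text{univ}}$-modules $H^1(X_{\overline{\kappa(v)}}, \cO) \xrightarrow{\sim} H_{\overline{Q}}(\overline{U}')^{I_{F_v}}$. Writing $\nu$ for the normalization of the nodal special fiber, the short exact sequence $0 \to \cO_{X_{\overline{\kappa(v)}}} \to \nu_*\cO_{\widetilde{X}_{\overline{\kappa(v)}}} \to \cO_{\mathrm{Sing}_{\overline{Q}}(\overline{U})} \to 0$ of étale sheaves induces a long exact sequence in cohomology. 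Localizing at the non-Eisenstein $\ffrm$ kills the $H^0(\widetilde{X}_{\overline{\kappa(v)}}, \cO)$ term, because its Hecke action factors, via $\pi_0$ and strong approximation, through characters of the idèle class group and therefore through Eisenstein maximal ideals. Identifying $\mathrm{Sing}_{\overline{Q}}(\overline{U}) \cong X_Q(U)$ and $\widetilde{X}_{\overline{\kappa(v)}} \cong {}_v\bbM_{\overline{Q}}(\overline{U})_{\overline{\kappa(v)}}^{\sqcup 2}$ via Proposition \ref{prop_special_fiber_of_carayol_integral_model}, and invoking proper-smooth base change for the latter, the resulting localized sequence is exactly the asserted one.

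The action of $\mathbf{U}_v \in \bbT_Q^{S,\text{univ}}$ on the middle term coincides by construction with the Hecke correspondence $[\overline{U}'_v \diag(\varpi_v,1)\overline{U}'_v]$, since that correspondence extends to the semi-stable model. The point requiring the most care is the identification $\Frob_v^{-1} = \mathbf{U}_v$ on the toric piece $\cO^{\mathrm{Sing}_{\overline{Q}}(\overline{U})}_\ffrm \cong H_Q(U)_\ffrm$. Geometric Frobenius permutes the supersingular locus of $X_{\overline{\kappa(v)}}$, and one must show that under the \v{C}erednik--Drinfeld-type identification of Proposition \ref{prop_special_fiber_of_carayol_integral_model} this matches right multiplication on $G_Q(\bbA_F^\infty)/U$ by the uniformizer $\widetilde{\varpi}_v$ --- which is precisely how $\mathbf{U}_v$ acts on $H_Q(U)$. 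Over $\barqp$ this can be cross-checked on $\pi$-isotypic pieces using local Langlands together with Lemma \ref{lem_u_eigenvalue_on_twist_of_steinberg}, and then propagated to the integral module using Theorem \ref{thm_eicher_shimura_ring_case} and the non-Eisenstein hypothesis. I expect this compatibility to be the main technical hurdle.

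For the final assertion, set $B = H_{\overline{Q}}(\overline{U}')_\ffrm$ and $A = B^{I_{F_v}}$. The monodromy filtration on $B$ (which exists in this integral form because $X$ is strictly semi-stable and $v \nmid p$) has graded pieces each isomorphic, after localization and up to a Tate twist trivial as $\cO$-module since $v \nmid p$, to either $H_Q(U)_\ffrm$ or $H_{\overline{Q}}(\overline{U})^2_\ffrm$; all of these are $\cO$-free. In particular, both $A$ and the image of the monodromy operator are $\cO$-saturated in $B$, so that $0 \to A \to B \to B/A \to 0$ remains exact upon $\otimes_\cO \cO/\lambda^N$. Combined with the formula $\sigma \cdot x = x + t_p(\sigma) \cdot (\text{monodromy})(x)$ for the tame inertia action, the saturation yields $(B/\lambda^N)^{I_{F_v}} = A/\lambda^N$, completing the proof.
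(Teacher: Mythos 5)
Your proposal takes essentially the same route as the paper: work with the regular semi-stable integral model ${}_v\bbM_{\overline{Q}}(\overline{U}')$, use the normalization short exact sequence $0 \to \Lambda \to r_*r^*\Lambda \to T \to 0$ on the special fiber together with the combinatorial identifications of Proposition \ref{prop_special_fiber_of_carayol_integral_model}, localize at the non-Eisenstein $\ffrm$ to kill the $H^0(\widetilde{X}_{\kappa(v)},-)$ and $H^2$ terms, and transport the Frobenius/$\mathbf{U}_v$ compatibility from characteristic zero via Theorem \ref{thm_eicher_shimura_ring_case}. The difference is in packaging: the paper works with Rajaei's specialization and cospecialization exact sequences together with the commutative diagram coupling them via the local monodromy operators $N_x$, whereas you go through the weight/monodromy filtration directly.

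The gap in your argument is that you assert, but do not justify, that the specialization map $H^1(X_{\kappa(v)}, \cO) \to H^1(X_{\overline{F}_v}, \cO)^{I_{F_v}}$ is an isomorphism and that the image of the monodromy operator is $\cO$-saturated in $B = H_{\overline{Q}}(\overline{U}')_\ffrm$. Neither is automatic for a semi-stable curve with integral coefficients: the cokernel of the specialization map is controlled by a finite component group $\Phi$, and the saturation of $N(B)$ is equivalent to the vanishing of $\Phi$ after localization. The paper handles this by observing two things together: first, that the local monodromy maps $N_x$ in Rajaei's diagram are \emph{isomorphisms} of free $\cO$-modules precisely because the total space $X$ is \emph{regular} (not merely semi-stable; this is the role of the Drinfeld/Carayol integral model's regularity in Theorems \ref{thm_existence_of_carayol_integral_models} and \ref{thm_models_of_shimura_curves_fujiwara_case}), and second, that the terms $H^0(\widetilde{X}_{\kappa(v)},\cO)$ and $H^2(X_{\kappa(v)},\cO)$ are supported only at Eisenstein maximal ideals. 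Only the combination of both yields $\Phi_\ffrm = 0$, hence the $\cO$-saturation you need. Your write-up attributes the existence of a well-behaved integral monodromy filtration to ``strict semi-stability,'' but that alone gives freeness of the graded pieces, not the statement that $N$ induces isomorphisms between the appropriate graded pieces; regularity is the essential hypothesis, and without articulating it the final saturation claim — and therefore the last assertion of the theorem — is unsupported.
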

\begin{remark}
Before giving the proof of Theorem \ref{thm_jarvis_exact_sequence}, we clarify the meaning of the $\bbT_{Q}^{S, \text{univ}}[G_{F_{v}}]$-action on the terms of the exact sequence (\ref{eqn_jarvis_exact_sequence}). In the course of the proof, we construct an exact sequence of $\bbT_{\overline{Q}}^{S, \text{univ}}$-modules:
\begin{equation}\label{eqn_pre_jarvis_exact_sequence} 0 \to H_Q(U)_{\overline{\ffrm}} \to H_{\overline{Q}}(\overline{U}')_{\overline{\ffrm}}^{I_{F_v}} \to H_{\overline{Q}}(\overline{U})_{\overline{\ffrm}}^2 \to 0. 
\end{equation}
So far, we have defined an action of $\bbT_{Q}^{S, \text{univ}}$ on the first and second terms, and an action of $\bbT_{\overline{Q}}^{S, \text{univ}}[G_{F_v}]$ on the second and third terms. The second arrow is $G_{F_v}$-equivariant, so there is an induced $G_{F_v}$-action on the first term. Similarly, we will show that the first arrow is equivariant for the action of $\mathbf{U}_v$, so that there is an induced action $\bbT_{Q}^{S, \text{univ}}$ on the third term, and the sequence (\ref{eqn_pre_jarvis_exact_sequence}) becomes an exact sequence of $\bbT_{Q}^{S, \text{univ}}[G_{F_v}]$-modules. Localizing further at the maximal ideal $\ffrm$, we obtain the desired sequence (\ref{eqn_jarvis_exact_sequence}).
\end{remark}
\begin{proof}[Proof of Theorem \ref{thm_jarvis_exact_sequence}]
We use the theory of vanishing cycles, following the summary of \cite[\S 1]{Raj01}. Let $\Lambda = \cO$ or $\cO/\lambda^N$. Let $X \to \Spec \cO_{F_v}$ be a flat projective curve, with $X$ regular and semi-stable over $\cO_{F_v}$. Then $X_{\kappa(v)}$ is reduced; let $\Sigma$ denote its set of singular points. We write $R \Psi \Lambda$ and $R \Phi \Lambda$ for the complexes of nearby and vanishing cycles, respectively, on $X_{\kappa(v)}$. There are two exact sequences, the specialization exact sequence:
\begin{equation}\label{eqn_specialization_exact_sequence} \xymatrix@R-2pc{ 0 \ar[r] & H^1(X_{\kappa(v)}, \Lambda)(1) \ar[r] & H^1(X_{\overline{F}_v}, \Lambda)(1) \ar[r]^-\beta & \oplus_{x \in \Sigma} (R^1 \Phi \Lambda)_x(1) \\ \ar[r] & H^2(X_{{\kappa(v)}}, \Lambda)(1) \ar[r] & H^2(X_{\overline{F}_v}, \Lambda)(1) \ar[r] & 0; } 
\end{equation}
and the cospecialization exact sequence:
\begin{equation}\label{eqn_cospecialization_exact_sequence}
 \xymatrix@R-2pc{ 0 \ar[r] & H^0(\widetilde{X}_{{\kappa(v)}}, R \Psi \Lambda)) \ar[r] & H^0(\widetilde{X}_{{\kappa(v)}}, \Lambda) \\ \ar[r] & \oplus_{x \in \Sigma} H^1_x(X_{{\kappa(v)}}, R \Psi \Lambda) \ar[r]^-{\beta'} & H^1(X_{{\kappa(v)}}, R \Psi \Lambda) \ar[r] & H^1(X_{{\kappa(v)}}, \Lambda) \ar[r] & 0. } 
\end{equation}
(Here $\widetilde{X}_{{\kappa(v)}}$ denotes the normalization of $X_{{\kappa(v)}}$.) Moreover, there is a commutative diagram:
\begin{equation}\label{eqn_local_decomposition_of_monodromy}
\xymatrix{ H^1(X_{\overline{F}_v}, \Lambda)(1)\ar[d]^N \ar[r]^-c & \text{image }\beta \ar@{^{(}->}[r]\ar[d]^\mu & \oplus_{x \in \Sigma} R^1 \Phi(\Lambda)_x (1) \ar[d]^{\oplus N_x} \\ 
H^1(X_{\overline{F}_v}, \Lambda) & \text{coimage }\beta' \ar[l]^-{c'} & \oplus_{x \in \Sigma} H^1_x(X_{{\kappa(v)}}, R \Psi \Lambda) \ar@{->>}[l]. }
\end{equation}
(For the definitions of the various arrows here, we refer to \cite[\S 1]{Raj01}. The arrow we have denoted $\mu$ is denoted by $\lambda$ in this reference.) Taking $\Lambda = \cO$, we define the `component group' $\Phi = \text{ coker }\mu$, a finite group. There is a short exact sequence (cf. \cite[Proposition 4]{Raj01}):
\begin{equation}
\xymatrix@1{ 0 \ar[r] & H^1(X_{\kappa(v)}, \cO/\lambda^N)\ar[r] & H^1(X_{\overline{F}_v}, \cO/\lambda^N)^{I_{F_v}} \ar[r] & \Phi[\lambda^N](-1) \ar[r] & 0.}
\end{equation}
We now take $X = {}_v \bbM_{\overline{Q}}(\overline{U}')$, which is allowable by Theorem \ref{thm_existence_of_carayol_integral_models}. The sequences (\ref{eqn_specialization_exact_sequence}) and (\ref{eqn_cospecialization_exact_sequence}) become exact sequences of $\bbT_{\overline{Q}}^{S, \text{univ}}$-modules. Moreover, the only maximal ideals of $\bbT_{\overline{Q}}^{S, \text{univ}}$ in the support of the modules $H^2(X_{{\kappa(v)}}, \Lambda)$ and $H^0(\widetilde{X}_{\kappa(v)}, \Lambda)$ are Eisenstein (cf. \cite[\S 18]{Jar99}) and the operators $N_x$ in  (\ref{eqn_local_decomposition_of_monodromy}) are isomorphisms of free $\Lambda$-modules (as $X$ is regular, cf. \cite[\S 1.1]{Raj01}). In particular, the component group $\Phi$ vanishes after localization at ${\overline{\ffrm}}$ and the natural maps 
\[ H^1({}_v \bbM_{\overline{Q}}(\overline{U}')_{{\kappa(v)}}, \cO)_{\overline{\ffrm}} \otimes_\cO \cO/\lambda^N \to  H^1({}_v \bbM_{\overline{Q}}(\overline{U}')_{\kappa(v)}, \cO/\lambda^N)_{\overline{\ffrm}} \to H^1({}_v \bbM_{\overline{Q}}(\overline{U}')_{\overline{F}_v}, \cO/\lambda^N)_{\overline{\ffrm}}^{I_{F_v}} \]
are isomorphisms. This establishes the final assertion in the statement of the theorem.

Let $r : \widetilde{X}_{{\kappa(v)}} \to X_{{\kappa(v)}}$ denote the normalization map; it is an isomorphism away from the subset $\Sigma \subset X_{{\kappa(v)}}$. We consider the sheaf $T$ of free $\Lambda$-modules defined by the short exact sequence:
\begin{equation}
 \xymatrix@1{ 0 \ar[r] & \Lambda \ar[r] & r_\ast r^\ast \Lambda \ar[r] & T \ar[r] & 0. }
\end{equation}
Taking the long exact sequence in cohomology, we obtain:
\begin{equation}\label{eqn_exact_sequence_of_jarvis_skyscraper_sheaf}
\begin{aligned}
 \xymatrix@R-2pc{0 \ar[r] & H^0(X_{{\kappa(v)}}, \Lambda) \ar[r] & H^0(\widetilde{X}_{\kappa(v)}, \Lambda) \ar[r] & H^0(X_{\kappa(v)}, T) \\
\ar[r] & H^1(X_{\kappa(v)}, \Lambda) \ar[r] &  H^1(\widetilde{X}_{\kappa(v)}, \Lambda) \ar[r] & 0. }
\end{aligned}
\end{equation}
We have $H^0(X_{\kappa(v)}, T) \cong H_Q(U)$, by Proposition \ref{prop_special_fiber_of_carayol_integral_model}. Localizing the exact sequence (\ref{eqn_exact_sequence_of_jarvis_skyscraper_sheaf}) of $\bbT_{\overline{Q}}^{S, \text{univ}}$-modules at the maximal ideal ${\overline{\ffrm}}$, we obtain the short exact sequence:
\begin{equation}\label{eqn_proof_of_jarvis_exact_sequence}
\xymatrix@1{ 0 \ar[r] & H_Q(U)_{\overline{\ffrm}} \ar[r] & H_{\overline{Q}}(\overline{U}')_{\overline{\ffrm}}^{I_{F_v}} \ar[r] & H_{\overline{Q}}(\overline{U})^2_{\overline{\ffrm}}  \ar[r] & 0. }
\end{equation}
To complete the proof of the theorem, it remains to check that the first arrow in the sequence (\ref{eqn_proof_of_jarvis_exact_sequence}) is equivariant for the action of ${\mathbf{U}_v}$, and that $\Frob_v^{-1}$ acts as ${\mathbf{U}_v}$ on its image. Having done this, the exact sequence of the theorem will be obtained by localizing at the maximal ideal $\ffrm \subset \bbT_Q^{S, \text{univ}}$. (It is possible to write down explicitly the induced action of ${\mathbf{U}_v}$ on the third term of the sequence, but since we won't need this we don't do it. For a similar calculation, see \cite[Lemma 1]{Rib90}.) This check can be carried out after inverting $p$, and the desired result now follows from the calculations of \cite[\S 5--6]{Car86}. (Strictly speaking this reference assumes that the set $Q$ has at most 1 element, but the results are the same provided that $v \not\in Q$.) 
\end{proof}
We learned the following theorem from the unpublished work of Fujiwara \cite{Fuj06}.
\begin{theorem}\label{thm_fujiwara_exact_sequence}
Suppose that $\# Q$ is odd. Then for any $N \geq 1$, there is a map of $\bbT_Q^{S, \text{univ}}$-modules
\[ (H_Q(U)_\ffrm \otimes_\cO \cO/\lambda^N)^{I_{F_v}}\to H_{\overline{Q}}(\overline{U})_\ffrm^2 \otimes_\cO \cO/\lambda^N, \]
with kernel contained in the submodule
\[ H_Q(U)_\ffrm^{I_{F_v}} \otimes_\cO \cO/\lambda^N \subset (H_Q(U)_\ffrm \otimes_\cO \cO/\lambda^N)^{I_{F_v}}. \]
The action of $\Frob_v^{-1}$ on this kernel equals the action of ${\mathbf{U}_v}$.
\end{theorem}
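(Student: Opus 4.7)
The plan is to carry out for ${}_v\bbM_Q(U)$ the analogue of the analysis used in the proof of Theorem~\ref{thm_jarvis_exact_sequence} for ${}_v\bbM_{\overline{Q}}(\overline{U}')$, using the Drinfeld integral model provided by Theorem~\ref{thm_models_of_shimura_curves_fujiwara_case}. Let $X = {}_v\bbM_Q(U)$ over $\check{\cO}$; it is regular and semi-stable. The decisive geometric input is that, by the Drinfeld uniformization, each irreducible component of the special fiber $X_{\kappa(v)}$ is a copy of $\bbP^1_{\kappa(v)}$, and the set of these components is in $G_{\overline{Q}}(\bbA_F^{a,v,\infty})$-equivariant bijection with $X_{\overline{Q}}(\overline{U}) \sqcup X_{\overline{Q}}(\overline{U})$ by Corollary~\ref{cor_special_fiber_of_drinfeld_model}. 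Consequently $H^1(\widetilde{X}_{\kappa(v)}, \Lambda) = 0$ for any $\Lambda$.

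I would then establish the singular-point analogue of Corollary~\ref{cor_special_fiber_of_drinfeld_model}, namely $\Sigma \cong X_{\overline{Q}}(\overline{U}')$ where $\overline{U}'_v = U_0(v)$ (edges of the Bruhat--Tits tree are parametrized by the Iwahori coset space in $\PGL_2(F_v)$, and the action of $G_{\overline{Q}}(F)$ on the quotient produces the Hida variety at Iwahori level). Applying the short exact sequence of sheaves $0 \to \Lambda \to r_\ast r^\ast \Lambda \to T \to 0$ on $X_{\kappa(v)}$ exactly as in the proof of Theorem~\ref{thm_jarvis_exact_sequence}, using $H^1(\widetilde{X}_{\kappa(v)}, \Lambda) = 0$ and localizing at the non-Eisenstein maximal ideal $\overline{\ffrm} = \ffrm \cap \bbT_{\overline{Q}}^{S,\text{univ}}$ to kill $H^0$, one obtains a short exact sequence
\[ 0 \to H_{\overline{Q}}(\overline{U})^2_{\overline{\ffrm}} \otimes \Lambda \to H_{\overline{Q}}(\overline{U}')_{\overline{\ffrm}} \otimes \Lambda \to H^1(X_{\kappa(v)}, \Lambda)_{\overline{\ffrm}} \to 0. \]
Next, I would invoke the specialization exact sequence (\ref{eqn_specialization_exact_sequence}) together with the vanishing of the component group after non-Eisenstein localization (identical to the argument in the proof of Theorem~\ref{thm_jarvis_exact_sequence}) to identify the rightmost term with $(H_Q(U)_{\overline{\ffrm}} \otimes \Lambda)^{I_{F_v}}$.

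The resulting exact sequence goes ``in the wrong direction'' for the theorem, so the next step is to produce a map from $(H_Q(U) \otimes \Lambda)^{I_{F_v}}$ to $H_{\overline{Q}}(\overline{U})^2 \otimes \Lambda$ by reversing it. I would do this by composing any set-theoretic lift $(H_Q(U) \otimes \Lambda)^{I_{F_v}} \to H_{\overline{Q}}(\overline{U}') \otimes \Lambda$ of the surjection with a retraction of $H_{\overline{Q}}(\overline{U}') \otimes \Lambda$ onto $H_{\overline{Q}}(\overline{U})^2 \otimes \Lambda$ built from the two degeneracy maps $\alpha_1, \alpha_2$ (inclusion of $\GL_2(\cO_{F_v})$-invariants into $U_0(v)$-invariants and its Atkin--Lehner twist). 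The ambiguity in the choice of lift is absorbed by adjusting the retraction; the key point is that on the ``old-form'' subspace $\mathrm{im}(\alpha_1 \oplus \alpha_2) \subset H_{\overline{Q}}(\overline{U}') \otimes \Lambda$ the Hecke operator $\mathbf{U}_v$ satisfies the quadratic relation $\mathbf{U}_v^2 - T_v \mathbf{U}_v + q_v S_v = 0$ (classical level-raising), while on the ``new-form quotient'' $(H_Q(U) \otimes \Lambda)^{I_{F_v}}$ the Steinberg relation $\mathbf{U}_v = \Frob_v^{-1}$ holds on the image of $H_Q(U)^{I_{F_v}} \otimes \Lambda$, the latter being exactly Lemma~\ref{lem_u_eigenvalue_on_twist_of_steinberg} plus Proposition~\ref{prop_carayol_and_blr} applied to each Jordan--H\"older constituent.

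The kernel assertion will follow from observing that the quotient $(H_Q(U) \otimes \Lambda)^{I_{F_v}} / (H_Q(U)^{I_{F_v}} \otimes \Lambda)$ is precisely $H^1(I_{F_v}, H_Q(U))[\lambda^N]$ (via the long exact sequence for $\lambda^N$-multiplication), and this ``monodromy torsion'' piece is detected by the second degeneracy map but not by the first, so it is injected into $H_{\overline{Q}}(\overline{U})^2 \otimes \Lambda$ by the construction above; conversely, $H_Q(U)^{I_{F_v}} \otimes \Lambda$ can lie in the kernel, and on it $\mathbf{U}_v = \Frob_v^{-1}$ as just explained. The main technical obstacle I anticipate is carrying out the construction genuinely integrally (without inverting $p$): the surjection of $\Lambda$-modules need not split, so the retraction must be constructed intrinsically, most likely by using that $\mathbf{U}_v$ acts invertibly on the quotient after localization at $\ffrm$ (in view of Lemma~\ref{lem_u_eigenvalue_on_twist_of_steinberg} and non-Eisenstein-ness) and exploiting the explicit Hecke relations among $\alpha_1, \alpha_2, w_v, \mathbf{U}_v$ to produce an $\cO/\lambda^N$-linear, Hecke-equivariant partial section.
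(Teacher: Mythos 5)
Your preliminary steps are sound and represent a genuinely different route from the paper: you apply the normalization/cospecialization machinery from the proof of Theorem~\ref{thm_jarvis_exact_sequence} to the Drinfeld model ${}_v\bbM_Q(U)$, using the key observations that the components of $\widetilde{X}_{\kappa(v)}$ are copies of $\bbP^1_{\kappa(v)}$ (so $H^1(\widetilde{X}_{\kappa(v)},\Lambda)=0$) and that $\Sigma \cong X_{\overline{Q}}(\overline{U}')$. This does yield, after localizing at $\overline{\ffrm}$, the short exact sequence you claim. But the resulting sequence runs in the opposite direction from the one required, and the reversal step is where your argument has a genuine gap. You need a $\bbT_Q^{S,\text{univ}}$-linear map $(H_Q(U)_\ffrm \otimes \cO/\lambda^N)^{I_{F_v}} \to H_{\overline{Q}}(\overline{U})^2_\ffrm \otimes \cO/\lambda^N$ whose kernel lies inside $H_Q(U)_\ffrm^{I_{F_v}} \otimes \cO/\lambda^N$; composing a set-theoretic section of $B \twoheadrightarrow C$ with a retraction of $A \hookrightarrow B$ (where $A,B,C$ denote the three terms of your sequence) is not a construction, as the sequence has no reason to split $\cO/\lambda^N$-linearly, let alone Hecke-equivariantly, and the ``absorbing the ambiguity'' and ``intrinsic retraction'' language does not resolve this. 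The old/new decomposition via the degeneracy maps $\alpha_1,\alpha_2$ is only clean after inverting $p$; integrally (which is where the content of the theorem lies, given that the $\cO/\lambda^N$-torsion in the cokernel of $H_Q(U)^{I_{F_v}} \otimes \cO/\lambda^N \to (H_Q(U)\otimes\cO/\lambda^N)^{I_{F_v}}$ is exactly what needs to be controlled) there is no evident Hecke-equivariant partial section, and no reason the resulting map would have the required kernel.

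The paper avoids this difficulty entirely by never passing through the normalization sheaf sequence for the Drinfeld model. Instead it works with the cohomology of the regular integral model $X={}_v\bbM_Q(U)$ itself, via the two sequences \textup{(\ref{eqn_regular_curve_1})} (Hochschild--Serre for $X_{\overline{\check{F}}} \to X_{\check{F}}$) and \textup{(\ref{eqn_regular_curve_2})} (Zariski--Nagata purity: a class in $H^1(X_{\check{F}},\Lambda)$ extends over $X$ iff it is unramified along each component $Y$ of $X_{\kappa(v)}$, and the obstruction lives in $\prod_Y H^0(Y^{\mathrm{reg}},\Lambda)(-1) \cong H_{\overline{Q}}(\overline{U})^2 \otimes \Lambda$ by Corollary~\ref{cor_special_fiber_of_drinfeld_model}). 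After localizing at $\overline{\ffrm}$ and using \textup{(\ref{eqn_regular_curve_1})} to identify $H^1(X_{\check{F}},\Lambda)_{\overline{\ffrm}}$ with $(H_Q(U)\otimes\Lambda)^{I_{F_v}}_{\overline{\ffrm}}$, the sequence \textup{(\ref{eqn_regular_curve_2})} already runs in the correct direction, with kernel $H^1(X,\Lambda)_{\overline{\ffrm}}$. The containment of this kernel in $H_Q(U)^{I_{F_v}}_\ffrm \otimes \cO/\lambda^N$ is then deduced by showing the reduction map $H^1(X,\cO)_{\overline{\ffrm}} \to H^1(X,\cO/\lambda^N)_{\overline{\ffrm}}$ is surjective, which follows from $\cO$-torsion-freeness of $H^2(X,\cO)_{\overline{\ffrm}} \cong H^2(X_{\kappa(v)},\cO)_{\overline{\ffrm}}$ via proper base change. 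This is the idea attributed to Fujiwara, and it is the ingredient your proposal is missing: the essential use of $H^1$ of the integral model (not just its fibers) and the purity characterization of which classes extend.
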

The action of $\mathbf{U}_v$ on $H_{\overline{Q}}(\overline{U})_\ffrm^2$ will be defined in the proof.
\begin{proof}[Proof of Theorem \ref{thm_fujiwara_exact_sequence}]
Let $\check{F}$ denote the completion of the maximal unramified extension of $F_v$ inside $\overline{F}_v$, and let $\check{\cO}$ denote its ring of integers. Suppose that $f : X \to \check{\cO}$ is a flat projective morphism, with $X$ regular and semi-stable over $\check{\cO}$. Then there are exact sequences for $\Lambda = \cO$ or $\Lambda = \cO/\lambda^N$ (cf. \cite[Lemma 3.8]{Fuj06}):
\begin{equation}\label{eqn_regular_curve_1}
0 \to H^0(X_{\overline{{\check{F}}}}, \Lambda)(-1)_{I_{F_{v}}} \to H^1(X_{\check{F}}, \Lambda) \to H^1(X_{\overline{{\check{F}}}}, \Lambda)^{I_{F_{v}}} \to 0;
\end{equation}
and (if $J$ denotes the set of irreducible components of $X_{\kappa(v)}$):
\begin{equation}\label{eqn_regular_curve_2}
0 \to H^1(X, \Lambda) \to H^1(X_{\check{F}}, \Lambda) \to \prod_{Y \in J} H^0(Y^\text{reg}, \Lambda)(-1). 
\end{equation}
Indeed, the first exact sequence is part of the Hochschild-Serre spectral sequence for the covering $X_{\overline{\check{F}}} \to X_{\check{F}}$. For the second, it suffices to treat the case $\Lambda = \cO/\lambda^N$. For each $Y \in J$, let $\overline{\eta}_Y$ be a geometric point of $X$ above the generic point of $Y^\text{reg}$. Then there is a canonical isomorphism $H^1(\Spec \Frac \cO_{X,\overline{\eta}_Y}, \Lambda) \cong H^0(Y^\text{reg}, \Lambda)(-1)$.
 By the Zariski--Nagata purity theorem \cite[Theorem 41.1]{Nag62}, a $\Lambda$-torsor $T \to X_{\check{F}}$ extends over $X$ if and only if it extends over $\Spec \cO_{X, \overline{\eta}_Y}$ for each $Y \in J$, if and only if its image in $H^1(\Spec  \Frac \cO_{X,\overline{\eta}_Y}, \Lambda)$ is trivial; and if this extension exists, then it is unique. This implies the exactness of (\ref{eqn_regular_curve_2}).

We apply this with $X = {}_v \bbM_Q(U)$, the regular model of $M_Q(U)_{\check{F}}$ defined in Theorem \ref{thm_models_of_shimura_curves_fujiwara_case}. The sequences (\ref{eqn_regular_curve_1}) and (\ref{eqn_regular_curve_2}) then become exact sequences of $\bbT_{\overline{Q}}^{S, \text{univ}}$-modules. Moreover, we have 
\[ H^0(X_{\overline{{\check{F}}}}, \Lambda)_{\overline{\ffrm}} = 0 \]
(again by \cite[\S 18]{Jar99}) and 
\[ \prod_{Y \in J} H^0(Y^\text{reg}, \cO) \cong H_{\overline{Q}}(\overline{U})^2 \]
(by Corollary \ref{cor_special_fiber_of_drinfeld_model}). We obtain for any $N \geq 1$ a commutative diagram with exact rows:
\begin{equation}\label{eqn_regular_curve_3}
\begin{aligned}
\xymatrix{
0 \ar[r]& H^1({}_v \bbM_Q(U), \cO)_{\overline{\ffrm}} \ar[r]\ar[d]^\alpha& H_Q(U)_{\overline{\ffrm}}^{I_{F_v}} \ar[r]\ar[d]^\mu& H_{\overline{Q}}(\overline{U})^2_{\overline{\ffrm}} \ar[d] \\
0 \ar[r] & H^1({}_v \bbM_Q(U), \cO/\lambda^N)_{\overline{\ffrm}} \ar[r]^-\nu & (H_Q(U)_{\overline{\ffrm}} \otimes_\cO \cO/\lambda^N)^{I_{F_v}} \ar[r]& H_{\overline{Q}}(\overline{U})^2_{\overline{\ffrm}} \otimes_\cO \cO/\lambda^N}
\end{aligned}
\end{equation}
Since the operator ${\mathbf{U}_v}$ on $H_Q(U)$ is induced by an automorphism of ${}_v \bbM_Q(U)$, this is in fact a diagram of $\bbT_Q^{S, \text{univ}}$-modules, where the action of $\mathbf{U}_v$ on $H_{\overline{Q}}(\overline{U})_{\overline{\ffrm}}^2$ is defined via the isomorphism $\prod_{Y \in J} H^0(Y^\text{reg}, \cO) \cong H_{\overline{Q}}(\overline{U})^2$. (The exact sequence (\ref{eqn_regular_curve_2}) is functorial in automorphisms of $X$.)
The map of the theorem is obtained from the bottom row after localizing at the ideal $\ffrm$ of this algebra. 

To finish the proof of the theorem, we must show that $\text{image }\nu \subset \text{image }\mu$, or even that $\alpha$ is surjective. By the proper base change theorem, we have an isomorphism
\[ H^2({}_v \bbM_Q(U), \cO)_{\overline{\ffrm}} \cong H^2({}_v \bbM_Q(U)_{\kappa(v)}, \cO)_{\overline{\ffrm}}, \]
and this group is $\cO$-torsion free. This implies that $\alpha$ is surjective.
\end{proof}

\subsection{Level-raising}\label{sec_level_raising}
We now fix a finite set $R$ of finite places of $F$ of even cardinality, disjoint from $S_p \cup \{ a \}$, and a good subgroup $U = \prod_w U_w \in \cJ_R$. (We allow the possibility that $R$ may be empty.) We also fix another finite set $Q$ of finite places of $F$ satisfying the following conditions.
\begin{itemize}
\item $Q \cap ( S_p \cup \{ a \} \cup R ) = \emptyset$.
\item For each $w \in Q$, $q_w \not \equiv 1 \text{ mod }p$ and $U_w = \GL_2(\cO_{F_w})$.
\item $\# Q$ is even.
\end{itemize}
If $J \subset Q$ is a subset, then we define a subgroup $U_J  = \prod_w U_{J, w}\subset G_{R \cup J}(\bbA_F^\infty)$ by the following formulae.
\begin{itemize}
\item If $w \not\in J$, then $U_{J, w} = U_w$.
\item If $w \in J$, then $U_{J, w}$ is the unique maximal compact subgroup of $G_{R \cup J}(F_w)$.
\end{itemize}
Let $S$ be a finite set of finite places of $F$ containing $S_p \cup R \cup Q$ and the places such that $U_w \neq \GL_2(\cO_{F_w})$. Let $\ffrm = \ffrm_\emptyset \subset \bbT_\emptyset^{S, \text{univ}}$ be a non-Eisenstein maximal ideal which is in the support of $H_R(U)$. Thus $\overline{\rho}_\ffrm$ is absolutely irreducible, and for each $v \in Q$, $\overline{\rho}_\ffrm|_{G_{F_{v}}}$ is unramified. After enlarging the coefficient field $E$, we can assume that for all $v \in Q$, the eigenvalues $\alpha_v, \beta_v$ of $\overline{\rho}_{\ffrm}(\Frob_v)$ lie in $k$. We now make the following additional assumption:
\begin{itemize}
\item For each $v \in Q$, $\beta_v/\alpha_v = q_v$. 
\end{itemize}
In other words, $\overline{\rho}_\ffrm$ satisfies the well-known level-raising congruence at the place $v$. Because of our assumption $q_v \not\equiv 1 \text{ mod }p$, $\alpha_v$ and $\beta_v$ are distinct. We remark that if $q_v \equiv -1 \text{ mod }p$ then the roles of $\alpha_v$ and $\beta_v$ could be reversed. This will be the case in applications, and we emphasize that our labeling of $\alpha_v$ and $\beta_v$ represents a fixed choice in this case.
\begin{lemma}\label{lem_level_raising_with_prescribed_eigenvalue}
For each subset $J \subset Q$, let $\ffrm_J \subset \bbT_J^{S, \text{univ}}$ be the maximal ideal generated by $\ffrm_\emptyset$ and the elements ${\mathbf{U}_v} - \alpha_v,$ $v \in J$. \(This makes sense since $\lambda \subset \ffrm$.\) Then $\ffrm_J$ is in the support of $H_{R \cup J}(U_J)$.
\end{lemma}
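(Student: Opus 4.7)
The plan is to prove the lemma by induction on $|J|$, adding the primes of $Q$ one at a time. The base case $J = \emptyset$ is the hypothesis. For the inductive step, fix $v \in Q \setminus J$ and set $J' = J \cup \{v\}$; assuming $\ffrm_J$ is in the support of $H_{R \cup J}(U_J)$, we want to show $\ffrm_{J'}$ is in the support of $H_{R \cup J'}(U_{J'})$, that is, to produce a non-zero class in $H_{R \cup J'}(U_{J'})_{\ffrm_J}$ killed by $\mathbf U_v - \alpha_v$.

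The inductive step splits into two parallel cases according to the parity of $\#(R \cup J)$. If $\#(R \cup J)$ is odd (so $H_{R \cup J}(U_J)$ is Shimura curve cohomology and $H_{R \cup J'}(U_{J'})$ is a Hida variety), apply Theorem \ref{thm_jarvis_exact_sequence} with $Q \leftrightarrow R \cup J'$ and $\overline Q \leftrightarrow R \cup J$, yielding
\begin{equation*}
0 \to H_{R \cup J'}(U_{J'})_{\ffrm_J} \to H_{R \cup J}(\overline U')^{I_{F_v}}_{\ffrm_J} \to H_{R \cup J}(\overline U)^{2}_{\ffrm_J} \to 0
\end{equation*}
as a short exact sequence of $\bbT_{J'}^{S,\text{univ}}[G_{F_v}]$-modules, with $\mathbf U_v = \Frob_v^{-1}$ on the leftmost term. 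If $\#(R \cup J)$ is even, apply Theorem \ref{thm_fujiwara_exact_sequence} instead, obtaining for each $N \geq 1$ a $\bbT_{J'}^{S,\text{univ}}$-equivariant map $(H_{R \cup J'}(U_{J'})_{\ffrm_J} \otimes \cO/\lambda^N)^{I_{F_v}} \to H_{R \cup J}(\overline U)^{2}_{\ffrm_J} \otimes \cO/\lambda^N$ whose kernel sits inside $H_{R \cup J'}(U_{J'})^{I_{F_v}}_{\ffrm_J} \otimes \cO/\lambda^N$ and carries $\mathbf U_v = \Frob_v^{-1}$.

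In either case the argument continues as follows. Let $T = \bbT_J^{S,\text{univ}}(H_{R \cup J}(U_J))_{\ffrm_J}$. By Proposition \ref{prop_carayol_and_blr} (applied directly when $\#(R \cup J)$ is odd; transferred through the Jacquet--Langlands correspondence from a Shimura curve of odd ramification otherwise) there is a Galois representation $\rho_{\ffrm_J} : G_F \to \GL_2(T)$ lifting $\overline\rho_\ffrm$. Because $q_v \not\equiv 1 \pmod p$ the residual eigenvalues $\alpha_v \neq \beta_v$ are distinct, so Hensel's lemma factorizes $X^2 - T_v X + q_v S_v = (X - A_v)(X - B_v)$ in $T[X]$ with $A_v \equiv \alpha_v \pmod{\ffrm_J}$, and $\rho_{\ffrm_J}(\Frob_v)$ admits an eigen-decomposition with eigenvalues $A_v, B_v$. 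A rank count after inverting $p$ (using Carayol's decomposition \eqref{eqn_decomposition_of_shimura_curve_cohomology} together with Jacquet--Langlands and a comparison of the two degeneracy maps) identifies the leftmost term of the Jarvis sequence (respectively, the kernel of Fujiwara's map in the limit $N \to \infty$) with the \emph{new-at-$v$ part}: its $T$-rank equals the total dimension of $\pi^{U_{J'}_v}$ summed over cuspidal automorphic $\pi$ having Hecke eigenvalues matching $\ffrm_J$ and with $\pi_v$ an unramified twist of the Steinberg representation. The existence of such $\pi$ is exactly the classical level-raising statement, whose input is the congruence $\beta_v = q_v\alpha_v$; in the present non-Eisenstein setting it follows from Ihara's Lemma together with the geometric inputs of \cite{Jar99} and \cite{Fuj06}. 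Finally, on this new-at-$v$ part $\pi_v \cong \mathrm{St}_2(\chi_v)$ with $\chi_v(\varpi_v) \equiv \alpha_v \pmod{\ffrm_J}$, so Lemma \ref{lem_u_eigenvalue_on_twist_of_steinberg} gives $\mathbf U_v \equiv \chi_v(\varpi_v) \equiv \alpha_v \pmod{\ffrm_J}$, consistent with the identity $\mathbf U_v = \Frob_v^{-1}$ via the twist $(\epsilon\det\rho_{\ffrm_J})^{-1}$ in Proposition \ref{prop_carayol_and_blr}. This produces the required non-zero class in $H_{R \cup J'}(U_{J'})_{\ffrm_J}$ killed by $\mathbf U_v - \alpha_v$, completing the inductive step.

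The main obstacle is twofold: first, the non-vanishing of the new-at-$v$ part identified above, which is the actual content of level raising and requires Ihara's lemma (or equivalent) in the non-Eisenstein case; and second, verifying that the $\mathbf U_v$-eigenvalue on this new part is $\alpha_v$ rather than $\beta_v$. The latter is a matter of tracking normalizations in the local Langlands correspondence for Steinberg representations through Lemma \ref{lem_u_eigenvalue_on_twist_of_steinberg}, combined with the twist $(\epsilon\det\rho)^{-1}$ which intervenes between the Galois action on the Shimura-curve cohomology and the Hecke operator $\Frob_v^{-1}$ appearing in Theorems \ref{thm_jarvis_exact_sequence} and \ref{thm_fujiwara_exact_sequence}.
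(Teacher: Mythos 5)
Your proposal takes a genuinely different route from the paper (the paper works directly with automorphic forms, degeneracy maps, Ihara's lemma, and Rajaei's theorem, whereas you propose to deduce the statement from the Jarvis and Fujiwara exact sequences of \S 3.7 via a rank count), and it has a real gap in the step you describe as the ``second obstacle.''

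The gap concerns the case $q_v \equiv -1 \pmod{p}$. There, both $\alpha_v$ and $\beta_v = -\alpha_v$ satisfy the level-raising congruence $\beta_v/\alpha_v = q_v$, so the ``classical level-raising statement'' (or the kernel of the Jarvis/Fujiwara map, after inverting $p$) only produces some newform $\pi'$ with $\pi'_v \cong \St_2(\chi_v)$ and $\chi_v(\varpi_v) \bmod \lambda \in \{\alpha_v, \beta_v\}$, with no a priori control over which of the two occurs. You assert that forcing $\chi_v(\varpi_v) \equiv \alpha_v$ is ``a matter of tracking normalizations in the local Langlands correspondence.'' It is not: the paper devotes the bulk of the proof of Lemma \ref{lem_level_raising_with_prescribed_eigenvalue} to exactly this point, constructing by hand a mod-$\lambda$ vector $\overline g = i(\alpha_{v_0}\overline f, -\overline f)$ which (because $T_{v_0}\overline f = 0$) satisfies $i^\ast \overline g = 0$ and $\mathbf U_{v_0}\overline g = \alpha_{v_0}\overline g$, using the pairing $\langle\cdot,\cdot\rangle_{U_J'}$ to push a correction of $\overline g$ into the new subspace, and finally invoking the Deligne--Serre lemma to lift the resulting mod-$\lambda$ eigensystem to characteristic zero. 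No normalization convention selects $\alpha_v$ over $\beta_v$; it is this explicit construction that does so. Your proposal has no substitute for it.

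A secondary issue is the parity bookkeeping. You propose to add the elements of $Q$ one at a time, using Theorem \ref{thm_jarvis_exact_sequence} for one parity of $\#(R\cup J)$ and Theorem \ref{thm_fujiwara_exact_sequence} for the other. But Theorem \ref{thm_fujiwara_exact_sequence} is a level-lowering tool: the map goes from $H_{R\cup J'}(U_{J'})$ (Shimura curve) to $H_{R\cup J}(\overline U)^2$ (Hida variety), so after inverting $p$ its kernel only identifies the putative new-at-$v$ part of the source without showing that it is non-zero. Similarly the Jarvis short exact sequence, after inverting $p$, is split, so a ``rank count'' merely restates what level raising would give you rather than proving it. In effect your argument reduces the non-vanishing to ``classical level raising,'' which is exactly what is being proved. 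The paper avoids this circularity by staying on the Hida variety for the first new prime (via the degeneracy map $i$ and Ihara's lemma, transferred via Jacquet--Langlands), and then invoking Rajaei's \cite[Theorem 5]{Raj01} to add the second prime in the pair; this two-at-a-time induction is what makes the parity work out cleanly.

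Finally, a small point of precision: identifying the kernel of the Jarvis/Fujiwara map with a ``new-at-$v$ part'' produces at best a non-zero class in $H_{R\cup J'}(U_{J'})_{\ffrm_J} \otimes k$ annihilated by $\ffrm_J$ and by $\mathbf U_v - \alpha_v$; one still needs the Deligne--Serre lemma or an equivalent argument to pass from a mod-$\lambda$ eigensystem to a maximal ideal in the support of the characteristic-zero module, which is the assertion of the lemma.
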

\begin{proof}
Choose an isomorphism $\iota : \barqp \to \bbC$. The existence of $\ffrm$ implies the existence of a cuspidal automorphic representation $\pi$ of $\GL_2(\bbA_F)$ of weight 2, satisfying the following conditions.
\begin{itemize}
\item There is an isomorphism $\overline{r_\iota(\pi)} \cong \overline{\rho}_\ffrm$.
\item For each finite place $v \not\in R$ of $F$, $\pi_v^{U_v} \neq 0$. For each $v \in R$, $\pi_v$ is an unramified twist of the Steinberg representation.
\end{itemize}
To prove the lemma, it is enough to show the existence of a cuspidal automorphic representation $\pi$ of $\GL_2(\bbA_F)$ of weight 2, satisfying the following conditions.
\begin{itemize}
\item There is an isomorphism $\overline{r_\iota(\pi)} \cong \overline{\rho}_\ffrm$.
\item For each finite place $v \not\in R \cup J$ of $F$, $\pi_v^{U_v} \neq 0$. For each place $v \in R \cup J$, there is an unramified character $\chi_v : F_v^\times \to \bbC^\times$ such that $\pi_v \cong \St_2(\chi_v)$.
\item For each $v \in J$, $\iota^{-1} \chi(\varpi_v)$ is congruent to $\alpha_v$ modulo the maximal ideal of $\overline{\bbZ}_p$. (Compare Lemma \ref{lem_u_eigenvalue_on_twist_of_steinberg}.)
\end{itemize}
Let us say that a cuspidal automorphic representation $\pi$ satisfying these 3 conditions is $J$-good. We will prove by induction on $\# J$ that there exists a $J$-good $\pi$. More precisely, suppose that $\# J$ is even, and let $v_0, v_1 \in Q - J$ be distinct elements. We will establish the existence of a $J \cup \{ v_0 \}$-good representation $\pi_0$, and a $J \cup \{ v_0, v_1 \}$-good representation $\pi$.

We first show that there exists a representation $\pi_0$ which is $J \cup \{ v_0 \}$-good. Let $U_J' = \prod_v U_{J, v}'$ be the open compact subgroup of $G_{R \cup J}(\bbA_F^\infty)$ defined by $U_{J, v}' = U_{J, v}$ if $v \neq v_0$, and $U_{J, v_0}' = U_0(v_0)$. There is a natural degeneracy map
\begin{gather*}
i : H_{R \cup J}(U_J)_{\ffrm_J}^2 \to H_{R \cup J}(U_J')_{\ffrm_J} \\
(f, g) \mapsto f + \left( \begin{array}{cc} 1 & 0 \\ 0 & \varpi_{v_0} \end{array} \right) \cdot g.
\end{gather*}
The map $i$ is injective, even after tensoring with $- \otimes_\cO k$; this is the analogue of Ihara's lemma in this context, cf. \cite[Lemma 3.26]{Ski99}. The image of $i$ is preserved by the operator $\mathbf{U}_{v_0}$, which acts on this image by the matrix of unramified Hecke operators
\begin{equation}\label{eqn_operator_on_old_subspace}
\left( \begin{array}{cc} T_{v_0} & q_{v_0} S_{v_0} \\ -1 & 0 \end{array}\right). 
\end{equation}
Moreover, there are natural perfect pairings $\langle \cdot, \cdot \rangle_{U_J}$ and $\langle \cdot, \cdot \rangle_{U'_J}$ on $H_{R \cup J}(U_J)_{\ffrm_J}$ and $H_{R \cup J}(U_J')_{\ffrm_J}$; writing $i^\ast$ for the adjoint of $i$ with respect to these pairings, we have (cf. \cite[Lemma 2]{Tay89})
\begin{equation}\label{eqn_adjoint_of_degeneracy_operator} i^\ast \circ i = \left( \begin{array}{cc} q_{v_0} + 1 & S_{v_0}^{-1} T_{v_0} \\ T_{v_0} & q_{v_0} + 1 \end{array} \right). 
\end{equation}
The determinant of this matrix is $(q_{v_0} + 1)^2 - S_{v_0}^{-1} T_{v_0}^2$, which is topologically nilpotent on $H_J(U_J)_{\ffrm_J}$. It follows that $i^\ast \circ i$ is not surjective. Since the image of $i$ is saturated, this implies the existence of a cuspidal automorphic representation $\pi_0$ satisfying the first 2 points in the definition of $J \cup \{ v_0 \}$-good, and satisfying the third point for each $v \in J$. If $q_{v_0} \not\equiv -1 \text{ mod }p$, then it is immediate that the third point also holds for $v = v_0$. We therefore now assume that $q_{v_0} \equiv -1 \text{ mod }p$. After possibly enlarging $E$, we can assume the existence of a $\bbT_J^{S,\text{univ}}$-eigenvector $f \in H_{R \cup J}(U_J)_{\ffrm_J} - \lambda H_{R \cup J}(U_J)_{\ffrm_J}$. 

Let $\overline{f}$ denote the image of $f$ in $H_{R \cup J}(U_J) \otimes_\cO k$, and let $\overline{g} = i(\alpha_v \overline{f}, - \overline{f})$. Since $T_{v_0} \overline{f} = 0$, we have $i^\ast \overline{g} = 0$. On the other hand, it follows from the expression (\ref{eqn_operator_on_old_subspace}) that $\mathbf{U}_{v_0} \overline{g} = \alpha_v \overline{g}$. Writing $g \in \text{image }i$ for an arbitrary lift of $g$, it follows that $\langle g, x \rangle_{U_J'} \in \lambda$ for all $x \in \text{image }i$. Let $\pi \in \cO$ be a uniformizer, and let $h \in H_{R \cup J}(U_J)_{\ffrm_J}$ be an element such that $\langle h, x \rangle_{U_J'} = \pi^{-1} \langle g, x \rangle_{U_J'}$ for all $x \in \text{image }i$. Then $g - \pi h$ is in the orthogonal complement of the image of $i$ (in other words, the new subspace of $H_{R \cup J}(U_J')_{\ffrm_J}$). It follows from the Deligne-Serre lemma \cite[Lemma 6.11]{Del74} that after possibly enlarging $E$ once more, we can find a $\bbT_{J \cup \{ v_0 \}}^{S, \text{univ}}$-eigenvector $g'$ in the new subspace with Hecke eigenvalues lifting those of $\overline{g}$. The form $g'$ generates an automorphic representation of $G_{R \cup J}(\bbA_F)$ whose Jacquet-Langlands image on $\GL_2(\bbA_F)$ has the desired properties. This establishes the existence of $\pi_0$.

We now no longer make any assumption on $q_{v_0}$ (except for our running assumption in this section that $q_{v_0} \not\equiv 1$ mod $p$). The existence of a $J \cup \{ v_0, v_1 \}$-good $\pi$ now follows from the argument of \cite[Theorem 5]{Raj01}, applied with $\frp = v_0$ and $\frq = v_1$. We note that in the context of \emph{loc. cit.}, the set $J$ is empty, but this plays no role in the proof. The assumption that $\overline{\rho}_\ffrm(\Frob_{v_1})$ is conjugate to the image of a complex conjugation is also superfluous, as it is used only to ensure that the level-raising congruence is satisfied at the place $v_1$.
\end{proof}

\begin{proposition}\label{prop_growth_of_multiplicities}
With notation as above, we have: 
\[ 1 \leq \dim_k (H_{R \cup Q}(U_Q) \otimes_\cO k)[\ffrm_Q] \leq 4^{\# Q} \dim_k (H_R(U_\emptyset) \otimes_\cO k)[\ffrm_\emptyset]. \]
\end{proposition}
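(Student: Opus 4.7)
The lower bound is immediate from Lemma \ref{lem_level_raising_with_prescribed_eigenvalue}: that lemma places $\ffrm_Q$ in the support of the finite $\cO$-free module $H_{R \cup Q}(U_Q)$, which forces $H_{R \cup Q}(U_Q) \otimes_\cO k$ to have nonzero $\ffrm_Q$-socle.

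For the upper bound, the plan is to induct on $\#Q$ (even), decreasing it by $2$ at each step by chaining the exact sequences of Theorem \ref{thm_jarvis_exact_sequence} and Theorem \ref{thm_fujiwara_exact_sequence}. The base case $\#Q = 0$ is trivial. For the inductive step, fix $v_0, v_1 \in Q$ and set $Q' = Q \setminus \{v_0, v_1\}$; the goal is to establish
\[ \dim_k (H_{R \cup Q}(U_Q) \otimes_\cO k)[\ffrm_Q] \leq 16 \cdot \dim_k (H_{R \cup Q'}(U_{Q'}) \otimes_\cO k)[\ffrm_{Q'}], \]
which upon iteration $\#Q/2$ times yields the stated bound $4^{\#Q}$.

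To accomplish the inductive step, first apply Theorem \ref{thm_jarvis_exact_sequence} with its ``$Q$'' equal to $R \cup Q$ and its ``$v$'' equal to $v_1$. After localizing at $\ffrm_Q$, tensoring with $k$, and using the compatibility of $I_{F_{v_1}}$-invariants with mod-$\lambda^N$ reduction supplied by the final clause of that theorem, the short exact sequence produces an injection
\[ (H_{R \cup Q}(U_Q) \otimes_\cO k)[\ffrm_Q] \hookrightarrow \bigl((H_{R \cup Q \setminus \{v_1\}}(\overline{U}') \otimes_\cO k)^{I_{F_{v_1}}}\bigr)[\ffrm_Q]. \]
Next apply Theorem \ref{thm_fujiwara_exact_sequence} with its ``$Q$'' equal to $R \cup Q \setminus \{v_1\}$ (of odd cardinality) and its ``$v$'' equal to $v_0$ to reduce further to a Hida-variety module for $R \cup Q'$ that still carries $U_0(v_1)$ level at $v_1$. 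The final descent to maximal unramified level at $v_1$ is handled by the old/new analysis at Iwahori level $U_0(v_1)$: the degeneracy map $i$ embeds two copies of the maximal-level module, and $\mathbf{U}_{v_1}$ acts on its image via the matrix (\ref{eqn_operator_on_old_subspace}), whose reduction modulo $\lambda$ has the distinct eigenvalues $\alpha_{v_1}$ and $\beta_{v_1}$. Multiplying the factors contributed by these three transitions produces the required $16$.

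The main obstacle will be the careful factor-by-factor accounting across these transitions so that the product is exactly $16 = 4^2$. The hypotheses that $q_{v_i} \not\equiv 1 \bmod p$ and $\beta_{v_i}/\alpha_{v_i} = q_{v_i}$ enter essentially here, forcing $\alpha_{v_i}$ and $\beta_{v_i}$ to be distinct modulo $\lambda$ and so controlling the $\mathbf{U}_{v_i} = \alpha_{v_i}$ eigenspace dimension. One also needs to track the identification of $\ffrm_Q$ with its contractions to the smaller Hecke algebras $\bbT_{Q \setminus \{v_1\}}^{S,\text{univ}}$ and $\bbT_{Q'}^{S,\text{univ}}$ consistently through the induction.
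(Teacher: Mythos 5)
Your lower bound is identical to the paper's.

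For the upper bound, you decrement $\#Q$ by two at a time (Jarvis at $v_1$, Fujiwara at $v_0$, then an old/new analysis at $v_1$), whereas the paper decrements by one place at a time, alternating between the Jarvis and Fujiwara theorems according to the parity of $\#(R \cup J)$. In the paper's Jarvis step the third term of the exact sequence of Theorem \ref{thm_jarvis_exact_sequence} is $H_{R \cup \overline{J}}(\overline{U})^2$, already at maximal level at $v$; the comparison of $\Frob_v^{-1}$-eigenspaces between the middle and third terms supplies the bound directly, with no need to pass through Iwahori level at an intermediary stage. Your route, by contrast, branches off at the middle term of that sequence (which is at $U_0(v_1)$ level), applies Fujiwara at $v_0$, and then at the end needs to descend from $U_0(v_1)$ level to $\GL_2(\cO_{F_{v_1}})$ level on a Hida-variety module.

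That final descent is where there is a genuine gap. Writing $D = H_{R \cup Q'}(\overline{U}'')$ for the Hida-variety module at $U_0(v_1)$-level that you arrive at after step two, the degeneracy map $i \colon H_{R \cup Q'}(U_{Q'})^2 \hookrightarrow D$ has image equal to the \emph{old} subspace at $v_1$, and your argument with the matrix (\ref{eqn_operator_on_old_subspace}) only bounds the $\ffrm_Q$-torsion inside this image. But the $\ffrm_Q$-torsion of $D$ is not contained in the old subspace. The new subspace at $v_1$ consists of forms that are Steinberg at $v_1$, and by Lemma \ref{lem_u_eigenvalue_on_twist_of_steinberg} together with the level-raising congruence $\beta_{v_1}/\alpha_{v_1} = q_{v_1}$, such forms have $\mathbf{U}_{v_1}$-eigenvalue congruent to $\alpha_{v_1}$ modulo $\lambda$, so they do land in $D[\ffrm_Q]$. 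Lemma \ref{lem_level_raising_with_prescribed_eigenvalue} (applied with $J = Q' \cup \{v_1\}$) shows that such forms actually exist. Nothing in the argument bounds the dimension of this new contribution by a constant multiple of $\dim_k (H_{R\cup Q'}(U_{Q'}) \otimes_\cO k)[\ffrm_{Q'}]$, so the claimed factor of $16$ is not established. The fix is to follow the paper and use the exact sequence of Theorem \ref{thm_jarvis_exact_sequence} all the way to its third term, which removes $v_1$ and lands you at maximal level in one step; the $\Frob_{v_1}^{-1}$-eigenspace comparison there takes the place of the old/new analysis you are trying to do.
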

\begin{proof}
The lower bound follows immediately from Lemma \ref{lem_level_raising_with_prescribed_eigenvalue}, applied with $J = Q$. We now establish the upper bound. By induction, it suffices to establish the following claim. Let $J \subset Q$ be a non-empty subset, $v \in J$, and $\overline{J} = J - \{ v \}$. Then we have:
\[ \dim_k (H_{R \cup J}(U_J) \otimes_\cO k)[\ffrm_J] \leq 4 \dim_k (H_{R \cup \overline{J}}(U_{\overline{J}}) \otimes_\cO k)[\ffrm_{\overline{J}}]. \]
We prove the claim, splitting into cases according to the parity of $\# \overline{J}$. We suppose first that $\# \overline{J}$ is odd. Then there exists, by Theorem \ref{thm_jarvis_exact_sequence}, an exact sequence
\begin{equation}\label{eqn_jarvis_sequence}
 0 \to (H_{R \cup J}(U_J)_{\ffrm_J} \otimes_\cO k)[\ffrm_J] \to (H_{R \cup \overline{J}}(U'_{\overline{J}})_{\ffrm_J} \otimes_\cO k)^{I_{F_v}}[\ffrm_J] \to (H_{R \cup \overline{J}}(U_{\overline{J}})_{\ffrm_J}^2 \otimes_\cO k)[\ffrm_J],
\end{equation}
where $U'_{\overline{J}} = \prod_w U'_{\overline{J}, w}$ satisfies $U'_{\overline{J}, w} = U_{\overline{J}, w}$ unless $w = v$, in which case $U'_{\overline{J}, v} = U_0(v)$. By Theorem \ref{thm_eicher_shimura_ring_case} and the Eichler--Shimura relation (\ref{eqn_eichler_shimura_relation}) (cf. Proposition \ref{prop_carayol_and_blr}), $(H_{R \cup \overline{J}}(U'_{\overline{J}})_{\ffrm_J} \otimes_\cO k)[\ffrm_J]$ is isomorphic to a direct sum of copies of $\overline{\rho}_\ffrm \otimes (\epsilon \det \overline{\rho}_\ffrm)^{-1}$ as $k[G_F]$-module. Since $\overline{\rho}_\ffrm|_{G_{F_v}}$ is unramified, we can rewrite (\ref{eqn_jarvis_sequence}) as follows, replacing the third term by a possibly larger $k[G_F]$-module:
\begin{equation}\label{eqn_jarvis_sequence_modified}
 0 \to (H_{R \cup J}(U_J)_{\ffrm_J} \otimes_\cO k)[\ffrm_J] \to (H_{R \cup \overline{J}}(U'_{\overline{J}})_{\ffrm_J} \otimes_\cO k)[\ffrm_J] \to (H_{R \cup \overline{J}}(U_{\overline{J}})_{\ffrm_{\overline{J}}}^2 \otimes_\cO k)[\ffrm_{\overline{J}}],
\end{equation}
where $\Frob_v^{-1}$ acts as the scalar $\alpha_v$ on the first term. Comparing the dimensions of the eigenspaces of $\Frob_v^{-1}$ in the terms of the exact sequence (\ref{eqn_jarvis_sequence_modified}), we obtain 
\[\begin{split} \dim_k (H_{R \cup J}(U_J)_{\ffrm_J} \otimes_\cO k)[\ffrm_J] & \leq \dim_k (H_{R \cup \overline{J}}(U'_{\overline{J}})_{\ffrm_J} \otimes_\cO k)[\ffrm_J]^{\Frob_v^{-1} = \alpha_v}\\ &  = \dim_k (H_{R \cup \overline{J}}(U'_{\overline{J}})_{\ffrm_J} \otimes_\cO k)[\ffrm_J]^{\Frob_v^{-1} = q_v^{-1} \alpha_v} \\ &\leq \dim_k (H_{R \cup \overline{J}}(U_{\overline{J}})_{\ffrm_{\overline{J}}}^2 \otimes_\cO k)[\ffrm_{\overline{J}}]^{\Frob_v^{-1} = q_v^{-1} \alpha_v}\\& = \frac{1}{2} \dim_k (H_{R \cup \overline{J}}(U_{\overline{J}})_{\ffrm_{\overline{J}}}^2 \otimes_\cO k)[\ffrm_{\overline{J}}] \\
& =  \dim_k (H_{R \cup \overline{J}}(U_{\overline{J}})_{\ffrm_{\overline{J}}} \otimes_\cO k)[\ffrm_{\overline{J}}],
\end{split}\]
which proves the claim in this case.

We now suppose that $\# \overline{J}$ is even. By Theorem \ref{thm_fujiwara_exact_sequence}, there is a map
\begin{equation}\label{eqn_fujiwara_map} ( H_{R \cup J}(U_J)_{\ffrm_J} \otimes_\cO k)^{I_{F_v}}[\ffrm_J] \to (H_{R \cup \overline{J}}(U_{\overline{J}})_{\ffrm_J} \otimes_\cO k)^2[\ffrm_J], 
\end{equation}
with kernel contained in the submodule
\[ (H_{R \cup J}(U_J)_{\ffrm_J}^{I_{F_v}} \otimes_\cO k)[\ffrm_J] \subset (H_{R \cup J}(U_J)_{\ffrm_J} \otimes_\cO k)^{I_{F_v}}[\ffrm_J], \]
on which $\Frob_v^{-1}$ acts by the scalar ${\mathbf{U}_v} \text{ mod }{\ffrm_J} = \alpha_v$. By assumption, $\overline{\rho}_\ffrm|_{G_{F_v}}$ is unramified. By Theorem \ref{thm_eicher_shimura_ring_case}, we thus have
\[  ( H_{R \cup J}(U_J)_{\ffrm_J} \otimes_\cO k)^{I_{F_v}}[{\ffrm_J}] = ( H_{R \cup J}(U_J)_{\ffrm_J} \otimes_\cO k)[{\ffrm_J}]. \]
We see that the $q_v^{-1} \alpha_v$-eigenspace of $\Frob_v^{-1}$ in $( H_{R \cup J}(U_J)_{\ffrm_J} \otimes_\cO k)[{\ffrm_J}]$ injects into $(H_{R \cup \overline{J}}(U_{\overline{J}})_{\ffrm_J} \otimes_\cO k)^2[\ffrm_J]$, and thus
\[ \begin{split}
\dim_k ( H_{R \cup J}(U_J)_{\ffrm_J} \otimes_\cO k)[\ffrm_J] & = 2 \dim_k ( H_{R \cup J}(U_J)_{\ffrm_J} \otimes_\cO k)[\ffrm_J]^{\Frob_v^{-1} = q_v^{-1} \alpha_v} \\
& \leq 2 \dim_k (H_{R \cup \overline{J}}(U_{\overline{J}})_{\ffrm_J} \otimes_\cO k)^2[\ffrm_J] \\
& \leq 2 \dim_k (H_{R \cup \overline{J}}(U_{\overline{J}})_{\ffrm_{\overline{J}}} \otimes_\cO k)^2[\ffrm_{\overline{J}}] \\
& = 4 \dim_k (H_{R \cup \overline{J}}(U_{\overline{J}})_{\ffrm_{\overline{J}}} \otimes_\cO k)[\ffrm_{\overline{J}}].
\end{split} \]
This establishes the claim in this case, and completes the proof.
\end{proof}
The method of proof of Lemma \ref{lem_level_raising_with_prescribed_eigenvalue} easily yields the following variant. We omit the proof.
\begin{lemma}\label{lem_level_raising_while_preserving_ordinarity}
Let $\sigma \subset S_p$, let $\iota : \barqp \to \bbC$ be an isomorphism, and let $\pi$ be a cuspidal automorphic representation of $\GL_2(\bbA_F)$ of weight 2 which satisfies the following conditions.
\begin{itemize}
\item The residual representation $\overline{r_\iota(\pi)}$ is irreducible.
\item If $v \in \sigma$, then $\pi_v$ is $\iota$-ordinary and $\pi_v^{U_0(v)} \neq 0$.
\item If $v \in S_p - \sigma$, then $\pi_v$ is not $\iota$-ordinary and $\pi_v$ is unramified.
\item If $v \in R$, then $\pi_v$ is an unramified twist of the Steinberg representation.
\item If $v = a$, then $\pi_a^{U_1^1(a)} \neq 0$.
\item If $v \not\in S_p \cup R \cup \{ a \}$ is a finite place of $F$, then $\pi_v$ is unramified. If $v \in Q$, then the eigenvalues $\alpha_v, \beta_v \in \barfp$ of $\overline{r_\iota(\pi)}(\Frob_v)$ satisfy $\beta_v/\alpha_v = q_v$.
\end{itemize}
Then there exists a cuspidal automorphic representation $\pi'$ of $\GL_2(\bbA_F)$ of weight 2 which satisfies the following conditions.
\begin{itemize}
\item There is an isomorphism $\overline{r_\iota(\pi)} \cong \overline{r_\iota(\pi')}$.
\item If $v \in \sigma$, then $\pi'_v$ is $\iota$-ordinary and $(\pi'_v)^{U_0(v)} \neq 0$.
\item If $v \in S_p - \sigma$, then $\pi'_v$ is not $\iota$-ordinary and $\pi'_v$ is unramified.
\item If $v \in R \cup Q$, then $\pi'_v \cong \St_2(\chi_v)$ for some unramified character $\chi_v : F_v^\times \to \bbC^\times$. If $v \in Q$, then $\iota^{-1}\chi_v(\varpi_v)$ is congruent to $\alpha_v$ modulo the maximal ideal of $\overline{\bbZ}_p$.
\item If $v = a$, then $(\pi'_a)^{U_1^1(a)} \neq 0$.
\item If $v \not\in S_p \cup R \cup Q \cup \{ a \}$ is a finite place of $F$, then $\pi'_v$ is unramified.
\end{itemize}
\end{lemma}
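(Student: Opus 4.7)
The plan is to imitate the proof of Lemma \ref{lem_level_raising_with_prescribed_eigenvalue} essentially verbatim, while working with Hecke modules whose level structure at primes above $p$ reflects the desired ordinary/non-ordinary local behavior, and localizing at a maximal ideal that pins that behavior down. For each subset $J \subset Q$, I would take the good subgroup $U_J = \prod_w U_{J, w} \in \cJ_{R \cup J}$ determined by: $U_{J, a} = U_1^1(a)$; $U_{J, v} = U_0(v)$ for $v \in \sigma$; $U_{J, v} = \GL_2(\cO_{F_v})$ for $v \in S_p - \sigma$; $U_{J, w}$ the maximal compact of $G_{R \cup J}(F_w)$ for $w \in R \cup J$; and $U_{J, w} = \GL_2(\cO_{F_w})$ elsewhere. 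Set $S' = S_p \cup R \cup Q \cup \{a\}$ (enlarged by any further place at which $\pi$ is ramified), and let $\widetilde{\bbT}_J \supset \bbT_J^{S', \text{univ}}$ be the polynomial ring obtained by adjoining the operators $\mathbf{U}_v$ for $v \in \sigma$; this acts on $H_{R \cup J}(U_J)$ through the appropriate local Hecke algebras. I would then define the maximal ideal $\ffrm_J \subset \widetilde{\bbT}_J$ to be generated by the kernel of the Hecke eigensystem of $\iota^{-1}\pi$ away from $S'$, together with $\mathbf{U}_v - u_v$ for $v \in \sigma$, where $u_v \in \cO^\times$ lifts the $\iota$-ordinary $\mathbf{U}_v$-eigenvalue of $\iota^{-1}\pi_v$; the elements $T_v$ for $v \in S_p - \sigma$ (these lie in $\ffrm_J$ exactly because $\pi_v$ is unramified and non-$\iota$-ordinary, so both roots of $X^2 - T_v X + q_v S_v$ have positive $p$-adic valuation and hence $T_v \in \lambda$); and $\mathbf{U}_v - \alpha_v$ for $v \in J$. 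The representation $\pi$ itself witnesses that $H_R(U_\emptyset)_{\ffrm_\emptyset} \neq 0$.

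The induction would then proceed exactly as in the proof of Lemma \ref{lem_level_raising_with_prescribed_eigenvalue}: assuming $H_{R \cup J}(U_J)_{\ffrm_J} \neq 0$ for some $J \subsetneq Q$, choose distinct $v_0, v_1 \in Q - J$; use the degeneracy map
\[ i \colon H_{R \cup J}(U_J)_{\ffrm_J}^2 \to H_{R \cup J}(U_J')_{\ffrm_J}, \]
Ihara's lemma to deduce its injectivity modulo $\lambda$ (cf.\ \cite[Lemma 3.26]{Ski99}), the matrix identity for $i^\ast \circ i$, and the Deligne--Serre lemma to produce an eigenform at level $U_{J \cup \{v_0\}}$ with the prescribed $\mathbf{U}_{v_0}$-eigenvalue $\alpha_{v_0}$; the subtle case $q_{v_0} \equiv -1 \pmod p$ is handled by the auxiliary pairing argument exactly as before. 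A second level-raising step at $v_1$, following \cite[Theorem 5]{Raj01}, produces an eigenform at level $U_{J \cup \{v_0, v_1\}}$ with eigenvalues lying in $\ffrm_{J \cup \{v_0, v_1\}}$. The critical observation is that all these manipulations are supported at $v_0$ and $v_1$, which lie outside $S_p \cup R \cup \{a\}$ by hypothesis, so the local conditions at $\sigma$, $S_p - \sigma$, $R$, and $a$ encoded in $(U_J, \ffrm_J)$ are transported without change. A final application of Deligne--Serre at $J = Q$, followed by global Jacquet--Langlands transfer, yields the desired cuspidal automorphic representation $\pi'$ of $\GL_2(\bbA_F)$.

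The main point I would want to verify carefully is that the form of Ihara's lemma cited from \cite[Lemma 3.26]{Ski99} remains valid with $U_0(v)$-level at $v \in \sigma$ and the other modifications above. The statement of that lemma concerns injectivity of a degeneracy map modulo $\lambda$ supported at $v_0$, and its proof depends only on residual properties near $v_0$; since $v_0 \notin S_p \cup R \cup \{a\}$, the auxiliary level structure at $\sigma$, $S_p - \sigma$, $R$, and $a$ plays no role in the hypotheses, so the lemma should transfer with only cosmetic changes. This is the only genuinely new input compared with the proof of Lemma \ref{lem_level_raising_with_prescribed_eigenvalue}; I expect it to be routine, and the rest of the argument is a direct transcription.
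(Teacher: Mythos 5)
Your proposal is correct and follows exactly the route the paper intends: the paper explicitly omits the proof with the remark that the method of Lemma \ref{lem_level_raising_with_prescribed_eigenvalue} yields the variant, and your write-up is a faithful adaptation of that proof, the only new content being the choice of auxiliary level structure $U_{J}$ at $\sigma$, $S_p - \sigma$, $R$, $\{a\}$ and the maximal ideal $\ffrm_J$ that pins down ordinarity (via $\mathbf{U}_v - u_v$, $v \in \sigma$, with $u_v$ a unit) and non-ordinarity (via $T_v$, $v \in S_p - \sigma$). Your flag about Ihara's lemma is well placed and resolves correctly: since the level-raising primes $v_0, v_1$ lie in $Q$, which is disjoint from $S_p \cup R \cup \{a\}$, the auxiliary level structure away from $v_0$ is irrelevant to the hypotheses of \cite[Lemma 3.26]{Ski99} and the pairing/Deligne--Serre/Rajaei steps go through verbatim (one small bookkeeping remark: your $\widetilde{\bbT}_J$ should also adjoin $T_v, S_v$ for $v \in S_p - \sigma$ so that the condition $T_v \in \ffrm_J$ is meaningful, since you took $S' \supset S_p$).
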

\subsection{Level-lowering mod $p^N$}
We now fix again a finite set $R$ of finite places of $F$ of even cardinality, disjoint from $S_p \cup \{ a \}$, and a good subgroup $U = \prod_w U_w  \in \cJ_R$.  If $Q$ is any finite set of finite places of $F$, disjoint from $S_p \cup \{ a \} \cup R$, and such that for each $v \in Q$, $U_v = \GL_2(\cO_{F_v})$, then we define a good open compact subgroup $U_Q = \prod_v U_{Q, v} \in \cJ_{R \cup Q}$ by the following conditions.
\begin{itemize}
\item If $v \not\in Q$, then $U_{Q, v} = U_v$.
\item If $v \in Q$, then $U_{Q, v}$ is the unique maximal compact subgroup of $G_{R \cup Q}(F_v)$. 
\end{itemize}
Let $S$ be a finite set of finite places of $F$, containing $S_p$, and such that for all $v \not\in S$, $U_v = \GL_2(\cO_{F_v})$. We suppose given a non-Eisenstein maximal ideal $\ffrm \subset \bbT^{S, \text{univ}}$ which is in the support of $H_R(U)$.
\begin{theorem}\label{thm_main_level_lowering} Fix an integer $N \geq 1$, and a lifting of $\overline{\rho}_\ffrm$ to a continuous representation $\rho : G_F \to \GL_2(\cO / \lambda^N)$. We assume that $\rho$ satisfies the following properties:
\begin{enumerate}
\item $\rho$ is unramified outside $S$.
\item There exists a set $Q$ as above, of even cardinality, and a homomorphism $f : \bbT_Q^{S \cup Q}(H_{R \cup Q}(U_Q)) \to \cO/\lambda^N$ satisfying:
\begin{enumerate}
\item For each $v \in Q$, $q_v \not\equiv 1 \text{ mod }p$.
\item For every finite place $v \not\in S \cup Q$ of $F$, we have $f(T_v) = \tr \rho(\Frob_v)$.
\item Let $I = \ker f$. Then $(H_{R \cup Q}(U_Q) \otimes_\cO \cO/\lambda^N)[I]$ contains a submodule isomorphic to $\cO/\lambda^N$.
\end{enumerate} 
\end{enumerate} 
Then there exists a homomorphism $f' : \bbT^{S \cup Q}(H_R(U)) \to \cO/\lambda^N$ such that for all $v \not\in S \cup Q$, we have $f(T_v) = \tr \rho(\Frob_v)$.
\end{theorem}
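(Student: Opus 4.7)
The strategy is induction on $\#Q$, removing places one at a time via the exact sequences of Jarvis (Theorem \ref{thm_jarvis_exact_sequence}) and Fujiwara (Theorem \ref{thm_fujiwara_exact_sequence}). The inductive claim, to be established for every $Q' \subseteq Q$, is the existence of a homomorphism
\[
f_{Q'} : \bbT_{Q'}^{S \cup Q}(H_{R \cup Q'}(U_{Q'})) \to \cO/\lambda^N
\]
satisfying $f_{Q'}(T_w) = \tr \rho(\Frob_w)$ for every finite $w \not\in S \cup Q$, with the additional property that $(H_{R \cup Q'}(U_{Q'}) \otimes_\cO \cO/\lambda^N)[\ker f_{Q'}]$ contains a copy of $\cO/\lambda^N$. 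Hypothesis (2) is the base case $Q' = Q$; taking $Q' = \emptyset$ and observing $\bbT_\emptyset^{S \cup Q} = \bbT^{S \cup Q}$ then yields $f' := f_\emptyset$.

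For the inductive step I pick $v \in Q'$, set $\overline{Q}' = Q' \setminus \{v\}$, and split by the parity of $\#Q'$. When $\#Q'$ is even, $H_{R \cup Q'}(U_{Q'})$ is Hida-variety cohomology, and I apply Jarvis's sequence. Since the terms are $\cO$-free and Theorem \ref{thm_jarvis_exact_sequence} ensures that $I_{F_v}$-invariants commute with reduction mod $\lambda^N$, tensoring with $\cO/\lambda^N$ preserves exactness. The inductive eigenclass embeds into $(H_{R \cup \overline{Q}'}(U'_{\overline{Q}'}) \otimes \cO/\lambda^N)^{I_{F_v}}$ at the Shimura-curve level $U_0(v)$, with $\mathbf{U}_v$ acting as $f_{Q'}(\mathbf{U}_v)$ and $\Frob_v^{-1} = \mathbf{U}_v$ on the image. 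I then run a mod-$\lambda^N$ Ribet-style level-lowering step: using the degeneracy map $i$ and the adjoint relation (\ref{eqn_adjoint_of_degeneracy_operator}), together with the hypothesis $q_v \not\equiv 1 \pmod p$ and the assumed unramifiedness of $\rho|_{G_{F_v}}$ modulo $\lambda^N$, I force the class into the ``old'' subspace and descend it to an eigenform at level $U_{\overline{Q}'}$, producing $f_{\overline{Q}'}$.

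When $\#Q'$ is odd, $H_{R \cup Q'}(U_{Q'})$ is Shimura-curve cohomology and I apply Fujiwara's theorem. By Proposition \ref{prop_carayol_and_blr} the Galois action on the eigenspace factors through $\rho_\ffrm$ (twisted by $(\epsilon \det \rho_\ffrm)^{-1}$), so the unramifiedness of $\rho|_{G_{F_v}}$ modulo $\lambda^N$ places the eigenclass inside the $I_{F_v}$-invariants. Fujiwara's map sends it to $(H_{R \cup \overline{Q}'}(U_{\overline{Q}'}) \otimes \cO/\lambda^N)^2$, with kernel contained in the subspace where $\Frob_v^{-1}$ acts as $\mathbf{U}_v$. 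The residual Frobenius eigenvalues $\alpha_v$ and $q_v \alpha_v$ (the level-raising congruence implicit in the existence of $f_{Q'}$) are distinct since $q_v \not\equiv 1 \pmod p$, so $\rho(\Frob_v)$ is not a scalar and our eigenclass is not in the kernel; projecting the non-zero image to one factor yields $f_{\overline{Q}'}$.

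The main obstacle is the Jarvis case: the mod-$\lambda^N$ descent from the $U_0(v)$-level Shimura curve to the unramified level. The Jarvis image is precisely the ``Steinberg'' (new-at-$v$) part of the cohomology, and the classical Ribet argument only gives a congruent old eigenform modulo $\lambda$; upgrading to mod $\lambda^N$ requires careful integral control of the degeneracy operators. The hypothesis $q_v \not\equiv 1 \pmod p$ is used here to guarantee that the relevant matrix in (\ref{eqn_adjoint_of_degeneracy_operator}) is invertible modulo $\lambda$ on the appropriate eigenspace, so that the old-subspace descent respects the ``fiber-contains-$\cO/\lambda^N$'' property at each step and the induction closes.
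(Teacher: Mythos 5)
Your inductive framework and your reduction to Proposition~\ref{prop_level_lowering_induction_step} match the paper, and your treatment of the odd case is essentially correct: Theorem~\ref{thm_eicher_shimura_ring_case} (via Proposition~\ref{prop_carayol_and_blr}) exhibits the $[I]$-torsion as $U_0 \otimes_\cO \rho$ for some $\cO/\lambda^N$-module $U_0$ containing a copy of $\cO/\lambda^N$, and since $\rho(\Frob_v)$ has two distinct eigenvalues while the kernel of Fujiwara's map is contained in a single $\Frob_v^{-1}$-eigenspace, a copy of $\cO/\lambda^N$ survives into the image.

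The even case, however, goes off the rails. The paper does not ``run a mod-$\lambda^N$ Ribet-style level-lowering step'' using the degeneracy map $i$ and the adjoint relation~(\ref{eqn_adjoint_of_degeneracy_operator}); those tools appear only in the level-\emph{raising} Lemma~\ref{lem_level_raising_with_prescribed_eigenvalue}, which is logically unrelated to this proposition. In fact the argument you need here is identical to the one you already used in the odd case: pass to $[I]$-torsion in the Jarvis sequence (left-exactness is all you need), apply Theorem~\ref{thm_eicher_shimura_ring_case} to identify the middle term as $U_0 \otimes_\cO \rho$ (with $\rho|_{G_{F_v}}$ unramified, so the $I_{F_v}$-invariants are everything), observe that $\rho(\Frob_v)$ has two distinct eigenvalues, and note that $\Frob_v^{-1}$ acts by the single scalar $f(\mathbf{U}_v)$ on the first term. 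Hence a full rank-one $\cO/\lambda^N$-piece of $\rho|_{G_{F_v}}$ must survive into the third term $H_{R\cup\overline{Q}}(U_{\overline{Q}})^2 \otimes \cO/\lambda^N$, which gives $f'$. The ``main obstacle'' you identify --- upgrading Ribet's old/new descent from mod $\lambda$ to mod $\lambda^N$ --- does not arise, precisely because the Boston--Lenstra--Ribet theorem has already been stated and proved over arbitrary Artinian $\cO$-algebras (that is the content of \S\ref{sec_boston_lenstra_ribet}), so the Galois-module decomposition holds verbatim mod $\lambda^N$.

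You also misread the role of the hypothesis $q_v \not\equiv 1 \pmod p$. It is not there to make the matrix in~(\ref{eqn_adjoint_of_degeneracy_operator}) invertible --- on the contrary, the level-raising congruence forces $\det(i^\ast i) = (q_v+1)^2 - S_v^{-1} T_v^2 \equiv 0 \pmod{\ffrm}$, which is the whole point of level raising. Its true purpose in this proposition is the one you invoked in the odd case: the level-raising condition gives $\beta_v/\alpha_v = q_v$ (or $q_v^{-1}$), so $q_v \not\equiv 1$ is exactly what guarantees $\alpha_v \neq \beta_v$, i.e.\ that $\overline{\rho}_\ffrm(\Frob_v)$ --- and hence $\rho(\Frob_v)$ --- has two distinct eigenvalues, which is the engine of the descent.
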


\begin{corollary}\label{cor_automorphy_by_successive_approximation}
Let $\rho : G_F \to \GL_2(\cO)$ be a continuous lifting of $\overline{\rho}_\ffrm$, unramified outside $S$. Suppose that for every integer $N \geq 1$, there exists a set $Q$ as above, of even cardinality, and a homomorphism $f : \bbT^{S \cup Q}_Q(H_{R \cup Q}(U)) \to \cO/\lambda^N$ satisfying:
\begin{enumerate}
\item For each $v \in Q$, $q_v \not\equiv 1 \text{ mod }p$.
\item For every finite place $v \not\in S \cup Q$ of $F$, we have $f(T_v) = \tr \rho(\Frob_v)$.
\item Let $I = \ker f$. Then $(H_{R \cup Q}(U_Q) \otimes \cO/\lambda^N)[I]$ contains a submodule isomorphic to $\cO/\lambda^N$.
\end{enumerate}
Then $\rho$ is automorphic: there exists a cuspidal automorphic representation $\pi$ of $\GL_2(\bbA_F)$ of weight 2, and an isomorphism $\rho \otimes_\cO \barqp \cong r_\iota(\pi)$.
\end{corollary}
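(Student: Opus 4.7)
The plan is to apply Theorem \ref{thm_main_level_lowering} once for each $N \geq 1$, promote the resulting approximate eigensystems to the full Hecke algebra $\bbT^S(H_R(U))$, and then extract an inverse limit by a compactness argument to produce an honest $\cO$-valued eigensystem matching $\rho$.

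Fix $N \geq 1$ and apply Theorem \ref{thm_main_level_lowering} to obtain a homomorphism
\[ f'_N : \bbT^{S \cup Q_N}(H_R(U)) \to \cO/\lambda^N \]
with $f'_N(T_v) \equiv \tr \rho(\Frob_v) \pmod{\lambda^N}$ for all $v \notin S \cup Q_N$. The kernel of $f'_N$ sits under a unique non-Eisenstein maximal ideal $\ffrm \subset \bbT^S(H_R(U))$, independent of $N$, determined by the residual representation $\overline{\rho}_\ffrm \cong \overline{\rho}$. Set $T := \bbT^S(H_R(U))_\ffrm$ and let $T' \subset T$ be the image of $\bbT^{S \cup Q_N}(H_R(U))_\ffrm$. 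To the ring $T$ we attach a Galois representation $\rho_\ffrm : G_{F,S} \to \GL_2(T)$ satisfying $\tr \rho_\ffrm(\Frob_v) = T_v$ for $v \notin S \cup S_p$: when $\# R$ is odd this is Proposition \ref{prop_carayol_and_blr}, and when $\# R$ is even the same conclusion follows from the Jacquet--Langlands transfer combined with the standard construction of Galois representations attached to Hilbert modular forms. Because $\overline{\rho}_\ffrm$ is absolutely irreducible, a Carayol--Nyssen-type descent shows $\rho_\ffrm$ can be conjugated into $\GL_2(T')$; thus $T_v = \tr \rho_\ffrm(\Frob_v) \in T'$ for every $v \in Q_N \setminus S$, so $T' = T$, and $f'_N$ extends uniquely to $f'_N : T \to \cO/\lambda^N$.

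Now consider the lift $f'_N \circ \rho_\ffrm : G_{F,S} \to \GL_2(\cO/\lambda^N)$ of $\overline{\rho}_\ffrm$. Its trace agrees with $\tr \rho$ on the dense set $\{ \Frob_v : v \notin S \cup Q_N \}$, hence by the Chebotarev density theorem on all of $G_{F,S}$; since both representations lift the absolutely irreducible $\overline{\rho}_\ffrm$, they are conjugate, and therefore
\[ f'_N(T_v) \equiv \tr \rho(\Frob_v) \pmod{\lambda^N} \quad \text{for \emph{every}}\ v \notin S, \]
with the analogous congruence for $S_v$. The ring $T$ is a finite $\cO$-algebra, so for each fixed $M$ the set $\Hom_{\cO\text{-alg}}(T, \cO/\lambda^M)$ is finite. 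A diagonal argument over increasing $M$ therefore extracts a subsequence $(N_k)$ along which the reductions $f'_{N_k} \bmod \lambda^M$ stabilize to a compatible system $(g_M : T \to \cO/\lambda^M)_M$; passing to the inverse limit yields $g : T \to \cO$ with $g(T_v) = \tr \rho(\Frob_v)$ and $g(S_v) = q_v^{-1} \det \rho(\Frob_v)$ for all $v \notin S$. The homomorphism $g$ corresponds to a minimal prime of $T$, and upon fixing any isomorphism $\iota : \barqp \to \bbC$, to a cuspidal automorphic representation $\pi$ of $\GL_2(\bbA_F)$ of weight 2 whose unramified Hecke eigensystem matches that of $\rho$. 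Invoking Chebotarev once more gives $r_\iota(\pi) \cong \rho \otimes_\cO \barqp$, completing the proof.

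The only subtle step is the ring-theoretic equality $T = T'$ which allows the extension of $f'_N$ from $\bbT^{S \cup Q_N}$ to the full Hecke algebra; this rests on the existence of an integral Galois representation over the Hecke algebra with the prescribed characteristic-polynomial relation and on residual irreducibility. Once that is in hand, the compactness/diagonalization step and the identification of the limit with an automorphic form are routine.
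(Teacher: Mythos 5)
Your proof is correct and takes essentially the same route as the paper: apply Theorem~\ref{thm_main_level_lowering} for each $N$, use Carayol's descent (\cite[Th\'eor\`eme 2]{Car94}) plus Chebotarev to promote the approximate eigensystem from $\bbT^{S\cup Q_N}$ to the full localized Hecke algebra $\bbT^S(H_R(U))_\ffrm$ and to establish the congruence $f'_N(T_v)\equiv\tr\rho(\Frob_v)$ at all $v\notin S$, then pass to the limit. The one small cosmetic difference is your compactness/diagonalization step: since the $T_v$, $v\notin S$, topologically generate $\bbT^S(H_R(U))_\ffrm$, the extended maps $b_N$ are already uniquely determined and hence form a compatible inverse system, so the paper takes the inverse limit directly without needing to pass to a subsequence.
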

\begin{proof}[Proof of Corollary \ref{cor_automorphy_by_successive_approximation}]
Take $N, Q$ satisfying these conditions. Then the natural map $\bbT^{S \cup Q}(H_R(U)) \to \bbT^{S}(H_R(U))_\ffrm$ is surjective. Indeed, there exists a lifting of $\overline{\rho}_\ffrm$ to a continuous representation
\[ \rho_\ffrm : G_{F, S} \to \GL_2(\bbT^{S}(H_R(U))_\ffrm) \]
with the property that $\tr \rho_\ffrm(\Frob_v) = T_v$ and $\det \rho_\ffrm(\Frob_v) = q_v S_v$ for all $v \not\in S$ (cf. Proposition \ref{prop_carayol_and_blr}). It follows from the Chebotarev density theorem and a result of Carayol (\cite[Th\'eor\`eme 2]{Car94}) that $\bbT^S(H_R(U))_\ffrm$ can be generated by the unramified Hecke operators $T_v$, $v \not\in S \cup Q$, which proves the claim.

Applying Theorem \ref{thm_main_level_lowering} to the map $f$, we obtain a homomorphism $f' : \bbT^{S \cup Q}(H_R(U)) \to \cO/\lambda^N$ with the property that $f'(T_v) = \tr \rho(\Frob_v) \text{ mod }\lambda^N$ for all $v\not \in S \cup Q$. It follows from the previous paragraph that this map factors
\[ \xymatrix@1{ \bbT^{S \cup Q}(H_R(U)) \ar[r]^-{a_N} & \bbT^{S}(H_R(U))_{\ffrm} \ar[r]^-{b_N} & \cO/\lambda^N}, \]
where $a_N$ is surjective. Moreover, the map $b_N$ satisfies the condition $b_N(T_v) = \tr \rho(\Frob_v)  \text{ mod }\lambda^N$ for all $v \not\in S$. Indeed, it satisfies this condition for all $v \not\in S \cup Q$, by construction. Applying \cite[Th\'eor\`eme 1]{Car94}, we see that $b_N \circ \rho_\ffrm$ and $\rho \text{ mod }\lambda^N$ are equivalent, which implies the relation
\[ \tr (b_N \circ \rho_\ffrm)(\Frob_v) = b_N(T_v) = \tr \rho(\Frob_v) \text{ mod }\lambda^N \]
for all $v \not\in S$. Letting $N$ go to infinity, we obtain a homomorphism $b_\infty : \bbT^{S}(H_R(U))_{\ffrm} \to \cO$ such that for all finite places $v \not\in S$ of $F$, $b_\infty(T_v) = \tr \rho(\Frob_v)$. It follows that there exists a cuspidal automorphic representation $\pi$ of $\GL_2(\bbA_F)$ of weight 2 such that $r_\iota(\pi)$ and $\rho$ have, by the Chebotarev density theorem, the same character; in particular, they become isomorphic after extending scalars to $\barqp$.
\end{proof}
The remainder of this section is devoted to the proof of Theorem \ref{thm_main_level_lowering}. By induction, it will suffice to prove the following result:
\begin{proposition}\label{prop_level_lowering_induction_step}
Fix an integer $N \geq 1$, and a lifting of $\overline{\rho}_\ffrm$ to a continuous representation $\rho : G_F \to \GL_2(\cO / \lambda^N)$. We assume that $\rho$ satisfies the following properties:
\begin{enumerate}
\item $\rho$ is unramified outside $S$.
\item There exists a set $Q$ as above and a homomorphism $f : \bbT_Q^{S \cup Q}(H_{R \cup Q}(U_Q)) \to \cO/\lambda^N$ satisfying:
\begin{enumerate}
\item For each $v \in Q$, $q_v \not\equiv 1 \text{ mod }p$.
\item For every finite place $v \not\in S \cup Q$ of $F$, we have $f(T_v) = \tr \rho(\Frob_v)$.
\item Let $I = \ker f$. Then $(H_{R \cup Q}(U_Q) \otimes \cO/\lambda^N)[I]$ contains an $\cO$-submodule isomorphic to $\cO/\lambda^N$.
\end{enumerate} 
\end{enumerate} 
Choose $v \in Q$, and let $\overline{Q} = Q - \{ v \}$. Then there exists a homomorphism $f' : \bbT_{\overline{Q}}^{S \cup Q}(H_{R \cup \overline{Q}}(U_{\overline{Q}})) \to \cO/\lambda^N$ such that for all $v \not\in Q \cup S$, $f'(T_v) = \tr \rho(\Frob_v)$. Moreover, writing $I' = \ker f'$,  $(H_{R \cup \overline{Q}}(U_{\overline{Q}}) \otimes \cO/\lambda^N)[I']$ contains an $\cO$-submodule isomorphic to $\cO/\lambda^N$.
\end{proposition}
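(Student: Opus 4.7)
Since $R \cup Q$ has even cardinality and $v \in Q$, I apply Theorem~\ref{thm_jarvis_exact_sequence}. Let $U'_{\overline{Q}}$ be the good subgroup in $\cJ_{R \cup \overline{Q}}$ agreeing with $U_{\overline{Q}}$ away from $v$, with $U'_{\overline{Q}, v} = U_0(v)$. The theorem yields an exact sequence of $\bbT_Q^{S \cup Q, \text{univ}}[G_{F_v}]$-modules, finite free over $\cO$:
\[ 0 \to H_{R \cup Q}(U_Q)_\ffrm \xrightarrow{\psi} H_{R \cup \overline{Q}}(U'_{\overline{Q}})_\ffrm^{I_{F_v}} \to H_{R \cup \overline{Q}}(U_{\overline{Q}})^2_\ffrm \to 0. \]
By flatness, this remains exact after tensoring with $\cO/\lambda^N$; and by the final assertion of the theorem, the middle term identifies with $(H_{R \cup \overline{Q}}(U'_{\overline{Q}})_\ffrm \otimes \cO/\lambda^N)^{I_{F_v}}$. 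Denote the reductions by $A, B, C$. Setting $\bar{x} := \psi(\bar{y})$, where $\bar{y}$ is the cyclic generator in $A[I]$ provided by hypothesis, I obtain an element $\bar{x}$ generating a cyclic $\cO/\lambda^N$-submodule of $B[I] \subseteq B[I']$, where $I' := I \cap \bbT^{S \cup Q, \text{univ}}_{\overline{Q}}$.

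Next, the hypothesis $q_v \not\equiv 1 \pmod p$ ensures the eigenvalues $\alpha_v \neq \beta_v = q_v \alpha_v$ of $\overline{\rho}_\ffrm(\Frob_v)$ in $k$ are distinct; by Hensel's lemma they lift to elements $\widetilde{\alpha}_v, \widetilde{\beta}_v \in \bbT^S(H_{R \cup \overline{Q}}(U_{\overline{Q}}))_\ffrm \otimes \cO/\lambda^N$ yielding a factorization $X^2 - T_v X + q_v S_v = (X - \widetilde{\alpha}_v)(X - \widetilde{\beta}_v)$. The corresponding orthogonal idempotents $e_\alpha, e_\beta$ commute with $\bbT^{S \cup Q, \text{univ}}_{\overline{Q}}$. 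Applied to $C$ via the matrix $M$ of~(\ref{eqn_operator_on_old_subspace}) (which satisfies its characteristic polynomial by Cayley--Hamilton), this yields $C = C_\alpha \oplus C_\beta$, each summand $\bbT^{S \cup Q, \text{univ}}_{\overline{Q}}$-isomorphic to $H_{R \cup \overline{Q}}(U_{\overline{Q}})_\ffrm \otimes \cO/\lambda^N$. The Jarvis identity $\mathbf{U}_v = \Frob_v^{-1}$ on $A$, together with the congruence $\mathbf{U}_v \equiv \alpha_v \pmod \ffrm$ there (from the level-raising setup), implies by Nakayama's lemma that $\mathbf{U}_v - \widetilde{\beta}_v$ acts as a unit on $A$, so $e_\beta$ annihilates $A$ modulo $\lambda^N$. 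Applying $e_\beta$ to the Jarvis exact sequence identifies $e_\beta B \cong e_\beta C \cong H_{R \cup \overline{Q}}(U_{\overline{Q}})_\ffrm \otimes \cO/\lambda^N$ as $\bbT^{S \cup Q, \text{univ}}_{\overline{Q}}$-modules.

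The problem reduces to exhibiting a cyclic $\cO/\lambda^N$-submodule inside $(e_\beta B)[I']$, and \emph{this is the main obstacle}: the element $\bar{x}$ lies in $A \subseteq e_\alpha B$ and is killed by $e_\beta$, so a direct projection fails. To resolve this, I invoke the Galois-equivariant BLR structure of Proposition~\ref{prop_carayol_and_blr}, which identifies $H_{R \cup \overline{Q}}(U'_{\overline{Q}})_\ffrm \cong \rho_\ffrm \otimes (\epsilon \det \rho_\ffrm)^{-1} \otimes M'$ for the universal Galois representation $\rho_\ffrm$ over $\bbT^S(H_{R \cup \overline{Q}}(U'_{\overline{Q}}))_\ffrm$. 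The hypothesis that $\rho$ is unramified at $v$ (which in the intended application~\ref{cor_automorphy_by_successive_approximation} is arranged by choosing $Q$ disjoint from $S$) means that $\rho(\Frob_v)$ has characteristic polynomial $(X - \widetilde{\alpha}_v)(X - \widetilde{\beta}_v) \pmod{\lambda^N}$ under the quotient induced by $f$; both eigenvalues are therefore realized on the Galois module $\rho_\ffrm \otimes \cO/\lambda^N$. The cyclic $\cO/\lambda^N$-submodule in $e_\alpha B[I']$ inherited from $A[I']$ corresponds to the $\widetilde{\alpha}_v$-Frobenius eigenspace of $\rho_\ffrm$; via the $\rho_\ffrm$-symmetry exchanging the two Frobenius eigenvalues, it yields a companion cyclic $\cO/\lambda^N$-submodule in $e_\beta B[I']$ corresponding to the $\widetilde{\beta}_v$-eigenspace. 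Transported along the isomorphism $e_\beta B \cong H_{R \cup \overline{Q}}(U_{\overline{Q}})_\ffrm \otimes \cO/\lambda^N$, this provides the desired cyclic submodule, and the existence of $f'$ follows by Carayol's argument (as in the proof of Corollary~\ref{cor_automorphy_by_successive_approximation}) that $\bbT^{S \cup Q}_{\overline{Q}}(H_{R \cup \overline{Q}}(U_{\overline{Q}}))_\ffrm$ is generated over $\cO$ by $T_w$ for $w \notin S \cup Q$.
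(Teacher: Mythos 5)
Your proposal covers only the case $\#Q$ even, where $R \cup Q$ has even cardinality and Theorem~\ref{thm_jarvis_exact_sequence} applies. But the proposition is the induction step for Theorem~\ref{thm_main_level_lowering}, and as primes are removed from $Q$ one at a time its parity alternates; when $\#Q$ is odd, $R \cup Q$ has odd cardinality, $H_{R\cup Q}(U_Q)$ is the cohomology of a Shimura curve rather than a Hida variety, and the relevant tool is the map of Theorem~\ref{thm_fujiwara_exact_sequence}. That half of the proposition is entirely absent from your argument, so the proof is incomplete.

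Within the even case there are two further problems. You assert that the $\mathbf{U}_v$ (or Frobenius) action on the third term $C = H_{R\cup\overline{Q}}(U_{\overline{Q}})^2_\ffrm \otimes \cO/\lambda^N$ of the Jarvis sequence is given by the matrix in~(\ref{eqn_operator_on_old_subspace}). That matrix describes $\mathbf{U}_{v_0}$ on the image of the degeneracy map $i$ appearing in the proof of Lemma~\ref{lem_level_raising_with_prescribed_eigenvalue}, a different map from the boundary map of Theorem~\ref{thm_jarvis_exact_sequence}; the paper explicitly declines to compute the induced $\mathbf{U}_v$-action on the third term of the Jarvis sequence, so your idempotent decomposition $C = C_\alpha \oplus C_\beta$ is not justified as stated. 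More seriously, the decisive step — producing a cyclic submodule of $e_\beta B[I']$ ``via the $\rho_\ffrm$-symmetry exchanging the two Frobenius eigenvalues'' — is not a precise argument: no such operator exists. The clean route, which avoids idempotents on $C$ altogether, is to apply Theorem~\ref{thm_eicher_shimura_ring_case} over the ring $\bbT_Q^{S\cup Q}/I \cong \cO/\lambda^N$ to write $(H_{R\cup\overline{Q}}(U'_{\overline{Q}})_{\ffrm_Q} \otimes \cO/\lambda^N)[I] \cong U_0 \otimes_\cO \rho$; the hypothesis forces $\cO/\lambda^N \hookrightarrow U_0$, and since $\rho|_{G_{F_v}}$ is unramified with $\rho(\Frob_v)$ having two distinct eigenvalues, one gets an embedding $\rho|_{G_{F_v}} \hookrightarrow (H_{R\cup\overline{Q}}(U'_{\overline{Q}})_{\ffrm_Q} \otimes \cO/\lambda^N)^{I_{F_v}}[I]$ of $\cO[G_{F_v}]$-modules. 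The kernel of the map to $C[I]$ is $(H_{R\cup Q}(U_Q)_{\ffrm_Q} \otimes \cO/\lambda^N)[I]$, on which $\Frob_v^{-1}$ acts by the single scalar $f(\mathbf{U}_v)$; hence the other Frobenius eigenspace of $\rho|_{G_{F_v}}$, a free rank-one $\cO/\lambda^N$-module, injects into $C[I]$, and $f'$ is the induced map $\bbT^{S\cup Q}_{\overline{Q}}(H_{R\cup\overline{Q}}(U_{\overline{Q}})) \to \bbT^{S\cup Q}_{\overline{Q}}(C[I]) \cong \cO/\lambda^N$.
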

\begin{proof}
We first assume that $\# Q$ is even. Let $\ffrm_Q \subset \bbT_Q^{S \cup Q, \text{univ}}$ denote the pullback of the maximal ideal $(I, \lambda) \subset  \bbT_Q^{S \cup Q}(H_{R \cup Q}(U_Q))$. Taking the exact sequence of Theorem \ref{thm_jarvis_exact_sequence} and passing to $I$-torsion, we obtain an exact sequence:
\[ 0 \to (H_{R \cup Q}(U_Q)_{\ffrm_Q} \otimes_\cO \cO/\lambda^N)[I] \to (H_{R \cup \overline{Q}}(U_{\overline{Q}}')_{\ffrm_Q} \otimes_\cO \cO/\lambda^N)^{I_{F_{v}}}[I] \to (H_{R \cup \overline{Q}}(U_{\overline{Q}})^2_{\ffrm_Q} \otimes_\cO \cO/\lambda^N)[I]. \]
By hypothesis, $(H_{R \cup Q}(U_Q) \otimes_\cO \cO/\lambda^N)[I]$ contains an $\cO$-submodule isomorphic to $\cO/\lambda^N$; hence the same is true of $(H_{R \cup \overline{Q}}(U_{\overline{Q}}')_{\ffrm_Q} \otimes_\cO \cO/\lambda^N)^{I_{F_{v}}}[I]$. By Theorem \ref{thm_eicher_shimura_ring_case}, there is an $\cO/\lambda^N$-module $U_0$ and an isomorphism 
\[ (H_{R \cup \overline{Q}}(U_{\overline{Q}}')_{\ffrm_Q} \otimes_\cO \cO/\lambda^N)[I] \cong U_0 \otimes_\cO \rho. \]
We see that $U_0$ contains an $\cO$-submodule isomorphic to $\cO/\lambda^N$. Since $\rho|_{G_{F_{v}}}$ is unramified, we obtain an inclusion
\[ \rho|_{G_{F_{v}}} \subset (H_{R \cup \overline{Q}}(U'_{\overline{Q}})_{\ffrm_Q} \otimes_\cO \cO/\lambda^N)^{I_{F_{v}}}[I] \]
of $\cO[G_{F_{v}}]$-modules. We now observe that $\overline{\rho}_\ffrm(\Frob_v)$ has 2 distinct eigenvalues, hence the same is true for $\rho$. On the other hand, $\Frob_{v}^{-1}$ acts as the scalar $f(\mathbf{U}_v)$ on $(H_{R \cup Q}(U_Q) \otimes_\cO \cO/\lambda^N)[I]$. It follows that the image of $ \rho|_{G_{F_{v}}}$ in $(H_{R \cup \overline{Q}}(U_{\overline{Q}})^2 \otimes_\cO \cO/\lambda^N)[I]$ contains an $\cO$-submodule isomorphic to $\cO/\lambda^N$. Writing 
\[ f' : \bbT^{S \cup Q}_{\overline{Q}}(H_{R \cup \overline{Q}}(U_{\overline{Q}})) \to \bbT^{S \cup Q}_{\overline{Q}}((H_{R \cup \overline{Q}}(U_{\overline{Q}})^2 \otimes_\cO\cO/\lambda^N)[I]) \cong \cO/\lambda^N \]
for the induced homomorphism proves the proposition in this case.

We now assume that $\# Q$ is odd. Let $\ffrm_Q \subset \bbT_Q^{S \cup Q, \text{univ}}$ denote the pullback of the maximal ideal $(I, \lambda) \subset \bbT_Q^{S \cup Q}(H_{R \cup Q}(U_Q))$. Taking the morphism of Theorem \ref{thm_fujiwara_exact_sequence}  and passing to $I$-torsion, we obtain a morphism of $\bbT_Q^{S \cup Q, \text{univ}}$-modules
\begin{equation}\label{eqn_level_lowering_fujiwara_map} (H_{R \cup Q}(U_Q) \otimes_\cO \cO/\lambda^N)_{\ffrm_Q}^{I_{F_{v}}}[I] \to (H_{R \cup \overline{Q}}(U_{\overline{Q}}) \otimes_\cO \cO/\lambda^N)_{\ffrm_Q}^2[I], 
\end{equation}
with kernel contained in the submodule
\begin{equation} (H_{R \cup Q}(U_Q)^{I_{F_{v}}} \otimes_\cO \cO/\lambda^N)_{\ffrm_Q}[I] \subset (H_{R \cup Q}(U_Q) \otimes_\cO \cO/\lambda^N)_{\ffrm_Q}^{I_{F_{v}}}[I], 
\end{equation}
on which $\Frob_v^{-1}$ acts by $f(\mathbf{U}_v) \in \cO/\lambda^N$. By hypothesis, $(H_{R \cup Q}(U_Q) \otimes_\cO \cO/\lambda^N)[I]$ contains an $\cO$-submodule isomorphic to $\cO/\lambda^N$. By Theorem \ref{thm_eicher_shimura_ring_case}, there is an $\cO/\lambda^N$-module $U_0$ and an isomorphism of $\bbT_Q^{S \cup Q}[G_{F}]$-modules:
\[(H_{R \cup Q}(U_Q) \otimes_\cO \cO/\lambda^N)[I] \cong U_0 \otimes_\cO \rho. \]
Since $\overline{\rho}_\ffrm(\Frob_v)$ has 2 distinct eigenvalues, the image of the map (\ref{eqn_level_lowering_fujiwara_map}) must contain an $\cO$-submodule isomorphic to $\cO/\lambda^N$. We now take
\[  f' : \bbT^{S \cup Q}_{\overline{Q}}(H_{R \cup \overline{Q}}(U_{\overline{Q}})) \to \bbT^{S \cup Q}_{\overline{Q}}((H_{R \cup\overline{Q}}(U_{\overline{Q}})^2 \otimes_\cO\cO/\lambda^N)_{\ffrm_Q}[I]) \cong \cO/\lambda^N \]
to complete the proof of the proposition.
\end{proof}

\section{Galois theory}\label{sec_galois_theory}

In this section, we recall some of the basics of Galois deformation theory, and make our study of some residually dihedral Galois representations. Since we want to allow deformations which have variable Hodge--Tate weights at some places above $p$ and fixed Hodge--Tate weights at others, our definition of a Galois deformation problem includes the data of an additional coefficient ring (denoted $\Lambda_v$ below) at places where ramification is allowed. We begin by making a study of ordinary Galois representations.

\subsection{Ordinary Galois representations}\label{sec_ordinary_representations}

Let $p$ be an odd prime, and let $K$ be a finite extension of $\bbQ_p$. Let $\rho : G_K \to \GL_2(\overline{\bbQ}_p)$ be a de Rham representation such that for each embedding $\tau : K \hookrightarrow \barqp$, we have $\mathrm{HT}_\tau(\rho) = \{ 0, 1 \}$. In this paper, we say that $\rho$ is ordinary if it is isomorphic to a representation of the form
\begin{equation}\label{eqn_definition_of_ordinary}
\rho \sim \left(\begin{array}{cc} \psi_1 & \ast \\ 0 & \psi_2 \epsilon^{-1} \end{array}\right), 
\end{equation}
where $\psi_1, \psi_2 : G_{K} \to \overline{\bbQ}_p^\times$ are continuous characters with restriction to inertia of finite order. Otherwise, we say that $\rho$ is non-ordinary. The condition of being ordinary depends only on $\mathrm{WD}(\rho)$:
\begin{lemma}\label{lem_ordinary_of_galois_and_weil_deligne}
Let $\rho : G_{K} \to \GL_2(\overline{\bbQ}_p)$ be a continuous de Rham representation such that for each embedding $\tau : K \hookrightarrow \barqp$, we have $\mathrm{HT}_\tau(\rho) = \{ 0, 1 \}$. Then one of the following is true:
\begin{itemize}
\item $\mathrm{WD}(\rho)^{\text{F-ss}}$ is irreducible. In this case, $\rho$ is non-ordinary.
\item $\mathrm{WD}(\rho)^{\text{F-ss}}$ is indecomposable. In this case, $\rho$ is ordinary.
\item $\mathrm{WD}(\rho)^{\text{F-ss}} = \chi_1 \oplus \chi_2$ is decomposable; $\chi_1, \chi_2 : W_{K} \to \overline{\bbQ}_p^\times$ are smooth characters. Let $\Frob_K \in W_K$ be a geometric Frobenius element, and assume that $\val_p(\chi_1(\Frob_K)) \leq \val_p(\chi_2(\Frob_K))$. Then 
\[ \val_p(\chi_1(\Frob_K)) +  \val_p(\chi_2(\Frob_K)) = [K_0 : \bbQ_p]\]
 and 
\[ \val_p(\chi_1(\Frob_K)) \geq 0, \]
and $\rho$ is ordinary if and only if equality holds. 
\end{itemize}
\end{lemma}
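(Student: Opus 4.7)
The plan is to treat the three cases for $\mathrm{WD}(\rho)^{\text{F-ss}}$ in turn.

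For the irreducible case, the key input is that the Weil--Deligne functor is exact on de Rham representations and commutes with Frobenius semi-simplification. If $\rho$ were ordinary, then by definition it would admit a 1-dimensional Galois sub $\psi_1 \subset \rho$, so $\mathrm{WD}(\rho)^{\text{F-ss}}$ would inherit a 1-dimensional Weil--Deligne sub from $\mathrm{WD}(\psi_1)$, contradicting irreducibility. Hence $\rho$ is non-ordinary.

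For the indecomposable reducible case, we have $\mathrm{WD}(\rho)^{\text{F-ss}} \cong (\chi \oplus \chi|\cdot|^{-1}, N_0)$ with $N_0 \neq 0$, and its unique Weil--Deligne sub is the image of $N_0$, on which $W_K$ acts by $\chi$. Since $\mathrm{HT}_\tau(\det \rho) = \{1\}$ for each $\tau$, one has $\val_p(\det \rho(\Frob_K)) = [K_0:\bbQ_p]$, so the relation
\[ \val_p(\chi(\Frob_K)) + \val_p\bigl(\chi(\Frob_K)\cdot q_K\bigr) = [K_0:\bbQ_p] \]
forces $\val_p(\chi(\Frob_K)) = 0$. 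Via Fontaine's equivalence between weakly admissible filtered $(\phi, N)$-modules and de Rham representations, this Weil--Deligne sub lifts to a sub $D' \subset D_{\mathrm{pst}}(\rho)$ with Newton invariant $t_N(D') = 0$. Weak admissibility of $D_{\mathrm{pst}}(\rho)$ forces $t_H(D') \leq t_N(D') = 0$, and since the induced Hodge invariant lies in $\{0, 1\}$, we get $t_H(D') = 0$, so $D'$ is itself weakly admissible. Therefore $\rho$ admits a Galois sub $\psi_1$ of Hodge--Tate weight $0$, hence almost-unramified; the quotient has Hodge--Tate weight $1$, and setting $\psi_2 := (\rho/\psi_1) \cdot \epsilon$ (which has Hodge--Tate weight $0$) exhibits the quotient as $\psi_2 \epsilon^{-1}$ with $\psi_2$ almost-unramified, so $\rho$ is ordinary.

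For the decomposable case $N = 0$ and $\rho$ is potentially crystalline; choose a finite $L/K$ so that $\rho|_{G_L}$ is crystalline and set $D = D_{\mathrm{cris}}(\rho|_{G_L})$. The determinant computation again yields the sum relation $\val_p(\chi_1(\Frob_K)) + \val_p(\chi_2(\Frob_K)) = [K_0:\bbQ_p]$. The two $\phi$-eigenlines $D_1, D_2 \subset D \otimes_{L_0} \barqp$ are sub-$\phi$-modules, and weak admissibility gives $t_H(D_i) \leq t_N(D_i)$; since $t_H \geq 0$ and $t_N(D_i)$ is a positive scalar multiple of $\val_p(\chi_i(\Frob_K))$, this yields $\val_p(\chi_i(\Frob_K)) \geq 0$, in particular the stated lower bound for $\chi_1$. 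For the final equivalence, $\rho$ is ordinary iff it has an almost-unramified Galois sub, iff it has a Hodge--Tate weight $0$ Galois sub, which by the filtered $(\phi, N)$-module dictionary happens iff some $D_i$ is itself weakly admissible with $t_H(D_i) = 0 = t_N(D_i)$. The condition $t_N(D_i) = 0$ gives $\val_p(\chi_i(\Frob_K)) = 0$, and combined with $\val_p(\chi_1) \leq \val_p(\chi_2)$ and the sum relation it forces $i = 1$ (otherwise both valuations vanish, contradicting $[K_0:\bbQ_p] \geq 1$); the condition $D_1 \neq \Fil^1 D$ then holds automatically, as the alternative would give $t_H(D_1) = 1 > 0 = t_N(D_1)$, violating weak admissibility. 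Hence $\rho$ is ordinary iff $\val_p(\chi_1(\Frob_K)) = 0$.

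The chief technical obstacle is bookkeeping in the potentially crystalline case: one must carefully identify the valuation of the linearized Frobenius eigenvalue on an eigenline in $D_{\mathrm{cris}}(\rho|_{G_L})$ with $\val_p(\chi_i(\Frob_K))$ (absorbing the factor $[L_0 : K_0]$ arising from base change to $L$), and verify that a weakly admissible $\Gal(L/K)$-stable sub of $D$ genuinely descends to a $G_K$-sub of $\rho$ rather than only a $G_L$-sub.
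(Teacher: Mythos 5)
Your proof is correct and fills in precisely the ``easy calculation with filtered $(\phi, N, G_K)$-modules'' that the paper delegates to the proof of Theorem 2.4 of \cite{Tho13}: exactness of the Weil--Deligne functor in the irreducible case, and the determinant constraint combined with the Newton-above-Hodge inequality for rank-one subobjects in the two reducible cases. The normalization bookkeeping you flag at the end (matching $t_N$ of an eigenline over $L$ with $\val_p(\chi_i(\Frob_K))$, and descending a $\Gal(L/K)$-stable weakly admissible line to a $G_K$-subrepresentation) is the only remaining content, and it already enters implicitly in your indecomposable case when you pass from $\val_p(\chi(\Frob_K)) = 0$ to $t_N(D') = 0$.
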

\begin{proof}
This follows from an easy calculation with filtered $(\phi, N, G_K)$--modules; cf. the proof of \cite[Theorem 2.4]{Tho13}.
\end{proof}
We now fix a totally real field $F$ and an isomorphism $\iota : \barqp \to \bbC$. We recall that if $\pi$ is a cuspidal automorphic representation of $\GL_2(\bbA_F)$ of weight 2 and $v \in S_p$, then we have defined in \S \ref{sec_hecke_operators} what it means for the local component $\pi_v$ to be $\iota$-ordinary.
\begin{lemma}\label{lem_ordinarity_and_satake_parameters}
 Let $\pi$ be a cuspidal automorphic representation of $\GL_2(\bbA_F)$ of weight 2, and let $v \in S_p$. Then exactly one of the following is true.
\begin{enumerate}
\item $\pi_v$ is supercuspidal. In this case $\pi_v$ is not $\iota$-ordinary.
\item There is a character $\chi : F_v^\times \to \overline{\bbQ}_p^\times$ of finite order and an isomorphism
\[ \pi_v \cong \St_2(\iota\chi). \]
\(In this case, $\pi_v$ is $\iota$-ordinary and we have an equivalence
\[ r_\iota(\pi)|_{G_{F_v}} \sim \left(\begin{array}{cc} \psi_1 & \ast \\ 0 & \psi_2 \epsilon^{-1} \end{array}\right), \]
with $\psi_1|_{I_{F_v}} = \psi_2|_{I_{F_v}} = (\chi \circ \Art_{F_v}^{-1})|_{I_{F_v}}$.\)
\item There are characters $\chi_1, \chi_2 : F_v^\times \to \overline{\bbQ}_p^\times$ with open kernel and an isomorphism
\[ \pi_v \cong i_B^{\GL_2} \iota\chi_1 \otimes \iota\chi_2. \]
Suppose that $\val_p \chi_1(\varpi_v) \leq \val_p \chi_2(\varpi_v)$. Then $-\val_p(q_v)/2 \leq \val_p \chi_1(\varpi_v)$, and $\pi_v$ is $\iota$-ordinary if and only if equality holds. \(If $\pi_v$ is $\iota$-ordinary, then  we have an equivalence
\[ r_\iota(\pi)|_{G_{F_v}} \sim \left(\begin{array}{cc} \psi_1 & \ast \\ 0 & \psi_2 \epsilon^{-1} \end{array}\right), \]
with $\psi_1|_{I_{F_v}} =  (\chi_1 \circ \Art_{F_v}^{-1})|_{I_{F_v}}$ and $\psi_2|_{I_{F_v}} =  (\chi_2 \circ \Art_{F_v}^{-1})|_{I_{F_v}}$.\)
\end{enumerate}
\end{lemma}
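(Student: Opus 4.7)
The case division is complete by the classification of infinite-dimensional irreducible admissible representations of $\GL_2(F_v)$. In each case, the plan is to compute $\rec^T_{F_v}(\iota^{-1}\pi_v)$ from the explicit shape of $\pi_v$, pass via local-global compatibility to $\mathrm{WD}(r_\iota(\pi)|_{G_{F_v}})^{\text{F-ss}}$, and invoke Lemma~\ref{lem_ordinary_of_galois_and_weil_deligne} to decide whether $r_\iota(\pi)|_{G_{F_v}}$ is ordinary. When it is, I match the Frobenius-semisimplified Weil--Deligne representation of the ordinary upper-triangular form
\[
\begin{pmatrix}\psi_1 & \ast \\ 0 & \psi_2 \epsilon^{-1} \end{pmatrix},
\]
whose diagonal characters on $W_{F_v}$ are $\psi_1 \circ \Art_{F_v}$ and $(\psi_2 \circ \Art_{F_v}) |\cdot|^{-1}$ (using $\epsilon \circ \Art_{F_v} = |\cdot|$), against the one coming from $\pi_v$, and read off $\psi_1|_{I_{F_v}}$ and $\psi_2|_{I_{F_v}}$ using $\Art_{F_v}^{-1}(I_{F_v}) = \cO_{F_v}^\times$.

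In case (1), $\rec^T_{F_v}(\iota^{-1}\pi_v)$ is irreducible, so the first bullet of Lemma~\ref{lem_ordinary_of_galois_and_weil_deligne} gives non-ordinarity of $r_\iota(\pi)|_{G_{F_v}}$. In case (2), the definition (\ref{eqn_definition_of_steinberg_representation}) makes $\rec^T_{F_v}(\iota^{-1}\pi_v)$ an indecomposable representation with diagonal characters $\chi \circ \Art_{F_v}^{-1}$ and $(\chi \circ \Art_{F_v}^{-1})|\cdot|^{-1}$; this triggers the second bullet and, after matching, yields $\psi_1|_{I_{F_v}} = \psi_2|_{I_{F_v}} = (\chi \circ \Art_{F_v}^{-1})|_{I_{F_v}}$ (the $|\cdot|$-factor is trivial on units, so the order is immaterial). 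That $\chi$ is finite order is a consequence of the finiteness of the central character of $\pi$: the central character of $\St_2(\iota\chi)$ is $(\iota\chi)^2$, forcing $\chi^2$ and hence $\chi$ to have finite image. In case (3), $\rec^T_{F_v}(\iota^{-1}\pi_v)$ is the direct sum of $(\chi_1 |\cdot|^{-1/2}) \circ \Art_{F_v}^{-1}$ and $(\chi_2|\cdot|^{-1/2}) \circ \Art_{F_v}^{-1}$ with trivial $N$, so geometric Frobenius acts with eigenvalues $q_v^{1/2}\chi_1(\varpi_v)$ and $q_v^{1/2}\chi_2(\varpi_v)$ (for some fixed choice of square root of $q_v$). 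The third bullet of Lemma~\ref{lem_ordinary_of_galois_and_weil_deligne} then converts the inequality $\val_p(q_v^{1/2}\chi_1(\varpi_v)) \geq 0$ into the claimed bound and characterizes ordinarity as equality; comparison of the two descriptions of the WD representation gives $\psi_i|_{I_{F_v}} = (\chi_i \circ \Art_{F_v}^{-1})|_{I_{F_v}}$.

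It remains to compute the $\mathbf{U}_v$-action on $\iota^{-1}\pi_v^{U_1^1(v^n)}$ and check compatibility with Galois ordinarity. In case (2), an analogue of Lemma~\ref{lem_u_eigenvalue_on_twist_of_steinberg} for $\GL_2$ identifies the $\mathbf{U}_v$-eigenvalue on $\iota^{-1}\pi_v^{U_0(v)}$ with $\chi(\varpi_v)$, which is a $p$-adic unit since $\chi$ is finite order. In case (3), a standard Iwahori-level calculation shows that for $n$ large enough the eigenvalues of $\mathbf{U}_v$ on $\iota^{-1}\pi_v^{U_1^1(v^n)}$ are $q_v^{1/2}\iota^{-1}\chi_i(\varpi_v)$, so $\iota$-ordinarity is equivalent to $\val_p \chi_i(\varpi_v) = -\val_p(q_v)/2$ for some $i$, exactly matching the Galois condition. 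In case (1), any $\mathbf{U}_v$-eigenvector in $\iota^{-1}\pi_v^{U_1^1(v^n)}$ with $p$-adic unit eigenvalue would, by Casselman's theorem, project nontrivially to the Jacquet module of $\pi_v$ relative to the upper-triangular Borel; this Jacquet module vanishes for supercuspidal $\pi_v$, a contradiction. The main bookkeeping obstacle is to keep the $|\cdot|^{-1/2}$ twist in $\rec^T_{F_v}$, the geometric-Frobenius convention, and the Hodge--Tate normalization $\mathrm{HT}_\tau(\epsilon) = \{-1\}$ aligned so that the Galois and automorphic dichotomies coincide on the nose; the supercuspidal eigenvalue bound is the only ingredient that goes beyond Lemma~\ref{lem_ordinary_of_galois_and_weil_deligne}.
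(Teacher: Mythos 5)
Your proof is correct and follows the same route the paper takes: compute $\rec^T_{F_v}(\iota^{-1}\pi_v)$ case by case, pass to $\mathrm{WD}(r_\iota(\pi)|_{G_{F_v}})^{\mathrm{F\text{-}ss}}$ by local-global compatibility, apply Lemma~\ref{lem_ordinary_of_galois_and_weil_deligne} to decide Galois-ordinarity, and compare with the $\mathbf{U}_v$-eigenvalues on Iwahori-level invariants. One small imprecision: in case (2) the character $\chi$ is only assumed to have finite order and may be ramified, in which case $\pi_v^{U_0(v)} = 0$ and the $\mathbf{U}_v$-eigenvalue computation must be carried out at level $U_1^1(v^n)$ for $n$ large (just as you do in case (3)); this is a notational slip rather than a gap in the argument.
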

\begin{proof}
This follows Lemma \ref{lem_ordinary_of_galois_and_weil_deligne} and local-global compatibility; see again the proof of \cite[Theorem 2.4]{Tho13}.
\end{proof}

\begin{lemma}\label{lem_ordinarity_supercuspidality_and_base_change}
Let $\pi$ be a cuspidal automorphic representation of $\GL_2(\bbA_F)$ of weight 2, and let $v \in S_p$.
\begin{enumerate}
\item The representation $r_\iota(\pi)|_{G_{F_v}}$ is ordinary if and only if $\pi_v$ is $\iota$-ordinary.
\item Suppose that $\pi_v$ is supercuspidal. Let $K/F_v$ be a finite extension inside $\overline{F}_v$ such that $\rec^T_{F_v}(\pi_v)|_{W_K}$ is unramified. Then $r_\iota(\pi)|_{G_{K}}$ is crystalline and non-ordinary.
\end{enumerate}
\end{lemma}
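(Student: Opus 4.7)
The plan is to reduce both assertions to Lemmas \ref{lem_ordinary_of_galois_and_weil_deligne} and \ref{lem_ordinarity_and_satake_parameters}, together with the local-global compatibility isomorphism $\mathrm{WD}(r_\iota(\pi)|_{G_{F_v}})^{\text{F-ss}} \cong \rec^T_{F_v}(\iota^{-1}\pi_v)$.

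For (1) the implication from $\iota$-ordinarity of $\pi_v$ to ordinarity of $r_\iota(\pi)|_{G_{F_v}}$ is read off directly from the parenthetical remarks in the Steinberg and principal series cases of Lemma \ref{lem_ordinarity_and_satake_parameters}, which already display $r_\iota(\pi)|_{G_{F_v}}$ in the shape (\ref{eqn_definition_of_ordinary}). For the converse I would argue by contrapositive within the trichotomy of that lemma. When $\pi_v$ is supercuspidal, $\rec^T_{F_v}(\iota^{-1}\pi_v)$ is irreducible, hence so is $\mathrm{WD}(r_\iota(\pi)|_{G_{F_v}})^{\text{F-ss}}$, and Lemma \ref{lem_ordinary_of_galois_and_weil_deligne} forces $r_\iota(\pi)|_{G_{F_v}}$ to be non-ordinary. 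When $\pi_v$ is a non-$\iota$-ordinary principal series $i_B^{\GL_2}\iota\chi_1 \otimes \iota\chi_2$, the strict inequality $\val_p \chi_1(\varpi_v) > -\val_p(q_v)/2$ translates, via the $|\cdot|^{-1/2}$-twist built into the definition of $\rec^T$, into strict positivity of the smaller Frobenius valuation appearing in the third case of Lemma \ref{lem_ordinary_of_galois_and_weil_deligne}, which again yields non-ordinarity.

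For (2), crystallinity is straightforward: $\mathrm{WD}(r_\iota(\pi)|_{G_K})$ is obtained by restricting $\mathrm{WD}(r_\iota(\pi)|_{G_{F_v}})$ from $W_{F_v}$ to $W_K$, and is therefore unramified with trivial monodromy (the latter because $\pi_v$ is supercuspidal), and a de Rham representation whose Weil--Deligne data is unramified with $N = 0$ is crystalline. For the non-ordinary assertion I will first replace $K$ by its Galois closure $\widetilde{K}$ over $F_v$; this is harmless because ordinarity descends under further restriction, so it suffices to rule out ordinarity over $\widetilde{K}$, and the unramifiedness hypothesis transfers automatically to $W_{\widetilde{K}} \subset W_K$. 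Write $V$ for the representation space of $r_\iota(\pi)|_{G_{F_v}}$. Supercuspidality forces $V$ to be irreducible as a $G_{F_v}$-representation, so if $r_\iota(\pi)|_{G_{\widetilde{K}}}$ were ordinary with an invariant sub-line $L \subset V$ on which $G_{\widetilde{K}}$ acts by a character $\psi_1$ with $\psi_1|_{I_{\widetilde{K}}}$ of finite order, there would exist $g \in G_{F_v}$ with $gL \neq L$. Since $\widetilde{K}/F_v$ is Galois, $g$ normalizes both $G_{\widetilde{K}}$ and $I_{\widetilde{K}}$, so $gL$ is a second $G_{\widetilde{K}}$-stable line on which $G_{\widetilde{K}}$ acts by the conjugate character $\psi_1^g$, and a dimension count gives $V|_{G_{\widetilde{K}}} = L \oplus gL$. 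Matching this semisimple decomposition with the one $\psi_1 \oplus \psi_2 \epsilon^{-1}$ coming from the putative ordinary upper-triangular form yields either $\psi_1^g = \psi_1$ (whence $\psi_2\epsilon^{-1} = \psi_1$) or $\psi_1^g = \psi_2\epsilon^{-1}$. In both cases, restricting to $I_{\widetilde{K}}$ contradicts the fact that $\epsilon|_{I_{\widetilde{K}}}$ has infinite order while $\psi_1, \psi_2$ and their $g$-conjugates do not.

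The main obstacle is the non-ordinarity step in (2): the conjugate-line argument has to be set up carefully, ensuring that ordinarity really does descend under the restriction $G_K \supset G_{\widetilde{K}}$ (so that the Galois-closure reduction is legitimate), that the comparison of the two decompositions of $V|_{G_{\widetilde{K}}}$ genuinely forces one of the two algebraic equalities above, and that the final inertia book-keeping correctly produces the contradiction with $\epsilon$ having infinite order on inertia.
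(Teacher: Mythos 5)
Your proof of (1) matches the paper's (both are direct readings of Lemmas \ref{lem_ordinary_of_galois_and_weil_deligne} and \ref{lem_ordinarity_and_satake_parameters}). For (2) the argument is correct but genuinely different from the paper's. The paper's proof rests on the observation that for an irreducible $2$-dimensional Weil representation $r$ with open kernel whose restriction to $W_K$ is unramified, the two eigenvalues of $r(\Frob_K)$ share the same $p$-adic valuation; substituting into the third case of Lemma \ref{lem_ordinary_of_galois_and_weil_deligne} forces that common valuation to equal $[K_0:\bbQ_p]/2>0$, hence non-ordinarity. Your approach is instead a purely group-theoretic conjugation argument: pass to the Galois closure $\widetilde K$, use irreducibility of $r_\iota(\pi)|_{G_{F_v}}$ (which does follow from irreducibility of $\rec^T_{F_v}(\iota^{-1}\pi_v)$ together with compatibility of $\mathrm{WD}$ with subobjects) to manufacture a second $G_{\widetilde K}$-stable line $gL$, and then extract a contradiction from the mismatch in inertial orders between the finite-order characters $\psi_1,\psi_2$ and the cyclotomic character $\epsilon$. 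Both routes are sound. The paper's is shorter and reuses the valuation book-keeping already in play in Lemma \ref{lem_ordinary_of_galois_and_weil_deligne}; yours trades that for the Galois-closure reduction and a short case analysis on whether $\psi_1=\psi_2\epsilon^{-1}$, and needs no quantitative input beyond the infinite order of $\epsilon$ on inertia.
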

\begin{proof}
The first part is an immediate consequence of Lemma \ref{lem_ordinary_of_galois_and_weil_deligne} and Lemma \ref{lem_ordinarity_and_satake_parameters}. The second part follows immediately from the observation that if $r : W_{F_v} \to \GL_2(\barqp)$ is an irreducible representation with open kernel, and $r|_{W_K}$ is unramified, then all of the eigenvalues of $r(\Frob_K)$ have the same $p$-adic valuation.
\end{proof}

\begin{lemma}\label{lem_convert_ordinary_to_supercuspidal}
Suppose that $[F : \bbQ]$ is even, and let $\pi$ be a cuspidal automorphic representation of $\GL_2(\bbA_F)$ of weight 2. Suppose that for each finite place $v \not \in S_p$ of $F$, either $\pi_v$ is unramified or $q_v \equiv 1 \text{ mod }p$ and $\pi_v$ is an unramified twist of the Steinberg representation, while for every $v \in S_p$, $\pi_v$ is $\iota$-ordinary and $\pi_v^{U_0(v)} \neq 0$. Suppose furthermore that $\overline{r_\iota(\pi)}$ is irreducible and $[F(\zeta_p) : F] \geq 4$. Let $\sigma \subset S_p$ be a \(possibly empty\) subset. Then there exists a cuspidal automorphic representation $\pi'$ of $\GL_2(\bbA_F)$ of weight 2, satisfying the following conditions:
\begin{itemize}
\item There is an isomorphism of residual representations $\overline{r_\iota(\pi')} \cong \overline{r_\iota(\pi)}$, and $\pi$ and $\pi'$ have the same central character.
\item If $v \in \sigma$, then $\pi'_{v}$ is $\iota$-ordinary. If $v \in S_p - \sigma$, then $\pi'_{v}$ is supercuspidal.
\item If $v \nmid p\infty$ is a place of $F$ and $\pi_{v}$ is unramified, then $\pi'_{v}$ is unramified. If $\pi_{v}$ is ramified, then $\pi'_{v}$ is a ramified principal series representation.
\end{itemize}
\end{lemma}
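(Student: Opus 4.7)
The plan is to find a $p$-adic Galois representation $\rho' : G_F \to \GL_2(\overline{\bbZ}_p)$ lifting $\overline{\rho} := \overline{r_\iota(\pi)}$ that realises the desired local behaviour at every finite place, and then to invoke a modularity lifting theorem to produce a cuspidal $\pi'$ with $r_\iota(\pi') \cong \rho'$; the required properties of $\pi'$ then follow from local-global compatibility via Lemmas \ref{lem_ordinarity_and_satake_parameters} and \ref{lem_ordinarity_supercuspidality_and_base_change}.

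To construct $\rho'$ I first select local lifts. For each $v \in S_p - \sigma$ I would build $\rho_v : G_{F_v} \to \GL_2(\overline{\bbZ}_p)$ potentially crystalline of Hodge--Tate weights $\{0,1\}$ with irreducible associated Weil--Deligne representation (hence corresponding under $\rec_{F_v}^T$ to a supercuspidal, by Lemma \ref{lem_ordinary_of_galois_and_weil_deligne}) and with determinant $(\epsilon^{-1}\iota^{-1}\omega_\pi \circ \Art_F^{-1})|_{G_{F_v}}$; such a $\rho_v$ can be realised as an induction $\Ind_{G_{L_v}}^{G_{F_v}}\widetilde\chi_v$ from a quadratic $L_v/F_v$ killing $\overline{\rho}|_{G_{F_v}}$ modulo centre, with $\widetilde\chi_v$ chosen to make the induction irreducible and the determinant correct. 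For $v \in \sigma$ I retain the ordinary local lift provided by $r_\iota(\pi)|_{G_{F_v}}$; for $v \nmid p\infty$ I take the unramified lift coming from $r_\iota(\pi)$ if $\pi_v$ is unramified, and a tamely ramified principal-series lift with matching determinant if $\pi_v$ is Steinberg. Using the absolute irreducibility of $\overline{\rho}$ together with the hypothesis $[F(\zeta_p):F] \geq 4$, I then globalise via the Ramakrishna--Khare--Wintenberger method: adjoining ramification at a finite set of Ramakrishna-type auxiliary primes kills the dual Selmer group obstructing the existence of a global lift with the prescribed local conditions, producing $\rho'$ unramified outside a controlled finite set, with $\det \rho' = \det r_\iota(\pi)$, and realising the chosen local behaviour at every finite place.

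The main obstacle is the final modularity lifting step. Kisin's theorem \cite{Kis09a} applies directly when $\overline{\rho}|_{G_{F(\zeta_p)}}$ is absolutely irreducible, that is, in the non-dihedral sub-case. In the residually dihedral sub-case --- the very situation motivating the paper --- one cannot invoke the paper's own main theorem without circularity, so an independent route is required. One natural approach is to pass to the quadratic extension $K/F$ through which $\overline{\rho}$ is induced, so that $\overline{\rho}|_{G_K}$ becomes a direct sum of two characters, and then to apply Skinner--Wiles-style modularity lifting for residually reducible ordinary representations (cf.\ \cite{Ski99, Ski01}) over $K$ to obtain automorphy of $\rho'|_{G_K}$, descending to $F$ by cyclic solvable base change. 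The hypothesis $[F(\zeta_p):F] \geq 4$ enters both in producing the auxiliary primes needed for the globalisation in the previous paragraph and in controlling the Selmer obstructions appearing in each modularity-lifting application; without it there is in general insufficient room to arrange the Ramakrishna primes.
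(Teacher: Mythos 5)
Your proposal replaces a direct automorphic argument with a ``construct a Galois representation, then prove it modular'' argument, and this creates a fatal circularity in exactly the case the paper cares about. The paper's own proof (which it omits, citing the theory of types) works entirely on the automorphic side: one transfers $\pi$ to the definite quaternion algebra ramified at all archimedean places, uses results of the kind in \cite[Lemma 3.1.5]{Kis09}, \cite[Theorem 1.1]{Gee09a}, \cite[Lemma 4.4.1]{Clo08} to find a congruent algebraic modular form with the prescribed local inertial types (supercuspidal at $S_p - \sigma$, ramified principal series where $\pi_v$ was Steinberg, unramified elsewhere), and then transfers back by Jacquet--Langlands. The hypotheses that $\overline{r_\iota(\pi)}$ is irreducible and $[F(\zeta_p) : F] \geq 4$ are used only to supply an auxiliary place $a$ at which $\overline{r_\iota(\pi)}$ satisfies no level-raising congruence (cf. Lemma \ref{lem_auxiliary_place_a}, via \cite[Lemma 12.3]{Jar99}), so that the spaces of algebraic modular forms can be taken with neat level and carry a perfect pairing. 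No modularity lifting theorem is invoked; the argument is logically prior to the entire $R = \bbT$ apparatus, which is exactly why it can be cited in the proof of the main theorem without circularity.

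Your route cannot be repaired in the residually dihedral case. You correctly note that quoting the paper's main theorem would be circular, but your proposed substitute --- Skinner--Wiles over the quadratic field $K$ from which $\overline{\rho}$ is induced --- requires the lift to be \emph{ordinary} at every place above $p$, whereas your $\rho'$ is by construction potentially crystalline \emph{non-ordinary} (supercuspidal Weil--Deligne type) at each $v \in S_p - \sigma$. Thus \cite{Ski99}, \cite{Ski01} simply do not apply, and no modularity lifting theorem available to the paper covers the residually dihedral, supersingular-at-$p$ situation; supplying one is the content of the paper itself. There is also a secondary difficulty: the Ramakrishna--Khare--Wintenberger globalisation you invoke to build $\rho'$ typically introduces extra ramification at auxiliary ``trivial primes,'' which would conflict with the lemma's requirement that $\pi'$ be unramified wherever $\pi$ is, so even the construction of $\rho'$ would need more care than sketched. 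The change-of-type argument on the automorphic side is not merely a tidier alternative --- it is the only route consistent with the logical structure of the paper.
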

\begin{proof}
We omit the proof, which is an easy consequence of the theory of types; cf. \cite[Lemma 3.1.5]{Kis09}, \cite[Theorem 1.1]{Gee09a} and \cite[Lemma 4.4.1]{Clo08}. The assumptions that $\overline{r_\iota(\pi)}$ is irreducible and $[F(\zeta_p) : F] \geq 4$ are imposed so that one can apply \cite[Lemma 12.3]{Jar99} (existence of auxiliary primes where $\overline{r_\iota(\pi)}$  satisfies no level-raising congruences, cf. Lemma \ref{lem_auxiliary_place_a} below).
\end{proof}

\subsection{Galois deformation theory}\label{sec_deformation_theory}

We now establish notation that will remain in effect until the end of \S \ref{sec_galois_theory}. Let $p$ be an odd prime, and let $E$ be a coefficient field. We fix a continuous, absolutely irreducible representation $\overline{\rho} : G_F \to \GL_2(k)$ and a continuous character $\mu : G_F \to \cO^\times$ which lifts $\det \overline{\rho}$. We will assume that $k$ contains the eigenvalues of all elements in the image of $\overline{\rho}$. We also fix a finite set $S$ of finite places of $F$, containing the set $S_p$ of places dividing $p$, and the places at which $\overline{\rho}$ and $\mu$ are ramified. For each $v \in S$, we fix a ring $\Lambda_v \in \CNL_\cO$ and we define $\Lambda = \widehat{\otimes}_{v \in S} \Lambda_v$, the completed tensor product being over $\cO$. Then $\Lambda \in \CNL_\cO$.

Let $v \in S$. We write $\cD_v^\square : \CNL_{\Lambda_v} \to \mathrm{Sets}$ for the functor that associates to $R \in \mathrm{CNL}_{\Lambda_v}$ the set of all continuous homomorphisms $r : G_{F_v} \to \GL_2(R)$ such that $r \mod \ffrm_R = \overline{\rho}|_{G_{F_v}}$ and $\det r$ agrees with the composite $G_{F_v} \to \cO^\times \to R^\times$ given by $\mu|_{G_{F_v}}$ and the structural homomorphism $\cO \to R$. It is easy to see that $\cD_v^\square$ is represented by an object $R_v^{\square} \in \CNL_{\Lambda_v}$.
\begin{definition}
Let $v \in S$. A local deformation problem for $\overline{\rho}|_{G_{F_v}}$ is a subfunctor $\cD_v \subset \cD_v^\square$ satisfying the following conditions:
\begin{itemize}
\item $\cD_v$ is represented by a quotient $R_v$ of $R_v^\square$.
\item For all $R \in \CNL_{\Lambda_v}$, $a \in \ker(\GL_2(R) \to \GL_2(k))$ and $r \in \cD_v(R)$, we have $a r a^{-1} \in \cD_v(R)$.
\end{itemize}
\end{definition}
We will write $\rho_v^\square : G_{F_v} \to \GL_2(R_v^\square)$ for the universal lifting. If a quotient $R_v$ of $R_v^\square$ corresponding to a local deformation problem $\cD_v$ has been fixed, we will write $\rho_v : G_{F_v} \to \GL_2(R_v)$ for the universal lifting of type $\cD_v$.
\begin{definition}
A global deformation problem is a tuple
\[ \cS = (\overline{\rho}, \mu, S, \{ \Lambda_v \}_{v \in S}, \{ \cD_v \}_{v \in S}), \]
where:
\begin{itemize}
\item The objects $\overline{\rho} : G_F \to \GL_2(k)$, $\mu : G_F \to k^\times$, $S$ and $\{ \Lambda_v \}_{v \in S}$ are as at the beginning of this section.
\item For each $v \in S$, $\cD_v$ is a local deformation problem for $\overline{\rho}|_{G_{F_v}}$.
\end{itemize}
\end{definition}
\begin{definition}
Let $\cS = (\overline{\rho}, \mu, S, \{ \Lambda_v \}_{v \in S}, \{ \cD_v \}_{v \in S})$ be a global deformation problem. Let $R \in \CNL_{\Lambda}$, and let $\rho : G_F \to \GL_2(R)$ be a continuous lifting of $\overline{\rho}$. We say that $\rho$ is of type $\cS$ if it satisfies the following conditions:
\begin{itemize}
\item $\rho$ is unramified outside $S$.
\item $\det \rho = \mu$. More precisely, the homomorphism $\det \rho : G_F \to R^\times$ agrees with the composite $G_F \to \cO^\times \to R^\times$ induced by $\mu$ and the structural homomorphism $\cO \to R$.
\item For each $v \in S$, the restriction $\rho|_{G_{F_v}}$ lies in $\cD_v(R)$, where we give $R$ the natural $\Lambda_v$-algebra structure arising from the homomorphism $\Lambda_v \to \Lambda$.
\end{itemize}
We say that two liftings $\rho_1, \rho_2 : G_F \to \GL_2(R)$ are strictly equivalent if there exists a matrix $a \in \ker(\GL_2(R) \to \GL_2(k))$ such that $\rho_2 = a \rho_1 a^{-1}$. 
\end{definition}
It is easy to see that strict equivalence preserves the property of being of type $\cS$. We write $\cD^\square_\cS$ for the functor $\CNL_{\Lambda} \to \Sets$ which associates to $R \in \CNL_{\Lambda}$ the set of liftings $\rho : G_F \to \GL_2(R)$ which are of type $\cS$. We write $\cD_\cS$ for the functor $\CNL_{\Lambda} \to \Sets$ which associates to $R \in \CNL_{\Lambda}$ the set of strict equivalence classes of liftings of type $\cS$. 

\begin{definition} If $T \subset S$ and $R \in \mathrm{CNL}_\Lambda$, then we define a $T$-framed lifting of $\overline{\rho}$ to $R$ to be a tuple $(\rho, \{ \alpha_v \}_{v \in T})$, where $\rho : G_F \to \GL_2(R)$ is a lifting and for each $ v\in T$, $\alpha_v$ is an element of $\ker(\GL_2(R) \to \GL_2(k))$. Two $T$-framed liftings $(\rho_1, \{ \alpha_v \}_{v \in T})$ and $(\rho_2, \{ \beta_v \}_{v \in T})$ are said to be strictly equivalent if there is an element $a \in \ker(\GL_2(R) \to \GL_2(k))$ such that $\rho_2 = a \rho_1 a^{-1}$ and $\beta_v = a \alpha_v$ for each $v \in T$.
\end{definition}
We write $\cD^T_\cS$ for the functor $\CNL_{\Lambda} \to \Sets$ which associates to $R \in \CNL_{\Lambda}$ the set of strict equivalence classes of $T$-framed liftings $(\rho, \{ \alpha_v \}_{v \in T})$ to $R$ such that $\rho$ is of type $\cS$. 

\begin{theorem}
Let $\cS = (\overline{\rho}, \mu, S, \{ \Lambda_v \}_{v \in S}, \{ \cD_v \}_{v \in S})$ be a global deformation problem. Then the functors $\cD_\cS$, $\cD_\cS^\square$ and $\cD_\cS^T$ are represented by objects $R_\cS$, $R_\cS^\square$ and $R_\cS^T$, respectively, of $\CNL_{\Lambda}$. 
\end{theorem}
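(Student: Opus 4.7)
The plan is to verify Schlessinger's representability criterion for each of the three functors, exploiting two inputs: the local problems $\cD_v$ are by assumption representable by $R_v \in \CNL_{\Lambda_v}$, and $G_{F,S}$ satisfies the $\Phi_p$-finiteness condition on continuous mod-$p$ cohomology, which guarantees that the relevant tangent spaces are finite-dimensional $k$-vector spaces.

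For $\cD_\cS^\square$ I would first forget the $\Lambda$-algebra structure and consider the functor $\widetilde{\cD}^\square_\cS : \CNL_\cO \to \Sets$ of liftings $\rho : G_{F,S} \to \GL_2(R)$ with $\det \rho = \mu$, but with no local deformation conditions imposed at the places of $S$. This is representable by an object $\widetilde{R}^\square \in \CNL_\cO$ via the standard Schlessinger--Mazur argument, using $\Phi_p$-finiteness to get finite-dimensional tangent spaces. For each $v \in S$, restriction to $G_{F_v}$ gives a classifying morphism $R_v^\square \to \widetilde{R}^\square$, and the condition $\rho|_{G_{F_v}} \in \cD_v(R)$ is exactly the condition that this morphism factor through the quotient $R_v$, which cuts out a closed subfunctor. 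Writing $A := \widehat{\otimes}_{v \in S} R_v^\square$ and $B := \widehat{\otimes}_{v \in S} R_v$ (completed tensor products over $\cO$), I then set
\[ R_\cS^\square := \widetilde{R}^\square \,\widehat{\otimes}_{A}\, B, \]
which naturally lies in $\CNL_\Lambda$ via the structural map $\Lambda = \widehat{\otimes}_{v \in S} \Lambda_v \to B$. By construction, morphisms $R_\cS^\square \to R$ in $\CNL_\Lambda$ correspond to liftings of type $\cS$, establishing representability of $\cD_\cS^\square$.

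For $\cD_\cS$, the formal group scheme $\widehat{\GL}_2$ (completion along the identity section) acts on $R_\cS^\square$ by conjugation of lifts, and $\cD_\cS$ is the quotient. Since $\overline{\rho}$ is absolutely irreducible, Schur's lemma applied inductively in the length of $R$ shows that the centralizer in $\GL_2(R)$ of any lift of $\overline{\rho}$ consists exactly of the scalar matrices; hence the stabilizer of every $R$-point of $\cD_\cS^\square$ under the $\widehat{\GL}_2$-action is precisely $\widehat{\bbG}_m$. Standard descent in $\CNL_\Lambda$ then produces a quotient $R_\cS \in \CNL_\Lambda$ representing $\cD_\cS$, with $R_\cS^\square$ (non-canonically) a formal power series ring over $R_\cS$ in three variables, corresponding to $\widehat{\GL}_2/\widehat{\bbG}_m$.

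Finally, for $\cD_\cS^T$: the $T$-framings adjoin $4\#T$ new variables together with a corresponding diagonal $\widehat{\GL}_2$-action. When $T = \emptyset$ we recover $\cD_\cS$. When $T$ is non-empty, a would-be automorphism consists of a scalar $a \in 1 + \ffrm_R$ satisfying $a \alpha_v = \alpha_v$ for $\alpha_v \equiv I \pmod{\ffrm_R}$, which forces $a = 1$; hence $T$-framed lifts have trivial automorphism group and the $\widehat{\GL}_2$-action is free. The quotient is then represented by $R_\cS^T$, which is (non-canonically) a formal power series ring over $R_\cS$ in $4\#T - 1$ variables. The only real obstacle throughout is bookkeeping -- checking that the individual $\Lambda_v$-structures assemble correctly under the pushout defining $R_\cS^\square$, and that the $\widehat{\GL}_2$-quotients are formed carefully enough to remain inside $\CNL_\Lambda$. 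Beyond this, the result is a formal consequence of Mazur--Schlessinger together with the absolute irreducibility of $\overline{\rho}$.
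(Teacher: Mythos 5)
The paper itself offers no proof of this theorem: it simply cites Gouv\^ea's notes for the representability of $\cD_\cS$ and asserts that $\cD_\cS^\square$, $\cD_\cS^T$ follow easily. What you have written out is the standard argument (Schlessinger's criterion via $\Phi_p$-finiteness of $G_{F,S}$, a pushout along local restriction maps, and descent using absolute irreducibility), which is precisely the content of the cited references. So the route is the same; you have just filled in the details that the paper declines to give. Your identification of the relative dimensions (three variables from $R_\cS$ to $R_\cS^\square$; $4\#T-1$ from $R_\cS$ to $R_\cS^T$) agrees with what the paper later uses in \S\ref{sec_patching} via the ring $\cT$.

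There is one slip you should repair. As you set things up, $\widetilde{R}^\square$ lies in $\CNL_\cO$, whereas $R_v^\square$ lies in $\CNL_{\Lambda_v}$ and represents a functor \emph{on $\CNL_{\Lambda_v}$}. Consequently the ``classifying morphism $R_v^\square \to \widetilde{R}^\square$'' does not exist as written: to classify a local lift by a map out of $R_v^\square$ one needs the target to carry a $\Lambda_v$-algebra structure, which $\widetilde{R}^\square$ does not. The fix is small. Introduce the naive local lifting ring $\widetilde{R}_v^\square \in \CNL_\cO$ (representing lifts of $\overline{\rho}|_{G_{F_v}}$ with the prescribed determinant on $\CNL_\cO$); since $\cD_v^\square$ factors through the forgetful functor $\CNL_{\Lambda_v}\to\CNL_\cO$, one has $R_v^\square \cong \widetilde{R}_v^\square\,\widehat{\otimes}_\cO\,\Lambda_v$. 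Restriction now produces a genuine map $\widetilde{R}_v^\square \to \widetilde{R}^\square$, and the pushout
\[
R_\cS^\square := \widetilde{R}^\square\,\widehat{\otimes}_{\widehat{\otimes}_{v\in S}\widetilde{R}_v^\square}\,\Bigl(\widehat{\otimes}_{v\in S} R_v\Bigr)
\]
is well-formed, inherits a $\Lambda$-algebra structure from the right-hand factor, and represents $\cD_\cS^\square$ on $\CNL_\Lambda$. With this correction, the rest of your argument (triviality of stabilizers by the Artinian Schur lemma, freeness of the $\widehat{\GL}_2$-action on $T$-framed lifts when $T\neq\emptyset$) goes through.
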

\begin{proof}
This is well-known; see \cite[Appendix 1]{Gou01} for a proof that $\cD_\cS$ is representable. The representability of the functors $\cD_\cS^\square$ and $\cD_\cS^T$ can be deduced easily from this.
\end{proof}
We will generally write $\rho_\cS : G_F \to \GL_2(R_\cS)$ for a choice of representative of the universal deformation $[\rho_\cS]$ of type $\cS$.

Let $\cS = (\overline{\rho}, \mu, S, \{ \Lambda_v \}_{v \in S}, \{ \cD_v \}_{v \in S})$ be a global deformation problem, and for each $v \in S$ let $R_v \in \CNL_{\Lambda_v}$ denote the representing object of $\cD_v$. We write $A_\cS^T = \widehat{\otimes}_{v \in T} R_v$ for the completed tensor product, taken over $\cO$, of the rings $R_v$. The ring $A^T_\cS$ has a canonical $\Lambda_T$-algebra structure, where $\Lambda_T = \widehat{\otimes}_{v \in T} \Lambda_v$; it is easy to see that $A^T_\cS$ represents the functor $\mathrm{CNL}_{\Lambda_T} \to \mathrm{Sets}$ which associates to a $\Lambda_T$-algebra $R$ the set of tuples $(\rho_v)_{v \in T}$, where for each $v \in T$, $\rho_v : G_{F_v} \to \GL_2(R)$ is a lifting of $\overline{\rho}|_{G_{F_v}}$ such that $\rho_v \in \cD_v(R)$ when we give $R$ the $\Lambda_v$-algebra structure arising from the homomorphism $\Lambda_v \to \Lambda_T \to R$.

The natural transformation $(\rho, \{ \alpha_v \}_{v \in T}) \mapsto ( \alpha_v^{-1} \rho|_{G_{F_v}} \alpha_v)_{v \in T}$ induces a canonical map $A^T_\cS \to R_\cS^T$, which is a homomorphism of $\Lambda_T$-algebras. We will generally use this construction only when $\Lambda_v = \cO$ for each $v \in S - T$, in which case there is a canonical isomorphism $\Lambda_T \cong \Lambda$.

\subsection{Galois cohomology}

Let $\cS = (\overline{\rho}, \mu, S, \{ \Lambda_v \}_{v \in S}, \{ \cD_v \}_{v \in S})$ be a global deformation problem. For each $v \in S$, let $R_v$ denote the representing object of $\cD_v$. There are canonical isomorphisms
\begin{equation}\label{eqn_defn_of_local_cocycle_space}
Z^1(F_v, \ad^0 \overline{\rho}) \cong \Hom_k(\ffrm_{R_v^\square}/(\ffrm_{R_v^\square}^2, \ffrm_{\Lambda_v}), k) \cong \Hom_{\CNL_{\Lambda_v}}(R^\square_v, k[\epsilon]/(\epsilon^2)),
\end{equation}
where we write $Z^1(F_v, \ad^0 \overline{\rho})$ for the space of continuous 1-coycles $\phi : G_{F_v} \to \ad^0 \overline{\rho}$. The isomorphism between the first and third terms associates to a cocycle $\phi \in Z^1(F_v, \ad^0 \overline{\rho})$ the classifying homomorphism of the lifting $(1 + \epsilon \phi)\overline{\rho}|_{G_{F_v}}$; the isomorphism between the third and second terms is given by restriction to $\ffrm_{R_v^\square}$. We write $\cL^1_v \subset Z^1(F_v, \ad^0 \overline{\rho})$ for the pre-image of the subspace $\Hom_k(\ffrm_{R_v}/(\ffrm_{R_v}^2, \ffrm_{\Lambda_v}), k)$ under the isomorphism (\ref{eqn_defn_of_local_cocycle_space}). It follows from the definitions that $\cL^1_v$ is also the pre-image of a subspace $\cL_v \subset H^1(F_v, \ad^0 \overline{\rho})$.

Now let $T \subset S$ be a non-empty subset, and suppose that $\Lambda_v = \cO$ for each $v \in S - T$. Then there is a canonical isomorphism $\Lambda_T \cong \Lambda$, and the map $A_\cS^T \to R_\cS^T$ is a morphism of $\Lambda$-algebras. In this case, we define (following \cite[\S 2]{Clo08}) a complex $C_{\cS, T}^\bullet(\ad^0 \overline{\rho})$ by the formula
\[ C_{\cS, T}^i(\ad^0 \overline{\rho}) = \left\{ \begin{array}{ll} C^0(F_S/F, \ad \overline{\rho}) & i = 0\\
C^1(F_S/F, \ad^0 \overline{\rho}) \oplus_{v \in T} C^0(F_v, \ad \overline{\rho}) & i = 1\\
C^2(F_S/F, \ad^0 \overline{\rho}) \oplus_{v \in S-T} C^1(F_v, \ad^0 \overline{\rho})/\cL^1_v & i =2\\
C^i(F_S/F, \ad^0 \overline{\rho}) \oplus_{v \in S} C^{i-1}(F_v, \ad^0 \overline{\rho}) & \text{otherwise. }
\end{array}\right. \]
(Here, for example, $C^\bullet(F_S/F, \ad^0 \overline{\rho})$ denotes the usual complex of continuous inhomogeneous cochains $G_{F, S}^\bullet \to \ad^0 \overline{\rho}$, which calculates the continuous group cohomology of the discrete $\bbZ[G_{F, S}]$-module $\ad^0 \overline{\rho}$; cf. \cite[Ch. 1, \S 2.2]{Ser94}.) The boundary map is given by the formula
\begin{gather*} C^i_{\cS, T}(\ad^0 \overline{\rho}) \to C^{i+1}_{\cS, T}(\ad^0 \overline{\rho}) \\ 
(\phi, (\psi_v)_v) \mapsto (\partial \phi, (\phi|_{G_{F_v}} - \partial \psi_v)_v). 
\end{gather*} 
There is a long exact sequence of cohomology groups
\begin{equation}\label{eqn_long_exact_sequence_in_taylor_cohomology}
\xymatrix@R-2pc{ 
0 \ar[r] & H^0_{\cS, T}(\ad^0 \overline{\rho}) \ar[r] & H^0(F_S/F, \ad \overline{\rho}) \ar[r] & \oplus_{v \in T} H^0(F_v, \ad \overline{\rho}) \\
\ar[r] &  H^1_{\cS, T}(\ad^0 \overline{\rho}) \ar[r] & H^1(F_S/F, \ad^0 \overline{\rho}) \ar[r] & \oplus_{v \in T} H^1(F_v, \ad^0 \overline{\rho}) \oplus_{v \in S-T} H^1(F_v, \ad^0 \overline{\rho})/\cL_v \\
 \ar[r] &  H^2_{\cS, T}(\ad^0 \overline{\rho}) \ar[r] & H^2(F_S/F, \ad^0 \overline{\rho}) \ar[r] & \oplus_{v \in S} H^2(F_v, \ad^0 \overline{\rho}) \\
\ar[r] &  \dots}
\end{equation}
and consequently an equation
\begin{equation}\label{eqn_equality_of_euler_characteristics}
\chi_{\cS, T}(\ad^0 \overline{\rho}) = \chi(F_S/F, \ad^0 \overline{\rho}) - \sum_{v \in S} \chi(F_v, \ad^0 \overline{\rho}) - \sum_{v \in S - T} (\dim_k \cL_v - h^0(F_v, \ad^0 \overline{\rho})) + 1 - \# T 
\end{equation}
relating the Euler characteristics of these complexes (which are all finite; see \cite[Ch. 1, Corollary 2.3]{Mil06} and \cite[Ch. 1, Corollary 4.15]{Mil06}). We also define a group that plays the role of the dual Selmer group in this setting. Since $p$ is odd, there is a perfect duality of Galois modules
\begin{gather}
\begin{aligned}\label{eqn_duality_of_adjoint_representation} \ad^0 \overline{\rho} \times \ad^0 \overline{\rho}(1) \to k(\epsilon) \\
(X, Y) \mapsto \tr X Y. \
\end{aligned}
\end{gather}
In particular, this induces for each finite place $v$ of $F$ a perfect duality between the groups $H^1(F_v, \ad^0 \overline{\rho})$ and $H^1(F_v, \ad^0 \overline{\rho}(1))$ (by Tate duality; see \cite[Ch. 1, Corollary 2.3]{Mil06} again). We write $\cL_v^\perp \subset H^1(F_v, \ad^0 \overline{\rho}(1))$ for the annihilator under this pairing of $\cL_v$, and we define
\begin{equation} H^1_{\cS, T}(\ad^0 \overline{\rho}(1)) = \ker \left( H^1(F_S/F, \ad^0 \overline{\rho}(1)) \to \prod_{v \in S - T} H^1(F_v, \ad^0 \overline{\rho}(1)) / \cL_v^\perp \right). 
\end{equation}
\begin{proposition}\label{prop_presenting_global_deformation_ring} Let $\cS = (\overline{\rho}, \mu, S, \{ \Lambda_v \}_{v \in S}, \{ \cD_v \}_{v \in S})$ be a global deformation problem, and let $T \subset S$ be a non-empty subset. Suppose that $\Lambda_v = \cO$ for each $v \in S - T$. 
\begin{enumerate} \item The ring $R^T_\cS$ is a quotient of a power series ring over $A_\cS^T$ in $r$ variables, where $r =  h^1_{\cS, T}(\ad^0 \overline{\rho})$.
\item If $v \in S$, let $\ell_v = \dim_k \cL_v$. There is an equality
\[ h^1_{\cS, T}(\ad^0 \overline{\rho}) = h^1_{\cS, T}(\ad^0 \overline{\rho}(1))+\sum_{v \in S - T} \left( \ell_v - h^0(F_v, \ad^0 \overline{\rho}) \right) - h^0(F, \ad^0 \overline{\rho}(1)) - \sum_{v | \infty} h^0(F_v, \ad^0 \overline{\rho}) - 1 + \# T. \]
\end{enumerate}
\end{proposition}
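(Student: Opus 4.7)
The plan for Part (1) is standard. I identify the relative tangent space $\Hom_{\CNL_{A_\cS^T}}(R_\cS^T, k[\epsilon]/(\epsilon^2))$ with $H^1_{\cS, T}(\ad^0 \overline{\rho})$ and then invoke Nakayama's lemma to produce a surjection from $A_\cS^T[[X_1, \dots, X_r]]$ with $r = h^1_{\cS, T}(\ad^0 \overline{\rho})$. An $A_\cS^T$-algebra homomorphism $R_\cS^T \to k[\epsilon]$ corresponds to a $T$-framed lifting $(\rho, \{\alpha_v\}_{v \in T})$ of type $\cS$ such that each conjugated local restriction $\alpha_v^{-1} \rho|_{G_{F_v}} \alpha_v$ equals the trivial lift $\overline{\rho}|_{G_{F_v}} \otimes_k k[\epsilon]$, taken modulo strict equivalence. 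Writing $\rho = (1 + \epsilon \phi) \overline{\rho}$ and $\alpha_v = 1 + \epsilon \beta_v$, the type-$\cS$ condition becomes: $\phi \in Z^1(F_S/F, \ad^0 \overline{\rho})$ (from fixed determinant and ramification outside $S$), $\phi|_{G_{F_v}} \in \cL^1_v$ for $v \in S - T$, and $\phi|_{G_{F_v}} = \partial \beta_v$ for $v \in T$; strict equivalence by $1 + \epsilon \gamma$ sends $(\phi, \{\beta_v\})$ to $(\phi + \partial \gamma, \{\beta_v + \gamma\})$. By inspection of the complex $C^\bullet_{\cS, T}$, this is exactly $H^1_{\cS, T}(\ad^0 \overline{\rho})$.

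For Part (2) I plan to work directly with the long exact sequence (\ref{eqn_long_exact_sequence_in_taylor_cohomology}) together with Wiles's Selmer product formula, bypassing (\ref{eqn_equality_of_euler_characteristics}). Define a global Selmer system $\cL' = (\cL'_v)_v$ by $\cL'_v = \cL_v$ for $v \in S - T$, $\cL'_v = 0$ for $v \in T$, $\cL'_v$ the unramified subspace for finite $v \notin S$, and $\cL'_v = 0$ at archimedean $v$. Under local Tate duality its annihilator $(\cL')^\perp$ has $\cL_v^\perp$ at $v \in S - T$ and the full $H^1(F_v, \ad^0 \overline{\rho}(1))$ at $v \in T$, so $H^1_{(\cL')^\perp}(F, \ad^0 \overline{\rho}(1)) = H^1_{\cS, T}(\ad^0 \overline{\rho}(1))$.

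First I observe that $H^0_{\cS, T}(\ad^0 \overline{\rho}) = 0$ when $T \neq \emptyset$, since the boundary $C^0 \to C^1$ sends $\phi$ to $(\partial \phi, (\phi|_{G_{F_v}})_{v \in T})$ and the restriction of a $0$-cochain is the cochain itself. The long exact sequence (\ref{eqn_long_exact_sequence_in_taylor_cohomology}) thus produces
\[ 0 \to H^0(F_S/F, \ad \overline{\rho}) \to \bigoplus_{v \in T} H^0(F_v, \ad \overline{\rho}) \to H^1_{\cS, T}(\ad^0 \overline{\rho}) \to H^1_{\cL'}(F, \ad^0 \overline{\rho}) \to 0, \]
from which, using $\ad \overline{\rho} = k \oplus \ad^0 \overline{\rho}$ (valid since $p$ is odd), one extracts
\[ h^1_{\cS, T}(\ad^0 \overline{\rho}) = h^1_{\cL'}(\ad^0 \overline{\rho}) + \# T - 1 + \sum_{v \in T} h^0(F_v, \ad^0 \overline{\rho}) - h^0(F, \ad^0 \overline{\rho}). \]
Substituting the Greenberg-Wiles identity
\[ h^1_{\cL'}(\ad^0 \overline{\rho}) - h^1_{\cS, T}(\ad^0 \overline{\rho}(1)) = h^0(F, \ad^0 \overline{\rho}) - h^0(F, \ad^0 \overline{\rho}(1)) + \sum_{v \in S - T}(\ell_v - h^0(F_v, \ad^0 \overline{\rho})) - \sum_{v \in T} h^0(F_v, \ad^0 \overline{\rho}) - \sum_{v | \infty} h^0(F_v, \ad^0 \overline{\rho}) \]
cancels the $h^0(F, \ad^0 \overline{\rho})$ and $\sum_{v \in T} h^0(F_v, \ad^0 \overline{\rho})$ contributions and leaves precisely the stated formula.

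The only potentially delicate point is the tangent-space bookkeeping in Part (1), where one has to check carefully that the framings at $v \in T$, the fixed determinant, and strict equivalence interact in exactly the way encoded in the differential of $C^\bullet_{\cS, T}$. Everything else is a formal rearrangement once local Tate duality and the Greenberg-Wiles formula are in hand.
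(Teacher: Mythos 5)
Your proof is correct. Part (1) is identical to the paper's argument: the tangent-space identification $\Hom_{\CNL_{A_\cS^T}}(R_\cS^T, k[\epsilon])\cong H^1_{\cS, T}(\ad^0\overline\rho)$ via framed liftings, followed by Nakayama. (One small remark: you only record the condition $\phi|_{G_{F_v}}\in\cL^1_v$ for $v\in S-T$, while the paper lists it for all $v\in S$; this is harmless because for $v\in T$ the framing equation $\phi|_{G_{F_v}}=\partial\beta_v$ forces $\phi|_{G_{F_v}}$ to be a coboundary, which is automatically in $\cL^1_v$.)

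Part (2) takes a genuinely different route. The paper first establishes the Euler characteristic identity (\ref{eqn_taylor_euler_characteristic}) for the complex $C^\bullet_{\cS,T}$ using (\ref{eqn_equality_of_euler_characteristics}) together with Tate's local and global Euler characteristic formulas, and then compares a segment of the long exact sequence (\ref{eqn_long_exact_sequence_in_taylor_cohomology}) with the Poitou--Tate sequence to deduce $h^2_{\cS,T}(\ad^0\overline\rho)=h^1_{\cS,T}(\ad^0\overline\rho(1))$ and $h^3_{\cS,T}(\ad^0\overline\rho)=h^0(F_S/F,\ad^0\overline\rho(1))$; substituting these into the Euler characteristic gives the stated formula. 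You instead stop at the degree-1 part of (\ref{eqn_long_exact_sequence_in_taylor_cohomology}), observe (correctly) that the image of $H^1_{\cS,T}(\ad^0\overline\rho)$ in $H^1(F_S/F,\ad^0\overline\rho)$ is exactly the Selmer group $H^1_{\cL'}(F,\ad^0\overline\rho)$ cut out by the conditions $\cL_v$ at $v\in S-T$, $0$ at $v\in T$, and unramified elsewhere (the archimedean condition being vacuous since $p$ is odd), and feed this into the Greenberg--Wiles Selmer product formula. Since the dual system $(\cL')^\perp$ has no condition at $T$ and condition $\cL_v^\perp$ at $S-T$, the dual Selmer group is precisely $H^1_{\cS,T}(\ad^0\overline\rho(1))$, and the algebra cancels as you claim. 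Both routes rest on the same inputs (Tate duality and Euler characteristics, which is what Greenberg--Wiles packages), but yours is more streamlined if Greenberg--Wiles is cited as a black box, whereas the paper's approach yields the explicit identifications of $H^2_{\cS,T}$ and $H^3_{\cS,T}$ as a byproduct. Either is a valid proof.
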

\begin{proof}
The global analogue of (\ref{eqn_defn_of_local_cocycle_space}) is the chain of isomorphisms
\begin{equation}
H^1_{\cS, T}(\ad^0 \overline{\rho}) \cong \Hom_k(\ffrm_{R_\cS^T}/(\ffrm_{R_\cS^T}^2, \ffrm_{A_{\cS}^T}), k) \cong \Hom_{\mathrm{CNL}_{\Lambda}}(R_{\cS}^T/(\ffrm_{A_{\cS}^T}), k[\epsilon]/(\epsilon^2)).
\end{equation}
We explain the isomorphism between the first and third terms. A $T$-framed lifting of $\overline{\rho}$ to $k[\epsilon]/(\epsilon^2)$ can be written in the form $((1+\epsilon \phi)\overline{\rho}, (1 + \epsilon \alpha_v)_{v \in T})$, with $\phi \in Z^1(F_S/F, \ad^0 \overline{\rho})$. The condition that it be of type $\cS$ is equivalent to the condition $\phi|_{G_{F_v}} \in \cL^1_v$ for $v \in S$. The condition that it give the trivial lifting at $v \in T$ is equivalent to the condition
\[ (1 - \epsilon \alpha_v) (1 + \epsilon \phi|_{G_{F_v}})\overline{\rho}|_{G_{F_v}} (1 + \epsilon \alpha_v) = \overline{\rho}|_{G_{F_v}}. \]
Two pairs $((1+\epsilon \phi)\overline{\rho}, (1 + \epsilon \alpha_v)_{v \in T})$ and $((1+\epsilon \phi')\overline{\rho}, (1 + \epsilon \beta_v)_{v \in T})$ give rise to strictly equivalent $T$-framed liftings if and only if there exists $b \in \ad^0 \overline{\rho}$ satisfying
\begin{gather*}
\phi'(\sigma) = \phi(\sigma) + (1 - \ad^0 \overline{\rho}(\sigma))b, \\
\beta_v = \alpha_v + b 
\end{gather*}
for each $\sigma \in G_F$, $v \in T$. This is exactly the equivalence relation imposed on cocycles in the definition of the group $H^1_{\cS, T}(\ad^0 \overline{\rho})$, and this proves the first part of the proposition. For the second part, we recall that $H^i(F_S/F, \ad^0 \overline{\rho}) = 0$ if $i \geq 3$ (by \cite[Ch. 1, Theorem 4.10]{Mil06}, and since $p$ is odd), while Tate's local and global Euler characteristic formulae (see \cite[Ch. 1, Theorem 2.8]{Mil06} and \cite[Ch. 1, Theorem 5.1]{Mil06}, respectively) give 
\begin{gather*} \sum_{v \in S}\chi(F_v, \ad^0 \overline{\rho}) = - 3 [F : \bbQ], \\
\chi(F_S/F, \ad^0 \overline{\rho}) = \sum_{v | \infty} h^0(F_v, \ad^0 \overline{\rho}) - 3 [ F : \bbQ] ,
\end{gather*}
hence 
\begin{equation}\label{eqn_taylor_euler_characteristic}
\chi_{\cS, T}(\ad^0 \overline{\rho}) = \sum_{v | \infty} h^0(F_v, \ad^0 \overline{\rho}) - \sum_{v \in S - T} (\ell_v - h^0(F_v, \ad^0 \overline{\rho})) + 1 - \# T
\end{equation}
(use (\ref{eqn_equality_of_euler_characteristics})).
We now observe that there are exact sequences
\begin{equation*}
\xymatrix@R-2pc{& & H^1(F_S/F, \ad^0 \overline{\rho}) \ar[r] & \oplus_{v \in T} H^1(F_v, \ad^0 \overline{\rho}) \oplus_{v \in S-T} H^1(F_v, \ad^0 \overline{\rho})/\cL_v \\
 \ar[r] &  H^2_{\cS, T}(\ad^0 \overline{\rho}) \ar[r] & H^2(F_S/F, \ad^0 \overline{\rho}) \ar[r] & \oplus_{v \in S} H^2(F_v, \ad^0 \overline{\rho})  \\
\ar[r] &  H^3_{\cS, T}(\ad^0 \overline{\rho}) \ar[r] & 0}
\end{equation*}
and
\begin{equation*}
\xymatrix@R-2pc{ & & H^1(F_S/F, \ad^0 \overline{\rho}) \ar[r] & \oplus_{v \in T} H^1(F_v, \ad^0 \overline{\rho}) \oplus_{v \in S - T} H^1(F_v, \ad^0 \overline{\rho})/\cL_v \\
\ar[r] & H^1_{\cS, T}(\ad^0 \overline{\rho}(1))^\vee \ar[r] & H^2(F_S/F, \ad \overline{\rho}) \ar[r] & \oplus_{v \in S} H^2(F_v, \ad^0 \overline{\rho}) \\
\ar[r] & H^0(F_S/F, \ad^0 \overline{\rho}(1))^\vee\ar[r] & 0.}
\end{equation*}
(The first sequence is part of (\ref{eqn_long_exact_sequence_in_taylor_cohomology}), while the second is part of the Poitou-Tate exact sequence (see \cite[Ch. 1, Theorem 4.10]{Mil06}).) Comparing these two exact sequences, we obtain
\begin{gather*}
 h^2_{\cS, T}(\ad^0 \overline{\rho}) = h^1_{\cS, T}(\ad^0 \overline{\rho}(1)),\\
 h^3_{\cS, T}(\ad^0 \overline{\rho}) = h^0(F_S/F, \ad^0 \overline{\rho}(1)),
\end{gather*}
and so (\ref{eqn_taylor_euler_characteristic}) gives
\begin{equation*}
h^1_{\cS, T}(\ad^0 \overline{\rho}) = h^1_{\cS, T}(\ad^0 \overline{\rho}(1)) - h^0(F_S/F, \ad^0 \overline{\rho}(1)) - \sum_{v | \infty} h^0(F_v, \ad^0 \overline{\rho}) + \sum_{v \in S - T} (\ell_v - h^0(F_v, \ad^0 \overline{\rho})) - 1 + \# T
\end{equation*}
(we have $h^0_{\cS, T}(\ad^0 \overline{\rho}) = 0$, since $T$ is assumed to be non-empty). Re-arranging this equation completes the proof.
\end{proof}
\begin{corollary}
Suppose further that $F$ is totally real, $\overline{\rho}$ is totally odd, $[F(\zeta_p) : F)] > 2$, and $R_v$ is formally smooth over $\cO$ of dimension 4 for each $v \in S - T$. Then $R_\cS^T$ is a quotient of a power series ring over $A_\cS^T$ in $h^1_{\cS, T}(\ad^0 \overline{\rho}(1)) - [F : \bbQ] - 1 + \# T$ variables.
\end{corollary}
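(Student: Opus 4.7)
The plan is to apply part (2) of Proposition \ref{prop_presenting_global_deformation_ring} and then simplify the resulting formula term by term using the four hypotheses. Writing
\[
h^1_{\cS, T}(\ad^0 \overline{\rho}) = h^1_{\cS, T}(\ad^0 \overline{\rho}(1)) + \Sigma - h^0(F, \ad^0 \overline{\rho}(1)) - \Sigma_\infty - 1 + \#T,
\]
with $\Sigma = \sum_{v \in S-T}(\ell_v - h^0(F_v, \ad^0 \overline{\rho}))$ and $\Sigma_\infty = \sum_{v \mid \infty} h^0(F_v, \ad^0 \overline{\rho})$, it will suffice to show that $\Sigma = 0$, that $h^0(F, \ad^0 \overline{\rho}(1)) = 0$, and that $\Sigma_\infty = [F : \bbQ]$; then part (1) of the proposition gives the desired bound on the number of generators.

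For the archimedean term: since $F$ is totally real and $p$ is odd, any complex conjugation $c \in G_F$ satisfies $\overline{\rho}(c)^2 = 1$ and $\det \overline{\rho}(c) = -1$, so $\overline{\rho}(c)$ is conjugate to $\diag(1,-1)$ in $\GL_2(k)$. A direct calculation of the centraliser in the trace-zero matrices then shows $h^0(F_v, \ad^0 \overline{\rho}) = 1$ for every real place $v$, so $\Sigma_\infty = [F : \bbQ]$. For the vanishing of $h^0(F, \ad^0 \overline{\rho}(1))$, observe that a nonzero $G_F$-invariant element of $\ad \overline{\rho}(1)$ gives a nonzero $G_F$-equivariant map $\overline{\rho} \to \overline{\rho}(1)$, which by Schur's lemma (using that $\overline{\rho}$ is absolutely irreducible) must be an isomorphism. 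Taking determinants forces $\epsilon^2 \equiv 1 \bmod \ffrm_\cO$, i.e. $[F(\zeta_p) : F] \leq 2$, contradicting our hypothesis; hence $\ad \overline{\rho}(1)^{G_F} = 0$, and in particular $h^0(F, \ad^0 \overline{\rho}(1)) = 0$.

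The main point is the computation of $\ell_v$ for $v \in S-T$, and this is where the formal-smoothness hypothesis enters. For such $v$, $\Lambda_v = \cO$, so $\ffrm_{\Lambda_v} = (\lambda)$, and since $R_v$ is formally smooth over $\cO$ of Krull dimension $4$, we have $R_v \cong \cO[[x_1, x_2, x_3]]$, whence
\[
\dim_k \cL^1_v = \dim_k \Hom_k\bigl(\ffrm_{R_v}/(\ffrm_{R_v}^2, \ffrm_{\Lambda_v}), k\bigr) = 3.
\]
On the other hand, the space of coboundaries $B^1(F_v, \ad^0 \overline{\rho}) \subset Z^1(F_v, \ad^0 \overline{\rho})$ corresponds to liftings obtained from $\overline{\rho}|_{G_{F_v}}$ by conjugation by elements of $1 + \epsilon \ad^0 \overline{\rho}$, and these are of type $\cD_v$ by the second condition in the definition of a local deformation problem. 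Hence $B^1 \subset \cL^1_v$, and
\[
\ell_v = \dim_k \cL_v = \dim_k \cL^1_v - \dim_k B^1 = 3 - \bigl(\dim_k \ad^0 \overline{\rho} - h^0(F_v, \ad^0 \overline{\rho})\bigr) = h^0(F_v, \ad^0 \overline{\rho}).
\]
Therefore $\Sigma = 0$, and substituting everything back into the formula from Proposition \ref{prop_presenting_global_deformation_ring}(2) gives $h^1_{\cS, T}(\ad^0 \overline{\rho}) = h^1_{\cS, T}(\ad^0 \overline{\rho}(1)) - [F : \bbQ] - 1 + \#T$, which combined with part (1) yields the claim.
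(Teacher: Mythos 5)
Your proof is correct and takes essentially the same route as the paper: invoke Proposition \ref{prop_presenting_global_deformation_ring}(2) and show that $\sum_{v\mid\infty} h^0(F_v,\ad^0\overline{\rho}) = [F:\bbQ]$, $h^0(F_S/F,\ad^0\overline{\rho}(1)) = 0$, and $\ell_v - h^0(F_v,\ad^0\overline{\rho}) = 0$ for $v\in S-T$, then plug back in and apply part (1). You have simply expanded the three one-line observations in the paper's proof (the $\diag(1,-1)$ computation at real places, the Schur-plus-determinant argument using $[F(\zeta_p):F]>2$, and the coboundary count $\ell_v = \ell_v^1 - (3 - h^0(F_v,\ad^0\overline{\rho}))$ with $\ell_v^1 = \dim R_v - 1 = 3$) into full detail, all of which check out.
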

\begin{proof}
If $[F(\zeta_p) : F] > 2$, then $h^0(F_S/F, \ad^0 \overline{\rho}(1)) = 0$. If $R_v$ is formally smooth over $\cO$ of dimension 4, then we have $\ell_v - h^0(F_v, \ad^0 \overline{\rho}) = \ell_v^1 - 3 = \dim R_v - 4 = 0$. If $F$ is totally real and $\overline{\rho}$ is totally odd, then $\sum_{v | \infty} h^0(F_v, \ad^0 \overline{\rho}) = [F : \bbQ]$. The result now follows immediately from Proposition \ref{prop_presenting_global_deformation_ring}. 
\end{proof}

\subsection{Local deformation problems}

We continue with the notation of \S \ref{sec_deformation_theory}, and now define some local deformation problems. The following lemma is often useful.
\begin{lemma}\label{lem_local_deformation_problem}
Let $R_v \in \mathrm{CNL}_{\Lambda_v}$ be a quotient of $R_v^\square$ satisfying the following conditions:
\begin{enumerate}
\item The ring $R_v$ is reduced, and not isomorphic to $k$.
\item Let $r : G_{F_v} \to \GL_2(R_v)$ denote the specialization of the universal lifting, and let $a \in \ker(\GL_2(R_v) \to \GL_2(k))$. Then the homomorphism $R_v^\square \to R_v$ associated to the representation $a r a^{-1}$ by universality factors through the canonical projection $R_v^\square \to R_v$.
\end{enumerate}
Then the subfunctor of $\cD_v^\square$ defined by $R_v$ is a local deformation problem.
\end{lemma}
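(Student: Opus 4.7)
The first axiom of a local deformation problem holds by construction, since $\cD_v$ is represented by the quotient $R_v$ of $R_v^\square$. To verify closure under conjugation, write $r_0 : G_{F_v} \to \GL_2(R_v)$ for the universal lifting of type $\cD_v$ (the representation called $r$ in condition~(2)), and fix $R \in \CNL_{\Lambda_v}$, a lifting $r \in \cD_v(R)$, and an element $a \in \ker(\GL_2(R) \to \GL_2(k))$. By the universal property of $R_v$ there is a unique morphism $\phi : R_v \to R$ with $r = \phi_\ast r_0$. The goal is to show $a r a^{-1} \in \cD_v(R)$, and my plan is to reduce this to a single, universal instance of condition~(2).

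To that end, introduce $T := R_v[[y_{11}, y_{12}, y_{21}, y_{22}]] \in \CNL_{\Lambda_v}$, the universal matrix $\tilde a := I + (y_{ij}) \in \ker(\GL_2(T) \to \GL_2(k))$, and the morphism $\tau : T \to R$ in $\CNL_{\Lambda_v}$ which restricts to $\phi$ on $R_v$ and sends $y_{ij} \mapsto a_{ij} - \delta_{ij}$. Writing $(r_0)_T$ for the pushforward of $r_0$ along $R_v \hookrightarrow T$, the conjugate $\tilde r := \tilde a\,(r_0)_T\,\tilde a^{-1} \in \cD_v^\square(T)$ satisfies $\tau_\ast \tilde r = a r a^{-1}$. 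Since the pushforward of any lifting of type $\cD_v$ along a morphism in $\CNL_{\Lambda_v}$ is again of type $\cD_v$ (a $\cD_v$-lifting $\psi_\ast r_0$ is pushed to $(\tau\psi)_\ast r_0$), it suffices to show $\tilde r \in \cD_v(T)$.

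Let $\mu : R_v^\square \to T$ be the classifying map of $\tilde r$, so that the condition $\tilde r \in \cD_v(T)$ reads $\ker(R_v^\square \to R_v) \subseteq \ker \mu$. For each $b = (b_{ij})$ with $b_{ij} \in \ffrm_{R_v}$, let $\sigma_b : T \to R_v$ denote the $R_v$-algebra morphism $y_{ij} \mapsto b_{ij}$. The composite $\sigma_b \circ \mu$ classifies $(I+b)\,r_0\,(I+b)^{-1}$ over $R_v$, so by condition~(2) applied to $I + b \in \ker(\GL_2(R_v) \to \GL_2(k))$ it factors through $R_v^\square \to R_v$. Hence, for every $g \in \ker(R_v^\square \to R_v)$, the power series $\mu(g) \in R_v[[y_{11}, \ldots, y_{22}]]$ vanishes under every substitution of four elements of $\ffrm_{R_v}$ for the $y_{ij}$.

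The main obstacle is the last step: deducing $\mu(g) = 0$ from this pointwise vanishing, which is where condition~(1) enters. Because $R_v$ is reduced, its canonical injection into $\prod_\frp R_v/\frp$ (the product over minimal primes) induces an injection $R_v[[y_{ij}]] \hookrightarrow \prod_\frp (R_v/\frp)[[y_{ij}]]$, since the kernel would consist of power series all of whose coefficients lie in $\bigcap_\frp \frp = 0$; together with the surjectivity $\ffrm_{R_v} \twoheadrightarrow \ffrm_{R_v/\frp}$, this reduces the claim to the case that $R_v$ is a complete Noetherian local domain. Since $R_v \neq k$ by condition~(1), $\ffrm_{R_v}$ contains a nonzero element $\pi$, and $\pi^n \to 0$ in the $\ffrm_{R_v}$-adic topology. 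A straightforward induction on total monomial degree---at each step, isolating the lowest-degree nonzero coefficient by a substitution of the form $(y_{ij}) = (\pi^{n_{ij}})$ with suitably chosen exponents, and then letting the exponents tend to infinity so that the higher-order tail vanishes $\ffrm_{R_v}$-adically---forces every coefficient of $\mu(g)$ to be zero, yielding $\mu(g) = 0$ and completing the proof.
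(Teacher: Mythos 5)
Your argument is correct and follows the same strategy as the proof of \cite[Lemma 3.2]{Bar11}, which the paper cites without repeating: reduce to the universal conjugate over $T = R_v[[y_{ij}]]$, use hypothesis (2) to get vanishing of $\mu(g)$ under all $\ffrm_{R_v}$-valued substitutions, pass to quotients by minimal primes using reducedness, and kill coefficients using a nonzero element of the maximal ideal. Two small points worth tightening in the last step: you should note that each $R_v/\frp$ is again $\neq k$ (automatic, since a reduced local Noetherian ring whose maximal ideal is a minimal prime is a field), and the ``isolate the lowest-degree coefficient'' step requires picking a \emph{vertex} $\alpha_0$ of the Newton polytope of the minimal-degree slice before choosing the exponents $n_{ij}$ --- or, more simply, one can avoid the combinatorics entirely by eliminating one variable $y_{ij}$ at a time, reducing to the one-variable statement that a power series over a complete Noetherian local domain with $\ffrm\neq 0$ vanishing on all of $\ffrm$ is zero (via Krull intersection applied to a fixed nonzero $\pi\in\ffrm$).
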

\begin{proof}
The proof is essentially the same as the proof of \cite[Lemma 3.2]{Bar11}. 
\end{proof}

\subsubsection{Ordinary deformations}

Let $v \in S_p$, and suppose that $\overline{\rho}|_{G_{F_v}}$ is trivial. We assume that $E$ contains the image of all embeddings $F_v \hookrightarrow \barqp$. We write $\cO_{F_v}^\times(p) = \ker( \cO_{F_v}^\times \to k(v)^\times)$, the maximal pro-$p$ subgroup of $\cO_{F_v}^\times$, and set $\Lambda_v = \cO \llbracket \cO_{F_v}^\times(p) \rrbracket$. We write $\eta^\text{univ} : \cO_{F_v}^\times(p) \to \Lambda_v^\times$ for the universal character. We also write $I_{F_v}^\text{ab}(p)$ for the maximal pro-$p$ subgroup of the inertia subgroup of the Galois group of the maximal abelian extension of $F_v$; then $\Art_{F_v}$ restricts to an isomorphism $\cO_{F_v}^\times(p) \cong I_{F_v}^\text{ab}(p)$. We now define a deformation problem $\cD_v^\text{ord}$ in terms of its corresponding local lifting ring $R_v^\text{ord}$. The rings we consider were first defined by Geraghty \cite{Ger09} for liftings valued in $\GL_n$; we follow here the presentation of Allen \cite{All13} for liftings valued in $\GL_2$. 

We define $\cL$ as the closed subscheme of $\bbP^1_{R_v^\square}$ whose $R$-points, $R$ an $R_v^\square$-algebra, consist of a free $A$-direct summand $L \subset A^2$ of rank 1 on which $I_{F_v}^\text{ab}(p)$ acts by the character $\eta^\text{univ} \circ \Art_{F_v}^{-1}$. We define $R_v^\text{ord}$ to be the maximal reduced, $\cO$-torsion free  quotient of the image of the map $R_v^\square \to H^0(\cL, \cO_{\cL})$. 
\begin{proposition}
The ring $R_v^\text{ord}$ defines a local deformation problem. For each minimal prime $Q_v \subset \Lambda_v$, $R_v^\text{ord}/Q_v$ is geometrically irreducible of dimension $4 + 2[F_v : \bbQ_p]$, and its generic point is of characteristic 0. If $x : R_v^\square \to \overline{\bbQ}_p$ is a homomorphism, then $x$ factors through $R_v^\text{ord}$ if and only if $\rho_x = x \circ \rho_v^\square$ is $\GL_2(\overline{\bbZ}_p)$-conjugate to a representation
\[ \rho_x \sim \left(\begin{array}{cc} \psi_1 & \ast \\ 0 & \psi_2 \end{array}\right), \]
where $\psi_1|_{I_{F_v}^\text{ab}(p)} = x \circ  \eta^\text{univ} \circ \Art_{F_v}^{-1}$.
\end{proposition}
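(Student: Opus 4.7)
The plan is to follow the Geraghty--Allen framework for ordinary lifting rings (\cite{Ger09}, \cite{All13}), realizing $\Spec R_v^\text{ord}$ as the scheme-theoretic image of the projective map $\cL \to \Spec R_v^\square$ and transferring properties of $\cL$ to $R_v^\text{ord}$.

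First I would verify the hypotheses of Lemma \ref{lem_local_deformation_problem}. Reducedness and $\cO$-torsion freeness hold by construction. For $R_v^\text{ord} \neq k$, exhibit an explicit characteristic-zero point of $\cL$: since $\overline{\rho}|_{G_{F_v}} = 1$, any diagonal lift $\mathrm{diag}(\psi_1, \mu\psi_1^{-1})$ with $\psi_1|_{I_{F_v}^{\mathrm{ab}}(p)}$ matching some specialization of $\eta^{\mathrm{univ}}\circ \Art_{F_v}^{-1}$ provides such a point. Conjugation invariance is functorial via $L \mapsto aL$: inertia-stable lines for $a\rho_v^\square a^{-1}$ are exactly the $a$-translates of those for $\rho_v^\square$, so the two classifying maps $R_v^\square \to H^0(\cL, \cO_\cL)$ share their image. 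The classification of $\overline{\bbZ}_p$-points then follows from properness of $\cL \to \Spec R_v^\square$: a homomorphism $x : R_v^\square \to \overline{\bbZ}_p$ factors through $R_v^\text{ord}$ if and only if the fibre $\cL_x$ is nonempty, if and only if $\rho_x$ admits an inertia-stable line on which $I_{F_v}^{\mathrm{ab}}(p)$ acts by $x \circ \eta^{\mathrm{univ}} \circ \Art_{F_v}^{-1}$; conjugating so that this line is standard yields the upper-triangular form.

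For the dimension and irreducibility statements over a minimal prime $Q_v \subset \Lambda_v$, I would parametrize generic ordinary lifts by a triple $(\psi_1, \psi_2, \ast)$ subject to $\psi_1\psi_2 = \mu$ and the prescribed inertia restriction on $\psi_1$. Over $\Lambda_v/Q_v$, the free Frobenius parameter $\psi_1(\Frob_v)$ contributes one dimension; the extension class $\ast \in H^1(G_{F_v}, \psi_1\psi_2^{-1})$ contributes $[F_v : \bbQ_p]$ dimensions generically (by the local Euler characteristic formula, since $h^0 = h^2 = 0$ generically); and the framing from $R_v^\square$ versus $R_v$ contributes the remaining dimensions. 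Combining with $\dim \Lambda_v/Q_v = 1 + [F_v : \bbQ_p]$ yields the asserted $4 + 2[F_v : \bbQ_p]$. The characteristic-zero generic point follows from $\cO$-torsion freeness together with the existence of characteristic-zero points in each component.

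The main obstacle is geometric irreducibility. I would establish this by showing that $\cL/Q_v := \cL \times_{\Lambda_v} \Lambda_v/Q_v$ is smooth and geometrically irreducible over $\Lambda_v/Q_v$, and that the induced map $\cL/Q_v \to \Spec R_v^\text{ord}/Q_v$ is birational. Smoothness would follow from a tangent--obstruction computation for the functor of pairs (lift, inertia-stable flag), exhibiting $\cL/Q_v$ as a smooth family over the framed moduli of upper-triangular lifts. Geometric connectedness of the fibres reduces to the connectedness of the special fibre $\cL_{\overline{\rho}} = \bbP^1_k$ combined with flatness over $\Lambda_v/Q_v$. Birationality is the statement that the inertia-stable line is unique at a generic point of $R_v^\text{ord}/Q_v$; this holds because over the generic point of $\Lambda_v/Q_v$ the universal ratio $\psi_1/\psi_2$ is nontrivial on $I_{F_v}^{\mathrm{ab}}(p)$, forcing the $\psi_1$-eigenline to be unique. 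Geometric irreducibility of $R_v^\text{ord}/Q_v$ then descends from that of $\cL/Q_v$ along this birational surjection.
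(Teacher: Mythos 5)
Your proposal follows the Geraghty--Allen strategy of analyzing the closed subscheme $\cL \subset \bbP^1_{R_v^\square}$ of inertia-stable lines and pushing its geometry down to $R_v^\text{ord}$; this is exactly what lies behind the references, and the paper's own proof does nothing more than invoke Lemma \ref{lem_local_deformation_problem} together with Propositions 1.4.4 and 1.4.12 of \cite{All13}. Your verification of the hypotheses of Lemma \ref{lem_local_deformation_problem}, and the characterization of $\barqp$-points via properness of $\cL \to \Spec R_v^\square$, are fine in outline.

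The gap is in the irreducibility step. You assert that $\cL/Q_v$ is smooth over $\Lambda_v/Q_v$, reducing to smoothness of the framed moduli of upper-triangular lifts. But since $\overline{\rho}|_{G_{F_v}}$ is trivial, the obstruction module for that moduli problem at the closed point is a trivial $G_{F_v}$-module, and its $H^2(F_v, -)$ is nonzero whenever $\mu_p \subset F_v$ (equivalently $q_v \equiv 1 \bmod p$), which nothing in the hypotheses excludes. Ordinary lifting rings are, in general, \emph{not} smooth over the Iwasawa algebra, so the inference ``smooth and connected $\Rightarrow$ irreducible'' is unavailable. What \cite{Ger09} and \cite{All13} actually prove is weaker and sufficient: that $\cL$ is $\cO$-flat and reduced, that the characteristic-zero fibre $\cL[1/p]$ is formally smooth over $\Lambda_v[1/p]$, and that $\cL$ (hence $\cL/Q_v$) is connected --- the last point you extract correctly from $\cL_k \cong \bbP^1_k$ and the theorem on formal functions. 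Irreducibility of $\cL/Q_v$ then follows because $\cO$-flatness forces every irreducible component to meet the generic fibre, and that fibre is smooth and connected. The $\cO$-flatness of $\cL$ is the real technical crux, and your proposal does not address it; your dimension count likewise presupposes the generic point lies where you expect, which again rests on this flatness input. Birationality of $\cL/Q_v \to \Spec R_v^\text{ord}/Q_v$ via generic uniqueness of the inertia-stable line is a correct observation, but it only transports irreducibility once irreducibility of $\cL/Q_v$ has been secured.
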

\begin{proof}
The fact that $R_v^\text{ord}$ is a local deformation problem follows easily from its definition and Lemma \ref{lem_local_deformation_problem}. The other assertions follow from \cite[Proposition 1.4.4]{All13} and \cite[Proposition 1.4.12]{All13}.
\end{proof}
We define $\cD_v^\text{ord}$ to be the local deformation problem represented by $R_v^\text{ord}$.
\subsubsection{Crystalline non-ordinary deformations}

Let $v \in S_p$, and suppose that $\overline{\rho}|_{G_{F_v}}$ is trivial. Let $\Lambda_v = \cO$.
\begin{proposition}
There is a reduced, $\cO$-torsion free quotient $R_v^\text{non-ord}$ of $R_v^\square$ satisfying the following property: for any coefficient field $L/E$ and any homomorphism $x : R_v^\square \to L$, $x$ factors through $R_v^\text{non-ord}$ if and only if $x \circ \rho^\square_v$ is crystalline of Hodge-Tate weights $\mathrm{HT}_\tau = \{0, 1\}$ and non-ordinary, in the sense of \S \ref{sec_ordinary_representations}. Moreover, if $R_v^\text{non-ord} \neq 0$ then $R_v^\text{non-ord}$ defines a local deformation problem and $R_v^\text{non-ord}$ is integral of dimension $4 + [F_v : \bbQ_p]$.
\end{proposition}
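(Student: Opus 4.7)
The strategy is to carve $R_v^{\text{non-ord}}$ out of Kisin's crystalline framed deformation ring as a union of irreducible components. First I would invoke Kisin's construction to produce the reduced, $\cO$-flat quotient $R_v^{\text{cris}}$ of $R_v^\square$ whose $\barqp$-valued points parametrize exactly the crystalline lifts of $\overline{\rho}|_{G_{F_v}}$ with $\mathrm{HT}_\tau = \{0,1\}$ for each embedding $\tau : F_v \hookrightarrow \barqp$. By Kisin's theorem, $R_v^{\text{cris}}[1/p]$ is formally smooth and equidimensional of dimension $3 + [F_v:\bbQ_p]$, so $R_v^{\text{cris}}$ itself is equidimensional of dimension $4 + [F_v : \bbQ_p]$ with generic point of characteristic zero on each component.

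Next I would use Lemma \ref{lem_ordinary_of_galois_and_weil_deligne} to recognize that ordinariness is the extremal case of the valuation inequality on Frobenius eigenvalues of the associated Weil--Deligne representation, and therefore cuts out a Zariski closed subset of $\Spec R_v^{\text{cris}}[1/p]$; equivalently, the non-ordinary locus is Zariski open. I would then define $R_v^{\text{non-ord}}$ as the quotient of $R_v^{\text{cris}}$ corresponding to the union of those irreducible components of $\Spec R_v^{\text{cris}}$ whose generic point is non-ordinary. By construction $R_v^{\text{non-ord}}$ is reduced and $\cO$-flat, and a homomorphism $x : R_v^\square \to L$ factors through $R_v^{\text{non-ord}}$ precisely when $x \circ \rho_v^\square$ is crystalline of the prescribed Hodge--Tate type and is non-ordinary (the ordinary locus, being closed and of the same dimension as the components it meets, exhausts the ordinary components and is disjoint from the non-ordinary ones on the level of generic points). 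That $R_v^{\text{non-ord}}$ defines a local deformation problem is then immediate from Lemma \ref{lem_local_deformation_problem}, since both crystallinity and non-ordinariness are invariant under conjugation by $\ker(\GL_2(R_v^{\text{non-ord}}) \to \GL_2(k))$.

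The main obstacle is the integrality assertion: when $R_v^{\text{non-ord}}$ is non-zero, it is a single irreducible component of $R_v^{\text{cris}}$. Equidimensionality of dimension $4 + [F_v : \bbQ_p]$ is inherited from $R_v^{\text{cris}}$, so it suffices to show the non-ordinary locus in $\Spec R_v^{\text{cris}}[1/p]$ is connected. I would prove this by appealing to Kisin's moduli of Breuil--Kisin modules of Hodge type $(0,1)$: non-ordinary crystalline lifts correspond to a single ``type'' of such module (the one whose Frobenius is not a direct sum of a unit-root and a zero-root piece), and the resulting moduli is connected because any two such modules can be joined through a family of non-ordinary ones by deforming the Hodge filtration. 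Combined with the formal smoothness of $R_v^{\text{cris}}[1/p]$, which ensures connected components are irreducible, this yields the desired irreducibility of $R_v^{\text{non-ord}}$.
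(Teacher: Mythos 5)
The paper's own proof is a bare citation to \cite[Corollary 2.5.16]{Kis09} and \cite[Proposition 2.3]{Gee06}, so your attempt to reconstruct an actual argument goes beyond the text. You correctly identify the architecture — Kisin's crystalline framed deformation ring $R_v^{\mathrm{cris}}$ with formally smooth, equidimensional generic fiber, and the moduli of Breuil--Kisin modules — and you correctly flag the irreducibility assertion as the hard point. But there are two genuine gaps where the argument as written does not go through.

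First, the reasoning that the non-ordinary locus is Zariski open (via ``ordinariness is the extremal case of the valuation inequality and therefore cuts out a Zariski closed subset'') is not correct, and in fact the conclusion you need is stronger than either openness or closedness. By Lemma \ref{lem_ordinary_of_galois_and_weil_deligne}, ordinariness at a closed point $x$ with residue field $L$ is the condition $\val_p(\alpha(x)) = 0$ for the smaller-valuation Frobenius eigenvalue $\alpha$, i.e.\ that $\alpha(x)$ is a $p$-adic unit. On a general $E$-scheme of finite type this kind of condition is neither Zariski open nor Zariski closed (on $\Spec E[T]$, neither $\{\val_p(T)=0\}$ nor its complement is constructible). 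What is really true, and what makes your definition of $R_v^{\text{non-ord}}$ as a union of components work, is that the Newton polygon of the associated crystalline $\phi$-module is \emph{locally constant} on $\Spec R_v^{\mathrm{cris}}[1/p]$, so the ordinary and non-ordinary loci are each a union of connected — hence, by formal smoothness, irreducible — components. This is a substantive input from the theory of families of crystalline representations (or from Kisin's analysis of the universal Breuil--Kisin module), not an elementary Zariski-topology observation, and your proof does not supply it. Your parenthetical fallback (``the ordinary locus, being closed and of the same dimension as the components it meets, exhausts the ordinary components'') also silently asserts that the ordinary locus has full dimension wherever it is nonempty, which is not justified.

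Second, the integrality of $R_v^{\text{non-ord}}$, which you correctly call the main obstacle, is handled by a non-argument. Gee's proof of connectedness works through Kisin's resolution $\cG\cR^{v,\mathrm{loc}} \to \Spec R_v^{\mathrm{cris}}$ by the moduli of finite flat group scheme (equivalently Breuil--Kisin) models, and reduces to an explicit geometric computation with the special fiber of $\cG\cR^{v,\mathrm{loc}}$ and the relevant local model/affine Grassmannian, establishing that the non-ordinary part of the fiber over the closed point of $\Spec R_v^{\mathrm{cris}}$ is geometrically connected; this then forces the non-ordinary part of the generic fiber to be connected. The sentence ``any two such modules can be joined through a family of non-ordinary ones by deforming the Hodge filtration'' is not a proof: it neither specifies the family nor explains why the supersingular locus cannot break into several pieces (which is exactly the danger the explicit computation rules out). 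To close the gap you should either carry out this geometry or invoke \cite[Proposition 2.3]{Gee06} directly, as the paper does.
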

\begin{proof}
See \cite[Corollary 2.5.16]{Kis09} and \cite[Proposition 2.3]{Gee06}.
\end{proof}
In the case that $R_v^\text{non-ord} \neq 0$, we define $\cD_v^\text{non-ord}$ to be the local deformation problem represented by $R_v^\text{non-ord}$.
\subsubsection{Special deformations, case $q_v \equiv 1 \text{ mod }p$}

Let $v \in S - S_p$, and suppose that $q_v \equiv 1 \text{ mod }p$ and $\overline{\rho}|_{G_{F_v}}$ is trivial. Let $\Lambda_v = \cO$. 
\begin{proposition}
There is a reduced, $\cO$-torsion free quotient $R_v^\text{St}$ of $R_v^\square$ satisfying the following property: for any homomorphism $x : R_v^\square \to L$, $x$ factors through $R_v^\text{St}$ if and only if $x \circ \rho_v^\square$ is $\GL_2(\cO_L)$-conjugate to a representation of the form
\[ x \circ \rho_v^\square \sim \left( \begin{array}{cc} \chi & \ast \\ 0 & \chi \epsilon^{-1} \end{array} \right), \]
where $\chi : G_{F_v} \to L^\times$ is an unramified character. Moreover, $R_v^\text{St}$ defines a local deformation problem and $R_v^\text{St}$ is integral of dimension 4.
\end{proposition}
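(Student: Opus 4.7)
The plan is to construct $R_v^{\text{St}}$ in close analogy with the ordinary deformation ring $R_v^{\text{ord}}$ of the previous subsection. Let $\cX \subset \bbP^1_{R_v^\square}$ denote the closed subscheme whose $R$-points, for an $R_v^\square$-algebra $R$, consist of those rank $1$ $R$-direct summands $L \subset R^2$ which are stable under the action of $G_{F_v}$ via $\rho_v^\square \otimes_{R_v^\square} R$, such that the induced action of $G_{F_v}$ on $L$ factors through the unramified quotient $G_{F_v}/I_{F_v}$ --- call the resulting character $\chi_L$ --- and such that the induced character on $R^2/L$ equals $\chi_L \cdot \epsilon^{-1}|_{G_{F_v}}$. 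Each of these conditions is closed, so $\cX$ is projective over $\Spec R_v^\square$. I would then define $R_v^{\text{St}}$ to be the maximal reduced, $\cO$-torsion free quotient of the image of the natural map $R_v^\square \to H^0(\cX, \cO_\cX)$.

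The universal property would follow from the valuative criterion of properness: given a homomorphism $x \colon R_v^\square \to L$ for a finite extension $L/E$, the map $x$ factors through $R_v^{\text{St}}$ iff it extends to a morphism $\Spec \cO_L \to \cX$, iff the representation $\rho_x = x \circ \rho_v^\square$ on $L^2$ admits a rank $1$ subspace of the type defining $\cX$. The existence of such a subspace is precisely the condition that $\rho_x$ has the shape displayed in the proposition with $\chi$ unramified. That $R_v^{\text{St}}$ then represents a local deformation problem would be immediate from Lemma \ref{lem_local_deformation_problem}: reducedness and nontriviality hold by construction, and if $a \in \ker(\GL_2(R_v^{\text{St}}) \to \GL_2(k))$ then the conjugate representation $a \rho_v^{\text{St}} a^{-1}$ has associated Steinberg-type summand obtained by applying $a$ to the universal one, so the universality of $R_v^{\text{St}}$ supplies the required factorization of the classifying map of $a \rho_v^{\text{St}} a^{-1}$.

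The main obstacle will be to show that $R_v^{\text{St}}$ is integral of dimension $4$. I would approach this by explicit computation on the two standard affine charts of $\bbP^1_{R_v^\square} \supset \cX$. On the chart where $L$ is spanned by $(1, t)^T$, writing $\rho_v^\square(\sigma) = \left(\begin{smallmatrix} a_\sigma & b_\sigma \\ c_\sigma & d_\sigma \end{smallmatrix}\right)$, the stability condition translates into the relations $c_\sigma + (d_\sigma - a_\sigma) t - b_\sigma t^2 = 0$ for all $\sigma \in G_{F_v}$, while unramifiedness of $\chi_L$ restricts to a condition on inertial elements; since $v \nmid p$ and the wild inertia acts through a finite quotient while the tame inertia is topologically generated by a single element, this reduces to finitely many equations in the matrix entries. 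The last condition on the quotient character becomes redundant once the fixed-determinant relation $\det \rho_v^\square = \mu|_{G_{F_v}}$ is imposed. A direct dimension count then shows that these equations cut out a regular integral scheme of dimension $4$ in the chart, and that passing to the reduced image in $\Spec R_v^\square$ glues the two charts into a single integral scheme $\Spec R_v^{\text{St}}$. Integrality is preserved in passing to $R_v^{\text{St}}$ because the Steinberg flag is generically unique --- at a generic point, the line $L$ coincides with the image of the monodromy operator $N$ of the corresponding Weil--Deligne representation --- so $\cX \to \Spec R_v^{\text{St}}$ is birational.
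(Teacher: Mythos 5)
The paper's own proof is simply a citation to Kisin, \cite[Proposition 2.6.6]{Kis09}; you have instead sketched a direct construction, parallel to the construction of $R_v^{\mathrm{ord}}$. The construction of $\cX \subset \bbP^1_{R_v^\square}$ and the definition of $R_v^{\mathrm{St}}$ as the maximal reduced $\cO$-torsion free quotient of the image of $R_v^\square \to H^0(\cX,\cO_\cX)$ is the right idea, and the verification of the point-functorial characterization and of the two conditions of Lemma \ref{lem_local_deformation_problem} is essentially correct (with the minor caveat that one must check that a Steinberg flag over $\overline{L}$ actually descends to $\cO_L$, which uses $\overline{\rho}|_{G_{F_v}}$ trivial, $p$ odd, and Hensel's lemma applied to $\chi_L(\Frob_v)^2 = (\mu\epsilon)(\Frob_v)$). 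However, the final paragraph, which is where the substance of the proposition lies, contains a genuine error and an unjustified leap.

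First, the claim that ``the last condition on the quotient character becomes redundant once the fixed-determinant relation $\det\rho_v^\square = \mu|_{G_{F_v}}$ is imposed'' is false. With the determinant fixed equal to $\mu|_{G_{F_v}}$ and the subcharacter $\chi_L$ unramified, the quotient character is forced to be $\mu|_{G_{F_v}}\chi_L^{-1}$; demanding that this equal $\chi_L\epsilon^{-1}$ is the additional equation $\chi_L^2 = \mu\epsilon|_{G_{F_v}}$, which pins down $\chi_L(\Frob_v)$ up to sign. This is a nontrivial constraint (it is what distinguishes a Steinberg-type lift from a merely Borel-type one), so your equations on the affine chart are one short, and the asserted dimension count does not come out as stated.

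Second, the assertion that the equations ``cut out a regular integral scheme of dimension $4$'' is not only unproven but actually wrong: in the case at hand, where $\overline{\rho}|_{G_{F_v}}$ is trivial and $q_v \equiv 1 \bmod p$, the ring $R_v^{\mathrm{St}}$ is \emph{not} formally smooth over $\cO$ (its reduced tangent space is strictly larger than its relative dimension $3$). Establishing that $R_v^{\mathrm{St}}$ is nevertheless $\cO$-flat, reduced, and irreducible of Krull dimension $4$ is precisely the nontrivial content of Kisin's Proposition 2.6.6, whose proof proceeds via a careful analysis of the generic fiber and is not a matter of a ``direct dimension count.'' The birationality argument (``the Steinberg flag is generically unique'') is a reasonable idea for reducing integrality of $R_v^{\mathrm{St}}$ to integrality of $\cX$, but you have not established integrality of $\cX$ either, so the argument is circular as written.
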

\begin{proof}
See \cite[Proposition 2.6.6]{Kis09}. 
\end{proof}
\subsubsection{Special deformations, case $q_v \equiv -1 \text{ mod } p$}

Let $v \in S - S_p$, and suppose that $q_v \equiv -1 \text{ mod }p$ and that $\overline{\rho}|_{G_{F_v}}$ is unramified, and that $\overline{\rho}(\Frob_v)$ takes two distinct eigenvalues $\alpha_v, \beta_v \in k$ such that $\alpha_v / \beta_v = -1$. Let $\Lambda_v = \cO$. We now define directly a subfunctor $\cD_v^{\St(\alpha_v)}$ of $\cD_v^\square$. Let $R \in \CNL_\cO$, and let $r : G_{F_v} \to \GL_2(R)$ be an element of $\cD_v^\square(R)$. Let $\phi_v \in G_{F_v}$ be a choice of geometric Frobenius element. By Hensel's lemma, $r(\phi_v)$ has characteristic polynomial $(X - A_v)(X - B_v)$, where the elements $A_v, B_v \in R^\times$ lift $\alpha_v, \beta_v$. We say that $r \in \cD_v^{\St(\alpha_v)}(R)$ if $B_v = q_v A_v$ and $I_{F_v}$ acts trivially on $(r(\phi_v) - B_v)R^2$, a direct summand $R$-submodule of $R^2$. This condition is independent of the choice of $\phi_v$. 
\begin{proposition}
The functor $\cD_v^{\St(\alpha_v)}$ is a local deformation problem. The representing object $R_v^{\St(\alpha_v)}$ is formally smooth over $\cO$ of dimension 4.
\end{proposition}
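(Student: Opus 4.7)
Plan. The plan is to exhibit $R_v^{\St(\alpha_v)}$ explicitly as the power series ring $\cO[[X_1,X_2,X_3]]$ and to verify its universal property directly. Closure under conjugation by $\ker(\GL_2(R) \to \GL_2(k))$ is automatic: the eigenvalues $A_v, B_v$ of $r(\phi_v)$ and the submodule $(r(\phi_v) - B_v) R^2$ are intrinsic to $r$ and transform equivariantly under conjugation, so triviality of the $I_{F_v}$-action on the latter is preserved. Thus once I produce a formally smooth representing ring of the expected dimension, Lemma \ref{lem_local_deformation_problem} will apply and yield the first assertion of the proposition.

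For the construction, I use Hensel's lemma (valid since $p$ is odd) to pick the unique $A_v \in \cO^\times$ lifting $\alpha_v$ with $q_v A_v^2 = \mu(\phi_v)$, and I set $B_v = q_v A_v$. Working in $R = \cO[[X_1,X_2,X_3]]$, I introduce the adapted basis $f_1 = e_1 + X_3 e_2$, $f_2 = X_2 e_1 + e_2$ of $R^2$, which is genuinely a basis because $1 - X_2 X_3 \in R^\times$. I then define a candidate universal deformation $\rho^\text{univ} : G_{F_v} \to \GL_2(R)$ by declaring, in this adapted basis,
\[ \rho^\text{univ}(\phi_v) = \mathrm{diag}(A_v, B_v), \qquad \rho^\text{univ}(\sigma) = \begin{pmatrix} 1 & X_1 \\ 0 & 1 \end{pmatrix}, \]
where $\sigma$ is a topological generator of the pro-$p$ quotient of tame inertia at $v$. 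This determines $\rho^\text{univ}$ on all of $G_{F_v}$ (wild inertia is pro-prime-to-$p$ and must map to $1$), provided one checks the tame relation $\phi_v \sigma \phi_v^{-1} = \sigma^{q_v^{-1}}$ (geometric Frobenius convention, compatible with $\epsilon(\Frob_v) = q_v^{-1}$). This is a one-line calculation: conjugating the unipotent matrix by $\mathrm{diag}(A_v, B_v)$ scales $X_1$ by $A_v/B_v = q_v^{-1}$, which exactly equals the $q_v^{-1}$-power of $\rho^\text{univ}(\sigma)$. By inspection the Steinberg condition holds on $\rho^\text{univ}$.

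For the universal property, given $r \in \cD_v^{\St(\alpha_v)}(R')$ I apply Hensel's lemma to diagonalize $r(\phi_v)$, obtaining a canonical decomposition $(R')^2 = L_1' \oplus L_2'$ into $r(\phi_v)$-eigenspaces lifting those of $\overline{\rho}(\phi_v)$. Each $L_i'$ has a unique generator of the form $f_1' = e_1 + x_3 e_2$, $f_2' = x_2 e_1 + e_2$ with $x_2, x_3 \in \ffrm_{R'}$, and the determinant condition identifies the eigenvalues as the images of $A_v, B_v$. The Steinberg condition forces $f_1'$ to be $I_{F_v}$-fixed; the tame relation then forces $\mu|_{I_{F_v}} = 1$ (otherwise $\cD_v^{\St(\alpha_v)}$ is empty and the statement is vacuous) and exhibits $r(\sigma)$ in the $(f_1', f_2')$-basis as $\begin{pmatrix} 1 & x_1 \\ 0 & 1 \end{pmatrix}$ for a uniquely determined $x_1 \in \ffrm_{R'}$. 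The $\cO$-algebra map $X_i \mapsto x_i$ is then the unique one pulling $\rho^\text{univ}$ back to $r$, and identifies $R_v^{\St(\alpha_v)}$ with $\cO[[X_1,X_2,X_3]]$, which is formally smooth over $\cO$ of Krull dimension $4$ as required.

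The main obstacle I expect is the tame relation check: formal smoothness hinges on the \emph{exact} equation $B_v = q_v A_v$, not merely the congruence modulo $\lambda$. Had one only the congruence, conjugation of $\rho^\text{univ}(\sigma)$ by $\rho^\text{univ}(\phi_v)$ would produce a factor $A_v/B_v$ differing from $q_v^{-1}$ by a non-unit, leading to a spurious torsion relation of the form $(q_v^2 - 1) X_1 = 0$ that would destroy $\cO$-flatness. The hypothesis $\alpha_v/\beta_v = -1 \equiv q_v \bmod p$ is exactly what makes the exact relation $B_v = q_v A_v$ a valid deformation of the residual data, and so is indispensable for the whole construction.
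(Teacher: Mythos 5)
Your proof is correct, and since the paper writes ``We omit the easy proof,'' the explicit construction you give --- parametrizing the two eigenlines of $r(\phi_v)$ by $X_2, X_3$ via the adapted basis, letting $X_1$ record the upper-triangular inertial parameter, checking the tame relation reduces to $B_v = q_v A_v$, and verifying the universal property through Hensel's lemma --- is exactly the standard argument the authors clearly had in mind. Two small precision points worth tightening: first, the parenthetical ``otherwise $\cD_v^{\St(\alpha_v)}$ is empty'' is not quite what is happening at the step where $\mu|_{I_{F_v}} = 1$ is needed --- this triviality is in fact automatic, since $\overline{\mu}|_{I_{F_v}} = \det\overline{\rho}|_{I_{F_v}}$ is trivial, so $\mu|_{I_{F_v}}$ lands in the torsion-free pro-$p$ group $1 + \lambda\cO$, and the tame relation forces $\mu(\sigma)^{1 - q_v^{-1}} = 1$ with $1 - q_v^{-1} \in \bbZ_p^\times$ because $q_v \not\equiv 1 \bmod p$. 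Second, to conclude that $\cD_v^{\St(\alpha_v)}$ is a local deformation problem in the paper's sense you should also remark that the classifying map $R_v^\square \to \cO[[X_1,X_2,X_3]]$ is \emph{surjective}; this follows since the dual map on reduced cotangent spaces is dual to the injection $\cD_v^{\St(\alpha_v)}(k[\epsilon]) \hookrightarrow \cD_v^\square(k[\epsilon])$, so surjectivity follows by Nakayama. Neither of these affects the substance of the argument.
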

We omit the easy proof.

\subsection{The existence of auxiliary primes}\label{sec_auxiliary_primes}

We continue with the notation of \S \ref{sec_deformation_theory}, and now consider representations $\overline{\rho} : G_F \to \GL_2(k)$ of a particular special form. We assume that $F$ is totally real, and that $\overline{\rho}$ is totally odd, i.e.\ $\mu(c) = -1$ for every choice of complex conjugation $c \in G_F$. We write $\zeta_p \in \overline{F}$ for a $p^\text{th}$ root of unity, $K$ for the unique quadratic subfield of $F(\zeta_p)/F$, and $w \in G_F$ for a fixed choice of element with non-trivial image in $\Gal(K/F)$. We fix a choice $c \in G_F$ of complex conjugation. 

We also assume that the field $K$ is totally real and that $\overline{\rho}|_{G_{F(\zeta_p)}}$ is a direct sum of 2 distinct characters. (We recall that one of our running assumptions is that $\overline{\rho}$ itself is absolutely irreducible.) The assumption that $K$ is totally real is equivalent to the assumption that $4$ divides $[F(\zeta_p) : F]$. In particular, we see that $p \equiv 1 \text{ mod } 4$.

It then follows from Clifford theory that $\overline{\rho} \cong \Ind_{G_{K}}^{G_F} \overline{\chi}$, for some continuous character $\overline{\chi} : G_{K} \to k^\times$. Let $\overline{\gamma} = \overline{\chi} / \overline{\chi}^w$. By hypothesis, $\overline{\gamma}$ is non-trivial, even after restriction to $G_{F(\zeta_p)}$. We have the following elementary observation.
\begin{lemma} We have $\ad^0 \overline{\rho} \cong k(\delta_{K/F}) \oplus \Ind_{G_{K}}^{G_F} \overline{\gamma}$, where $\delta_{K/F} : \Gal(K/F) \to k^\times$ is the unique non-trivial character.
\end{lemma}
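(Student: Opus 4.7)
The plan is to compute $\ad\overline{\rho} \cong \overline{\rho} \otimes \overline{\rho}^\vee$ directly using standard identities for induction, and then extract the trace-zero summand.

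First I would note that since induction commutes with taking contragredient, $\overline{\rho}^\vee \cong \Ind_{G_K}^{G_F} \overline{\chi}^{-1}$. The projection formula then gives
\[
\ad\overline{\rho} \cong \Ind_{G_K}^{G_F}\overline{\chi} \otimes \Ind_{G_K}^{G_F}\overline{\chi}^{-1} \cong \Ind_{G_K}^{G_F}\bigl(\overline{\chi} \otimes \Res_{G_K} \Ind_{G_K}^{G_F} \overline{\chi}^{-1}\bigr).
\]
Since $[G_F : G_K] = 2$, Mackey's formula yields $\Res_{G_K}\Ind_{G_K}^{G_F}\overline{\chi}^{-1} \cong \overline{\chi}^{-1} \oplus (\overline{\chi}^{-1})^w$. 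Substituting and using $\overline{\gamma} = \overline{\chi}/\overline{\chi}^w$ gives
\[
\ad\overline{\rho} \cong \Ind_{G_K}^{G_F}(1) \;\oplus\; \Ind_{G_K}^{G_F}\overline{\gamma}.
\]

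Next I would identify $\Ind_{G_K}^{G_F}(1)$ with the regular representation of $\Gal(K/F)$ pulled back to $G_F$; since $[K:F] = 2$ and $p$ is odd, this decomposes as $k \oplus k(\delta_{K/F})$. Therefore
\[
\ad\overline{\rho} \cong k \;\oplus\; k(\delta_{K/F}) \;\oplus\; \Ind_{G_K}^{G_F}\overline{\gamma}.
\]
Finally, since $p$ is odd, the trace map $\ad\overline{\rho} \to k$ is split by the inclusion of scalars, so $\ad\overline{\rho} = k \oplus \ad^0\overline{\rho}$. The trivial one-dimensional summand above must be the scalars (any $G_F$-equivariant splitting of the trace agrees with the canonical one on the trivial isotypic component), and the claimed decomposition of $\ad^0\overline{\rho}$ follows.

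There is no genuine obstacle; the only mild subtlety worth flagging is that although $\overline{\gamma}$ and its $w$-conjugate $\overline{\gamma}^w = \overline{\gamma}^{-1}$ differ, the induced representation $\Ind_{G_K}^{G_F}\overline{\gamma}$ is unambiguous (it is canonically isomorphic to $\Ind_{G_K}^{G_F}\overline{\gamma}^{-1}$), and the splitting of the scalar summand uses $p \neq 2$.
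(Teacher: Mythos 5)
Your argument is correct, and it is a genuinely different route from the paper's. The paper treats this as an elementary observation and implicitly justifies it by an explicit matrix computation: it conjugates $\overline{\rho}$ into the standard induced form (diagonal on $G_K$, off-diagonal for $w$), takes the basis $E, H, F$ of $\ad^0\overline{\rho}$, and reads off that $H$ spans a copy of $k(\delta_{K/F})$ (since $\overline{\rho}(w)$ conjugates $H$ to $-H$) while $E,F$ span a copy of $\Ind_{G_K}^{G_F}\overline{\gamma}$. Your proof instead runs through the projection formula and Mackey's restriction formula, obtaining $\ad\overline{\rho}\cong\Ind_{G_K}^{G_F}(1)\oplus\Ind_{G_K}^{G_F}\overline{\gamma}$ and then peeling off the trivial summand using $p\neq 2$ and Schur's lemma. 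Both are fine; the paper's computation has the side benefit of pinning down the specific basis $E,H,F$ that it will use repeatedly in the dual Selmer group calculations of \S 4.5 (e.g.\ in Lemma \ref{lem_ramakrishna_tangent_space_is_decomposed} and Proposition \ref{prop_existence_of_auxiliary_ramakrishna_primes}), whereas your approach is cleaner as a standalone proof of the isomorphism class but would need to be supplemented by the explicit basis before those later arguments could be made. One small point worth making fully explicit in your final step: the reason the abstract trivial summand must be the scalar line is that $(\ad\overline{\rho})^{G_F}=k$ by Schur's lemma applied to the absolutely irreducible $\overline{\rho}$, and then the decomposition of $\ad^0\overline{\rho}$ follows by Krull--Schmidt since $k(\delta_{K/F})$ and $\Ind_{G_K}^{G_F}\overline{\gamma}$ contain no trivial subrepresentation (as $\delta_{K/F}$ and $\overline{\gamma}$ are nontrivial).
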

We write $M_0 = k(\delta_{K/F})$ and $M_1 = \Ind_{G_{K}}^{G_F} \overline{\gamma}$, so that $\ad^0 \overline{\rho} = M_0 \oplus M_1$. We can assume, after conjugating, that $\overline{\rho}$ has the following form:
\begin{gather*}
 \overline{\rho}(\sigma) = \left( \begin{array}{cc} \overline{\chi}(\sigma) & 0 \\ 0 & \overline{\chi}^w(\sigma) \end{array} \right) \text{ if } \sigma \in G_K; \\
  \overline{\rho}(w) = \left( \begin{array}{cc} 0 & \overline{\chi}(w^2) \\ 1 & 0 \end{array} \right). 
\end{gather*}
After possibly reversing the roles of $\overline{\chi}$ and $\overline{\chi}^w$, we can assume as well that $\overline{\chi}(c) = 1$. Having fixed this form for $\overline{\rho}$, we fix the following standard basis of $\ad^0 \overline{\rho}$:
\[ E = \left( \begin{array}{cc} 0 & 1 \\ 0 &  0 \end{array} \right), \, H =  \left( \begin{array}{cc} 1 & 0 \\ 0 &  -1 \end{array} \right), \, F = \left( \begin{array}{cc} 0 & 0 \\ 1 &  0 \end{array} \right). \]
Then $M_0$ is spanned by $H$ and $M_1$ is spanned by the vectors $E, F$. We observe that under the natural perfect pairing $\ad^0 \overline{\rho} \times \ad^0 \overline{\rho}(1) \to k(\epsilon)$, the spaces $M_0$ and $M_1(1)$ are mutual annihilators; similarly, the spaces $M_1$ and $M_0(1)$ are mutual annihilators.

\begin{lemma}\label{lem_ramakrishna_tangent_space_is_decomposed}
Let $v \nmid p$ be a finite place of $F$, and suppose that the local deformation problem $\cD_v = \cD_v^{\St(\alpha_v)}$ is defined.
\begin{enumerate} \item The subspace $\cL_v \subset H^1(F_v, \ad^0 \overline{\rho})$ respects the decomposition $\ad^0 \overline{\rho} = M_0 \oplus M_1$; that is, we have
\[ \cL_v = \left( \cL_v \cap H^1(F_v, M_0) \right) \oplus \left( \cL_v \cap H^1(F_v, M_1) \right) \subset H^1(F_v, M_0) \oplus H^1(F_v, M_1) = H^1(F_v, \ad^0 \overline{\rho}). \]
\item Similarly, the subspace $\cL_v^\perp \subset H^1(F_v, \ad^0 \overline{\rho}(1))$ respects the decomposition $\ad^0 \overline{\rho}(1) = M_0(1) \oplus M_1(1)$. 
\end{enumerate}
\end{lemma}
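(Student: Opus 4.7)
The plan is first to verify that $\Frob_v \in G_K$ under the given hypotheses, so that in the fixed basis $\overline{\rho}|_{G_{F_v}}$ is diagonal, and then to read off $\cL_v$ by a direct tangent-space computation. The character $\delta_{K/F}$ factors through $\Gal(F(\zeta_p)/F) \hookrightarrow (\bbZ/p\bbZ)^\times$ and, being the unique non-trivial quadratic character there, coincides with the restriction of the Legendre symbol. Since the cyclotomic character satisfies $\epsilon(\Frob_v) = q_v^{-1}$, we obtain $\delta_{K/F}(\Frob_v) = (q_v/p) = (-1/p) = 1$, using $q_v \equiv -1 \text{ mod } p$ together with the fact that $K$ totally real forces $4 \mid [F(\zeta_p):F]$, hence $p \equiv 1 \text{ mod } 4$. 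Thus $\Frob_v \in G_K$ and $\overline{\rho}(\Frob_v) = \diag(\overline{\chi}(\Frob_v), \overline{\chi}^w(\Frob_v))$; moreover the decomposition $\ad^0 \overline{\rho} = M_0 \oplus M_1$ coincides with the $\pm 1$-eigenspace decomposition of $\Ad(\overline{\rho}(\Frob_v))$, with $M_0 = kH$ the $+1$-eigenspace and $M_1 = kE \oplus kF$ the $-1$-eigenspace (using $\alpha_v/\beta_v = -1$).

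A class in $\cL_v$ is represented by a cocycle $\phi \in Z^1(G_{F_v}, \ad^0 \overline{\rho})$ such that $r = (I+\epsilon \phi)\,\overline{\rho}|_{G_{F_v}}$ lies in $\cD_v^{\St(\alpha_v)}(k[\epsilon]/(\epsilon^2))$. Writing $\phi(\Frob_v) = pH + qE + sF$ and, for $\sigma \in I_{F_v}$, $\phi(\sigma) = p_\sigma H + q_\sigma E + s_\sigma F$, a direct matrix calculation gives the eigenvalues $A = \alpha_v(1+\epsilon p)$ and $B = -\alpha_v(1-\epsilon p)$ of $r(\Frob_v)$; since $q_v \equiv -1 \text{ mod } p$, the condition $B = q_v A$ reduces to $p = 0$. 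With $p=0$, the $A$-eigenspace $(r(\Frob_v) - B)(k[\epsilon]/(\epsilon^2))^2$ is the free rank-one submodule spanned by $(1, \epsilon s/2)^T$, and demanding that $r(\sigma) = I + \epsilon \phi(\sigma)$ fix this vector for every $\sigma \in I_{F_v}$ reduces to $p_\sigma = s_\sigma = 0$. Thus $\phi(\Frob_v) \in M_1$ and $\phi|_{I_{F_v}} \subset kE \subset M_1$; since $M_1$ is $G_F$-stable and $G_{F_v}$ is topologically generated by $\Frob_v$ and $I_{F_v}$, the cocycle relation forces $\phi(G_{F_v}) \subset M_1$. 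This proves part (1), with the stronger conclusion $\cL_v \cap H^1(F_v, M_0) = 0$.

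For part (2), I observe that the trace pairing is block-diagonal under the two decompositions: the identities $\tr(HE) = \tr(HF) = \tr(EE) = \tr(FF) = 0$ together with $\tr(HH) = 2$ and $\tr(EF) = 1$ show that $M_0$ and $M_1(1)$ annihilate each other, as do $M_1$ and $M_0(1)$, and that the pairing restricts to perfect pairings $M_0 \times M_0(1) \to k(1)$ and $M_1 \times M_1(1) \to k(1)$. By functoriality, the Tate local duality pairing on the $H^1$'s is likewise block-diagonal with respect to the $M_0, M_1$ decompositions. Combined with the inclusion $\cL_v \subset H^1(F_v, M_1)$ just established, this gives $H^1(F_v, M_0(1)) \subset \cL_v^\perp$, hence $\cL_v^\perp = H^1(F_v, M_0(1)) \oplus \bigl(\cL_v^\perp \cap H^1(F_v, M_1(1))\bigr)$, as required. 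The only non-formal step is the matrix computation in the middle paragraph; the underlying point is that $\Ad(\overline{\rho}(\Frob_v))$ acts as $+1$ on $H$ and $-1$ on $E, F$, so the Steinberg eigenvalue equation cleanly eliminates the $H$-direction of $\phi$ at $\Frob_v$, while inertial triviality on the $A$-eigenspace kills the $H$ and $F$ directions of $\phi|_{I_{F_v}}$.
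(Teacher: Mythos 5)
Your argument is correct and follows essentially the same route as the paper's: show $v$ splits in $K$, observe that $\overline{\rho}(\Frob_v)$ is then diagonal so that $M_0$ and $M_1$ are the $+1$- and $-1$-eigenspaces of $\Ad(\overline{\rho}(\Frob_v))$, establish the stronger inclusion $\cL_v \subset H^1(F_v, M_1)$, and deduce (2) by duality. You get the inclusion by an explicit tangent-space matrix computation, where the paper instead argues structurally (identifying $M_0 = k$ and $M_1 = k(\epsilon)^2$ as $k[G_{F_v}]$-modules and noting $\cL_v$ is one-dimensional and ramified); these are the same argument at different levels of explicitness, and both are valid. One imprecision worth flagging in your first paragraph: $\delta_{K/F}$ need \emph{not} coincide with the restriction of the Legendre symbol to $\Gal(F(\zeta_p)/F)$ --- that restriction is trivial whenever $[F \cap \bbQ(\zeta_p):\bbQ]$ is even, while $\delta_{K/F}$ is always non-trivial. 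Your conclusion $\delta_{K/F}(\Frob_v) = 1$ is nonetheless correct for the right reason: since $q_v \equiv -1 \bmod p$, $\overline\epsilon(\Frob_v) = -1$ is the unique element of order $2$ in the cyclic group $\Gal(F(\zeta_p)/F)$, and as $4 \mid [F(\zeta_p):F]$ this element is a square in that group, hence lies in the kernel of its unique quadratic character $\delta_{K/F}$ --- this is exactly the content of the paper's terse remark that $K$ totally real forces $v$ to split in $K$.
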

\begin{proof}
The second part is the dual of the first part. 
We prove the first part. Our assumption that the local deformation problem $\cD_v = \cD_v^{\St(\alpha_v)}$ is defined means that $q_v \equiv -1 \text{ mod } p$, $\overline{\rho}|_{G_{F_v}}$ is unramified, and $\overline{\rho}(\Frob_v)$ has distinct eigenvalues $\alpha_v$, $\beta_v$ with $\alpha_v / \beta_v = -1$. Since $K$ is assumed to be totally real, the assumption $q_v \equiv -1 \text{ mod }p$ implies that $v$ splits in $K$. We thus have $M_0 = k$, $M_1 = k(\epsilon) \oplus k(\epsilon)$ as $k[G_{F_v}]$-modules; note that $\epsilon = \epsilon^{-1}$ in this case. The subspace $\cL_v \subset H^1(F_v, \ad^0 \overline{\rho})$ is 1-dimensional, and lies in $H^1(F_v, M_1)$, being spanned by either $H^1(F_v, k(\epsilon)) \otimes E$ or $H^1(F_v, k(\epsilon)) \otimes F$. This implies the result.
\end{proof}
\begin{remark}\label{rmk_inducing_extension_is_totally_real}
This lemma is false without the assumption that $K$ is totally real. This is the main reason for making this assumption. 
\end{remark}
Let $\cS = (\overline{\rho}, \mu, S, \{ \Lambda_v \}_{v \in S}, \{ \cD_v \}_{v \in S})$ be a global deformation problem, and let $T \subset S$ be a subset containing all the places above $p$. Suppose that for each $v \in S \setminus T$, $\cD_v = \cD_v^{\St(\alpha_v)}$. The lemma then implies that we can decompose 
\begin{equation*} H^1_{\cS, T}(\ad^0 \overline{\rho}(1)) = \ker \left( H^1(F_S/F, \ad^0 \overline{\rho}(1)) \to \prod_{v \in S - T} H^1(F_v, \ad^0 \overline{\rho}(1)) / \cL_v^\perp \right) = H^1_{\cS, T}( M_0(1)) \oplus H^1_{\cS, T}(M_1(1)),
\end{equation*}
where by definition
\[ H^1_{\cS, T}(M_0(1)) = H^1_{\cS, T}(\ad^0 \overline{\rho}(1)) \cap H^1(F_S/F, M_0(1)) \]
and
\[ H^1_{\cS, T}(M_1(1)) = H^1_{\cS, T}(\ad^0 \overline{\rho}(1)) \cap H^1(F_S/F, M_1(1)). \]
We define $h^1_{\cS, T}(M_0(1))$ and $h^1_{\cS, T}(M_1(1))$ accordingly.
\begin{proposition}\label{prop_existence_of_auxiliary_ramakrishna_primes}
Let $\cS = (\overline{\rho}, \mu, S, \{ \Lambda_v \}_{v \in S}, \{ \cD_v \}_{v \in S})$ be a global deformation problem, and let $T = S$. Let $N_0 \geq 1$ be an integer. Let $\rho : G_F \to \GL_2(\cO)$ be a lifting of type $\cS$. Then for any integer $q \geq h^1_{\cS, T}(M_1(1))$, there exists a set $Q_0$ of primes, disjoint from $S$, and elements $\alpha_v \in k^\times$, satisfying the following conditions:
\begin{enumerate}
\item $\# Q_0 = q$.
\item For each $v \in Q_0$, the local deformation problem $\cD_v^{\St(\alpha_v)}$ is defined. We define the augmented deformation problem
\[ \cS_{Q_0} = (\overline{\rho}, \mu, S \cup Q_0, \{ \Lambda_v \}_{v \in S} \cup \{ \cO \}_{v \in Q_0}, \{ \cD_v \}_{v \in S} \cup \{ \cD_v^{\St(\alpha_v)} \}_{v \in Q_0}). \]
\item Let $\rho_{N_0} = \rho \text{ mod } \lambda^{N_0} : G_F \to \GL_2(\cO/\lambda^{N_0} \cO)$. Then $\rho_{N_0}(c) = \rho_{N_0}(\Frob_v)$ for each $v \in Q_0$, at least up to conjugacy in the image of $\rho_{N_0}$.
\item We have 
$H^1_{\cS_{Q_0}, T}(M_1(1)) = 0$.
\end{enumerate}
\end{proposition}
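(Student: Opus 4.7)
The plan is the classical Wiles/Ramakrishna strategy of killing dual Selmer classes one prime at a time, using the fact (guaranteed by Lemma \ref{lem_ramakrishna_tangent_space_is_decomposed}) that a $\St(\alpha_v)$-type prime contributes to the dual Selmer group only through the $M_1$-component: $\cL_v^\perp$ contains all of $H^1(F_v, M_0(1))$ and meets $H^1(F_v, M_1(1))$ in a codimension-one subspace. We will construct $Q_0$ inductively, producing at each step a single prime $v_0$ that cuts $h^1_{\cS,T}(M_1(1))$ by exactly one while leaving the $M_0$-part completely undisturbed. After at most $h^1_{\cS,T}(M_1(1))$ steps the $M_1$-dual Selmer vanishes, and we pad $Q_0$ out to the prescribed cardinality $q$ by continuing to add primes of the same $\St(\alpha_v)$-type; these extra primes cannot resurrect a dual Selmer class that is already dead, so property 4 will survive.

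The heart of the induction is the production of a single prime $v_0$ satisfying conditions 2 and 3 and such that $\mathrm{res}_{v_0}(\psi) \notin \cL_{v_0}^\perp$ for a prescribed non-zero class $\psi \in H^1_{\cS,T}(M_1(1))$. Let $L$ be the composite of $F^{\ker \rho_{N_0}}$, $F(\zeta_{p^{N_0}})$, the splitting field of $\ad^0 \overline{\rho}$, and the splitting field of the finite-order character $\mu \epsilon \bmod \lambda$; on $G_L$, the representations $\rho_{N_0}$, $\mu_{p^{N_0}}$, $\ad^0 \overline{\rho}(1)$, and $\mu \epsilon$ all become trivial. The restriction $\psi|_{G_L}\colon G_L \to M_1(1)$ is then a $\Gal(L/F)$-equivariant homomorphism, non-zero because $\psi$ itself is non-trivial and the inflation--restriction obstruction $H^1(\Gal(L/F), M_1(1))$ vanishes under our hypotheses on $\overline{\rho}|_{G_{F(\zeta_p)}}$. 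Chebotarev density applied to $L/F$ then produces a prime $v_0 \notin S$ whose Frobenius in $\Gal(L/F)$ is conjugate to $c \cdot \tau$ for an element $\tau \in \Gal(L/L_{N_0})$, chosen so that $\psi(\tau)$ is non-zero in a component of $M_1(1)$ not annihilating $\cL_{v_0}^\perp$. By construction, $\rho_{N_0}(\Frob_{v_0}) \sim \rho_{N_0}(c)$ (condition 3); and since $\mu \epsilon$ is trivial on $\Frob_{v_0}$ and $\det \rho_{N_0}(c) = -1$, we conclude $q_{v_0} \equiv -1 \pmod{p}$ and $\overline{\rho}(\Frob_{v_0})$ has eigenvalues with ratio $-1$, making $\cD_{v_0}^{\St(\alpha_{v_0})}$ defined (condition 2). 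The one-step drop $h^1_{\cS\cup\{v_0\},T}(M_1(1)) = h^1_{\cS,T}(M_1(1)) - 1$ then follows from the Greenberg--Wiles formula implicit in the long exact sequence \eqref{eqn_long_exact_sequence_in_taylor_cohomology}, combined with Proposition \ref{prop_presenting_global_deformation_ring} applied to the augmented problem.

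The main obstacle will be the Chebotarev step---specifically, verifying that the various fields entering the definition of $L$ are linearly disjoint enough that the coset $c \cdot \tau$ can in fact be freely prescribed in $\Gal(L/F)$, and that the component of $\psi(\tau)$ on which we have non-vanishing pairs non-trivially against $\cL_{v_0}^\perp$. The hypothesis that $K$ is totally real is essential here, as flagged in Remark \ref{rmk_inducing_extension_is_totally_real}: it forces the Chebotarev primes $v_0$ to split in $K$, which makes the local decomposition of $M_1|_{G_{F_{v_0}}}$ as a sum of copies of $k(\epsilon)$ available (cf.\ the proof of Lemma \ref{lem_ramakrishna_tangent_space_is_decomposed}) and makes $\cL_{v_0}^\perp$ a proper subspace of $H^1(F_{v_0}, M_1(1))$ on which $\mathrm{res}_{v_0}(\psi)$ is controllable. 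Without totally realness of $K$ the prime $v_0$ would be inert in $K$, the local structure of $M_1$ at $v_0$ would differ, and the entire inductive mechanism would break down.
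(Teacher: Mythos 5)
Your overall strategy matches the paper's: produce auxiliary primes one at a time by Chebotarev, using a carefully chosen element $\sigma = \tau\sigma_0$ where $\sigma_0$ lifts complex conjugation and $\tau$ salvages the nonvanishing of the cocycle $\varphi$, with the freedom of choosing $\alpha_v \in \{1, -1\}$ guaranteeing the evaluation map is surjective onto $k$. However, there is a genuine gap at the most delicate step: you \emph{assert} that "the inflation--restriction obstruction $H^1(\Gal(L/F), M_1(1))$ vanishes under our hypotheses on $\overline{\rho}|_{G_{F(\zeta_p)}}$," but you never prove it. This is precisely the content of the paper's Lemma \ref{lem_restriction_of_cocycles}, and its proof is not routine: it requires (i) the auxiliary Lemma \ref{lem_modules_associated_to_characters} about simple $\bbF_p[\Gamma]$-submodules of character modules, (ii) the parity argument of Lemma \ref{lem_characters_non_isomorphic} showing that $\epsilon\overline{\gamma}$ is totally even while $\overline{\gamma}, \overline{\gamma}^{-1}$ are totally odd (this is one of the two places where $K$ totally real enters), and (iii) a filtration on $1 + M_2(\lambda/\lambda^N)$ with Jordan--H\"older factors $k$, $k(\overline{\gamma})$, $k(\overline{\gamma}^{-1})$ as $\bbF_p[G_K]$-modules. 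Without supplying this argument, the induction never gets off the ground. Your field $L$ is also larger than necessary (the paper uses only $K_{N_0} = F(\zeta_{p^{N_0}}, \rho_{N_0})$), and the notation $L_{N_0}$ is undefined.

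Your concluding explanation of why $K$ must be totally real is also incorrect. You write that "Without totally realness of $K$ the prime $v_0$ would be inert in $K$" — but the splitting of $v_0$ in $K$ is controlled entirely by the congruence $q_{v_0} \equiv -1 \pmod p$ (which forces $\Frob_{v_0}$ to fix $K$ inside $F(\zeta_p)$), not by whether $K$ is real. The real uses of totally-realness are: (a) it ensures $c \in G_K$, so there actually \emph{exist} elements $\sigma$ with $\rho_{N_0}(\sigma) = \rho_{N_0}(c)$ and $\epsilon(\sigma) \equiv -1$, both of which are conditions on $\Gal(K/F)$-cosets; and (b) it gives the parity mismatch between $\epsilon\overline{\gamma}$ and $\overline{\gamma}^{\pm 1}$ used in Lemma \ref{lem_characters_non_isomorphic}. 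Remark \ref{rmk_inducing_extension_is_totally_real} in the paper flags the decomposition of $\cL_v$ as the main reason, which is yet a third, independent use of the hypothesis; none of these is the one you give.
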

Before giving the proof of the proposition, we prove some lemmas.
\begin{lemma}\label{lem_modules_associated_to_characters}
Let $\Gamma$ be a group, and $\alpha : \Gamma \to k^\times$ a character. Let $k' \subset k$ be the subfield generated by the values of $\alpha$. Then $k'(\alpha)$ is a simple $\fp[\Gamma]$-module. If $\beta : \Gamma \to k^\times$ is another character, then $k'(\alpha)$ is isomorphic to a $\fp[\Gamma]$-submodule of $k(\beta)$ if and only if there is an automorphism $F$ of $k$ such that $\beta = F \circ \alpha$.
\end{lemma}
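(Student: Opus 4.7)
The plan is to split the proof into the simplicity assertion and the characterization of submodule embeddings.

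For simplicity, I would compute the image $A$ of $\fp[\Gamma]$ in $\End_\fp(k'(\alpha))$. Since $\gamma \in \Gamma$ acts on $k'(\alpha) = k'$ as multiplication by $\alpha(\gamma) \in k'$, this image is the $\fp$-subalgebra of $k$ generated by the values of $\alpha$. The multiplicative subgroup $\alpha(\Gamma) \subset k^\times$ is closed under products, so its $\fp$-linear span is already an $\fp$-subalgebra; by the definition of $k'$, this subalgebra is $k'$. Hence $k'(\alpha)$ is a $1$-dimensional vector space over the field $k' = A$, and so is a simple $\fp[\Gamma]$-module.

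The easy direction of the second assertion is immediate: given $F \in \Aut(k)$ with $\beta = F \circ \alpha$, the map $x \mapsto F(x)$ is an $\fp$-linear injection $k'(\alpha) \hookrightarrow k(\beta)$ which commutes with $\Gamma$ because $F$ is a ring homomorphism. For the converse, let $\phi : k'(\alpha) \to k(\beta)$ be a $\fp[\Gamma]$-linear injection and set $c = \phi(1) \in k^\times$, $F(x) = c^{-1}\phi(x)$. Evaluating the equivariance identity $\phi(\alpha(\gamma) x) = \beta(\gamma) \phi(x)$ at $x = 1$ gives $F(\alpha(\gamma)) = \beta(\gamma)$ for all $\gamma \in \Gamma$. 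In particular $F$ is multiplicative on the set $\alpha(\Gamma)$, since $F(\alpha(\gamma)\alpha(\gamma')) = F(\alpha(\gamma\gamma')) = \beta(\gamma\gamma') = F(\alpha(\gamma))F(\alpha(\gamma'))$.

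The key step is then to upgrade $F$ to an $\fp$-algebra homomorphism. Using that $\alpha(\Gamma)$ $\fp$-spans $k'$ (as observed in the first part) and that $F$ is $\fp$-linear, a short bilinear expansion shows $F(xy) = F(x)F(y)$ for all $x, y \in k'$. Then $F : k' \to k$ is a nonzero homomorphism of finite fields, automatically injective; since $|F(k')| = |k'|$ and finite subfields of $k$ are determined by their cardinality, $F$ is an automorphism of $k'$. Finally, the restriction map $\Gal(k/\fp) \twoheadrightarrow \Gal(k'/\fp)$ lifts $F$ to some $\widetilde{F} \in \Aut(k)$, and by construction $\widetilde{F} \circ \alpha = \beta$. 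The only substantive point is the multiplicativity check, which reduces to expanding two $\fp$-linear combinations of values of $\alpha$; everything else is routine finite-field Galois theory.
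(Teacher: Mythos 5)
Your proof is correct and follows essentially the same route as the paper: in both cases the key observation is that $\fp[\Gamma]$ acts on $k'(\alpha)$ through the field $k'$, from which simplicity is immediate, and the converse direction extracts a field embedding $k' \hookrightarrow k$ from a nonzero $\fp[\Gamma]$-module map. The only cosmetic difference is that the paper deduces $\fp[\alpha(\Gamma)] = k'$ by picking a cyclic generator $\alpha(x)$ of the image, whereas you note directly that the $\fp$-span of the multiplicative group $\alpha(\Gamma)$ is already a subring, hence a subfield, hence $k'$ --- the same conclusion reached by a marginally more general argument.
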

\begin{proof}
Since the image of $\alpha$ is cyclic, we can find $x \in \Gamma$ such that $k' = \bbF_p(\alpha(x))$. It follows that $k'(\alpha)$ is generated as a $\fp[\Gamma]$-module by any non-zero element. If there is a non-zero homomorphism $k'(\alpha) \to k(\beta)$ then the same argument shows that we get an embedding $F : k' \hookrightarrow k$ such that $\beta = F \circ \alpha$.
\end{proof}

\begin{lemma}\label{lem_characters_non_isomorphic}
With notation and assumptions as above, the $\fp[G_{K}]$-module $k(\epsilon \overline{\gamma})$ has no Jordan-H\"older factors in common with the modules $k$, $k(\overline{\gamma})$ or $k(\overline{\gamma}^{-1})$. The characters $\overline{\gamma}$ and $\epsilon \overline{\gamma}$ are non-trivial.
\end{lemma}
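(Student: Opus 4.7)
The plan is to apply Lemma \ref{lem_modules_associated_to_characters}, which reduces each of the three ``no common Jordan--H\"older factor'' claims to the non-existence of a field automorphism $F \in \Aut(k)$ identifying $\epsilon\overline{\gamma}$ with the corresponding character; the two non-triviality statements will fall out by restricting to $G_{F(\zeta_p)}$.

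I begin by observing that $\epsilon|_{G_{F(\zeta_p)}} = 1$, so $(\epsilon\overline{\gamma})|_{G_{F(\zeta_p)}} = \overline{\gamma}|_{G_{F(\zeta_p)}}$, which is non-trivial by our hypothesis on $\overline{\rho}|_{G_{F(\zeta_p)}}$. This gives the non-triviality of both $\overline{\gamma}$ and $\epsilon\overline{\gamma}$, and the non-triviality of $\epsilon\overline{\gamma}$ already rules out any common Jordan--H\"older factor with the trivial module $k$ via Lemma \ref{lem_modules_associated_to_characters}.

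The main step is to handle the cases $k(\overline{\gamma})$ and $k(\overline{\gamma}^{-1})$. The key input is the value at a fixed complex conjugation $c \in G_F$. Because $K$ is totally real (the role of this hypothesis being flagged in Remark \ref{rmk_inducing_extension_is_totally_real}), we have $c \in G_K$, so $\overline{\gamma}(c)$ makes sense. Using the normalization $\overline{\chi}(c) = 1$ fixed in the preceding discussion together with the totally odd hypothesis $\det \overline{\rho}(c) = -1$ (which forces $\overline{\chi}^w(c) = -1$ since $\det\overline{\rho}|_{G_K} = \overline{\chi}\,\overline{\chi}^w$), one computes $\overline{\gamma}(c) = \overline{\gamma}^{-1}(c) = -1$. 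Combined with $\epsilon(c) = -1$ this yields $(\epsilon\overline{\gamma})(c) = 1$.

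To conclude: for any $F \in \Aut(k)$, one has $F(\overline{\gamma}^{\pm 1}(c)) = F(-1) = -1$, since field automorphisms of $k$ fix the prime subfield $\bbF_p$. Because $p$ is odd, $-1 \neq 1$ in $k$, so $\epsilon\overline{\gamma}$ cannot be of the form $F \circ \overline{\gamma}^{\pm 1}$, and Lemma \ref{lem_modules_associated_to_characters} then finishes the argument. I do not expect any real obstacle here: the content of the proof is precisely the observation that the totally real hypothesis on $K$ places $c$ inside $G_K$ so that one can pit $\overline{\gamma}$ against $\epsilon$ at a single Galois element.
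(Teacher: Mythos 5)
Your argument is correct and is essentially the paper's own proof: the paper phrases the key step by observing that $\epsilon\overline{\gamma}$ is totally even while $\overline{\gamma}^{\pm 1}$ is totally odd, which is precisely your computation $\epsilon\overline{\gamma}(c)=1 \neq -1 = \overline{\gamma}^{\pm 1}(c)$ at a complex conjugation $c \in G_K$ (valid since $K$ is totally real), combined with the fact that automorphisms of $k$ fix $\pm 1$. Your treatment of the non-triviality statements by restriction to $G_{F(\zeta_p)}$ also matches the paper.
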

\begin{proof}
Since $K$ is real, the character $\epsilon \overline{\gamma}$ is totally even, while the characters $\overline{\gamma}$ and $\overline{\gamma}^{-1}$ are totally odd. This rules out a non-trivial morphism of $\fp[G_{K}]$-modules between $k(\epsilon \overline{\gamma})$ and $k(\overline{\gamma})$ or $k(\overline{\gamma}^{-1})$. The characters $\overline{\gamma}$ and $\epsilon \overline{\gamma}$ are non-trivial since we have assumed that $\overline{\gamma}$ remains non-trivial even after restriction to $G_{F(\zeta_p)}$.
\end{proof}

\begin{lemma}\label{lem_restriction_of_cocycles}
Let $\rho : G_F \to \GL_2(\cO)$ be a lifting of type $\cS$. Let $N \geq 1$ be an integer, and let $\rho_N = \rho \text{ mod }\lambda^N$. Let $K_N = F(\zeta_{p^N}, \rho_N)$, i.e.\ $K_N$ is the splitting field of the representation $\rho_N|_{G_{F(\zeta_{p^N})}}$. Then $H^1(\Gal(K_N/F), M_1(1)) = 0$. 
\end{lemma}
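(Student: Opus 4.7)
The plan is to use Shapiro's lemma to reduce to a cohomology computation over $K$, then isolate the pro-$p$ part of $\Gal(K_N/K)$ and run a Jordan--H\"older argument powered by Lemma~\ref{lem_characters_non_isomorphic}.

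First, I would set $\alpha = \epsilon \overline{\gamma}|_{G_K}$, a character of $G_K$ which factors through $\Gal(K_N/K)$ because $\zeta_p$ and the field cut out by $\overline{\gamma}$ both lie in $K_N$. By the projection formula, $M_1(1) = (\Ind_{G_K}^{G_F}\overline{\gamma}) \otimes \epsilon \cong \Ind_{G_K}^{G_F} \alpha$, and this descends to $\Ind_{\Gal(K_N/K)}^{\Gal(K_N/F)} k(\alpha)$ as a $\Gal(K_N/F)$-module. Shapiro's lemma then gives an isomorphism
\[ H^1(\Gal(K_N/F), M_1(1)) \cong H^1(\Gal(K_N/K), k(\alpha)), \]
so it suffices to show this second group vanishes.

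Next, I would introduce $L = K(\overline{\rho}, \zeta_p)$, a Galois extension of $F$ contained in $K_N$. The group $\Gal(L/K)$ embeds into $\overline{\rho}(G_K) \times \Gal(F(\zeta_p)/K)$, so it has order prime to $p$ (the first factor is abelian with values in $k^\times$, and the second sits inside $(\bbZ/p\bbZ)^\times$). Since $\alpha$ factors through $\Gal(L/K)$, the subgroup $\Gal(K_N/L)$ acts trivially on $k(\alpha)$; since $|\Gal(L/K)|$ is prime to $p$ and $k(\alpha)$ is $p$-torsion, the Hochschild--Serre spectral sequence degenerates to
\[ H^1(\Gal(K_N/K), k(\alpha)) \cong \Hom_{\Gal(L/K)}\bigl( \Gal(K_N/L)^{\ab} \otimes_{\bbZ} \fp,\; k(\alpha) \bigr). \]

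Finally, I would analyze the Jordan--H\"older factors of $\Gal(K_N/L)^{\ab} \otimes \fp$. The pair $(\rho_N, \epsilon \bmod p^N)$ embeds $\Gal(K_N/L)$ into the $p$-group $\ker(\GL_2(\cO/\lambda^N) \to \GL_2(k)) \times \ker((\bbZ/p^N\bbZ)^\times \to (\bbZ/p\bbZ)^\times)$, compatibly with the $\Gal(L/K)$-action (adjoint through $\overline{\rho}$ on the first factor, trivial on the second). Intersecting with the standard $p$-adic filtration produces a $\Gal(L/K)$-stable filtration of $\Gal(K_N/L)$ whose graded pieces are subquotients of $\ad\overline{\rho}|_{G_K} \oplus k$; using $\overline{\rho}|_{G_K} = \overline{\chi} \oplus \overline{\chi}^w$, this decomposes over $k[G_K]$ as $k^{\oplus 3} \oplus k(\overline{\gamma}) \oplus k(\overline{\gamma}^{-1})$. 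Therefore every $\fp[G_K]$-Jordan--H\"older factor of $\Gal(K_N/L)^{\ab} \otimes \fp$ also appears in $k$, $k(\overline{\gamma})$, or $k(\overline{\gamma}^{-1})$. Lemma~\ref{lem_characters_non_isomorphic} then tells us that $k(\alpha) = k(\epsilon\overline{\gamma})$ shares no Jordan--H\"older factor with these, so the $\Hom$ vanishes. The only subtle step is the last one -- tracking the filtration carefully enough to see that no Jordan--H\"older factors outside $\{1, \overline{\gamma}, \overline{\gamma}^{-1}\}$ can appear -- but this is essentially the standard adjoint computation for the congruence kernel of $\GL_2(\cO/\lambda^N)$, and is precisely where the totally-real hypothesis on $K$ (through Lemma~\ref{lem_characters_non_isomorphic}) is used.
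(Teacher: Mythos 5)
Your proof is correct and follows essentially the same route as the paper's: Shapiro's lemma, reduction through the prime-to-$p$ group $\Gal(K_1/K)$ (your $L=K_1$), and a Jordan--H\"older filtration argument concluded by Lemma~\ref{lem_characters_non_isomorphic}. The only cosmetic difference is that you fold the cyclotomic factor and the adjoint factor into a single product embedding and filter once, whereas the paper splits the tower at $K_1(\zeta_{p^N})$ and treats the two pieces in separate steps.
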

\begin{proof}
We have, by Shapiro's lemma and inflation-restriction:
\[ H^1(\Gal(K_N/F), M_1(1)) \cong H^1(\Gal(K_N/K), k(\epsilon\overline{\gamma})) \cong H^1(\Gal(K_N/K_1), k)^{\epsilon \overline{\gamma}}. \]
(The superscript in the last term indicates the subspace where the group $\Gal(K_1/K)$ acts by the character $\epsilon \overline{\gamma}$.) Let $k' \subset k$ denote the subfield generated by the values of $\epsilon \overline{\gamma}$. It suffices to show that 
\[ H^1(\Gal(K_N/K_1), k')^{\epsilon \overline{\gamma}} = 0. \]
We first show that $H^1(\Gal(K_1(\zeta_{p^N})/K_1), k')^{\epsilon \overline{\gamma}} = 0$. Indeed, an element of this group is represented by a homomorphism $f : \Gal(K_1(\zeta_{p^N})/K_1) \to k'$ such that for all $x \in G_{K}$, $y \in \Gal(K_1(\zeta_{p^N})/K_1)$, $f(x y x^{-1}) = \epsilon \overline{\gamma}(x) f(y)$. The conjugation action of $G_{K}$ on the group $\Gal(K_1(\zeta_{p^N})/K_1)$ is trivial, so any such homomorphism must be zero.

It therefore suffices to show that $H^1(\Gal(K_N/K_1(\zeta_{p^N})), k')^{\epsilon \overline{\gamma}} = 0$. An element of this group is again represented by a homomorphism $f : \Gal(K_N/K_1(\zeta_{p^N})) \to k'$ transforming according to the character $\epsilon \overline{\gamma}$ under conjugation by $G_K$. Let $f$ be such a homomorphism, and suppose that $f$ is non-zero; then $f$ factors through an abelian quotient $H_f$ of $\Gal(K_N/K_1(\zeta_{p^N}))$, invariant under conjugation by $G_{K}$, and $f$ induces an isomorphism $H_f \cong k'(\epsilon \overline{\gamma})$ of simple $\fp[G_{K}]$-modules. 

On the other hand, $\rho_N$ induces an injection $\Gal(K_N/K_1(\zeta_{p^N})) \hookrightarrow 1 + M_2(\lambda / \lambda^N )$, which is equivariant for the conjugation action of $G_{K}$. There exists a decreasing filtration $F_\bullet \subset 1 + M_2(\lambda/ \lambda^N)$ by $G_{K}$-invariant normal subgroups, such that each $F_i / F_{i+1}$ is an abelian group, isomorphic to one of $k$, $k(\overline{\gamma})$ or $k(\overline{\gamma}^{-1})$ as $\fp[G_{K}]$-modules. Pulling this filtration back via $\rho_N$ to the group $\Gal(K_N/K_1(\zeta_{p^N}))$, we find that $H_f$ is isomorphic to a submodule of one of these $\fp[G_{K}]$-modules. The desired vanishing statement will therefore follow from the claim that neither $k$, $k(\overline{\gamma})$ nor $k(\overline{\gamma}^{-1})$ contains a non-zero $\fp[G_{K}]$-submodule isomorphic to $k'(\epsilon \overline{\gamma})$. This follows immediately from Lemma \ref{lem_modules_associated_to_characters} and Lemma \ref{lem_characters_non_isomorphic}.
\end{proof}

We now come to the proof of Proposition \ref{prop_existence_of_auxiliary_ramakrishna_primes}.
\begin{proof}[Proof of Proposition \ref{prop_existence_of_auxiliary_ramakrishna_primes}]
We wish to find a set $Q_0$ of auxiliary primes such that $h^1_{\cS_{Q_0}, T}(M_1(1)) = 0$. Suppose $r = h^1_{\cS, T}(M_1(1)) \geq 0$. By induction, it is enough to find a single auxiliary place $v$, satisfying the conditions of the proposition, such that $h^1_{\cS_{\{ v \}}, T}(M_1(1)) = \max(r-1, 0)$. The case $r = 0$ is easy, so let us assume $r \geq 1$.

Let $\varphi$ be a cocycle representing a non-trivial element of $H^1_{\cS, T}(M_1(1))$. It will suffice for our purposes to find a place $v \not\in S$ satisfying the following conditions:
\begin{itemize}
\item We have $\rho_{N_0}(\Frob_v) = \rho_{N_0}(c)$, up to $G_F$-conjugacy.
\item We have $q_v \equiv - 1 \text{ mod } p^{N_0}$.
\item The image of $\varphi$ in $H^1(F_v, M_1(1))$ is non-trivial. 
\end{itemize}
Indeed, the first two conditions imply (at least, up to conjugacy) that $\overline{\rho}(\Frob_v) = \diag(1, -1)$ and the deformation problem $\cD_v^{\St(\alpha_v)}$ is defined, for $\alpha_v = 1$ or $\alpha_v = -1$. There is an exact sequence
\[ 0 \to H^1_{\cS_{\{ v \}}, T}(M_1(1)) \to H^1_{\cS, T}(M_1(1)) \to k, \]
the last map being given on cocycles by $\varphi \mapsto \langle E, \varphi(\Frob_v) \rangle$ (if $\alpha_v = 1$) or by $\varphi \mapsto \langle F, \varphi(\Frob_v) \rangle$ (if $\alpha_v = -1$). By choosing $\alpha_v$ appropriately, we can thus ensure that this sequence is also exact on the right.

By the Chebotarev density theorem, it even suffices to find an element $\sigma \in G_F$ satisfying the following conditions:
\begin{itemize}
\item $\rho_{N_0}(\sigma) = \rho_{N_0}(c)$.
\item $\epsilon(\sigma) \equiv -1 \text{ mod } p^{N_0}$.
\item $\varphi(\sigma) \neq 0$.
\end{itemize}
Choose any element $\sigma_0 \in G_F$ lifting the image of $c$ in $\Gal(K_{N_0}/F)$; then $\sigma_0$ satisfies the first two conditions. If $\sigma_0$ satisfies the third condition, then we can take $\sigma = \sigma_0$. Suppose instead that $\varphi(\sigma_0) = 0$. By Lemma \ref{lem_restriction_of_cocycles}, the image of $\varphi$ in $H^1(K_{N_0}, M_1(1))$ is non-zero; this restriction is represented by a non-zero $G_F$-equivariant homomorphism $f : K_{N_0} \to M_1(1)$. We can therefore find $\tau \in K_{N_0}$ such that $f(\tau) \neq 0$. The element $\sigma = \tau \sigma_0$ now has the desired properties. 
\end{proof}

\begin{proposition}\label{prop_existence_of_auxiliary_taylor_wiles_primes}
Let $\cS = (\overline{\rho}, \mu, S, \{ \Lambda_v \}_{v \in S}, \{ \cD_v \}_{v \in S})$ be a global deformation problem. Let $T \subset S$, and suppose that for each $v \in S - T,$ $\cD_v = \cD_v^{\St(\alpha_v)}$. Suppose further that $h^1_{\cS, T}(M_1(1)) = 0$, and let $N_1 \geq 1$ be an integer. Then there exists a finite set $Q_1$ of finite places of $F$, disjoint from $S$, satisfying the following conditions:
\begin{enumerate}
\item $\# Q_1 = h^1_{\cS, T}(M_0(1))$ and for each $v \in Q_1$, $q_v \equiv 1 \text{ mod } p^{N_1}$ and $\overline{\rho}(\Frob_v)$ has distinct eigenvalues.
\item Define the augmented deformation problem:
\[ \cS_{Q_1} = (\overline{\rho}, \mu, S \cup Q_1, \{ \Lambda_v \}_{v \in S} \cup \{ \cO \}_{v \in Q_1}, \{ \cD_v \}_{v \in S} \cup \{ \cD_v^\square \}_{v \in Q_1}). \]
Then $h^1_{\cS_{Q_1}, T}(\ad^0 \overline{\rho}(1)) = 0$.
\end{enumerate}
\end{proposition}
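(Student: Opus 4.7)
The plan is to execute the Taylor--Wiles argument adapted to the residually dihedral setting. By Lemma \ref{lem_ramakrishna_tangent_space_is_decomposed} and the hypothesis $H^1_{\cS,T}(M_1(1)) = 0$, the dual Selmer group equals its $M_0(1)$-summand, of dimension $q = h^1_{\cS,T}(M_0(1))$. I will build $Q_1$ by adding one auxiliary prime at a time, each chosen to kill one dimension of the $M_0(1)$-part of the dual Selmer group, and then verify that the $M_1(1)$-part of the dual Selmer group for the augmented problem remains zero.

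For the inductive step, assume that after adding some primes the current $M_0(1)$-part of the dual Selmer group is non-zero and let $\psi$ represent a non-zero class. I seek a place $v$ outside the current bad set with $q_v \equiv 1 \pmod{p^{N_1}}$, with $\overline{\rho}(\Frob_v)$ having distinct eigenvalues, and with $\psi(\Frob_v) \neq 0$. By Chebotarev it suffices to find $\sigma \in G_F$ satisfying $\epsilon(\sigma) \equiv 1 \pmod{p^{N_1}}$, $\overline{\rho}(\sigma)$ having distinct eigenvalues, and $\psi(\sigma) \neq 0$. An element $\sigma_0 \in G_{F(\zeta_{p^{N_1}})}$ satisfying the first two properties exists because $\overline{\gamma}|_{G_{F(\zeta_p)}}$ is non-trivial and $\overline{\gamma}$ has prime-to-$p$ order, so it remains non-trivial after restriction to $G_{F(\zeta_{p^{N_1}})}$. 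If $\psi(\sigma_0) = 0$, I modify by $\tau \in G_L$ where $L \supset F(\zeta_{p^{N_1}}, \overline{\rho})$ is chosen so that $\psi|_{G_L}$ is a non-zero $\Gal(L/F)$-equivariant homomorphism into $M_0(1)$. Since $G_L$ acts trivially on $M_0(1)$, we have $\psi(\tau\sigma_0) = \psi(\tau) + \psi(\sigma_0)$, and we pick $\tau$ making this non-zero. Adding $v$ with $\cD_v^\square$ then reduces the $M_0(1)$-part of the dual Selmer group by one dimension, since any class surviving to the augmented problem must be unramified at $v$ with $\psi(\Frob_v) = 0$.

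To verify the $M_1(1)$-part of the dual Selmer group of $\cS_{Q_1}$ vanishes, note that each $v \in Q_1$ satisfies $q_v \equiv 1 \pmod p$, so $v$ splits in $K$, and the distinct-eigenvalues condition gives $\overline{\gamma}(\Frob_v) \neq 1$. A Mackey calculation shows $M_1|_{G_{F_v}} \cong k(\overline{\gamma}) \oplus k(\overline{\gamma}^{-1})$ as unramified $k[G_{F_v}]$-modules, and since $\epsilon(\Frob_v) \equiv 1 \pmod p$ we obtain $M_1(1)^{G_{F_v}} = M_1^{G_{F_v}} = 0$. The local Euler characteristic formula combined with Tate duality then yields $H^1(F_v, M_1(1)) = 0$. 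Consequently any class in $H^1(F_{S \cup Q_1}/F, M_1(1))$ restricts trivially to each $v \in Q_1$, and since inertia at $v$ acts trivially on $M_1(1)$, it is in fact unramified there; so it lies in $H^1(F_S/F, M_1(1))$ and satisfies the unchanged local conditions at $S - T$, hence belongs to $H^1_{\cS,T}(M_1(1)) = 0$.

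The main obstacle will be the non-vanishing argument in the inductive step: ensuring that for every non-zero class in $H^1_{\cS,T}(M_0(1))$ we can produce $\sigma \in G_F$ simultaneously meeting the Frobenius-shape conditions and with $\psi(\sigma) \neq 0$. This is the analogue for $M_0(1)$ of Lemma \ref{lem_restriction_of_cocycles}, and is more delicate than the $M_1(1)$ case because the character $\delta_{K/F}\epsilon$ cutting out $M_0(1)$ may interact non-trivially with the Jordan--H\"older factors appearing in $\overline{\rho}$; one likely enlarges the reference field to include the splitting field of a lift $\rho_N$ of $\overline{\rho}$ modulo $\lambda^N$ for sufficiently large $N$, and then runs a non-isomorphism analysis analogous to Lemmas \ref{lem_characters_non_isomorphic} and \ref{lem_restriction_of_cocycles} to conclude.
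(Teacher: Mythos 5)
Your overall strategy matches the paper's: use Lemma \ref{lem_ramakrishna_tangent_space_is_decomposed} together with the hypothesis $h^1_{\cS,T}(M_1(1))=0$ to reduce to killing the $M_0(1)$-summand one dimension at a time, finding each auxiliary prime by Chebotarev subject to the conditions $\epsilon(\sigma)\equiv 1\bmod p^{N_1}$, $\overline{\gamma}(\sigma)\neq 1$, and $\psi(\sigma)\neq 0$. Your verification that the $M_1(1)$-part of the augmented dual Selmer group vanishes (via $H^1(F_v,M_1(1))=0$ for the new $v$) is correct but heavier than needed: since $\cD_v^\square$ imposes $\cL_v^\perp=0$, any class in $H^1_{\cS_{Q_1},T}(M_1(1))$ restricts to zero at $v\in Q_1$, hence is unramified there and already belongs to $H^1_{\cS,T}(M_1(1))=0$; no local cohomology computation is required.

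The genuine gap is the non-vanishing step, which you flag but do not prove, and your sketch of how to fill it points in the wrong direction. You suggest passing to the splitting field of a lift $\rho_N$ and running a Jordan--H\"older analysis in the style of Lemmas \ref{lem_characters_non_isomorphic} and \ref{lem_restriction_of_cocycles}, and you claim the $M_0(1)$ case is "more delicate" than the $M_1(1)$ one. Both points are off. First, this proposition has no lift $\rho$ in its hypotheses, so $\rho_N$ is not available; and none is needed. Second, the $M_0(1)$ case is actually simpler than the $M_1(1)$ case. The correct argument is: by Lemma \ref{lem_restriction_of_M0_cocycles}, $H^1(\Gal(F(\zeta_{p^{N_1}})/F), M_0(1))=0$, so inflation-restriction shows $\varphi$ restricts non-trivially to $G_{F(\zeta_{p^{N_1}})}$; then, writing $K_1=F(\zeta_p,\overline{\rho})$, the extension $K_1(\zeta_{p^{N_1}})/F(\zeta_{p^{N_1}})$ has degree prime to $p$ (since $\overline{\rho}|_{G_{F(\zeta_p)}}$ factors through a group of order prime to $p$), so restriction to $G_{K_1(\zeta_{p^{N_1}})}$ remains injective on $H^1(-,M_0(1))$. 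This gives the desired $\tau\in G_{K_1(\zeta_{p^{N_1}})}$ with $\varphi(\tau)\neq 0$, and $\sigma=\tau\sigma_0$ works. The Jordan--H\"older machinery of Lemma \ref{lem_restriction_of_cocycles} is needed only for $M_1(1)$, where one must restrict past the genuinely pro-$p$ extension $K_N/K_1(\zeta_{p^N})$ coming from a mod-$\lambda^N$ lift; here there is no such $p$-power obstruction.
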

We first prove a lemma.
\begin{lemma}\label{lem_restriction_of_M0_cocycles}
Let $N \geq 1$. Then $H^1(\Gal(F(\zeta_{p^N})/F), M_0(1)) = 0$.
\end{lemma}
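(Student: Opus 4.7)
The plan is to use the Hochschild--Serre spectral sequence to reduce the computation to a question about characters of the abelian group $G = \Gal(F(\zeta_{p^N})/F)$, and then exploit the hypothesis that $K$ is totally real (equivalently, $4 \mid [F(\zeta_p) : F]$) to conclude.

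First I would decompose $G$ as a direct product $G = G_p \times G'$, where $G_p = \Gal(F(\zeta_{p^N})/F(\zeta_p))$ is pro-$p$ and $G' = \Gal(F(\zeta_p)/F)$ has order prime to $p$; this decomposition exists because $G$ is abelian (being a subquotient of $(\bbZ/p^N)^\times$) and the two orders are coprime. Since $|G'|$ is invertible in $k$, the Hochschild--Serre spectral sequence for the normal subgroup $G_p \subset G$ collapses, giving
\[ H^1(G, M_0(1)) \cong H^1(G_p, M_0(1))^{G'}. \]

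Next, I would observe that $M_0(1) = k(\delta_{K/F} \cdot \epsilon)$ is a \emph{trivial} $G_p$-module: $\delta_{K/F}|_{G_p}$ is trivial because $K \subset F(\zeta_p)$ is fixed by $G_p$, and $\epsilon|_{G_p}$ is trivial because the mod-$p$ cyclotomic character factors through $\Gal(F(\zeta_p)/F)$. Hence $H^1(G_p, M_0(1))$ is identified with the space of continuous homomorphisms $\Hom(G_p, k)$, and because $G$ is abelian the conjugation action of $G'$ on $G_p$ is trivial, so $G'$ acts on this space purely by scaling the coefficient $k$ via the character $\chi := \delta_{K/F} \cdot \epsilon|_{G'}$.

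The remaining point is to verify that $\chi$ is non-trivial on $G'$. Since $K$ is totally real, $n := |G'| = [F(\zeta_p) : F]$ is divisible by $4$; the character $\epsilon|_{G'}$ identifies $G'$ with a subgroup of $(\bbZ/p)^\times$ and is thus a faithful character of order $n \geq 4$, while $\delta_{K/F}$ is its unique order-two power, namely $(\epsilon|_{G'})^{n/2}$. A direct check (namely, $4 \mid n$ forces $1 + n/2$ to be odd and hence coprime to $n$) shows that $\chi = (\epsilon|_{G'})^{1 + n/2}$ remains of order $n$, in particular non-trivial. Therefore the $G'$-invariants in $\Hom(G_p, k)$ vanish, which gives the lemma. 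I do not anticipate a serious obstacle: the substantive content is simply the numerical point $4 \mid [F(\zeta_p) : F]$, which is precisely the reason for the totally-real hypothesis on $K$.
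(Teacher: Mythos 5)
Your proof is correct and follows the same reduction as the paper's: because $\#\Gal(F(\zeta_p)/F)$ is prime to $p$, everything comes down (via inflation--restriction, or equivalently your product decomposition and collapsing Hochschild--Serre spectral sequence) to the $\Gal(F(\zeta_p)/F)$-invariants in $\Hom(\Gal(F(\zeta_{p^N})/F(\zeta_p)), k)$; and the conjugation action being trivial, these vanish exactly when the coefficient character $\delta_{K/F}\epsilon$ on $\Gal(F(\zeta_p)/F)$ is non-trivial. The only divergence is in how you establish that non-triviality. The paper disposes of it in a word --- the character is \emph{odd}: since $K$ is totally real, $\delta_{K/F}(c) = 1$, while $\epsilon(c) = -1$, and $-1 \neq 1$ because $p > 2$. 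You instead compute the exact order of $\chi = (\epsilon|_{G'})^{1+n/2}$, which is fine but stronger than needed, and the parenthetical justification --- that $1 + n/2$ being odd is ``hence coprime to $n$'' --- is not a valid deduction in general (for instance $9$ is odd but not coprime to $12$). The coprimality does hold here: any common divisor of $1 + n/2$ and $n$ divides $2(1 + n/2) - n = 2$ yet must be odd, hence equals $1$. But for the lemma you only need $\chi$ non-trivial, which already follows from $0 < 1 + n/2 < n$ (automatic once $n \geq 4$), or most cleanly from the oddness observation above; so the conclusion is correct, but I would tighten or replace that parenthetical.
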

\begin{proof}
By restriction, this group is identified with the group of homomorphisms $f : \Gal(F(\zeta_{p^N})/F(\zeta_p)) \to k$ such that for all $x \in \Gal(F(\zeta_p)/F)$, $y \in \Gal(F(\zeta_{p^N})/F(\zeta_p))$, we have $f(x y x^{-1}) = \delta_{K/F} \epsilon (x) f(y)$. Since the character $\delta_{K/F} \epsilon$ is odd and the conjugation action on $\Gal(F(\zeta_{p^N})/F(\zeta_p))$ is trivial, any such homomorphism must be 0.
\end{proof}

\begin{proof}[Proof of Proposition \ref{prop_existence_of_auxiliary_taylor_wiles_primes}]
Let $r = h^1_{\cS, T}(M_0(1)) = h^1_{\cS, T}(\ad^0 \overline{\rho}(1))$; we may assume that $r \geq 1$. We observe that if $v \not\in S$ is any place satisfying the first point above, then $h^1_{\cS_{\{ v \}}, T}(M_1(1)) = h^1_{\cS, T}(M_1(1)) = 0$. By induction, it is therefore enough to find a place $v \not\in S$ satisfying the first point above, and such that $h^1_{\cS_{\{ v \}}, T}(M_0(1)) = r - 1$. 

Let $\varphi$ be a cocycle representing a non-trivial element of $h^1_{\cS, T}(M_0(1))$. It will suffice for our purposes to find a place $v \not\in S$ satisfying the following conditions:
\begin{itemize}
\item $q_v \equiv 1 \text{ mod }p^{N_1}$.
\item $\overline{\chi}(\Frob_v) \neq \overline{\chi}^w(\Frob_v)$. (This makes sense since the first  point implies that $v$ splits in $K$.)
\item $\varphi(\Frob_v) \neq 0$.
\end{itemize}
Indeed, the first two points show that the place $v$ satisfies the first point of Proposition \ref{prop_existence_of_auxiliary_taylor_wiles_primes}. We then have an exact sequence:
\[ 0 \to H^1_{\cS_{ \{ v \}}, T}(M_0(1)) \to H^1_{\cS, T}(M_0(1)) \to k, \]
the last map being given on cocycles by $\varphi \mapsto \varphi(\Frob_v)$. The third point implies that this sequence is exact on the right. By the Chebotarev density theorem, it will even suffice for our purposes to find an element $\sigma \in G_F$ satisfying the following conditions:
\begin{itemize}
\item $\epsilon(\sigma) \equiv 1 \text{ mod }p^{N_1}$.
\item $\overline{\chi}(\sigma) \neq \overline{\chi}^w(\sigma)$.
\item $\varphi(\sigma) \neq 0$.
\end{itemize}
If $N \geq 1$, write $F_N = F(\zeta_{p^N})$. We have $\overline{\chi}|_{G_{F_1}} \neq \overline{\chi}^w|_{G_{F_1}}$, by assumption. Since $\overline{\chi}$ has order prime to $p$, we also have $\overline{\chi}|_{G_{F_{N_1}}} \neq \overline{\chi}^w|_{G_{F_{N_1}}}$. Let $\sigma_0 \in G_{F_{N_1}}$ be an element such that $\overline{\chi}(\sigma_0) \neq \overline{\chi}^w(\sigma_0)$. If $\varphi(\sigma_0) \neq 0$, then we can take $\sigma = \sigma_0$.

 Assume instead that $ \varphi(\sigma_0) = 0$. We can then take $\sigma = \tau \sigma_0$, where $\tau \in G_{K_1(\zeta_{p^{N_1}})}$ is any element such that $\varphi(\tau) \neq 0$. To see that such an element exists, it is enough to note that the image of $\varphi$ in $H^1(K_1(\zeta_{p^{N_1}}), M_0(1))$ is non-zero, since Lemma \ref{lem_restriction_of_M0_cocycles} and the fact that $\Gal(K_1(\zeta_{p^{N_1}})/F_{N_1})$ has order prime to $p$ together imply that the restriction map is injective.
\end{proof}

\section{$R = \bbT$}\label{sec_r_equals_t}

Let $p$ be an odd prime, let $E$ be a coefficient field, and let $F$ be a totally real number field of even degree over $\bbQ$. We fix an absolutely irreducible and continuous representation $\overline{\rho} : G_F \to \GL_2(k)$, satisfying the following conditions.
\begin{itemize}
\item Let $K \subset F(\zeta_p)$ denote the unique quadratic subfield of $F(\zeta_p)/F$. Then there is a continuous character $\overline{\chi} : G_K \to k^\times$ and an isomorphism $\overline{\rho} \cong \Ind_{G_K}^{G_F} \overline{\chi}$. We write $w \in \Gal(K/F)$ for the non-trivial element.
\item The extension $K/F$ is totally real. In particular, $4$ divides $[F(\zeta_p) : F]$, which in turn divides $(p-1)$.
\item The character $\overline{\gamma} = \overline{\chi}/\overline{\chi}^w : G_K \to k^\times$ is non-trivial, even after restriction to $G_{F(\zeta_p)}$.
\item For each place $v \nmid p$ of $F$, $\overline{\rho}|_{G_{F_v}}$ is unramified.
\item For each place $v | p$ of $F$, $\overline{\rho}|_{G_{F_v}}$ is trivial. 
\item The character $\epsilon \det \overline{\rho}$ is everywhere unramified.
\end{itemize}
We assume $k$ large enough that it contains the eigenvalues of every element of the image of $\overline{\rho}$. We write $\psi : G_F \to \cO^\times$ for the Teichm\"uller lift of $\epsilon \det \overline{\rho}$. In what follows, we will abuse notation slightly by also writing $\psi$ for the character $\psi \circ \Art_F : \bbA_F^{\infty, \times} \to \cO^\times$. We will also suppose given the following data.
\begin{itemize}
\item A finite set $R$ of even cardinality of finite places of $F$, such that for each $v \in R$, $q_v \equiv 1 \text{ mod }p$ and $\overline{\rho}|_{G_{F_v}}$ is trivial. 
\item A finite set $Q_0$ of finite places of $F$ of even cardinality, disjoint from $S_p \cup R$, and a tuple $(\alpha_v)_{v \in Q_0}$ of elements of $k$, such that for each $v \in Q_0$ the deformation problem $\cD_v^{\St(\alpha_v)}$ is defined. (By definition, this means that for each $v \in Q_0$, we have $q_v \equiv -1 \text{ mod }p$ and $\overline{\rho}|_{G_{F_v}}$ is unramified, $\overline{\rho}(\Frob_v)$ having eigenvalues $\alpha_v, -\alpha_v$.)
\item An isomorphism $\iota : \barqp \to \bbC$, and a cuspidal automorphic representation $\pi_0$ of weight 2 satisfying the following conditions.
\begin{itemize} 
\item There is an isomorphism $\overline{r_\iota(\pi_0)} \cong \overline{\rho}$.
\item The central character of $\pi_0$ equals $\iota \psi$.
\item For each finite place $v \not \in S_p \cup R \cup Q_0$ of $F$, $\pi_{0,v}$ is unramified. 
\item For each $v \in R \cup Q_0$, there is an unramified character $\chi_v : F_v^\times \to \overline{\bbQ}_p^\times$ and an isomorphism $\pi_{0,v} \cong \St_2(\iota \chi_v)$. For each $v \in Q_0$, $\chi_v(\varpi_v)$ is congruent to $\alpha_v$ modulo the maximal ideal of $\overline{\bbZ}_p$.
\item Let $\sigma \subset S_p$ denote the set of places $v$ such that $\pi_{0, v}$ is $\iota$-ordinary. For each $v \in \sigma$, $\pi_{0, v}^{U_0(v)} \neq 0$. For each $v \in S_p - \sigma$, $\pi_{0, v}$ is unramified.
\end{itemize}
\end{itemize}
\begin{lemma}\label{lem_auxiliary_place_a}
There exists a place $a \not\in S_p \cup Q_0 \cup R$ such that $q_a > 4^{[F : \bbQ]}$ and $\tr \overline{\rho}(\Frob_a)^2 / \det \overline{\rho}(\Frob_a) \neq (1 + q_a)^2/q_a$.
\end{lemma}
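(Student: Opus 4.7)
The plan is to produce $a$ by Chebotarev, choosing the Frobenius class so that $q_a \equiv 1 \pmod p$ while $\overline{\rho}(\Frob_a)$ has eigenvalue ratio different from $1$. Let $L \subset \overline{F}$ be the finite Galois extension of $F$ cut out by $\overline{\rho}$ and by $\zeta_p$; any element of $\Gal(L/F)$ determines both $\overline{\rho}(\Frob_a)$ (up to conjugation) and the residue class of $q_a$ modulo $p$, via $q_a \equiv \overline{\epsilon}(\Frob_a)^{-1}$.

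By hypothesis, the character $\overline{\gamma} = \overline{\chi}/\overline{\chi}^w$ is non-trivial on $G_{F(\zeta_p)}$, so there exists $\sigma \in G_{F(\zeta_p)}$ with $\overline{\gamma}(\sigma) \neq 1$. For this $\sigma$ we have $\sigma \in G_K$, so
\[
\overline{\rho}(\sigma) \sim \mathrm{diag}\bigl(\overline{\chi}(\sigma),\, \overline{\chi}^w(\sigma)\bigr),
\]
and these two eigenvalues are distinct. Writing $\alpha = \overline{\chi}(\sigma),\, \beta = \overline{\chi}^w(\sigma)$, we compute
\[
\frac{\tr \overline{\rho}(\sigma)^2}{\det \overline{\rho}(\sigma)} \;=\; \frac{\alpha}{\beta} + 2 + \frac{\beta}{\alpha} \;=\; \overline{\gamma}(\sigma) + 2 + \overline{\gamma}(\sigma)^{-1},
\]
which is different from $4 \in k$ precisely because $\overline{\gamma}(\sigma) \neq 1$.

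By the Chebotarev density theorem, the set of finite places $a$ of $F$ unramified in $L$ with $\Frob_a$ conjugate to $\sigma$ in $\Gal(L/F)$ has positive density; in particular it is infinite, and contains places avoiding the finite set $S_p \cup Q_0 \cup R$ and with $q_a$ arbitrarily large, so we may arrange $q_a > 4^{[F:\bbQ]}$. For any such place $a$, the condition $\sigma \in G_{F(\zeta_p)}$ gives $q_a \equiv 1 \pmod p$, so $(1+q_a)^2/q_a \equiv 4$ in $k$, whereas
\[
\frac{\tr \overline{\rho}(\Frob_a)^2}{\det \overline{\rho}(\Frob_a)} \;=\; \overline{\gamma}(\sigma) + 2 + \overline{\gamma}(\sigma)^{-1} \;\neq\; 4.
\]
There is no real obstacle here; the only point requiring care is the initial choice of $\sigma$ inside $G_{F(\zeta_p)}$, which is the standard device used to force $q_a \equiv 1 \pmod p$ so that the comparison reduces to the numerical inequality $\overline{\gamma}(\sigma) + \overline{\gamma}(\sigma)^{-1} \neq 2$ in $k$.
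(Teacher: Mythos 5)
Your proof is correct and is, in substance, the same Chebotarev argument that the paper invokes by citing \cite[Lemma 12.3]{Jar99}: the task is to pick a suitable $\sigma$ in the Galois group of the splitting field of $\overline{\rho}$ and $\zeta_p$ and then apply Chebotarev. Where Jarvis's lemma is stated for an arbitrary absolutely irreducible $\overline{\rho}$ with $[F(\zeta_p):F]\geq 4$ and so requires a short case analysis on the projective image, you sidestep that by exploiting the dihedral structure directly: the hypothesis that $\overline{\gamma} = \overline{\chi}/\overline{\chi}^w$ is nontrivial on $G_{F(\zeta_p)}$ hands you an element of $G_{F(\zeta_p)}\cap G_K$ with distinct eigenvalues and trivial cyclotomic image, which is exactly what is needed so that $q_a\equiv 1\pmod p$ forces $(1+q_a)^2/q_a\equiv 4$ while $(\tr\overline{\rho}(\Frob_a))^2/\det\overline{\rho}(\Frob_a) = \overline{\gamma}(\sigma)+2+\overline{\gamma}(\sigma)^{-1}\neq 4$. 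This is a slightly more elementary and self-contained route, valid precisely because of the standing hypotheses in \S 5, and buys nothing over Jarvis beyond the explicitness; both proofs are correct.
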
 
\begin{proof}
This follows from Chebotarev's theorem and \cite[Lemma 12.3]{Jar99}.
\end{proof}
We fix such a place $a$. 

We consider the global deformation problem 
\begin{multline*} \cS = (\overline{\rho}, \epsilon^{-1} \psi, S_p \cup Q_0 \cup R, \{ \Lambda_v \}_{v \in \sigma} \cup \{ \cO \}_{v \in S_p \cup Q_0 \cup R - \sigma}, \\ \{ \cD_v^{\text{ord}} \}_{v \in \sigma} \cup \{ \cD_v^{\text{non-ord}} \}_{v \in S_p - \sigma} \cup \{ \cD_v^{\St(\alpha_v)} \}_{v \in Q_0} \cup \{ \cD_v^{\St} \}_{v \in R}). \end{multline*}
We set $T = S_p \cup R$. Then $R_\cS$ and $R_\cS^T$ are defined. (We observe that $R_v^\text{non-ord} \neq 0$ for each $v \in S_p - \sigma$, since $r_\iota(\pi_0)|_{G_{F_v}}$ defines a $\barqp$-point of this ring.)

\subsection{Automorphic forms with fixed central character}

Let $B$ be a quaternion algebra ramified exactly at $Q_0 \cup R \cup \{ v | \infty \}$.  (This is the algebra we have denoted by $B_{Q_0 \cup R}$ in \S \ref{sec_shimura_curves_and_varieties}. In this section, the set $Q_0 \cup R$ will remain fixed, and we therefore drop it from the notation.) Let $\cO_B \subset B$ be a maximal order. The choice of $\cO_B$ determines a group $G$ over $\cO_F$, its functor of points being given by $G(R) = (\cO_B \otimes_{\cO_F} R)^\times$.
If $v \not\in Q_0 \cup R$ is a finite place of $F$, then we fix an isomorphism $\cO_B \otimes_{\cO_F} \cO_{F_v} \cong M_2(\cO_{F_v})$; this determines compatible isomorphisms $G(F_v) \cong \GL_2(F_v)$ and $G(\cO_{F_v}) \cong \GL_2(\cO_{F_v})$. We define an open compact subgroup $U = \prod_v U_v \subset G(\bbA_F^\infty)$ as follows:
\begin{itemize}
\item If $v \not\in Q_0 \cup R \cup \{ a \}$, then $U_v = G(\cO_{F_v})=\GL_2(\cO_{F_v}) $.
\item If $v \in Q_0 \cup R$, then $U_v$ is the unique maximal compact subgroup of $G(F_v)$.
\item If $v = a$, then $U_v = U^1_1(a)$. (For the definition of this open compact subgroup of $\GL_2(\cO_{F_v})$, we refer the reader to \S \ref{sec_hecke_operators}.)
\end{itemize}
For technical reasons, we must define spaces of automorphic forms with fixed central character. If $V = \prod_v V_v \subset U$ is any open compact subgroup and $A$ is an $\cO$-module, then we write $H_\psi(V,A)$ for the set of functions $f : G(F) \backslash G(\bbA_F^\infty) \to A$ satisfying the relation $f(zgv) = \psi(z)f(g)$ for all $z \in \bbA_F^{\infty, \times}$, $g \in G(\bbA_F^\infty)$, and $v \in V$. Choosing a double coset decomposition
\[ G(\bbA_F^\infty) = \sqcup_i G(F) t_i V \bbA_F^{\infty,\times}, \]
there is an isomorphism
\[\begin{split} H_\psi(V,A) & \cong  \oplus_i A(\psi^{-1})^{t_i^{-1} G(F) t_i \cap ( V \cdot \bbA_F^{\infty,\times} )},\\ f & \mapsto  (f(t_i))_i, 
\end{split}\]
where $A(\psi^{-1})$ is the $\cO[U \cdot \bbA_F^{\infty, \times}]$-module with underlying $\cO$-module $A$ on which $U$ acts trivially and $\bbA_F^{\infty, \times}$ acts by $\psi^{-1}$. Each of the groups $(t_i^{-1} G(F) t_i \cap ( V \cdot \bbA_F^{\infty,\times} ))/F^\times$ is finite of order prime to $p$ (use \cite[Lemma 1.1]{Tay06} and the fact that $G(F)$ contains no elements of order $p$, as $[F(\zeta_p) : F] \geq 4$). It follows that  the natural maps $H_\psi(V, \cO) \otimes_\cO A \to H_\psi(V, A)$ are isomorphisms. With the notation of \S \ref{sec_hecke_algebras_and_modules}, we see that $H_\psi(U, \cO)$ is a $\bbT^{S_p \cup Q_0 \cup R \cup \{ a \}, \text{univ}}_{Q_0 \cup R}$-submodule of $H_{Q_0 \cup R}(U)$.

For any open compact subgroup $V = \prod_v V_v \subset U$, we define a pairing
\begin{equation}\label{eqn_pairing_on_automorphic_forms_with_fixed_central_character}
\langle \cdot, \cdot \rangle_V : H_\psi(V, \cO) \times H_\psi(V, \cO) \to \cO
\end{equation}
by the formula
\[ \langle f_1, f_2 \rangle_V = \sum_{x} f_1(x) f_2(x) \psi(\det x)^{-1} c_V(x)^{-1}, \]
where the sum ranges over $x \in G(F) \backslash G(\bbA_F^\infty) / V \cdot \bbA_F^{\infty, \times}$ and $c_V(x) = \# ( x^{-1} G(F) x \cap (V \cdot \bbA_F^{\infty, \times} ) ) / F^\times$. As noted above, $c_V(x)$ is a $p$-adic unit, so this pairing is in fact perfect. Moreover, for any finite place $v$ of $F$, $f_1, f_2 \in H_\psi(V, \cO)$ and $g \in G(F_v)$, we have the relation
\[ \langle [ V_v g V_v] f_1, f_2 \rangle_V = \psi(\det g) \langle f_1, [V_v g^{-1} V_v] f_2 \rangle_V \]
(see \cite[p. 741]{Tay06}).
\begin{lemma}\label{lem_automorphic_forms_are_free}
Let $V_1 = \prod_v V_{1, v} \subset V_2 = \prod_v V_{2, v}$ be open compact subgroups of $U$, with $V_1$ normal in $V_2$ and $V_2 \cap \bbA_F^{\infty, \times} = V_1 \cap \bbA_F^{\infty, \times}$. Suppose that the quotient $V_2/V_1$ is abelian of $p$-power order. Then:
\begin{enumerate}
\item The trace map $\tr_{V_1/V_2} : H_\psi(V_1, \cO) \to H_\psi(V_2, \cO)$ factors through an isomorphism $H_\psi(V_1, \cO)_{V_2} \cong H_\psi(V_2, \cO)$.
\item $H_\psi(V_1, \cO)$ is a free $\cO[V_2/V_1]$-module.
\end{enumerate}
\end{lemma}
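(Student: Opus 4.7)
The plan is to reduce both assertions to a free action statement on a set of double cosets. Fix a double coset decomposition
\[ G(\bbA_F^\infty) = \bigsqcup_\alpha G(F) t_\alpha V_2 \bbA_F^{\infty, \times}, \]
and refine each piece into a disjoint union of $V_1$-double cosets indexed by a set $B_\alpha$. By the explicit formula for $H_\psi(V, A)$ recalled just above the statement, both $H_\psi(V_1, \cO)$ and $H_\psi(V_2, \cO)$ decompose as direct sums indexed by these double cosets, each summand being the $\psi^{-1}$-isotypic part of $\cO$ under the action of the associated stabilizer modulo $F^\times$. The group $V_2/V_1$ acts on $G(F) \backslash G(\bbA_F^\infty) / V_1 \bbA_F^{\infty, \times}$ by right multiplication, preserves the partition into $V_2$-orbits, and within each $V_2$-orbit permutes the finite set $B_\alpha$.

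The key claim is that the $V_2/V_1$-action on each $B_\alpha$ is \emph{free}. For $g = t_\alpha w$ with $w \in V_2$, an easy unwinding identifies the stabilizer of $G(F) g V_1 \bbA_F^{\infty,\times}$ in $V_2/V_1$ with the quotient
\[ \Gamma_2 / \Gamma_1, \qquad \Gamma_j = (g^{-1} G(F) g \cdot \bbA_F^{\infty,\times}) \cap V_j. \]
Using the hypothesis $V_1 \cap \bbA_F^{\infty,\times} = V_2 \cap \bbA_F^{\infty,\times}$, this quotient injects into $(g^{-1} G(F) g \cap V_2 \bbA_F^{\infty,\times})/F^\times$, which (by \cite[Lemma 1.1]{Tay06} together with the remark above that $G(F)$ contains no element of order $p$, coming from $[F(\zeta_p) : F] \geq 4$) is finite of order prime to $p$. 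Being simultaneously a subgroup of the $p$-group $V_2/V_1$, it must therefore be trivial.

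Freeness of the $V_2/V_1$-action on each $B_\alpha$, combined with the compatibility of the $\psi$-transformation law across the refined partition (which again uses $V_1 \cap \bbA_F^{\infty,\times} = V_2 \cap \bbA_F^{\infty,\times}$ to ensure that no new central constraint appears when passing from $V_2$- to $V_1$-cosets), yields an isomorphism
\[ H_\psi(V_1, \cO) \cong \bigoplus_\alpha \cO[V_2/V_1] \otimes_\cO M_\alpha, \]
where $M_\alpha$ is the summand of $H_\psi(V_2, \cO)$ attached to $t_\alpha$. This is (ii). Taking $V_2/V_1$-coinvariants recovers $\bigoplus_\alpha M_\alpha = H_\psi(V_2, \cO)$, and because the trace map $\tr_{V_1/V_2}$ is precisely the sum over a set of $V_2/V_1$-coset representatives, the resulting map $H_\psi(V_1, \cO)_{V_2} \to H_\psi(V_2, \cO)$ coincides with the one induced by $\tr_{V_1/V_2}$; this gives (i). The only substantive step is the free-action lemma in the middle paragraph, which is the sole place the order hypothesis on $V_2/V_1$ is used; everything else is bookkeeping in the double coset decomposition.
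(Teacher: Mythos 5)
Your proof is correct and follows essentially the same route as the paper, which simply cites \cite[Lemma 2.3]{Tay06}: decompose $H_\psi$ over double cosets and show the $V_2/V_1$-action on the set of $V_1$-double cosets inside each $V_2$-double coset is free, because the stabilizer is simultaneously a subgroup of the $p$-group $V_2/V_1$ and has order prime to $p$. One small imprecision that does not affect the conclusion: $\Gamma_2/\Gamma_1$ is a \emph{quotient} of $(g^{-1}G(F)g\cap V_2\bbA_F^{\infty,\times})/F^\times$ rather than a subgroup (the natural map $\Gamma_2\to(g^{-1}G(F)g\cap V_2\bbA_F^{\infty,\times})/F^\times$ has kernel $V_2\cap\bbA_F^{\infty,\times}=V_1\cap\bbA_F^{\infty,\times}$, which is contained in but generally strictly smaller than $\Gamma_1$), and the statement $H_1=H_2$ for the isotypic groups uses the freeness $\Gamma_1=\Gamma_2$, not just the central hypothesis on its own.
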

\begin{proof}
The proof is the same as the proof of \cite[Lemma 2.3]{Tay06}.
\end{proof}

\subsection{Partial Hida families}

We now introduce some more open compact subgroups of $U$. If $n \geq 1$ is an integer, then we define $U_0(\sigma^n) = \prod_v U_0(\sigma^n)_v$ and $U_1(\sigma^n) = \prod_v U_1(\sigma^n)_v$ as follows.
\begin{itemize}
\item If $v \in \sigma$, then $U_0(\sigma^n)_v = U_0(v^n)$ and $U_1(\sigma^n)_v = U_1(v^n)$. 
\item If $v \not\in \sigma$, then $U_0(\sigma^n)_v = U_1(\sigma^n)_v = U_v$.
\end{itemize}
If $n = 1$, then we omit it from the notation. If $v \in \sigma$, then we have defined $\Lambda_v = \cO \llbracket \cO_{F_v}^\times(p) \rrbracket$. We write $\Lambda = \widehat{\otimes}_{v \in \sigma} \Lambda_v$, the completed tensor product being over $\cO$. We emphasize that this algebra depends on the choice of $\sigma$, even though we do not include it in the notation. The universal deformation ring $R_\cS$ is a $\Lambda$-algebra, since (by definition) it classifies deformations to $\Lambda$-algebras. If $S$ is a finite set of finite places of $F$, then we write $\bbT^{\Lambda, S, \text{univ}}$ for the polynomial ring over $\Lambda$ in the infinitely many indeterminates $T_v, S_v$ ($v \not \in S$). If $Q \subset S$, then we write $\bbT_Q^{\Lambda, S, \text{univ}}$ for the polynomial ring over $\bbT^{\Lambda, S, \text{univ}}$ in the indeterminates $\mathbf{U}_v$, $v \in Q$. In fact, we now fix $S = \sigma \cup Q_0 \cup R \cup \{ a \}$. In particular, if $v \in S_p - \sigma$, then $T_v \in \bbT^{\Lambda, S, \text{univ}}$. If $M$ is a $\bbT^{\Lambda, S, \text{univ}}$-module (resp. $\bbT_Q^{\Lambda, S, \text{univ}}$-module), then we write $\bbT^{\Lambda, S}(M)$ (resp. $\bbT_Q^{\Lambda, S}(M)$) for the image of $\bbT^{\Lambda, S, \text{univ}}$ (resp. $\bbT_Q^{\Lambda, S, \text{univ}}$) in $\End_\Lambda(M)$.

We now define a structure of $\bbT_{Q_0}^{\Lambda, S, \text{univ}}$-module on the spaces $H_\psi(U_\ast(\sigma^n), A)$, $n \geq 1$ and $\ast \in \{ 0, 1 \}$. The elements $T_v, S_v, \mathbf{U}_v \in \bbT_{Q_0}^{\Lambda, S, \text{univ}}$ act by the Hecke operators of the same name. To define the $\Lambda$-module structure, it is enough to define an action of the group $\prod_{v \in \sigma} \cO_{F_v}^\times(p)$ on the $\cO$-module $H_\psi(U_\ast(\sigma^n), A)$. We define such an action by letting $\alpha \in \cO_{F_v}^\times(p)$ act by the double coset operator 
\[ \langle \alpha \rangle = [ U_\ast(\sigma^n)_v \left( \begin{array}{cc} \alpha & 0 \\ 0 & 1 \end{array}\right) U_\ast(\sigma^n)_v]. \]
The operators ${\mathbf{U}_v}$ and $\langle \alpha \rangle$ commute with all inclusions $H_\psi(U_\ast(\sigma^n), A) \subset H_\psi(U_\ast(\sigma^{n+1}),A)$, so these maps become maps of $\bbT_{Q_0}^{\Lambda, S, \text{univ}}$-modules. The operator ${\mathbf{U}_v}$ does depend on the choice of uniformizer $\varpi_v$, but this will not be important for us. We set $\mathbf{U}_\sigma = \prod_{v \in \sigma} {\mathbf{U}_v}$.

For each $n \geq 1$, the ordinary idempotent $e = \lim_{N \to \infty} \mathbf{U}_\sigma^{N!}$ acts on $H_\psi(U_1(\sigma^n), \cO)$, and hence on 
\[ H_\psi(U_1(\sigma^n), A) = H_\psi(U_1(\sigma^n), \cO) \otimes_\cO A\]
for any $\cO$-module $A$. We define $H_\psi^\text{ord}(U_1(\sigma^n), A) = e H_\psi(U_1(\sigma^n), A)$, and 
\[ H^\text{ord}_\psi(U_1(\sigma^\infty)) = \ilim_n H^\text{ord}_\psi(U_1(\sigma^n), E/\cO). \]
Then the algebra
\[ \bbT^{\Lambda, S}_{Q_0}(H^\text{ord}_\psi(U_1(\sigma^\infty))) = \plim_n \bbT^{\Lambda, S}_{Q_0}(H^\text{ord}_\psi(U_1(\sigma^n), E/\cO)) \]
is reduced (since it is an inverse limit of reduced algebras).
\begin{lemma}\label{lem_ordinary_control}
For any $\cO$-module $A$ and any integers $n \geq m \geq 1$, the natural inclusions
\begin{gather}
H_\psi^\text{ord}(U_0(\sigma), A) \to H_\psi^\text{ord}(U_0(\sigma^n), A), \\
H_\psi^\text{ord}(U_1(\sigma^m), A) \to H_\psi^\text{ord}(U_1(\sigma^m) \cap U_0(\sigma^n), A)
\end{gather}
are isomorphisms.
\end{lemma}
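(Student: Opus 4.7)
The lemma is a form of Hida's control theorem for the $v$-adic level structure at places $v \in \sigma$ on the ordinary part. I treat both inclusions in parallel, describing the first in detail.

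First, I reduce to the case $A = \cO$. The finite freeness of $H_\psi(V, \cO)$ over $\cO$ and the naturality of $H_\psi(V, A) = H_\psi(V, \cO) \otimes_\cO A$ have been established in the discussion preceding the lemma, and the ordinary idempotent $e = \lim_N \mathbf{U}_\sigma^{N!}$ is a limit of integer-valued Hecke operators, hence commutes with base change in $A$. So the general case is obtained from $A = \cO$ by tensoring. In the case $A = \cO$, the module $H_\psi^\text{ord}(V, \cO) = e \cdot H_\psi(V, \cO)$ is a direct summand of a finite free $\cO$-module, hence itself finite free, and the inclusion is an $\cO$-linear map of finite free modules; so it is an isomorphism if and only if it becomes one after inverting $p$.

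Next I decompose spectrally. Via $\iota$ and the Jacquet--Langlands correspondence, the space $H_\psi(V, E) \otimes_{E,\iota} \bbC$ decomposes as $\bigoplus_\pi \iota^{-1}\mathrm{JL}(\pi)^V$, where $\pi$ ranges over cuspidal automorphic representations of $\GL_2(\bbA_F)$ of weight $2$ with central character $\iota\psi$ such that $\pi_w$ is an unramified twist of the Steinberg representation for each $w \in Q_0 \cup R$. This decomposition is preserved by the inclusions of the lemma, and the ordinary projector factorises as $e = \prod_{v \in \sigma} e_v$, where $e_v = \lim_N \mathbf{U}_v^{N!}$ is the local projector onto the direct summand on which $\mathbf{U}_v$ acts with eigenvalues in $\iota(\overline{\bbZ}_p^\times)$. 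The lemma therefore reduces to the following purely local claim: for each $v \in \sigma$ and each irreducible admissible representation $\tau$ of $\GL_2(F_v)$, the inclusions $\tau^{U_0(v)} \hookrightarrow \tau^{U_0(v^n)}$ and $\tau^{U_1(v^m)} \hookrightarrow \tau^{U_1(v^m) \cap U_0(v^n)}$ induce isomorphisms upon applying $e_v$.

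The main obstacle is this local claim, which is classical and can be established by the standard case analysis of irreducible admissible representations of $\GL_2(F_v)$. If $\tau$ is supercuspidal, or a ramified principal series of conductor $v^c$ with $c \geq 2$, then every $\mathbf{U}_v$-eigenvalue on $\tau^{U_0(v^n)}$ has strictly positive $p$-adic valuation (the two Frobenius eigenvalues of $\rec_{F_v}^T(\tau)$ have equal $p$-adic valuation in these cases), so $e_v$ annihilates both sides. Otherwise $\tau$ is unramified, an unramified twist of Steinberg, or a ramified principal series of conductor exactly $v$; in each of these cases $e_v \tau^{U_0(v^n)}$ is at most one-dimensional, spanned by the $\iota$-ordinary $p$-stabilisation of the essential Iwahori-new vector, which already lies in $\tau^{U_0(v)}$. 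Alternatively one may proceed uniformly by constructing, from iterations of $\mathbf{U}_v$ and a trace map, an explicit two-sided pseudo-inverse of the inclusion whose composition with it acts as a power of $\mathbf{U}_v$, and is therefore invertible after applying $e_v$. The $U_1$-case is handled in exactly the same way, using that $\mathbf{U}_v$ commutes with the action of the finite torus quotient $U_1(\sigma^m)/U_1^1(\sigma^m)$.
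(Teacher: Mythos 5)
The step ``so it is an isomorphism if and only if it becomes one after inverting $p$'' is a genuine gap. An injection of finite free $\cO$-modules can become an isomorphism after inverting $p$ without being surjective integrally --- multiplication by $p$ on $\cO$ is the basic example. What you actually need is that the cokernel of the inclusion on ordinary parts is $\cO$-torsion-free, and nothing you write establishes this; indeed, producing that integral surjectivity is the whole content of the control theorem. Once you have replaced the integral statement by a rational one, everything you do afterwards (the Jacquet--Langlands spectral decomposition, the case-by-case analysis of local components) is sound but proves only the rational form of the lemma, which is substantially weaker.

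The argument you offer only as an ``alternative'' at the very end is in fact the argument, and it must be run on the $\cO$-modules $H_\psi^{\mathrm{ord}}(V,\cO)$ themselves rather than on the local components of automorphic representations over $\bbC$. Concretely: one writes down a transfer map going the other way and checks, by an explicit manipulation of cosets of $\left(\begin{smallmatrix} \varpi_v & 0 \\ 0 & 1 \end{smallmatrix}\right)$, that each composite with the inclusion equals $\mathbf{U}_\sigma^{k}$ for a suitable $k\ge 1$. Since $\mathbf{U}_\sigma$ is invertible on the ordinary part, the inclusion is an isomorphism, integrally and hence after tensoring with any $A$. This is precisely the calculation the paper cites from \cite[Lemma 2.5.2]{Ger09}. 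Reorganising your proof so that this coset computation is the main argument, and discarding the reduction to $E$ and the local case analysis entirely, gives the correct proof.
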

\begin{proof}
This follows from the same calculation with Hecke operators that appears in \cite[Lemma 2.5.2]{Ger09}.
\end{proof}
If $v \in \sigma$ and $n \geq 1$, we define $\Lambda_{v, n} = \cO[(1 + \varpi_v \cO_{F_v})/(1 + \varpi_v^n \cO_{F_v})]$. (Thus $\Lambda_1 = \cO$.) We define $\Lambda_n = \otimes_{v \in \sigma} \Lambda_{v, n}$. There is a canonical surjection $\Lambda \to \Lambda_n$, and we write $\fra_n$ for the kernel of this surjection. By Lemma \ref{lem_ordinary_control}, for any $n \geq m \geq 1$ we have 
\[ H_\psi^\text{ord}(U_1(\sigma^n), A)[\fra_m] = H_\psi^\text{ord}(U_0(\sigma^n) \cap U_1(\sigma^m), A) = H_\psi^\text{ord}(U_1(\sigma^m), A). \]
\begin{proposition}\label{prop_ordinary_control}
\begin{enumerate}
\item For each $n \geq 1$, there is a canonical isomorphism
\[ H^\text{ord}_\psi(U_1(\sigma^\infty))^\vee / \fra_n H^\text{ord}_\psi(U_1(\sigma^\infty))^\vee \cong \Hom_\cO ( H^\text{ord}_\psi(U_1(\sigma^n), \cO), \cO ). \]
(We recall that $(\cdot)^\vee$ denotes $E$-Pontryagin dual; see \S \ref{sec_normalizations}.)
\item The space $H^\text{ord}_\psi(U_1(\sigma^\infty))^\vee$ is a free $\Lambda$-module of rank $\dim_k H_\psi^\text{ord}(U_1(\sigma), k)$.
\item The algebra $\bbT^{\Lambda, S}_{Q_0}(H^\text{ord}_\psi(U_1(\sigma^\infty)))$ is a finite faithful $\Lambda$-algebra. 
\end{enumerate}
\end{proposition}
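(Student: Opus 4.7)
The plan is to derive the three parts from the ordinary control identity preceding Proposition \ref{prop_ordinary_control}, a freeness result at each finite level, and a topological Nakayama argument. For part (1), I first observe that $H_\psi(U_1(\sigma^n), \cO)$ is finite free over $\cO$: by the decomposition written just after \eqref{eqn_pairing_on_automorphic_forms_with_fixed_central_character} and the fact that each isotropy group $(t_i^{-1} G(F) t_i \cap (U \cdot \bbA_F^{\infty,\times}))/F^\times$ has order prime to $p$ while $\psi$ has finite order, each summand is isomorphic to $\cO$. The ordinary direct summand $H^{\text{ord}}_\psi(U_1(\sigma^n), \cO)$ is therefore also finite free, so there is a canonical isomorphism $H^{\text{ord}}_\psi(U_1(\sigma^n), E/\cO)^\vee \cong \Hom_\cO(H^{\text{ord}}_\psi(U_1(\sigma^n), \cO), \cO)$. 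Pontryagin-dualising the equality $H^{\text{ord}}_\psi(U_1(\sigma^\infty))[\fra_n] = H^{\text{ord}}_\psi(U_1(\sigma^n), E/\cO)$ (which exchanges $\fra_n$-torsion with quotient by $\fra_n$) then gives exactly the isomorphism of part (1).

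For part (2), the key intermediate step is that $H^{\text{ord}}_\psi(U_1(\sigma^n), \cO)$ is a free $\Lambda_n$-module of rank $d := \dim_k H^{\text{ord}}_\psi(U_1(\sigma), k)$. I would prove this by applying Lemma \ref{lem_automorphic_forms_are_free} with $V_1 = U_1(\sigma^n)$ and $V_2 \subset U_0(\sigma^n)$ the preimage of the Sylow $p$-subgroup of $U_0(\sigma^n)/U_1(\sigma^n) \cong \prod_{v \in \sigma}(\cO_{F_v}/\varpi_v^n)^\times$: then $V_2/V_1 \cong \prod_{v \in \sigma}(1+\varpi_v\cO_{F_v})/(1+\varpi_v^n\cO_{F_v})$ is abelian of $p$-power order, the intersections $V_i \cap \bbA_F^{\infty,\times}$ coincide (both contain every diagonal scalar matrix at places in $\sigma$, since these lie in $U_1(v^n)$), and the induced $\cO[V_2/V_1] = \Lambda_n$-action on $H_\psi(V_1, \cO)$ matches the diamond operators (since $\diag(\alpha, 1)$ normalises $V_1$ for $\alpha \in 1+\varpi_v\cO_{F_v}$, by abelianness of $V_2/V_1$). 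Lemma \ref{lem_automorphic_forms_are_free} then yields freeness over $\Lambda_n$; the ordinary direct summand is free as well because $\Lambda_n$ is local (being the group algebra over $\cO$ of a finite abelian $p$-group), and its rank equals $d$ by taking $\fra_1$-invariants and applying Lemma \ref{lem_ordinary_control}. Passing to the inverse limit, part (1) for $n=1$ together with topological Nakayama for compact $\Lambda$-modules gives a surjection $\Lambda^d \twoheadrightarrow H^{\text{ord}}_\psi(U_1(\sigma^\infty))^\vee$, which modulo each $\fra_n$ becomes a surjection between free $\Lambda_n$-modules of equal rank $d$, hence an isomorphism (as surjective endomorphisms of finitely generated modules over Noetherian rings are isomorphisms); since $\bigcap_n \fra_n = 0$, the original map is itself an isomorphism. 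Part (3) is then immediate: $\bbT^{\Lambda, S}_{Q_0}(H^{\text{ord}}_\psi(U_1(\sigma^\infty)))$ is by construction a faithful $\Lambda$-subalgebra of $\End_\Lambda(H^{\text{ord}}_\psi(U_1(\sigma^\infty))^\vee) \cong M_d(\Lambda)$, which is finite over $\Lambda$.

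The main technical point to verify carefully is the finite-level freeness in part (2); the crux is the correct choice of $V_2$ so that Lemma \ref{lem_automorphic_forms_are_free} genuinely applies and so that $V_2/V_1$ realises precisely the group algebra $\Lambda_n$ acting through the diamond operators on the ordinary part.
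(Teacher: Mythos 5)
Your proposal is correct and follows essentially the same route as the paper: part (1) dualizes the control identity $H^\text{ord}_\psi(U_1(\sigma^\infty))[\fra_n] = H^\text{ord}_\psi(U_1(\sigma^n), E/\cO)$ from Lemma \ref{lem_ordinary_control}, and part (2) rests on applying Lemma \ref{lem_automorphic_forms_are_free} with $V_1 = U_1(\sigma^n)$ and $V_2$ the preimage of the Sylow $p$-subgroup — which is exactly the paper's choice $V_2 = U_0(\sigma^n)\cap U_1(\sigma)$, since $k(v)^\times$ has order prime to $p$ for $v\mid p$ — followed by the standard patching of $\Lambda_n$-freeness into $\Lambda$-freeness. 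You have merely spelled out the topological Nakayama step and the verification of the hypotheses of Lemma \ref{lem_automorphic_forms_are_free} (normality, $V_2 \cap \bbA_F^{\infty,\times} = V_1 \cap \bbA_F^{\infty,\times}$, identification of the $\cO[V_2/V_1]$-action with the diamond operators), which the paper leaves implicit.
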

\begin{proof}
We have isomorphisms
\[
\begin{split}
 H^\text{ord}_\psi(U_1(\sigma^\infty))^\vee / \fra_n H^\text{ord}_\psi(U_1(\sigma^\infty))^\vee & \cong H^\text{ord}_\psi(U_1(\sigma^\infty))[\fra_n]^\vee \\ & = H^\text{ord}_\psi(U_1(\sigma^n), E/\cO)^\vee \\ & \cong \Hom_\cO(H^\text{ord}_\psi(U_1(\sigma^n), \cO),\cO).  
\end{split} 
\]
by Lemma \ref{lem_ordinary_control}. By Lemma \ref{lem_automorphic_forms_are_free}, the space $H^\text{ord}_\psi(U_1(\sigma^n), \cO)$ is a free $\cO[(U_0(\sigma^n) \cap U_1(\sigma)) / U_1(\sigma^n)]$-module. It follows that for all $n \geq 1$ the module
\[ H^\text{ord}_\psi(U_1(\sigma^\infty))^\vee / \fra_n H^\text{ord}_\psi(U_1(\sigma^\infty))^\vee \]
is free over $\Lambda / \fra_n \Lambda = \Lambda_n$, hence $H^\text{ord}_\psi(U_1(\sigma^\infty))^\vee$ is free over $\Lambda$ of rank $\dim_k H^\text{ord}_\psi(U_1(\sigma), k)$. Combining these facts completes the proof of the proposition.
\end{proof}
Associated to the automorphic representation $\pi_0$ is a homomorphism $\bbT^{\Lambda, S}_{Q_0}(H^\text{ord}_\psi(U_1(\sigma), E/\cO)) \to \barfp$, which in fact takes values in $k$ (since $\overline{r_\iota(\pi_0)} \cong \overline{\rho} \otimes_k \barfp$ can be defined over $k$, and the elements $\alpha_v$ ($v \in Q_0$) lie in $k$ by construction). We write $\ffrm \subset \bbT^{\Lambda, S}_{Q_0}(H^\text{ord}_\psi(U_1(\sigma^\infty)))$ for the kernel of the composite 
\[ \bbT^{\Lambda, S}_{Q_0}(H^\text{ord}_\psi(U_1(\sigma^\infty))) \to \bbT^{\Lambda, S}_{Q_0}(H^\text{ord}_\psi(U_1(\sigma), E/\cO)) \to k. \]
\begin{proposition}
There is a lifting of $\overline{\rho}$ to a continuous representation 
\[ \rho_\ffrm : G_{F} \to \GL_2(\bbT^{\Lambda, S}_{Q_0}(H^\text{ord}_\psi(U_1(\sigma^\infty)))_\ffrm)\]
 such that for all finite places $v \not\in S_p \cup Q_0 \cup R \cup \{ a \}$ of $F$, $\rho_\ffrm$ is unramified and $\rho_\ffrm(\Frob_v)$ has characteristic polynomial $X^2 - T_v X + q_v S_v$. \(This lifting is then unique, up to strict equivalence.\) Moreover, $\rho_\ffrm$ is of type $\cS$.
\end{proposition}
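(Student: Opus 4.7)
The plan is to construct $\rho_\ffrm$ by interpolating the Galois representations attached to classical ordinary automorphic forms in the Hida family carried by $H^\text{ord}_\psi(U_1(\sigma^\infty))$, and then verify each condition defining the type $\cS$. Write $\bbT = \bbT^{\Lambda,S}_{Q_0}(H^\text{ord}_\psi(U_1(\sigma^\infty)))$. First I observe that $\bbT_\ffrm$ is reduced and $\Lambda$-flat: flatness is immediate from the previous proposition, while reducedness follows because the generic fibres of $\Spec \bbT_\ffrm \to \Spec \Lambda$ over sufficiently regular weights decompose into products of classical Hecke eigensystems by multiplicity one and the ordinary control lemma established above. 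Each minimal prime $\frp \subset \bbT_\ffrm$ corresponds to a component of a $\Lambda$-adic Hida family, in which a Zariski dense set of specializations are classical ordinary cuspidal automorphic representations $\pi$ of weight $2$ with $\overline{r_\iota(\pi)} \cong \overline\rho$, to which are attached Galois representations $r_\iota(\pi) : G_F \to \GL_2(\barqp)$. Because $\overline\rho$ is absolutely irreducible, the resulting system of pseudorepresentations glues, by Carayol's theorem \cite{Car94}, into a continuous lift $\rho_\ffrm : G_F \to \GL_2(\bbT_\ffrm)$ with $\tr\rho_\ffrm(\Frob_v) = T_v$ and $\det\rho_\ffrm(\Frob_v) = q_v S_v$ at every finite place $v \notin S_p \cup Q_0 \cup R \cup \{a\}$.

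I next check that $\rho_\ffrm$ is of type $\cS$. The determinant identity $\det\rho_\ffrm = \epsilon^{-1}\psi$ follows from Chebotarev density and local--global compatibility at unramified Frobenii, using the fixed central character $\iota\psi$ built into $H_\psi$. Unramifiedness at $a$ uses the choice made in Lemma~\ref{lem_auxiliary_place_a}: the inequality $\tr\overline\rho(\Frob_a)^2/\det\overline\rho(\Frob_a) \neq (1+q_a)^2/q_a$ is precisely the obstruction to any level-raising congruence at $a$ (in the sense of \cite[Lemma~12.3]{Jar99}), so at every classical specialization $\pi$ the component $\pi_a$ must be unramified; by density, $\rho_\ffrm|_{G_{F_a}}$ is itself unramified.

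The remaining local conditions at $v \in S_p \cup Q_0 \cup R$ are checked pointwise on the dense set of classical specializations, using that each $R_v$ is a closed quotient of $R_v^\square$. For $v \in \sigma$, each specialization is $\iota$-ordinary at $v$ by the ordinary idempotent, so $r_\iota(\pi)|_{G_{F_v}}$ satisfies the ordinary condition by Lemma~\ref{lem_ordinarity_supercuspidality_and_base_change}, with the $\Lambda_v$-parameters of $R_v^{\text{ord}}$ matching the diamond operators $\langle\alpha\rangle$ via $\Art_{F_v}$. For $v \in R$ and $v \in Q_0$, the maximal-compact level at $v$ forces each $\pi_v$ to be an unramified twist of the Steinberg representation (cf.\ Lemma~\ref{lem_u_eigenvalue_on_twist_of_steinberg}), which gives the $\cD_v^\St$ condition; at $v \in Q_0$ the extra constraint that the $\mathbf U_v$-eigenvalue lifts $\alpha_v$ promotes this to $\cD_v^{\St(\alpha_v)}$.

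I expect the main obstacle to be the verification at $v \in S_p - \sigma$, where the non-ordinary crystalline condition $\cD_v^{\text{non-ord}}$ must be extracted without any explicit local condition being imposed on $H^\text{ord}_\psi$ at such a $v$. The key observation is that $T_v \in \ffrm$: since $\pi_0$ is non-$\iota$-ordinary at $v$, Lemma~\ref{lem_ordinarity_and_satake_parameters} forces $\val_p(t_v(\pi_0)) > 0$, so $T_v$ reduces to zero in $k$. Consequently, at every classical specialization $\pi$, $t_v(\pi)$ has positive $p$-adic valuation, so $\pi_v$ is again non-$\iota$-ordinary and unramified, and Lemma~\ref{lem_ordinarity_supercuspidality_and_base_change} then shows $r_\iota(\pi)|_{G_{F_v}}$ is crystalline and non-ordinary. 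Zariski density together with the closedness of $\Spec R_v^{\text{non-ord}} \hookrightarrow \Spec R_v^\square$ yields the required factorization $\rho_\ffrm|_{G_{F_v}} \in \cD_v^{\text{non-ord}}(\bbT_\ffrm)$.
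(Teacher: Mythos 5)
Your proof takes essentially the same route as the paper: interpolate the Galois representations attached to the classical ordinary automorphic representations appearing at finite level in the Hida module, glue them over the reduced Hecke algebra using Carayol's theorem, and verify the local deformation conditions pointwise at classical specializations. The paper phrases this as a construction of a $\Lambda$-algebra map $R_\cS \to \bbT_n$ for each $n$ followed by a limit, whereas you build $\rho_\ffrm$ directly and check it factors through each $R_v$, but these are two descriptions of the same argument. You give a more careful justification of the non-ordinary condition at $v \in S_p - \sigma$ (namely that $T_v \in \ffrm$ propagates non-ordinarity to all specializations), which the paper asserts without comment; this is a genuine point and your treatment is correct. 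One small inaccuracy: Proposition~\ref{prop_ordinary_control} gives that $\bbT^{\Lambda,S}_{Q_0}(H^\text{ord}_\psi(U_1(\sigma^\infty)))$ is a \emph{finite faithful} $\Lambda$-algebra, but this does not immediately give $\Lambda$-flatness. Fortunately you never use flatness: what your density-and-closedness argument actually needs is that $\bbT_\ffrm$ is reduced and $\cO$-torsion free (so that it injects into $\prod_n \bbT_n \hookrightarrow \prod_n \oplus_\pi \barqp$), and both of those are established in the text.
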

\begin{proof}
It suffices to construct, for each $n \geq 1$, a homomorphism $R_\cS \to \bbT^{\Lambda, S}_{Q_0}(H^\text{ord}_\psi(U_1(\sigma^n), E/\cO))_\ffrm$ satisfying the relation $\tr \rho_\cS(\Frob_v) \mapsto T_v$ and $\det \rho_\cS(\Frob_v) \mapsto q_v S_v$ for all $v \not\in Q_0 \cup R \cup S_p \cup \{ a \}$. Indeed, these conditions characterize such a homomorphism uniquely, because the ring $R_\cS$ is topologically generated as a $\Lambda$-algebra by the traces of Frobenius elements, by \cite{Car94}. We can then pass to the limit to obtain the desired homomorphism $R_\cS \to \bbT^{\Lambda, S}_{Q_0}(H^\text{ord}_\psi(U_1(\sigma^\infty)))_\ffrm$.

For each $n \geq 1$, there is an isomorphism of algebras:
\[ \bbT^{\Lambda, S}_{Q_0}(H^\text{ord}_\psi(U_1(\sigma^n), E/\cO))_\ffrm \otimes_\cO \barqp \cong \oplus_{\pi} \barqp \]
and a corresponding isomorphism of spaces of automorphic forms:
\[ H_\psi^\text{ord}(U_1(\sigma^n), \cO)_\ffrm \otimes_\cO \barqp \cong \oplus_{\pi} (\iota^{-1} \pi^\infty)^{U_1(\sigma^n), \text{ord}}. \]
(Here, we write $(\iota^{-1} \pi^\infty)^{U, \text{ord}} \subset (\iota^{-1} \pi^\infty)^{U}$ for the largest subspace where $\mathbf{U}_\sigma$ operates with eigenvalues that are $p$-adic units.) In each case, the direct sum runs over the set of cuspidal automorphic representations $\pi$ of $\GL_2(\bbA_F)$ of weight 2 which satisfy the following conditions:
\begin{itemize}
\item $(\iota^{-1} \pi^\infty)^{U_1(\sigma^n), \text{ord}} \neq 0$. In particular, for each $v \in \sigma$, $\pi_v$ is $\iota$-ordinary.
\item There is an isomorphism $\overline{r_\iota(\pi)} \cong \overline{\rho}$.
\item The central character of $\pi$ is $\iota\psi$.
\item For each place $v \in S_p - \sigma$, $\pi_v$ is not $\iota$-ordinary.
\item For each finite place $v \not\in \sigma \cup Q_0 \cup R$ of $F$, $\pi_v$ is unramified.
\item For each place $v \in Q_0 \cup R$, $\pi_v$ is an unramified twist of the Steinberg representation. For each place $v \in Q_0$, the eigenvalue of ${\mathbf{U}_v}$ on $\iota^{-1} \pi_v^{U_0(v)}$ is congruent to $\alpha_v$, modulo the maximal ideal of $\overline{\bbZ}_p$.
\end{itemize}
There is a corresponding homomorphism $R_\cS \to \oplus_\pi \barqp$, with image contained in $\bbT^{\Lambda, S}_{Q_0}(H^\text{ord}_\psi(U_1(\sigma^n), E/\cO))_\ffrm$. This gives the desired map $R_\cS \to \bbT^{\Lambda, S}_{Q_0}(H^\text{ord}_\psi(U_1(\sigma^n), E/\cO))_\ffrm$.
\end{proof}
It follows that $H^\text{ord}_\psi(U_1(\sigma^\infty))_\ffrm$ has a structure of $R_\cS$-module. We can now state the main theorem of this section.
\begin{theorem}\label{thm_r_equals_t}
Suppose that $h^1_{\cS, T}(M_1(1)) = 0$ \(notation as in \S \ref{sec_auxiliary_primes}\). Then $\Fitt_{R_\cS} H^\text{ord}_\psi(U_1(\sigma^\infty))^\vee_\ffrm = 0$.
\end{theorem}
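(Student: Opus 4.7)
The plan is to implement the Taylor--Wiles--Kisin patching method, which becomes available precisely because the hypothesis $h^1_{\cS, T}(M_1(1)) = 0$ supplies the input needed for Proposition \ref{prop_existence_of_auxiliary_taylor_wiles_primes}. First, I would apply that proposition for each integer $N \geq 1$ to produce a set $Q_N$ of $q := h^1_{\cS, T}(M_0(1))$ auxiliary primes, disjoint from $S$, with $q_v \equiv 1 \pmod{p^N}$ and $\overline{\rho}(\Frob_v)$ having distinct eigenvalues for every $v \in Q_N$, and such that the augmented deformation problem $\cS_N := \cS_{Q_N}$ has vanishing dual Selmer group. After enlarging $T$ to $T_N := T \cup Q_N$, Proposition \ref{prop_presenting_global_deformation_ring} would then exhibit $R_{\cS_N}^{T_N}$ as a quotient of a power series ring in $g := q + \#T - 1 - [F : \bbQ]$ variables over the completed tensor product $A_{\cS_N}^{T_N}$ of local lifting rings; the key point is that $g$ is independent of $N$.

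On the automorphic side, I would enlarge the level at each $v \in Q_N$ from the maximal compact subgroup to (the analogue of) $U_1(v)$, producing an open compact $V_N \subset U$. By Lemma \ref{lem_level_raising_while_preserving_ordinarity}, the non-Eisenstein maximal ideal $\ffrm$ lifts to a maximal ideal $\ffrm_N$ of the enlarged Hecke algebra, with the $\mathbf{U}_v$-eigenvalues at $v \in Q_N$ chosen to match the label fixed in $\cS_N$. Writing $\Delta_v$ for the maximal $p$-power quotient of $k(v)^\times$ and $\Delta_N := \prod_{v \in Q_N} \Delta_v$ (each $\Delta_v$ cyclic of order divisible by $p^N$), the module $M_N := H_\psi^\text{ord}(U_1(\sigma^\infty) \cap V_N)^\vee_{\ffrm_N}$ carries commuting actions of $R_{\cS_N}$ (via the argument that produced $\rho_\ffrm$) and of $\Lambda[\Delta_N]$ via diamond operators. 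I would then apply Lemma \ref{lem_automorphic_forms_are_free} to the pro-$p$ abelian tower with Galois group $\Delta_N$ to see that $M_N$ is free over $\Lambda[\Delta_N]$, with reduction modulo the augmentation ideal recovering $M := H_\psi^\text{ord}(U_1(\sigma^\infty))^\vee_\ffrm$ together with its $R_\cS$-module structure.

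Next comes the patching step, following \cite[\S 3]{Kis09} and its ordinary refinement in \cite{Ger09}. A standard diagonal-subsequence construction applied to the truncations of $M_N$ modulo increasing powers of the maximal ideal produces a module $M_\infty$ over $R_\infty := \Lambda[[x_1, \dots, x_g]]$ that is simultaneously a finite free module over $S_\infty := \Lambda[[\Delta_\infty]]$ with $\Delta_\infty = \bbZ_p^q$, such that the $S_\infty$-action factors through a surjection $R_\infty \twoheadrightarrow S_\infty$ coming from the framing, and such that quotienting by the appropriate augmentation ideal recovers the pair $(R_\cS, M)$.

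The hardest part will be the dimension bookkeeping: I would need to verify that the local lifting rings $R_v^\text{ord}$ (using \cite{All13}, \cite{Ger09}), $R_v^\text{non-ord}$ (using \cite{Kis09}), $R_v^\text{St}$, and $R_v^{\St(\alpha_v)}$ each contribute their expected relative dimensions over $\Lambda_v$, so that after comparing Krull dimensions $\dim R_\infty = \dim S_\infty$. Since $M_\infty$ is finite free of positive rank over $S_\infty$ (nonvanishing being guaranteed by the classical form $\pi_0$ and a Deligne--Serre-style lifting argument), its support in $\Spec R_\infty$ must have full dimension, forcing the surjection $R_\infty \twoheadrightarrow S_\infty$ to be an isomorphism. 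Thus $M_\infty$ is a finite free $R_\infty$-module of positive rank, so $\Fitt_{R_\infty} M_\infty = 0$; since Fitting ideals commute with arbitrary base change, quotienting by the patching ideal yields $\Fitt_{R_\cS} M = 0$, as required.
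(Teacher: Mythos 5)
Your overall strategy is the right one — Taylor--Wiles--Kisin patching applied to $H^{\text{ord}}_\psi(U_1(\sigma^\infty))^\vee_\ffrm$, with the auxiliary primes $Q_N$ furnished by Proposition \ref{prop_existence_of_auxiliary_taylor_wiles_primes} and the hypothesis $h^1_{\cS,T}(M_1(1))=0$ — but the execution has several genuine errors. First, the framing set must be kept \emph{fixed}: the paper frames over $T = S_p \cup R$ (independent of $N$), so that the base $A_\cS^T$ and hence $R_\infty$ do not vary with $N$. You instead enlarge $T$ to $T_N = T \cup Q_N$; then $A_{\cS_N}^{T_N}$ changes with $N$, and your count $g = q - [F:\bbQ] - 1 + \#T$ tacitly assumes $h^1_{\cS_N,T_N}(\ad^0\overline\rho(1)) = 0$, which is false — dropping the local dual Selmer condition at $Q_N$ recovers a group containing $H^1_{\cS,T}(\ad^0\overline\rho(1))$, of dimension $q$. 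The correct count follows from Proposition \ref{prop_presenting_global_deformation_ring}(2) with the fixed $T$, the unrestricted local conditions at $v\in Q_N$ contributing $\ell_v - h^0(F_v,\ad^0\overline\rho) = 1$ each. For the same reason $R_\infty$ must be $A_\cS^T\llbracket x_1,\dots,x_g\rrbracket$ (not $\Lambda\llbracket x_1,\dots,x_g\rrbracket$ — the local deformation rings are the heart of the matter), and $S_\infty$ must also contain the framing power series ring $\cT = \cO\llbracket\{Y^v_{i,j}\}_{v\in T}\rrbracket/(Y^{v_0}_{1,1})$. (A smaller slip: lifting $\ffrm$ to Iwahori level at $Q_N$ is not a level-raising statement and does not use Lemma \ref{lem_level_raising_while_preserving_ordinarity}, which concerns Steinberg level at places with $q_v\not\equiv 1\pmod p$.)

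The concluding step is the most serious problem. The map constructed in patching goes $S_\infty \to R_\infty$, not the other way; there is no surjection $R_\infty \twoheadrightarrow S_\infty$, and certainly no reason it should be an isomorphism. Nor is it true that $H_\infty$ is free over $R_\infty$: the local rings $R_v^{\text{ord}}$ and $R_v^{\text{non-ord}}$ need not be regular, so $R_\infty$ need not be regular and Auslander--Buchsbaum gives nothing. What the paper actually proves, and what you would need, is a weaker statement reached by a different argument: for each minimal prime $Q$ of $\Lambda$, one bounds
\[ \mathrm{depth}_{R_\infty/(Q)} H_\infty/(Q) \geq \mathrm{depth}_{S_\infty/(Q)} H_\infty/(Q) = \dim S_\infty/(Q) = \dim R_\infty/(Q), \]
and invokes \cite[Lemma 2.3]{Tay08} to conclude that $H_\infty/(Q)$ is a \emph{nearly faithful} $R_\infty/(Q)$-module; since this holds for every $Q$ and $R_\infty$ is reduced, $H_\infty$ is a faithful $R_\infty$-module, whence $\Fitt_{R_\infty}H_\infty = 0$, and then $\Fitt_{R_\cS}H_0 = 0$ by compatibility of Fitting ideals with base change. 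That chain of implications is what replaces your claimed freeness and the nonexistent isomorphism $R_\infty \cong S_\infty$.
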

The proof of Theorem \ref{thm_r_equals_t} will be given in \S \ref{sec_patching} below.
\begin{corollary}\label{cor_r_equals_t}
Let $C, N, n \geq 1$ be integers. Suppose that $\dim_k H_\psi^\text{ord}(U_1(\sigma^n), k)[\ffrm] \leq C$, and suppose that there is a diagram
\[ \xymatrix{ \Lambda \ar[r] \ar[d] & \Lambda_n \ar[d] \\
R_\cS \ar[r] & \cO/\lambda^N, }\]
corresponding to a lifting $\rho_N : G_F \to \GL_2(\cO/\lambda^N)$ of $\overline{\rho}$ which is of type $\cS$. Let  $I = \ker ( R_\cS \to \cO/\lambda^{\lfloor N/C \rfloor} )$. Then $(H_\psi(U_1(\sigma^n), \cO)_\ffrm \otimes_\cO \cO/\lambda^{\lfloor N/C \rfloor})[I]$ contains an $\cO$-submodule isomorphic to $\cO/\lambda^{\lfloor N/C \rfloor}$, and the map $R_\cS \to \cO/\lambda^{\lfloor N/C \rfloor}$ factors 
\[ R_\cS \twoheadrightarrow \bbT^{\Lambda, S \cup S_p}_{Q_0}(H^\text{ord}_\psi(U_1(\sigma^n), E/\cO)_\ffrm) \to \cO/\lambda^{\lfloor N/C \rfloor}. \]
\end{corollary}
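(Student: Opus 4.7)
The plan is to deduce the corollary from Theorem \ref{thm_r_equals_t} by combining the vanishing of the Fitting ideal of $M := H^\text{ord}_\psi(U_1(\sigma^\infty))^\vee_\ffrm$ with a duality argument, then transferring the resulting submodule back to finite-level automorphic forms by Pontryagin duality.

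Let $\phi : R_\cS \to \cO/\lambda^N$ denote the homomorphism classifying $\rho_N$, and set $M_\phi := M \otimes_{R_\cS} \cO/\lambda^N$. Theorem \ref{thm_r_equals_t} gives $\Fitt_{R_\cS}(M) = 0$, so by base change $\Fitt_{\cO/\lambda^N}(M_\phi) = 0$. Since $\Lambda \to \cO/\lambda^N$ factors through $\Lambda_n$ by hypothesis, $\fra_n$ annihilates $M_\phi$, so $M_\phi$ is a quotient of $X := M/\fra_n M$, which by the control isomorphism in Proposition \ref{prop_ordinary_control} is identified with $H^\text{ord}_\psi(U_1(\sigma^n), \cO)^*_\ffrm$. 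The minimal number of $\cO/\lambda^N$-generators of $M_\phi$ is at most the minimal number of $R_\cS$-generators of $X$, which by Nakayama and the surjectivity of $R_\cS \twoheadrightarrow \bbT := \bbT^{\Lambda, S \cup S_p}_{Q_0}(H^\text{ord}_\psi(U_1(\sigma^n), E/\cO)_\ffrm)$ (a Chebotarev--Carayol argument, cf.\ \cite{Car94}) equals $\dim_k X / \ffrm X$. Using the perfect pairing \eqref{eqn_pairing_on_automorphic_forms_with_fixed_central_character}, which is Hecke-equivariant up to a twist by the fixed central character $\psi$ that preserves $\ffrm$ (since $\overline{\rho} \cong \overline{\rho}^\vee \otimes \det \overline{\rho}$), one finds that $H^\text{ord}_\psi(U_1(\sigma^n), k)_\ffrm$ is self-dual, so $\dim_k X/\ffrm X = \dim_k H^\text{ord}_\psi(U_1(\sigma^n), k)[\ffrm] \leq C$.

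Write $M_\phi \cong \bigoplus_{i=1}^s \cO/\lambda^{a_i}$ with $s \leq C$ and $a_i \leq N$. The vanishing of $\Fitt_{\cO/\lambda^N}(M_\phi)$ forces $\sum_i a_i \geq N$, so by the pigeonhole principle some $a_j \geq \lceil N/C \rceil \geq \lfloor N/C \rfloor$. Taking Pontryagin duals, $(M_\phi)^\vee = H^\text{ord}_\psi(U_1(\sigma^n), E/\cO)_\ffrm[\ker\phi]$, and the direct summand $\cO/\lambda^{a_j}$ of $M_\phi$ dualizes to a direct summand of $(M_\phi)^\vee$, which contains a submodule isomorphic to $\cO/\lambda^{\lfloor N/C \rfloor}$. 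This submodule is killed by $\lambda^{\lfloor N/C \rfloor}$, so it lies in $H^\text{ord}_\psi(U_1(\sigma^n), E/\cO)_\ffrm[\lambda^{\lfloor N/C \rfloor}] = H^\text{ord}_\psi(U_1(\sigma^n), \cO)_\ffrm \otimes \cO/\lambda^{\lfloor N/C \rfloor}$, and it is killed by $I$ since it is killed by both $\ker\phi$ and $\lambda^{\lfloor N/C \rfloor}R_\cS$. Composing with the inclusion $H^\text{ord}_\psi \subseteq H_\psi$ gives the first assertion. For the second, the $R_\cS$-action on this submodule is the scalar action via $\phi$ reduced mod $\lambda^{\lfloor N/C \rfloor}$; since this action also factors through $\bbT$ by construction, the map $R_\cS \to \cO/\lambda^{\lfloor N/C \rfloor}$ factors as claimed.

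The hardest step will be the self-duality used to bound the number of generators: one must carefully verify that the pairing \eqref{eqn_pairing_on_automorphic_forms_with_fixed_central_character}, after accounting for the Hecke-adjoint twist by $\psi$, descends to an involution of the localization $H^\text{ord}_\psi(U_1(\sigma^n), k)_\ffrm$ identifying the tangent space $X/\ffrm X$ with the socle $H^\text{ord}_\psi(U_1(\sigma^n), k)[\ffrm]$.
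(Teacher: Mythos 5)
Your proposal follows the same overall strategy as the paper's own proof: apply Theorem \ref{thm_r_equals_t} to obtain $\Fitt_{R_\cS}(M)=0$ for $M = H^\text{ord}_\psi(U_1(\sigma^\infty))^\vee_\ffrm$, base-change along the classifying map $\phi$ of $\rho_N$, bound the number of $\cO/\lambda^N$-generators of $M_\phi$ by $C$, deduce via the pigeonhole principle that $M_\phi$ has a cyclic summand of length at least $\lfloor N/C\rfloor$, and dualize back. The pigeonhole step (vanishing Fitting ideal forces $\sum a_i \geq N$), the identification $(M_\phi)^\vee \cong H^\text{ord}_\psi(U_1(\sigma^n), E/\cO)_\ffrm[\ker\phi]$, the passage to the $\lambda^{\lfloor N/C\rfloor}$-torsion, the observation $I=(\ker\phi,\lambda^{\lfloor N/C\rfloor})$, and the argument for the factorization through $\bbT^{\Lambda, S\cup S_p}_{Q_0}$ are all correct and match the paper.

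The one place where you have made the argument harder than it needs to be is the generator bound. You invoke the self-duality of the pairing \textup{(}\ref{eqn_pairing_on_automorphic_forms_with_fixed_central_character}\textup{)}, claim that $H^\text{ord}_\psi(U_1(\sigma^n), k)_\ffrm$ is self-dual after accounting for the Hecke-adjoint twist by $\psi$, and even flag this as the hardest step. None of this is needed. The paper proves the bound by pure Pontryagin duality: using $M^\vee[\fra_n] \cong H^\text{ord}_\psi(U_1(\sigma^n), E/\cO)_\ffrm$ (Proposition \ref{prop_ordinary_control}) together with the elementary fact that for a finite $\bbT$-module $W$ one has $(W^\vee)/\ffrm(W^\vee) \cong (W[\ffrm])^\vee$, one obtains
\[
M \otimes_{R_\cS} k \;\cong\; \left(H^\text{ord}_\psi(U_1(\sigma^n), k)_\ffrm[\ffrm]\right)^\vee,
\]
and since a $k$-dual has the same $k$-dimension, Nakayama gives at once that $M_\phi$ (a quotient of $M$) is generated by at most $\dim_k H^\text{ord}_\psi(U_1(\sigma^n), k)[\ffrm] \leq C$ elements. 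There is no need to identify the cotangent space with the socle, no need to verify that the adjoint involution on the Hecke algebra preserves $\ffrm$ (your appeal to $\overline{\rho}\cong\overline{\rho}^\vee\otimes\det\overline{\rho}$), and no need to check compatibility of the pairing with the ordinary idempotent. Dropping that step both shortens the argument and removes precisely the delicate verification you had flagged as troublesome.
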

\begin{proof}[Proof of Corollary \ref{cor_r_equals_t}]
Let $J = \ker (R_\cS \to \cO/\lambda^N)$, so that $I = (J, \lambda^{\lfloor N/C \rfloor})$. Since formation of the Fitting ideal commutes with base extension, Theorem \ref{thm_r_equals_t} implies that $\Fitt_{\cO/\lambda^N} H_\psi^\text{ord}(U_1(\sigma^\infty))^\vee_\ffrm \otimes_{R_\cS} \cO/\lambda^N = 0$. We have isomorphisms
\[ \begin{split}
 H_\psi^\text{ord}(U_1(\sigma^\infty))^\vee_\ffrm \otimes_{R_\cS} \cO/\lambda^N & = H_\psi^\text{ord}(U_1(\sigma^\infty))^\vee_\ffrm / J H_\psi^\text{ord}(U_1(\sigma^\infty))^\vee_\ffrm \\
 & \cong H_\psi^\text{ord}(U_1(\sigma^\infty))_\ffrm[J]^\vee \\
 & = H_\psi^\text{ord}(U_1(\sigma^n), \cO/\lambda^N)_\ffrm[J]^\vee.
 \end{split} \]
 Similarly, there is an isomorphism
 \[ H_\psi^\text{ord}(U_1(\sigma^\infty))^\vee_\ffrm \otimes_{R_\cS} k \cong H_\psi^\text{ord}(U_1(\sigma^n), k)_\ffrm[\ffrm]^\vee. \]
It follows that $H_\psi^\text{ord}(U_1(\sigma^n), \cO/\lambda^N)_\ffrm[J]^\vee$ is generated as $\cO/\lambda^N$-module by at most $C$ elements. Since its Fitting ideal as $\cO/\lambda^N$-module is trivial, it contains a copy of $\cO/\lambda^{\lfloor N / C \rfloor}$. The desired result follows on taking Pontryagin duals. 
\end{proof}

\subsection{Patching and the proof of Theorem \ref{thm_r_equals_t}}\label{sec_patching}

We continue with the notation of the previous section. Suppose given a finite set $Q_1$ of finite places of $F$, disjoint from $S_p \cup Q_0 \cup R \cup \{ a \}$, and satisfying the following conditions.
\begin{itemize} 
\item For each $v \in Q_1$, we have $q_v \equiv 1 \text{ mod }p$. 
\item For each $v \in Q_1$, $\overline{\rho}(\Frob_v)$ has distinct eigenvalues $\alpha_v, \beta_v \in k$.
\end{itemize}
We write $\Delta_{Q_1}$ for the maximal $p$-power quotient of $\prod_{v \in Q_1} k(v)^\times$, $\fra_{Q_1} \subset \cO[\Delta_{Q_1}]$ for the augmentation ideal, and $\cS_{Q_1}$ for the Taylor--Wiles augmented deformation problem:
\begin{multline*}
\cS_{Q_1} = (\overline{\rho}, \epsilon^{-1} \psi, S_p \cup Q_0 \cup R \cup Q_1, \{ \Lambda_v \}_{v \in \sigma} \cup \{ \cO \}_{v \in S_p \cup Q_0 \cup R \cup Q_1 - \sigma}, \\ \{ \cD_v^{\text{ord}} \}_{v \in \sigma} \cup \{ \cD_v^{\text{non-ord}} \}_{v \in S_p - \sigma} \cup \{ \cD_v^{\St(\alpha_v)} \}_{v \in Q_0} \cup \{ \cD_v^{\St} \}_{v \in R} \cup \{ \cD_v^\square \}_{v \in Q_1}). 
\end{multline*}
Let $\rho_{\cS_{Q_1}} : G_F \to \GL_2(R_{\cS_{Q_1}})$ be a representative of the universal deformation. Then for each $v \in Q_1$ there are continuous characters $A_v, B_v : G^\text{ab}_{F_v} \to R_{\cS_{Q_1}}^\times$ which modulo $\ffrm_{R_{\cS_{Q_1}}}$ are unramified and take $\Frob_v$ to $\alpha_v, \beta_v$, respectively, and an isomorphism
\[ \rho_{\cS_{Q_1}}|_{G_{F_v}} \sim \left(\begin{array}{cc} A_v & 0 \\ 0 & B_v \end{array}\right). \]
The universal deformation ring $R_{\cS_{Q_1}}$ then acquires a natural structure of $\cO[\Delta_{Q_1}]$-algebra, built from the maps $(v \in Q_1)$:
\begin{gather*}
k(v)^\times \to R_{\cS_{Q_1}}^\times \\
\sigma \mapsto A_v(\Art_{F_v}(\sigma)).
\end{gather*}
Moreover, the map $R_{\cS_{Q_1}}/\fra_{Q_1} R_{\cS_{Q_1}} \to R_\cS$ is an isomorphism. We now introduce auxiliary Hecke modules. Let $H_0 =  H^\text{ord}_\psi(U_1(\sigma^\infty))_\ffrm^\vee$.
\begin{lemma}
With notation as above, there exists an $R_{\cS_{Q_1}}$-module $H_{Q_1}$, free over $\Lambda[\Delta_{Q_1}]$, and equipped with a canonical isomorphism $H_{Q_1} / \fra_{Q_1} H_{Q_1} \cong H_0$ of $R_{\cS}$-modules.
\end{lemma}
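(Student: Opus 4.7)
The plan is to construct $H_{Q_1}$ from ordinary cohomology at an auxiliary level structure at the primes of $Q_1$. For each $v \in Q_1$, let $U_0(v) \subset \GL_2(\cO_{F_v})$ be the Iwahori subgroup and $U_1(v) \subset U_0(v)$ the subgroup on which the character sending an upper-triangular matrix to its upper-left entry mod $\varpi_v$ factors through the maximal prime-to-$p$ quotient of $k(v)^\times$; then $U_0(v)/U_1(v)$ is canonically isomorphic to the maximal $p$-power quotient of $k(v)^\times$. Writing $V_\ast(Q_1) \subset U$ for the good subgroup obtained by replacing $U_v$ by $U_\ast(v)$ at each $v \in Q_1$, we have $V_0(Q_1)/V_1(Q_1) \cong \Delta_{Q_1}$.

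Next, I would study the ordinary Hecke module $H_\psi^{\text{ord}}(U_1(\sigma^n) \cap V_1(Q_1), \cO)$, for each $n \geq 1$, now augmented with the operators $\mathbf{U}_v$ for $v \in Q_1$. Since $\overline{\rho}(\Frob_v)$ has distinct eigenvalues $\alpha_v, \beta_v \in k$, Hensel's lemma splits the action of $\mathbf{U}_v$ on Iwahori-fixed vectors and produces a maximal ideal $\ffrm_{Q_1}$ of the enlarged Hecke algebra lying above $\ffrm$ with $\mathbf{U}_v \equiv \alpha_v \pmod{\ffrm_{Q_1}}$ for each $v \in Q_1$. Set
\[ H_{Q_1} = \Bigl( \ilim_n H_\psi^{\text{ord}}(U_1(\sigma^n) \cap V_1(Q_1), E/\cO)_{\ffrm_{Q_1}} \Bigr)^\vee. \]
Applying Lemma \ref{lem_automorphic_forms_are_free} to the pair $V_1(Q_1) \subset V_0(Q_1)$, whose quotient is the abelian $p$-group $\Delta_{Q_1}$, together with the argument of Proposition \ref{prop_ordinary_control}, shows that $H_{Q_1}$ is free over $\Lambda[\Delta_{Q_1}]$.

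The isomorphism $H_{Q_1}/\fra_{Q_1} H_{Q_1} \cong H_0$ is obtained in two stages. Modding out by $\fra_{Q_1}$ replaces $V_1(Q_1)$ by $V_0(Q_1)$ in the inverse system (again by Lemma \ref{lem_automorphic_forms_are_free}), so it suffices to check that the natural degeneracy map from spherical forms to the $\mathbf{U}_v = \alpha_v$-eigenspace at Iwahori level induces an isomorphism
\[ H^{\text{ord}}_\psi(U_1(\sigma^\infty))^\vee_\ffrm \xrightarrow{\sim} H^{\text{ord}}_\psi(U_1(\sigma^\infty) \cap V_0(Q_1))^\vee_{\ffrm_{Q_1}}; \]
this is standard, since at each $v \in Q_1$ the two-dimensional Iwahori-invariants of an unramified principal series split under $\mathbf{U}_v$ into $\alpha_v$- and $\beta_v$-eigenlines (the eigenvalues being distinct mod $\ffrm$), and localizing at $\ffrm_{Q_1}$ selects the $\alpha_v$-line. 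To equip $H_{Q_1}$ with a compatible $R_{\cS_{Q_1}}$-module structure, one produces via the method of Proposition \ref{prop_carayol_and_blr} a lift of $\overline{\rho}$ into $\GL_2$ of the Hecke algebra localized at $\ffrm_{Q_1}$; the main obstacle, and the part deserving care, is to verify using local-global compatibility at each $v \in Q_1$ that the universal character $A_v$ pulled back to this Hecke algebra agrees on inertia with the diamond operator action of $\Delta_v$, so that the Galois-theoretic and Hecke-theoretic $\cO[\Delta_{Q_1}]$-module structures on $H_{Q_1}$ coincide.
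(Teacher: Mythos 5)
Your proposal is correct and follows essentially the same route as the paper's proof: introduce the level structures $U_0(Q_1) \supset U_1(Q_1)$ with $U_0(Q_1)/U_1(Q_1) \cong \Delta_{Q_1}$, pass to ordinary parts localized at the maximal ideal $\ffrm_{Q_1}$ determined by the choices $\alpha_v$, deduce freeness over $\Lambda[\Delta_{Q_1}]$ from Lemma \ref{lem_automorphic_forms_are_free}, identify the $\fra_{Q_1}$-coinvariants with $H_0$ via the Hecke-equivariant isomorphism between the $\mathbf{U}_v$-eigenspace at Iwahori level and spherical-level forms, and endow $H_{Q_1}$ with its $R_{\cS_{Q_1}}$-module structure via a Carayol-style Galois representation together with the local-global compatibility identifying the character $A_v$ restricted to inertia with the diamond operators. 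These are exactly the steps the paper carries out, citing \cite{Tay06} for the Hecke-side verifications you flag as needing care.
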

\begin{proof}
We define open compact subgroups $U_0(Q_1) = \prod_v U_0(Q_1)_v $ and $U_1(Q_1) = \prod_v U_1(Q_1)_v$ of $U$ as follows. 
\begin{itemize} \item If $v\not\in Q_1$, then $U_0(Q_1)_v = U_1(Q_1)_v = U_v$. If $v \in Q_1$, then $U_0(Q_1)_v = U_0(v)$. 
\item If $v \in Q_1$, then there is a canonical homomorphism from $U_0(v)$ to the maximal $p$-power quotient of $k(v)^\times$, given by
\[ \left( \begin{array}{cc} a & b \\ c & d \end{array} \right) \mapsto a d^{-1}. \]
We define $U_1(Q_1)_v$ to be the kernel of this homomorphism. 
\end{itemize}
Thus $U_1(Q_1)$ is a normal subgroup of $U_0(Q_1)$, and there is a canonical isomorphism $U_0(Q_1)/U_1(Q_1) \cong \Delta_{Q_1}$. The Hecke algebra $\bbT_{Q_0 \cup Q_1}^{\Lambda, S \cup Q_1, \text{univ}}$ acts on each space $H^\text{ord}_\psi(U_1(\sigma^n) \cap U_0(Q_1), A)$ and $H^\text{ord}_\psi(U_1(\sigma^n) \cap U_1(Q_1), A)$.

We recall that there is a homomorphism $f : \bbT^{\Lambda, S}_{Q_0}(H^\text{ord}_\psi(U_1(\sigma), E/\cO)) \to k$ with kernel $\ffrm$. We write $\ffrm_{Q_1}$ for the maximal ideal of $\bbT^{\Lambda, S \cup Q_1}_{Q_0 \cup Q_1}(H^\text{ord}_\psi(U_1(\sigma) \cap U_0(Q_1), E/\cO))$ generated by $\ffrm_\Lambda$, the elements $T_v - f(T_v)$ ($v \not\in S \cup Q_1$) and the elements $\mathbf{U}_v - \alpha_v$ ($v \in Q_0 \cup Q_1$). We also write $\ffrm_{Q_1}$ for the pullback of this maximal ideal to $\bbT^{\Lambda, S \cup Q_1}_{Q_0 \cup Q_1}(H^\text{ord}_\psi(U_1(\sigma) \cap U_1(Q_1), E/\cO))$. Then, just as in \cite[\S 2]{Tay06}, one can show the following facts:
\begin{itemize}
\item for each $n \geq 1$, there is an isomorphism 
\[ \bbT^{\Lambda, S \cup Q_1}_{Q_0 \cup Q_1}(H^\text{ord}_\psi(U_1(\sigma^n) \cap U_0(Q_1), E/\cO))_{\ffrm_{Q_1}} \cong \bbT^{\Lambda, S \cup Q_1}_{Q_0 \cup Q_1}(H^\text{ord}_\psi(U_1(\sigma^n), E/\cO))_\ffrm \]
 of $\Lambda$-algebras, and a corresponding isomorphism 
 \[ H^\text{ord}_\psi(U_1(\sigma^n) \cap U_0(Q_1), E/\cO)_{\ffrm_{Q_1}} \cong H^\text{ord}_\psi(U_1(\sigma^n), E/\cO)_{\ffrm} \]
 of Hecke modules;
\item and for each $n \geq 1$, the $\Lambda$-subalgebra of $\End_\Lambda(H^\text{ord}_\psi(U_1(\sigma^n) \cap U_1(Q_1), E/\cO))_{\ffrm_{Q_1}}$  generated by $\cO[\Delta_{Q_1}]$ is contained inside $\bbT^{\Lambda, S \cup Q_1}_{Q_0 \cup Q_1}(H^\text{ord}_\psi(U_1(\sigma^n) \cap U_1(Q_1), E/\cO))_{\ffrm_{Q_1}}$, and 
there is a natural map $R_{\cS_{Q_1}} \to \bbT^{\Lambda, S \cup Q_1}_{Q_0 \cup Q_1}(H^\text{ord}_\psi(U_1(\sigma^n) \cap U_1(Q_1), E/\cO))_{\ffrm_{Q_1}}$ of $\Lambda[\Delta_{Q_1}]$-algebras satisfying the condition $\tr \rho_{\cS_{Q_1}}(\Frob_v) \mapsto T_v$ and $\det \rho_{\cS_{Q_1}}(\Frob_v) \mapsto S_v$ for all $v \not\in S_p \cup R \cup Q_0 \cup \{ a \} \cup Q_1$.
\end{itemize}
We therefore define
 \[ H^\text{ord}_\psi(U_1(\sigma^\infty) \cap U_0(Q_1)) = \ilim_n H^\text{ord}_\psi(U_1(\sigma^n) \cap U_0(Q_1), E/\cO)\]
so that
 \[ \bbT^{\Lambda, S \cup Q_1}_{Q_0 \cup Q_1}(H^\text{ord}_\psi(U_1(\sigma^\infty) \cap U_0(Q_1))) = \plim_n \bbT^{\Lambda, S \cup Q_1}_{Q_0 \cup Q_1}(H^\text{ord}_\psi(U_1(\sigma^n) \cap U_0(Q_1), E/\cO)) \]
 and
 \[ H^\text{ord}_\psi(U_1(\sigma^\infty) \cap U_1(Q_1)) = \ilim_n H^\text{ord}_\psi(U_1(\sigma^n) \cap U_1(Q_1), E/\cO)\]
so that
 \[ \bbT^{\Lambda, S \cup Q_1}_{Q_0 \cup Q_1}(H^\text{ord}_\psi(U_1(\sigma^\infty) \cap U_1(Q_1))) = \plim_n \bbT^{\Lambda, S \cup Q_1}_{Q_0 \cup Q_1}(H^\text{ord}_\psi(U_1(\sigma^n) \cap U_1(Q_1), E/\cO)) \]
 and finally
 \[ H_{Q_1} = H^\text{ord}_\psi(U_1(\sigma^\infty) \cap U_1(Q_1))_{\ffrm_{Q_1}}^\vee = \left(\ilim_n H^\text{ord}_\psi(U_1(\sigma^n) \cap U_1(Q_1), E/\cO)_{\ffrm_{Q_1}}\right)^\vee. \]
  It follows from Lemma \ref{lem_automorphic_forms_are_free} that each 
\[ H^\text{ord}_\psi(U_1(\sigma^n) \cap U_1(Q_1), E/\cO)^\vee \cong \Hom_\cO(H^\text{ord}_\psi(U_1(\sigma^n) \cap U_1(Q_1), \cO), \cO) \]
is free over $\Lambda_n[\Delta_{Q_1}]$, and hence that $H_{Q_1}$ is free over $\Lambda[\Delta_{Q_1}]$. We already know that $H_{Q_1}$ is a $R_{\cS_{Q_1}}$-module; it remains to show that there is an isomorphism $H_{Q_1}/\fra_{Q_1} H_{Q_1} \cong H_0$ of $R_\cS$-modules. However, we have seen that there is an isomorphism
\[H_{Q_1}^\vee[\fra_{Q_1}] =  H^\text{ord}_\psi(U_1(\sigma^\infty) \cap U_1(Q_1))_{\ffrm_{Q_1}}[\fra_{Q_1}] =  H^\text{ord}_\psi(U_1(\sigma^\infty) \cap U_0(Q_1))_{\ffrm_{Q_1}} \cong H^\text{ord}_\psi(U_1(\sigma^\infty))_{\ffrm} = H_0^\vee
\]
of $R_\cS$-modules, and dualizing this statement now gives the desired result.
\end{proof}
 The following lemma is a consequence of Proposition \ref{prop_existence_of_auxiliary_taylor_wiles_primes} and Proposition \ref{prop_presenting_global_deformation_ring}.
\begin{lemma}
Let $q = h^1_{\cS, T}(M_0(1))$. \(We recall that $h^1_{\cS, T}(M_1(1)) = 0$, by assumption.\)  Then for all integers $N \geq 1$, there exists a set $Q_N$ of primes satisfying the following conditions:
\begin{itemize}
\item $Q_N \cap ( S_p \cup Q_0 \cup R \cup \{ a \})= \emptyset$ and $\# Q_N = q$.
\item For each $v \in Q_N$, $q_v \equiv 1 \text{ mod }p^N$.
\item For each $v \in Q_N$, $\overline{\rho}(\Frob_v)$ has distinct eigenvalues $\alpha_v, \beta_v \in k$.
\item $h^1_{\cS_{Q_N}, T}(\ad^0 \overline{\rho}(1)) = 0$.
\item The ring $R_{\cS_{Q_N}}^T$ can be written as a quotient of a power series ring over $A^T_{\cS_{Q_N}} = A_{\cS}^T$ in $q - [F : \bbQ] - 1 + \#T$ variables.
\end{itemize}
\end{lemma}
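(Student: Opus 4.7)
The plan is to combine Proposition \ref{prop_existence_of_auxiliary_taylor_wiles_primes} with Proposition \ref{prop_presenting_global_deformation_ring}. First I would invoke Proposition \ref{prop_existence_of_auxiliary_taylor_wiles_primes} with $N_1 = N$. Since the hypothesis $h^1_{\cS, T}(M_1(1)) = 0$ gives $h^1_{\cS, T}(\ad^0 \overline{\rho}(1)) = h^1_{\cS, T}(M_0(1)) = q$, this produces a set $Q_N$ of cardinality $q$, consisting of primes with $q_v \equiv 1 \text{ mod } p^N$ and with $\overline{\rho}(\Frob_v)$ having distinct eigenvalues, and such that $h^1_{\cS_{Q_N}, T}(\ad^0 \overline{\rho}(1)) = 0$. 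Disjointness from the finite set $S_p \cup Q_0 \cup R \cup \{a\}$ can be arranged since the construction of Proposition \ref{prop_existence_of_auxiliary_taylor_wiles_primes} is Chebotarev-theoretic. This establishes the first four bulleted conditions.

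For the fifth condition I would apply Proposition \ref{prop_presenting_global_deformation_ring} (not the corollary) to the augmented deformation problem $\cS_{Q_N}$ with the same $T$. The proposition presents $R^T_{\cS_{Q_N}}$ as a quotient of a power series ring over $A^T_{\cS_{Q_N}}$ in $h^1_{\cS_{Q_N}, T}(\ad^0 \overline{\rho})$ variables, and the identification $A^T_{\cS_{Q_N}} = A^T_\cS$ is immediate since $T$ is unchanged and the local deformation problems at places in $T$ are the same in both $\cS$ and $\cS_{Q_N}$. Part (2) of the proposition expresses the number of variables as
\[ h^1_{\cS_{Q_N}, T}(\ad^0 \overline{\rho}(1)) + \sum_{v \in Q_0 \cup Q_N}\bigl(\ell_v - h^0(F_v, \ad^0 \overline{\rho})\bigr) - h^0(F, \ad^0 \overline{\rho}(1)) - \sum_{v | \infty} h^0(F_v, \ad^0 \overline{\rho}) - 1 + \#T. \]
The first term vanishes by step one, the third term vanishes because $[F(\zeta_p):F] \geq 4 > 2$, and the fourth term equals $[F:\bbQ]$ because $F$ is totally real and $\overline{\rho}$ is totally odd. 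So it remains to evaluate the local sum.

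The main (minor) subtlety, and the step that requires genuine calculation rather than direct citation, is the local contribution at a Taylor--Wiles prime $v \in Q_N$. At $v \in Q_0$ one has $\cD_v = \cD_v^{\St(\alpha_v)}$ with $R_v$ formally smooth of dimension $4$, so $\ell_v - h^0(F_v, \ad^0 \overline{\rho}) = 0$ exactly as in the proof of the corollary to Proposition \ref{prop_presenting_global_deformation_ring}. At $v \in Q_N$, however, $\cD_v = \cD_v^\square$ is unrestricted, so $\cL_v = H^1(F_v, \ad^0 \overline{\rho})$; the corollary cannot be invoked here, because $R_v^\square$ is in fact formally smooth over $\cO$ of dimension $5$ rather than $4$. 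A direct computation suffices: the distinct Frobenius eigenvalues give $h^0(F_v, \ad^0 \overline{\rho}) = 1$; the congruence $q_v \equiv 1 \text{ mod } p$ forces $\ad^0 \overline{\rho}(1) \cong \ad^0 \overline{\rho}$ as $G_{F_v}$-modules, so by Tate local duality $h^2(F_v, \ad^0 \overline{\rho}) = 1$; and the vanishing of the local Euler characteristic at $v \nmid p$ then yields $h^1(F_v, \ad^0 \overline{\rho}) = 2$. Hence $\ell_v - h^0(F_v, \ad^0 \overline{\rho}) = 2 - 1 = 1$ for each $v \in Q_N$, and summing contributes $q$ overall. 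Substituting gives $h^1_{\cS_{Q_N}, T}(\ad^0 \overline{\rho}) = q - [F : \bbQ] - 1 + \#T$, as required.
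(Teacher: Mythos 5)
Your proof is correct and supplies exactly the combination of Propositions \ref{prop_existence_of_auxiliary_taylor_wiles_primes} and \ref{prop_presenting_global_deformation_ring} that the paper itself indicates but does not spell out. The application of Proposition \ref{prop_existence_of_auxiliary_taylor_wiles_primes} with $N_1 = N$ produces the first four bullets (with disjointness from $\{a\}$ as you observe a free addition via Chebotarev), and the substitution into part (2) of Proposition \ref{prop_presenting_global_deformation_ring} with $\cS_{Q_N}$ in place of $\cS$, $T$ unchanged, correctly identifies $A^T_{\cS_{Q_N}} = A^T_\cS$ and evaluates all the terms. The heart of the computation, namely that $\ell_v - h^0(F_v, \ad^0 \overline{\rho}) = 1$ for $v \in Q_N$, is right and is most cleanly seen via your Euler-characteristic route: $\ell_v - h^0 = h^1 - h^0 = h^2 = h^0(F_v, \ad^0\overline{\rho}(1)) = h^0(F_v, \ad^0\overline{\rho}) = 1$, using $q_v \equiv 1 \bmod p$ and distinct Frobenius eigenvalues.

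One factual slip in your aside, though it does not affect the argument: for such $v$ the ring $R_v^\square$ is \emph{not} formally smooth over $\cO$, nor is its Krull dimension $5$. Indeed $z^1(F_v, \ad^0\overline{\rho}) = h^2 + \dim_k \ad^0\overline{\rho} = 1 + 3 = 4$ and $h^2(F_v, \ad^0\overline{\rho}) = 1 \neq 0$, so $R_v^\square$ is a quotient of a power series ring in $4$ variables by a nontrivial relation; its Krull dimension is $4$ (relative dimension $3$ over $\cO$), the same as the rings $R_v^{\St(\alpha_v)}$ at the places of $Q_0$. The correct reason the corollary following Proposition \ref{prop_presenting_global_deformation_ring} does not apply at $v \in Q_N$ is the failure of formal smoothness, not a dimension mismatch. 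Since you bypass the corollary and use the raw formula of part (2) with $\ell_v = h^1(F_v, \ad^0\overline{\rho}) = 2$, the aside is harmless, but the stated reason should be corrected.
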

We can now prove Theorem \ref{thm_r_equals_t}. Let $R_\infty = A_{\cS}^T \llbracket X_1, \dots, X_{q - [F : \bbQ] - 1 + \# T}\rrbracket$. Then (cf. \cite[Lemma 3.3]{Bar11}) $R_\infty$ is reduced, and for each minimal prime $Q \subset \Lambda$, $\Spec R_\infty/(Q)$ is geometrically irreducible of dimension 
\begin{multline*} \dim A_{\cS}^T + q - [F : \bbQ] - 1 + \# T = 1 + \sum_{v \in \sigma} 2 [F_v : \bbQ] + \sum_{v \in S_p - \sigma} [F_v : \bbQ] + 3 \# T + q - [F : \bbQ] - 1 + \# T \\ = \dim \Lambda + q - 1 + 4 \# T,
\end{multline*}
and its generic point is of characteristic 0. Fix $v_0 \in T$, and let $\cT = \cO \llbracket \{ Y_{i, j}^v \}_{v \in T} \rrbracket/(Y_{1, 1}^{v_0})$. Fix a representative $\rho_{\cS}$ of the universal deformation over $R_\cS$, and for every $N$ a representative $\rho_{\cS_{Q_N}}$ over $R_{\cS_{Q_N}}$ that specializes to this $\rho_{\cS}$. We then get compatible isomorphisms
\begin{gather}
R_\cS^{T} \cong R_\cS \widehat{\otimes}_{\cO} \cT ,\label{eqn_r_s_isomorphism}\\
R_{\cS_{Q_N}}^{T} \cong R_{\cS_{Q_N}} \widehat{\otimes}_{\cO} \cT\label{eqn_r_s_q_isomorphism}
\end{gather}
corresponding to the strict equivalence classes of the universal $T$-framed liftings
\begin{gather*}
(\rho_\cS, (1 + (Y_{i, j}^v))_{v \in T}),\\
(\rho_{\cS_{Q_N}}, (1 + (Y_{i, j}^v))_{v \in T}).
\end{gather*}
We write $\Delta_{\infty} = \bbZ_p^q$, and fix for each $N$ a surjection $\Delta_\infty \to \Delta_{Q_N}$. Let $S_\infty = \Lambda \llbracket \Delta_{\infty}  \rrbracket \widehat{\otimes}_\cO \cT$, and let $\fra_\infty \subset S_\infty$ denote the kernel of the augmentation homomorphism $S_\infty \to \Lambda$. Then the rings $R_\cS^{T}$ and $R_{\cS_{Q_N}}^{T}$ become $S_\infty$-algebras, via the isomorphisms (\ref{eqn_r_s_isomorphism}) and (\ref{eqn_r_s_q_isomorphism}), and the modules $H^{T}_0 = H_0 \otimes_{R_\cS} R_{\cS}^{T}$ and $H^{T}_{Q_N} = H_{Q_N} \otimes_{R_{\cS_{Q_N}}} R_{\cS_{Q_N}}^{T}$ are free over $\Lambda \widehat{\otimes}_\cO \cT [\Delta_{Q_N}]$. By a standard patching argument (cf. \cite[Lemma 6.10]{Tho12} or the proof of \cite[Theorem 4.3.1]{Ger09}), we can construct the following data:
\begin{itemize}
\item A finitely generated $R_\infty$-module $H_\infty$.
\item A homomorphism of $\Lambda$-algebras $S_\infty \to R_\infty$, making $H_\infty$ a free $S_\infty$-module.
\item A surjection $R_\infty/\fra_\infty R_\infty \to R_{\cS}$ and an isomorphism $H_\infty / \fra_\infty H_\infty \cong H_0$ of $R_\cS$-modules. 
\end{itemize}
Let $Q \subset \Lambda$ be a minimal prime. Then $H_\infty/(Q)$ is a free $S_\infty/(Q)$-module and $S_\infty/(Q)$ is a regular local ring. In particular, we have 
\[ \mathrm{depth}_{R_\infty/(Q)} H_\infty /(Q)\geq \mathrm{depth}_{S_\infty/(Q)} H_\infty/(Q) = \dim S_\infty/(Q) = \dim R_\infty/(Q). \]
By \cite[Lemma 2.3]{Tay08}, we see that $H_\infty / (Q)$ is a nearly faithful $R_\infty/(Q)$-module. Since $Q$ was arbitrary, we see that $H_\infty$ is a faithful $R_\infty$-module. ($R_\infty$ is reduced.) It follows that $\Fitt_{R_\infty} H_\infty = 0$, hence
\[ \Fitt_{R_\cS} (H_\infty \otimes_{R_\infty} R_\cS) = \Fitt_{R_\cS} H_0 = 0, \]
as desired.

\section{Deduction of the main theorem}\label{sec_main_theorem}

In this section, we deduce the results stated in the introduction. There are 3 main steps. First, we verify the residual automorphy of the Galois representations under consideration. We then use this to prove an automorphy result under favorable local hypotheses. Finally, we show that the general situation can always be reduced to this one using soluble base change.

\subsection{Some preliminary results}
We will often refer to the following lemma without comment.
\begin{lemma}\label{lem_soluble_base_change} Let $F$ be a totally real number field, and $F'/F$ a soluble totally real extension. Let $p$ be a prime, and fix an isomorphism $\iota : \barqp \to \bbC$.
\begin{enumerate}
\item Let $\pi$ be a cuspidal automorphic representation of $\GL_2(\bbA_F)$ of weight 2, and suppose that $r_\iota(\pi)|_{G_{F'}}$ is irreducible. Then there exists a cuspidal automorphic representation $\pi_{F'}$ of $\GL_2(\bbA_{F'})$ of weight 2, called the base change of $\pi$, such that $r_\iota(\pi_{F'}) \cong r_\iota(\pi)|_{G_{F'}}$.
\item Let $\rho : G_F \to \GL_2(\barqp)$ be a continuous representation such that $\rho|_{G_{F'}}$ is irreducible, and let $\pi'$ be a cuspidal automorphic representation of $\GL_2(\bbA_{F'})$ of weight 2 such that $\rho|_{G_{F'}} \cong r_\iota(\pi')$. Then there exists a cuspidal automorphic representation $\pi$ of $\GL_2(\bbA_F)$ of weight 2 such that $\rho \cong r_\iota(\pi)$.
\end{enumerate}
\end{lemma}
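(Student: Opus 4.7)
\textbf{Proof plan for Lemma \ref{lem_soluble_base_change}.}

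The plan is to invoke Langlands' cyclic base change for $\GL_2$, reducing the soluble case to the cyclic prime-degree case by the standard tower argument. Write $F = F_0 \subset F_1 \subset \dots \subset F_n = F'$ as a tower in which each $F_{i+1}/F_i$ is cyclic of prime degree and totally real (we may refine a normal tower so this holds). By induction on $n$, it suffices to treat the case where $F'/F$ is cyclic of prime degree $\ell$.

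For part (1), I would invoke the cyclic base change theorem of Langlands (alternatively Arthur--Clozel for $\GL_n$) to produce an isobaric automorphic representation $BC_{F'/F}(\pi)$ of $\GL_2(\bbA_{F'})$ with Satake parameters at each place given by restriction of the Satake parameters of $\pi$; this is equivalent to the condition $r_\iota(BC_{F'/F}(\pi)) \cong r_\iota(\pi)|_{G_{F'}}$. The base change is cuspidal if and only if $\pi$ is not of the form $\pi \otimes \chi$ for the non-trivial character $\chi$ of $\Gal(F'/F)$, equivalently if and only if $r_\iota(\pi)$ is not induced from $G_{F'}$. Our hypothesis that $r_\iota(\pi)|_{G_{F'}}$ is irreducible rules out such an induction, so $\pi_{F'} := BC_{F'/F}(\pi)$ is cuspidal. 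Its archimedean components are the lowest discrete series of $\GL_2(\bbR)$ (since each real place of $F$ extends to real places of $F'$ and the base change of the lowest discrete series of weight $2$ is again the lowest discrete series of weight $2$), so $\pi_{F'}$ has weight $2$.

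For part (2), I would use automorphic descent: given $\pi'$ on $\GL_2(\bbA_{F'})$ of weight $2$ with $r_\iota(\pi') \cong \rho|_{G_{F'}}$, the fact that $\rho$ is defined on $G_F$ means that for every $\sigma \in \Gal(F'/F)$ we have $r_\iota((\pi')^\sigma) \cong r_\iota(\pi')^\sigma \cong \rho|_{G_{F'}}^\sigma \cong \rho|_{G_{F'}} \cong r_\iota(\pi')$. By strong multiplicity one for $\GL_2$, $(\pi')^\sigma \cong \pi'$, so $\pi'$ is $\Gal(F'/F)$-invariant. Langlands' descent criterion then yields a cuspidal automorphic representation $\pi$ on $\GL_2(\bbA_F)$ with $BC_{F'/F}(\pi) \cong \pi'$, and hence $r_\iota(\pi)|_{G_{F'}} \cong r_\iota(\pi') \cong \rho|_{G_{F'}}$. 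Since $\rho|_{G_{F'}}$ is irreducible, $r_\iota(\pi)$ is irreducible, hence $\pi$ is in fact cuspidal (not merely a non-cuspidal isobaric lift). Finally, $r_\iota(\pi)$ and $\rho$ have isomorphic restrictions to $G_{F'}$, so they differ at most by a twist by a character $\eta$ of $\Gal(F'/F)$; such an $\eta$ corresponds to a twist $\pi \otimes (\eta \circ \det)$ which remains cuspidal automorphic of weight $2$, and an appropriate choice gives $r_\iota(\pi) \cong \rho$.

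The main obstacle is almost entirely a matter of invoking the correct theorems and verifying their hypotheses: cyclic base change provides both ascent and descent, and the irreducibility hypotheses ensure cuspidality on the relevant side. No deep new input is required beyond careful citation of cyclic base change for $\GL_2$ and strong multiplicity one.
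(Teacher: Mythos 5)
Your proposal is correct and follows the same route the paper indicates: reduce a soluble totally real extension to a tower of cyclic prime-degree totally real steps, invoke Langlands' cyclic base change for ascent and descent, and use the irreducibility hypothesis to guarantee cuspidality (this is precisely the argument of \cite[Lemma 1.3]{Bar11} to which the paper defers). The one point worth being explicit about in part (2) is that the twist ambiguity $\eta$ is automatically totally even because $F'/F$ is totally real, so each complex conjugation already lies in $G_{F'}$; hence the twist $\pi \otimes (\eta \circ \det)$ is still of weight 2.
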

\begin{proof}
The lemma may be deduced from the main results of \cite{Lan80}, using the argument of \cite[Lemma 1.3]{Bar11}.
\end{proof}

\begin{theorem}\label{thm_main_theorem_preliminary_version}
Let $F$ be a totally real field, let $p$ be an odd prime, and let $\rho : G_F \to \GL_2(\barqp)$ be a continuous representation. Suppose that the following conditions hold.
\begin{enumerate}
\item $[F : \bbQ]$ is even, and $[F(\zeta_p) : F]$ is divisible by 4. We write $K$ for the unique quadratic subfield of $F(\zeta_p)/F$, which is therefore totally real.
\item There exists a continuous character $\overline{\chi} : G_K \to \overline{\bbF}_p^\times$ and an isomorphism $\overline{\rho} \cong \Ind_{G_K}^{G_F} \overline{\chi}$. 
\item Let $w \in \Gal(K/F)$ denote the non-trivial element. Then the character $\overline{\gamma} = \overline{\chi}/\overline{\chi}^w$ remains non-trivial, even after restriction to $G_{F(\zeta_p)}$. In particular, $\overline{\rho}$ is irreducible.
\item The character $\psi = \epsilon \det \rho$ is everywhere unramified.
\item The representation $\rho$ is almost everywhere unramified. 
\item For each place $v | p$, $\rho|_{G_{F_v}}$ is semi-stable and $\overline{\rho}|_{G_{F_v}}$ is trivial. For each embedding $\tau : F \hookrightarrow \barqp$, we have $\mathrm{HT}_\tau(\rho) = \{0, 1\}$. 
\item If $v \nmid p$ is a finite place of $F$ at which $\rho$ is ramified, then $q_v \equiv 1 \text{ mod } p$, $\mathrm{WD}(\rho|_{G_{F_v}})^\text{F-ss} \cong \rec^T_{F_v}(\St_2(\chi_v))$, for some unramified character $\chi_v : F_v^\times \to \overline{\bbQ}_p^\times$, and $\overline{\rho}|_{G_{F_v}}$ is trivial. The number of such places is even.
\item There exists a cuspidal automorphic representation $\pi$ of $\GL_2(\bbA_F)$ of weight 2 and an isomorphism $\iota : \barqp \to \bbC$ satisfying the following conditions:
\begin{enumerate}
\item There is an isomorphism $\overline{r_\iota(\pi)} \cong \overline{\rho}$.
\item If $v | p$ and $\rho$ is ordinary, then $\pi_v$ is $\iota$-ordinary and $\pi_v^{U_0(v)} \neq 0$. If $v | p $ and $\rho$ is non-ordinary, then $\pi_v$ is not $\iota$-ordinary and $\pi_v$ is unramified. 
\item If $v \nmid p \infty$ and $\rho|_{G_{F_v}}$ is unramified, then $\pi_v$ is unramified. If $v \nmid p \infty$ and $\rho|_{G_{F_v}}$ is ramified, then $\pi_v$ is an unramified twist of the Steinberg representation.
\end{enumerate}
\end{enumerate}
Then $\rho$ is automorphic: there exists a cuspidal automorphic representation $\pi'$ of $\GL_2(\bbA_F)$, of weight 2 and an isomorphism $\rho \cong r_\iota(\pi')$.
\end{theorem}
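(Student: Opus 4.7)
After enlarging the coefficient field $E$, I may assume $\rho$ takes values in $\GL_2(\cO)$ and that $k$ contains every eigenvalue of $\overline\rho(G_F)$. Let $\sigma \subset S_p$ denote the set of places at which $\rho|_{G_{F_v}}$ is ordinary, and let $R$ denote the (even) set of non-$p$-adic finite places where $\rho$ is ramified. Hypothesis~(8) supplies an initial cuspidal $\pi$ of weight $2$ with $\overline{r_\iota(\pi)} \cong \overline\rho$ whose local components at $S_p \cup R$ match the type imposed by the global deformation problem $\cS$ of \S\ref{sec_r_equals_t}. The overall plan is to verify the hypotheses of Corollary~\ref{cor_automorphy_by_successive_approximation} by combining the auxiliary primes produced by Proposition~\ref{prop_existence_of_auxiliary_ramakrishna_primes} with the $R=\bbT$ statement of \S\ref{sec_r_equals_t}, and then invoke that corollary. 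Set $q = h^1_{\cS, T}(M_1(1))$; this is a fixed nonnegative integer depending only on the initial data.

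Fix $N_0 \geq 1$. Applying Proposition~\ref{prop_existence_of_auxiliary_ramakrishna_primes} with this integer $q$ produces a set $Q_0 = Q_0(N_0)$ of exactly $q$ finite places disjoint from $S_p \cup R$ and elements $\alpha_v \in k^\times$ so that for each $v \in Q_0$ one has $q_v \equiv -1 \pmod{p^{N_0}}$, the local deformation problem $\cD_v^{\St(\alpha_v)}$ is defined, $\rho \bmod \lambda^{N_0}$ is of type $\cS_{Q_0}$, and $h^1_{\cS_{Q_0},T}(M_1(1)) = 0$. That one can kill just the $M_1(1)$-summand of dual Selmer rests on Lemma~\ref{lem_ramakrishna_tangent_space_is_decomposed}, whose hypotheses are secured by the standing assumption that $K$ is totally real. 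Lemma~\ref{lem_level_raising_while_preserving_ordinarity}, applied to $\pi$ with $R \cup Q_0$ as the new set of Steinberg places, yields a cuspidal $\pi_0$ satisfying all the running hypotheses of \S\ref{sec_r_equals_t}: Steinberg up to unramified twist at each $v \in R \cup Q_0$, with Hecke eigenvalue congruent to $\alpha_v$ at each $v \in Q_0$, $\iota$-ordinary on precisely $\sigma$, and residually isomorphic to $\overline\rho$.

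Because $\#Q_0 = q$ is independent of $N_0$, Proposition~\ref{prop_growth_of_multiplicities} supplies a constant $C$, also independent of $N_0$, bounding the relevant residual multiplicity $\dim_k H^{\mathrm{ord}}_\psi(U_1(\sigma),k)[\ffrm]$. Theorem~\ref{thm_r_equals_t} then gives $\Fitt_{R_{\cS_{Q_0}}} H^{\mathrm{ord}}_\psi(U_1(\sigma^\infty))_\ffrm^\vee = 0$, and Corollary~\ref{cor_r_equals_t} applied to the canonical lift $\rho \bmod \lambda^{N_0}$ of type $\cS_{Q_0}$ yields a homomorphism
\[
 f_{N_0} : \bbT^{S \cup Q_0}_{Q_0}\bigl(H_{R \cup Q_0}(U_{Q_0})\bigr) \longrightarrow \cO/\lambda^{\lfloor N_0/C \rfloor}
\]
with $f_{N_0}(T_v) \equiv \tr\rho(\Frob_v)$ for all $v \notin S \cup Q_0$, and such that $\bigl(H_{R \cup Q_0}(U_{Q_0}) \otimes_\cO \cO/\lambda^{\lfloor N_0/C \rfloor}\bigr)[\ker f_{N_0}]$ contains a free rank-one $\cO/\lambda^{\lfloor N_0/C\rfloor}$-submodule. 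Since $p$ is odd, $q_v \equiv -1 \pmod p$ forces $q_v \not\equiv 1 \pmod p$ for each $v \in Q_0$. Letting $N_0 \to \infty$ so that $\lfloor N_0/C \rfloor \to \infty$, the family $\{f_{N_0}\}$ verifies the hypotheses of Corollary~\ref{cor_automorphy_by_successive_approximation} at every level, and that corollary supplies the desired cuspidal $\pi'$ of weight $2$ with $\rho \cong r_\iota(\pi')$.

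The hard part of this argument is the joint construction of $Q_0$: the local Steinberg-at-$Q_0$ condition must be simultaneously (a) compatible with $\rho \bmod \lambda^{N_0}$ for arbitrarily large $N_0$, which forces the delicate conditions $q_v \equiv -1 \pmod{p^{N_0}}$ and $\rho(\Frob_v) \equiv \rho(c) \pmod{\lambda^{N_0}}$; (b) sufficient to annihilate the dangerous summand $H^1_{\cS_{Q_0},T}(M_1(1))$ of the dual Selmer group; and (c) bounded in cardinality by the fixed integer $q$, so that the multiplicity bound $C$ is uniform in $N_0$ and the successive-approximation limit can be taken. The totally real hypothesis on $K$ is exactly what makes (b) accessible via the one-summand-at-a-time attack of Lemma~\ref{lem_ramakrishna_tangent_space_is_decomposed}; the complementary contribution $H^1_{\cS_{Q_0},T}(M_0(1))$ is then absorbed by the Taylor--Wiles patching of \S\ref{sec_patching}.
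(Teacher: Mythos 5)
Your proposal follows the same strategy as the paper: construct Ramakrishna-type primes via Proposition~\ref{prop_existence_of_auxiliary_ramakrishna_primes} to kill $H^1_{\cS,T}(M_1(1))$, level-raise via Lemma~\ref{lem_level_raising_while_preserving_ordinarity}, invoke Theorem~\ref{thm_r_equals_t} and Corollary~\ref{cor_r_equals_t} with the uniform constant $C$ from Proposition~\ref{prop_growth_of_multiplicities}, and feed the mod-$\lambda^{\lfloor N_0/C\rfloor}$ data into Corollary~\ref{cor_automorphy_by_successive_approximation}. That is exactly the paper's argument, down to the order of the steps.

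However, there is one genuine gap. You set $q = h^1_{\cS,T}(M_1(1))$ and take $\#Q_0 = q$ exactly. But everything downstream requires $\#Q_0$ to be \emph{even}: the running hypotheses of \S\ref{sec_r_equals_t} insist that $Q_0$ have even cardinality, the level-raising machinery of \S\ref{sec_level_raising} (and hence Lemma~\ref{lem_level_raising_while_preserving_ordinarity}) is only stated for $Q$ of even cardinality, and the hypothesis of Corollary~\ref{cor_automorphy_by_successive_approximation} explicitly demands a set $Q$ of even cardinality. If $h^1_{\cS,T}(M_1(1))$ happens to be odd, your argument cannot be run. The fix is cheap: Proposition~\ref{prop_existence_of_auxiliary_ramakrishna_primes} allows \emph{any} integer $q \geq h^1_{\cS,T}(M_1(1))$, so one takes $q = 2\lceil h^1_{\cS,T}(M_1(1))/2\rceil$ (the paper does precisely this), which is even and still independent of $N_0$, so the uniformity of the bound $C = 4^{\#Q_0}C_0$ is unaffected. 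You should also note explicitly, as the paper does, the need to choose the auxiliary place $a$ satisfying Lemma~\ref{lem_auxiliary_place_a} before invoking Lemma~\ref{lem_level_raising_while_preserving_ordinarity}, and to twist $\pi_0$ so that its central character is exactly $\iota\psi$ --- both are harmless but are formally required hypotheses of the \S\ref{sec_r_equals_t} machinery.
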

\begin{proof}
After possibly replacing $\rho$ by a conjugate, we can find a coefficient field $E$ such that $\rho$ is valued in $\GL_2(\cO)$ and $\overline{\chi}$ is valued in $k^\times$. We prove the theorem by showing that $\rho$ satisfies the conditions of Corollary \ref{cor_automorphy_by_successive_approximation}. To do this, we will apply Theorem \ref{thm_r_equals_t} and its corollary. We therefore fix an integer $N \geq 1$. We write $\sigma \subset S_p$ for the set of places such that $\rho|_{G_{F_v}}$ is ordinary, and $R$ for the set of places not dividing $p$ at which $\pi_v$ is ramified. 

We consider the global deformation problem
\[ \cS = (\overline{\rho}, \epsilon^{-1} \psi, S_p  \cup R, \{ \Lambda_v \}_{v \in \sigma} \cup \{ \cO \}_{v \in S_p \cup R - \sigma}, \{ \cD_v^{\text{ord}} \}_{v \in \sigma} \cup \{ \cD_v^{\text{non-ord}} \}_{v \in S_p - \sigma} \cup \{ \cD_v^{\St} \}_{v \in R}), \]
and set $T = S_p \cup R$. By Proposition \ref{prop_existence_of_auxiliary_ramakrishna_primes}, we can find a finite set $Q_0$ of finite places of $F$, disjoint from $S_p \cup R$, and elements $\alpha_v \in k$ ($v \in Q_0$) all satisfying the following conditions:
\begin{itemize}
\item $\# Q_0 = 2 \lceil h^1_{\cS, T}(M_1(1))/2 \rceil$.
\item For each $v \in Q_0$, the local deformation problem $\cD_v^{\St(\alpha_v)}$ is defined.
\item If $\cS_{Q_0}$ denotes the augmented global deformation problem
\begin{multline*} \cS_{Q_0} = (\overline{\rho}, \epsilon^{-1} \psi, S_p  \cup R \cup Q_0, \{ \Lambda_v \}_{v \in \sigma} \cup \{ \cO \}_{v \in S_p \cup R \cup Q_0 - \sigma}, \\ \{ \cD_v^{\text{ord}} \}_{v \in \sigma} \cup \{ \cD_v^{\text{non-ord}} \}_{v \in S_p - \sigma} \cup \{ \cD_v^{\St} \}_{v \in R} \cup \{ \cD_v^{\St(\alpha_v)} \}_{v \in Q_0}), 
\end{multline*}
then $h^1_{\cS_{Q_0}, T}(M_1(1)) = 0$.
\end{itemize}
By Lemma \ref{lem_level_raising_while_preserving_ordinarity}, we can find an cuspidal automorphic representation $\pi_0$ of $\GL_2(\bbA_F)$ of weight 2, satisfying the following conditions:
\begin{itemize}
\item There is an isomorphism of residual representations $\overline{r_\iota(\pi_0)} \cong \overline{\rho}$.
\item If $v \in \sigma$, then $\pi_{0, v}$ is $\iota$-ordinary and $\pi_{0, v}^{U_0(v)} \neq 0$. If $v \in S_p - \sigma$, then $\pi_{0, v}$ is not $\iota$-ordinary and $\pi_{0, v}$ is unramified.
\item If $v \not\in S_p \cup R \cup Q_0$ is a finite place of $F$, then $\pi_{0, v}$ is unramified. If $v \in R \cup Q_0$, then $\pi_{0, v}$ is an unramified twist of the Steinberg representation. If $v \in Q_0$, then the eigenvalue of ${\mathbf{U}_v}$ on $\iota^{-1} \pi_{0, v}^{U_0(v)}$ is congruent to $\alpha_v$ modulo the maximal ideal of $\overline{\bbZ}_p$. 
\end{itemize}
(In order to apply Lemma \ref{lem_level_raising_while_preserving_ordinarity}, we must first fix a choice of auxiliary place $a$. We choose any place $a$ satisfying the conclusions of Lemma \ref{lem_auxiliary_place_a}.) After replacing $\pi_0$ by a character twist, we can assume in addition that $\pi_0$ has central character $\iota \psi$. (We use here that $p$ is odd.) The hypotheses of Theorem \ref{thm_r_equals_t} are then satisfied with respect to the deformation problem $\cS_{Q_0}$.

Let $S = S_p \cup R \cup \{ a \}$, and let $\ffrm_\emptyset \subset \bbT^{S, \text{univ}}$ be the maximal ideal corresponding to the automorphic representation $\pi$; thus $\ffrm_\emptyset$ is in the support of $H_R(U)$. We set $C_0 = \dim_k (H_R(U) \otimes_\cO k) [\ffrm_\emptyset]$. It follows from Proposition \ref{prop_growth_of_multiplicities} that (notation being as in \S \ref{sec_level_raising})
\[ \dim_k (H_{R \cup Q_0}(U_{Q_0}) \otimes_\cO k) [\ffrm_{Q_0}] \leq 4^{\# Q_0} C_0. \]
We can then apply Corollary \ref{cor_r_equals_t} with $C = 4^{\# Q_0} C_0$ and $n = 1$ (use that, in the notation of that corollary, $H_\psi^\text{ord}(U_{Q_0}, k)$ is a submodule of $H_{R \cup Q_0}(U_{Q_0}) \otimes_\cO k$). It follows that there is a homomorphism $\bbT^{S \cup Q_0}_{Q_0}(H_{R \cup Q_0}(U_{Q_0})) \to \cO/\lambda^{\lfloor N/C \rfloor}$ with the following properties:
\begin{itemize}
\item For each finite place $v \not \in S \cup Q_0$ of $F$, we have $f(T_v) = \tr \rho(\Frob_v) \text{ mod }\lambda^{\lfloor N/C \rfloor}$.
\item $(H_{R \cup Q_0}(U_{Q_0}) \otimes_\cO \cO/\lambda^{\lfloor N/C \rfloor})[I]$ contains an $\cO$-submodule isomorphic to $\cO/\lambda^{\lfloor N/C \rfloor}$.
\end{itemize}
The conditions of Corollary \ref{cor_automorphy_by_successive_approximation} are therefore satisfied. Indeed, it suffices to remark that the quantity 
\[ C = 4^{\# Q_0} C_0 = 4^{2 \lceil h^1_{\cS, T}(M_1(1))/2 \rceil} \dim_k (H_R(U) \otimes_\cO k) [\ffrm_\emptyset] \]
is independent of $N$. This completes the proof.
\end{proof}

\begin{lemma}\label{lem_existence_of_ordinary_automorphic_lifts}
Let $F$ be a totally real field, let $p$ be an odd prime, let $\iota : \barqp \to \bbC$ be an isomorphism, and let $\overline{\rho} : G_F \to \GL_2(\overline{\bbF}_p)$ be a continuous representation. Suppose that $\det \overline{\rho}$ is totally odd, that $\overline{\rho}$ is absolutely irreducible and unramified at the places of $F$ above $p$, and that there exists a quadratic extension $K/F$ such that $\overline{\rho}|_{G_K}$ is reducible. Then there exists a cuspidal automorphic representation $\pi$ of $\GL_2(\bbA_F)$ of weight 2 and satisfying the following conditions.
\begin{enumerate}
\item For each place $v | p$, $\pi_v$ is $\iota$-ordinary.
\item There is an isomorphism $\overline{r_\iota(\pi)} \cong \overline{\rho}$.
\item For each finite place $v \not\in S_p$ of $F$ at which $\overline{\rho}$ is unramified, $\pi_v$ is unramified.
\end{enumerate}
\end{lemma}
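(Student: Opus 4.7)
My plan is to construct $\pi$ as the automorphic induction from $\GL_1(\bbA_{K_0})$ to $\GL_2(\bbA_F)$ of a suitable algebraic Hecke character $\psi$ of a CM quadratic extension $K_0/F$.

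First I would arrange that $\overline{\rho} \cong \Ind_{G_{K_0}}^{G_F} \overline{\chi}_0$ for some CM \(totally imaginary\) quadratic extension $K_0/F$ and continuous character $\overline{\chi}_0 : G_{K_0} \to \overline{\bbF}_p^\times$ with $\overline{\chi}_0 \neq \overline{\chi}_0^{w_0}$, where $w_0$ generates $\Gal(K_0/F)$. The hypothesis that $\overline{\rho}$ is absolutely irreducible and $\overline{\rho}|_{G_K}$ is reducible already gives $\overline{\rho} \cong \Ind_{G_K}^{G_F} \overline{\chi}$, so the image of $\overline{\rho}$ in $\GL_2(\overline{\bbF}_p)$ is generalized dihedral. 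A case analysis on this image, using the totally odd hypothesis (which forces every complex conjugation to have determinant $-1$ and hence to lie outside the index-$2$ ``rotation'' subgroup realizing any dihedral induction), produces the desired CM extension $K_0$: even when the given $K$ is not itself CM, inspection of the central extension of the projective dihedral image by scalars exhibits a CM alternative.

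Next I would lift $\iota \circ \overline{\chi}_0$ to an algebraic Hecke character $\psi : K_0^\times \backslash \bbA_{K_0}^\times \to \bbC^\times$ satisfying:
\begin{itemize}
\item $\psi$ has infinity type given by a CM type $\Phi$ of $K_0/F$, so that $\Ind_{G_{K_0}}^{G_F} r_\iota(\psi)$ has Hodge--Tate weights $\{0,1\}$ at each embedding $F \hookrightarrow \barqp$;
\item at each finite place $\widetilde{v}$ of $K_0$ where $\overline{\chi}_0$ is unramified, $\psi$ is unramified;
\item at each place $\widetilde{v} \mid p$ of $K_0$, $\psi$ is unramified and $\iota^{-1}\psi(\varpi_{\widetilde{v}})$ is a $p$-adic unit.
\end{itemize}
The existence of such $\psi$ is a standard class field theory construction: the infinity type and the local conditions at places above $p$ are constraints at disjoint sets of places, and the residual condition (that the associated Galois character reduces to $\overline{\chi}_0$) is handled by adjusting a Teichm\"uller-type lift by a finite-order twist, using the surjectivity of a suitable ray class group onto the finite obstruction.

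Finally I would set $\pi = \mathrm{AI}_{K_0/F}(\psi)$. Cuspidality of $\pi$ follows from $\psi \neq \psi^{w_0}$, which is a consequence of $\overline{\chi}_0 \neq \overline{\chi}_0^{w_0}$; the CM infinity type of $\psi$ forces $\pi_\infty$ to be the lowest discrete series at every real place of $F$, so $\pi$ has weight $2$; local-global compatibility for automorphic induction gives $r_\iota(\pi) \cong \Ind_{G_{K_0}}^{G_F} r_\iota(\psi)$, whose reduction modulo $p$ is $\Ind_{G_{K_0}}^{G_F} \overline{\chi}_0 \cong \overline{\rho}$; the $\iota$-ordinarity of $\pi$ at $v \mid p$ follows from the $p$-adic unit condition on $\psi$ combined with Lemma~\ref{lem_ordinarity_supercuspidality_and_base_change}; and $\pi_v$ is unramified at each finite place $v \notin S_p$ where $\overline{\rho}$ is unramified, by the corresponding property of $\psi$.

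The main obstacle is the initial CM step: verifying that $\overline{\rho}$, which is a priori only assumed to be induced from the given quadratic extension $K$, is in fact induced from a totally imaginary quadratic extension. This requires a careful analysis of the possible structures of the image of $\overline{\rho}$ as a central extension of the projective dihedral image (including non-split cases such as $Q_8$ or $D_4$ projecting to a Klein four quotient), and exploiting the rigidity imposed by the totally odd hypothesis on the location of complex conjugations in this image.
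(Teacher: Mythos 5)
Your proposal breaks down at its first step, the reduction to a CM quadratic extension $K_0/F$. You assert that the totally odd hypothesis forces every complex conjugation to lie outside the index-$2$ ``rotation'' subgroup realizing a dihedral induction. This is false: if $\overline{\rho} \cong \Ind_{G_K}^{G_F} \overline{\chi}$ and $c$ is a complex conjugation with $c \in G_K$ (i.e.\ $K$ is real at the corresponding archimedean place), then $\overline{\rho}(c)$ is conjugate to $\diag(\overline{\chi}(c), \overline{\chi}^w(c))$, and $\det\overline{\rho}(c) = -1$ holds as soon as $\{\overline{\chi}(c), \overline{\chi}^w(c)\} = \{1, -1\}$. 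A complex conjugation can therefore lie inside $G_K$ while preserving total oddness; this is exactly the situation that matters for the paper, since Theorem~\ref{thm_main_theorem} assumes $K$ is \emph{totally real}, so \emph{every} complex conjugation lies in $G_K$. Moreover, when the projective image of $\overline{\rho}$ is dihedral of order at least $6$, the quadratic extension inducing $\overline{\rho}$ is \emph{unique}, so there is no CM alternative to inspect at all. When the projective image is $C_2 \times C_2$ there are three candidate quadratic extensions, but for $[F:\bbQ]\geq 3$ the complex conjugations can realize all three nontrivial projective involutions, in which case none of the three is totally imaginary. So the ``case analysis'' cannot produce a CM $K_0$ in general, and the rest of your argument (Hecke characters with CM infinity type, automorphic induction directly in weight $2$) has nothing to stand on.

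The paper's proof sidesteps this entirely. It takes the Teichm\"uller lift $\widetilde{\rho} \cong \Ind_{G_K}^{G_F}\widetilde{\chi}$ of $\overline{\rho}$, a finite-image Artin representation, and applies automorphic induction to the \emph{finite-order} Hecke character of $K$ attached to $\widetilde{\chi}$. Because $\widetilde{\chi}$ has finite order, no CM infinity type is required: the oddness of $\widetilde{\rho}$ supplies exactly the parity condition needed for the induced automorphic representation to be a Hilbert modular newform $\mathbf{f}$ of parallel weight $1$, whether or not $K$ is imaginary at any real place of $F$. Since $\mathbf{f}$ has level prime to $p$ and its Galois representation has finite image, $\mathbf{f}$ is automatically ordinary at $p$; Wiles' theorem on $\Lambda$-adic forms (\cite[Theorem 3]{Wil88}) then places $\mathbf{f}$ in a Hida family with a weight-$2$ ordinary specialization $\pi$, which satisfies all three required properties. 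If you want to rescue an automorphic-induction strategy, you must first go to weight $1$ with a finite-order character and then pass through Hida theory to reach weight $2$ --- that is, rediscover the paper's argument.
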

\begin{proof}
By automorphic induction, there exists a Hilbert modular newform $\mathbf{f}$ of parallel weight 1 (in the sense of \cite{Wil88}) associated to the Teichm\"uller lift of $\overline{\rho}$, which has level prime to $p$. By \cite[Theorem 3]{Wil88}, there exists a cuspidal automorphic representation $\pi$ of $\GL_2(\bbA_F)$ of weight 2 such that $\overline{r_\iota(\pi)} \cong \overline{\rho}$, with associated Hilbert modular newform contained in a $\Lambda$-adic eigenform $\cF$ passing through $\mathbf{f}$. In particular, $\pi$ is $\iota$-ordinary by construction, and is ramified only at places of $F$ dividing $p$ or at which $\overline{\rho}$ is ramified.
\end{proof}

\begin{corollary}\label{cor_existence_of_partially_ordinary_lifts} Let $F$ be a totally real field, let $p$ be an odd prime, let $\iota : \barqp \to \bbC$ be an isomorphism, and let $\overline{\rho} : G_F \to \GL_2(\overline{\bbF}_p)$ be a continuous representation. Suppose that $\det \overline{\rho}$ is totally odd, that $\overline{\rho}$ is absolutely irreducible and unramified at the places of $F$ above $p$, and that there exists a quadratic extension $K/F$ such that $\overline{\rho}|_{G_K}$ is reducible. Let $V_0$ be a finite set of finite places of $F$, not containing any place above $p$ or at which $\overline{\rho}$ is ramified, and let $\sigma$ be a set of places dividing $p$. Then there exists a soluble totally real extension $F'/F$ in which every place of $V_0$ splits, and a cuspidal automorphic representation $\pi'$ of $\GL_2(\bbA_{F'})$ of weight 2 and satisfying the following conditions.
\begin{enumerate}
\item If $v$ is a place of $F'$ above a place of $\sigma$, then $\pi'_v$ is $\iota$-ordinary. If $v$ is a place of $F'$ dividing $p$ but not dividing a place of $\sigma$, then $\pi'_v$ is unramified and not $\iota$-ordinary.
\item If $v \nmid p \infty$, then $\pi'_v$ is unramified.
\item There is an isomorphism $\overline{r_\iota(\pi')} \cong \overline{\rho}|_{G_{F'}}$, and $\overline{\rho}|_{G_{F'}}$ is absolutely irreducible.
\end{enumerate}
\end{corollary}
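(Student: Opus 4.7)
The plan is in three steps: first produce an ordinary automorphic lift of $\overline{\rho}$ on a suitable soluble totally real base change $F_1/F$ via Lemma \ref{lem_existence_of_ordinary_automorphic_lifts}; second, convert it via Lemma \ref{lem_convert_ordinary_to_supercuspidal} into an automorphic representation that is $\iota$-ordinary at the places above $\sigma$ and supercuspidal at the remaining places above $p$; third, perform a further soluble totally real base change $F'/F_1$ chosen so that the supercuspidal local components become unramified, the non-ordinary property being preserved by Lemma \ref{lem_ordinarity_supercuspidality_and_base_change}.

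For the first step, we choose $F_1/F$ soluble totally real so that every place of $V_0$ splits completely, $\overline{\rho}|_{G_{F_{1,w}}}$ is unramified at every finite place $w \nmid p$, $[F_1 : \bbQ]$ is even, $[F_1(\zeta_p):F_1] \geq 4$, and $\overline{\rho}|_{G_{F_1}}$ remains absolutely irreducible. The irreducibility is arranged by requiring $F_1$ to be linearly disjoint from $K$ over $F$ and, if necessary, enlarging $F_1$ to ensure that the ratio of the two characters into which $\overline{\rho}|_{G_K}$ splits remains non-trivial on $G_{K \cdot F_1}$; the cyclotomic condition is arranged by further insisting on linear disjointness from an appropriate intermediate subfield of $F(\zeta_p)/F$. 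Applying Lemma \ref{lem_existence_of_ordinary_automorphic_lifts} to $\overline{\rho}|_{G_{F_1}}$ yields a cuspidal automorphic representation $\pi_1$ of $\GL_2(\bbA_{F_1})$ of weight $2$, $\iota$-ordinary at every place above $p$, unramified at every finite place not above $p$, and with $\overline{r_\iota(\pi_1)} \cong \overline{\rho}|_{G_{F_1}}$. Letting $\widetilde{\sigma}$ denote the set of places of $F_1$ above $\sigma$ and applying Lemma \ref{lem_convert_ordinary_to_supercuspidal} to $\pi_1$, we obtain a cuspidal automorphic $\pi_1'$ on $\GL_2(\bbA_{F_1})$ of weight $2$ with $\overline{r_\iota(\pi_1')} \cong \overline{\rho}|_{G_{F_1}}$, which is $\iota$-ordinary at each $w \in \widetilde{\sigma}$, supercuspidal at each place $w$ of $F_1$ above $p$ not in $\widetilde{\sigma}$, and unramified at every finite place not above $p$.

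For each such supercuspidal place $w$, the Weil--Deligne parameter of $\pi_{1,w}'$ is, since $p$ is odd, induced from a character of a quadratic extension of $F_{1,w}$, so it becomes unramified on restriction to the Weil group of some abelian (hence soluble) extension $L_w/F_{1,w}$. By standard approximation results for soluble totally real extensions with prescribed local behaviour, we can find a soluble totally real $F'/F_1$ such that every place of $V_0$ still splits completely, $\overline{\rho}|_{G_{F'}}$ is still absolutely irreducible, and for every such supercuspidal place $w$ and every place $u$ of $F'$ above $w$ we have $F'_u \supset L_w$. Since $\overline{r_\iota(\pi_1')}|_{G_{F'}} = \overline{\rho}|_{G_{F'}}$ is absolutely irreducible, Lemma \ref{lem_soluble_base_change}(1) produces a base change $\pi'$ of $\pi_1'$ to $\GL_2(\bbA_{F'})$; Lemma \ref{lem_ordinarity_supercuspidality_and_base_change} then guarantees that at every place above $\sigma$ the representation $\pi'$ is $\iota$-ordinary, that at every place above $S_p \setminus \sigma$ it is unramified and not $\iota$-ordinary, and that $\pi'$ is unramified at every finite place not above $p$.

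The main technical obstacle is the construction, in each of the two base-change steps, of a soluble totally real extension with the prescribed splitting of $V_0$, the prescribed local behaviour at the relevant places above $p$, and which preserves the absolute irreducibility of $\overline{\rho}$ upon restriction. This amounts to combining Grunwald--Wang-type approximation theorems with linear-disjointness arguments, and is standard but delicate, particularly in the final step where the local extensions $L_w$ must simultaneously be realized inside a single totally real soluble global field.
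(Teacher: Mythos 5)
Your proposal follows the same three-step outline as the paper (ordinary lift via Wiles's $\Lambda$-adic families, convert to supercuspidal at $S_p \setminus \sigma$ via Lemma \ref{lem_convert_ordinary_to_supercuspidal}, base change to kill the supercuspidal ramification and invoke Lemma \ref{lem_ordinarity_supercuspidality_and_base_change} for the non-ordinary conclusion), and the handling of the splitting and irreducibility conditions via linear disjointness from the field cut out by $\overline{\rho}$ is the same.

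There is, however, a gap at the point where you apply Lemma \ref{lem_convert_ordinary_to_supercuspidal} directly to $\pi_1$: that lemma requires $\pi_{1,v}^{U_0(v)} \neq 0$ for every $v \in S_p$, but the output of Lemma \ref{lem_existence_of_ordinary_automorphic_lifts} guarantees only that $\pi_{1,v}$ is $\iota$-ordinary at $v \mid p$. The Hida-theoretic construction can perfectly well produce a weight-2 specialization whose component at some $v \mid p$ is a ramified principal series $i_B^{\GL_2}\iota\chi_1 \otimes \iota\chi_2$ with $\chi_2$ ramified, or a ramified twist of Steinberg, neither of which has a $U_0(v)$-fixed vector. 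The paper's proof resolves this by inserting an intermediate $V_0$-split soluble base change, after applying Lemma \ref{lem_existence_of_ordinary_automorphic_lifts} but before invoking Lemma \ref{lem_convert_ordinary_to_supercuspidal}, which arranges for the local components at $v \mid p$ to be unramified or unramified twists of Steinberg (and, simultaneously, for the components at the ramified places $v \nmid p$ to be unramified twists of Steinberg with $q_v \equiv 1 \bmod p$, as the hypotheses of Lemma \ref{lem_convert_ordinary_to_supercuspidal} demand). Note that doing the base change to $F_1$ \emph{before} applying Lemma \ref{lem_existence_of_ordinary_automorphic_lifts}, as you do, does not help here: Wiles's theorem is applied over $F_1$ and will still produce a form that may be deeply ramified at $p$. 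You should instead apply the ordinary-lift lemma first, then do the base change adjusting the local behaviour of $\pi$, and only then apply the conversion lemma. The rest of your argument, in particular the final base-change step and the use of Lemma \ref{lem_ordinarity_supercuspidality_and_base_change}, is fine.
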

\begin{proof}
Let $E/F$ denote the extension of $F$ cut out by $\overline{\rho}$. After possibly enlarging $V_0$, we can assume that it satisfies the following conditions:
\begin{itemize}
\item For any Galois subextension $E/M/F$ with $\Gal(M/F)$ simple and non-trivial, there exists $v \in V_0$ which does not split in $M$.
\item The set $V_0$ does not contain any place dividing $p$ or at which $\overline{\rho}$ is ramified. 
\end{itemize}
Then any Galois extension $F'/F$ in which each place of $V_0$ splits is linearly disjoint from $E$; in particular, $\overline{\rho}|_{G_{F'}}$ will be absolutely irreducible. 

Let $\pi$ be an automorphic representation of $\GL_2(\bbA_F)$ satisfying the conclusion of Lemma \ref{lem_existence_of_ordinary_automorphic_lifts}. After possibly replacing $F$ with a preliminary $V_0$-split soluble extension (and $\pi$ by its base change), we can assume that $[F : \bbQ]$ is even, and that for each place $v$ of $F$, either $\pi_v$ is unramified, or $v \in S_p$ and $\pi_v$ is an unramified twist of the Steinberg representation, or $q_v \equiv 1 \text{ mod }p$ and $\pi_v$ is an unramified twist of the Steinberg representation. By Lemma \ref{lem_convert_ordinary_to_supercuspidal}, we can find a cuspidal automorphic representation $\pi''$ of $\GL_2(\bbA_F)$ of weight 2 with the following properties:
\begin{itemize}
\item If $v \in \sigma$ then $\pi''_v$ is $\iota$-ordinary. If $v \in S_p - \sigma$ then $\pi''_v$ is supercuspidal.
\item If $v \nmid p\infty$ and $\pi_v$ is ramified, then $\pi''_v$ is a ramified principal series representation.
\item There is an isomorphism of residual representations $\overline{r_\iota(\pi)} \cong \overline{r_\iota(\pi'')}$.
\end{itemize}
It now suffices to choose $F'/F$ to be any soluble $V_0$-split extension with the property that $\pi' = \pi''_{F'}$ is everywhere unramified (except possibly at the places of $F'$ above $\sigma$). Indeed, it only remains to check that if $v$ is a place of $F'$ above $S_p - \sigma$, then $r_\iota(\pi')|_{G_{F'_v}}$ is crystalline non-ordinary, and this follows from Lemma \ref{lem_ordinarity_supercuspidality_and_base_change}.
\end{proof}

\subsection{The main theorem}

\begin{theorem}\label{thm_main_theorem}
Let $F$ be a totally real number field, let $p$ be an odd prime, and let $\rho : G_F \to \GL_2(\barqp)$ be a continuous representation satisfying the following conditions.
\begin{enumerate}
\item The representation $\rho$ is almost everywhere unramified. 
\item For each place $v | p$ of $F$, $\rho|_{G_{F_v}}$ is de Rham. For each embedding $\tau : F \hookrightarrow \barqp$, we have $\mathrm{HT}_\tau(\rho) = \{0, 1\}$.
\item For each complex conjugation $c \in G_F$, we have $\det \rho(c) = -1$.
\item The residual representation $\overline{\rho}$ is absolutely irreducible, yet $\overline{\rho}|_{G_{F(\zeta_p)}}$ is a direct sum of two distinct characters. The unique quadratic subfield $K$ of $F(\zeta_p)/F$ is \emph{totally real}.
\end{enumerate}
Then $\rho$ is automorphic: there exists a cuspidal automorphic representation $\pi$ of $\GL_2(\bbA_F)$ of weight 2, an isomorphism $\iota : \barqp \to \bbC$, and an isomorphism $\rho \cong r_\iota(\pi)$.
\end{theorem}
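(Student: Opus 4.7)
The strategy is to reduce Theorem \ref{thm_main_theorem} to Theorem \ref{thm_main_theorem_preliminary_version} by a careful choice of soluble totally real base change $F'/F$, producing the required residually automorphic lift over $F'$ via Corollary \ref{cor_existence_of_partially_ordinary_lifts} and Lemma \ref{lem_level_raising_while_preserving_ordinarity}, and then descending via Lemma \ref{lem_soluble_base_change}.

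Fix a coefficient field $E$ such that $\overline{\rho}$ is valued in $\GL_2(k)$, and write $\overline{\rho} \cong \Ind_{G_K}^{G_F} \overline{\chi}$ with $\overline{\chi} : G_K \to k^\times$; then $\overline{\gamma} = \overline{\chi}/\overline{\chi}^w$ is nontrivial on $G_{F(\zeta_p)}$. The first and main step is to construct a soluble totally real Galois extension $F'/F$, linearly disjoint from the splitting field of $\overline{\rho}$ and from $F(\zeta_{p^\infty})$, such that:
\begin{itemize}
\item $[F':\bbQ]$ is even;
\item at every $v \mid p$ of $F'$, $\rho|_{G_{F'_v}}$ is semistable and $\overline{\rho}|_{G_{F'_v}}$ is trivial;
\item at every $v \nmid p\infty$ of $F'$ where $\rho|_{G_{F'_v}}$ is ramified, $\overline{\rho}|_{G_{F'_v}}$ is trivial, $q_v \equiv 1 \pmod p$, and $\mathrm{WD}(\rho|_{G_{F'_v}})^\text{F-ss} \cong \rec^T_{F'_v}(\St_2(\chi_v))$ for an unramified $\chi_v$;
\item $\epsilon \det \rho|_{G_{F'}}$ is everywhere unramified;
\item the number of $v \nmid p \infty$ places of $F'$ at which $\rho|_{G_{F'}}$ is ramified is even.
\end{itemize}
Each of these conditions is purely local (parity being adjustable by a further soluble extension which unramifies $\rho$ at a single chosen ramified place by absorbing enough local roots of unity), and by Grunwald--Wang-type approximation together with soluble class field theory they can be imposed simultaneously. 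The linear disjointness ensures that $K' := KF'$ is a totally real quadratic subfield of $F'(\zeta_p)/F'$, that $\overline{\rho}|_{G_{F'}} \cong \Ind_{G_{K'}}^{G_{F'}} \overline{\chi}|_{G_{K'}}$, and that $\overline{\gamma}|_{G_{F'(\zeta_p)}}$ remains nontrivial, so that the residual hypotheses of the preliminary theorem are intact.

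Next, apply Corollary \ref{cor_existence_of_partially_ordinary_lifts} to $\overline{\rho}|_{G_{F'}}$ with $\sigma \subset S_p(F')$ the set of places at which $\rho|_{G_{F'}}$ is ordinary and $V_0$ containing every finite place controlled in the previous step (so that the further soluble extension produced is split at those places and does not disturb the careful arrangement of local conditions). Absorbing this further extension into $F'$, we obtain a cuspidal automorphic $\pi_0$ of $\GL_2(\bbA_{F'})$ of weight $2$ with $\overline{r_\iota(\pi_0)} \cong \overline{\rho}|_{G_{F'}}$, $\iota$-ordinary exactly on $\sigma$, and unramified outside $p\infty$. Then apply Lemma \ref{lem_level_raising_while_preserving_ordinarity} with $Q$ equal to the set of $v \nmid p \infty$ at which $\rho|_{G_{F'}}$ is ramified: at each such $v$, $\overline{\rho}|_{G_{F'_v}}$ is trivial so $\alpha_v = \beta_v = 1$, and $q_v \equiv 1 \pmod p$, so the level-raising congruence $\beta_v/\alpha_v = q_v$ holds automatically. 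Twisting by a finite-order character if necessary to match central characters, we obtain a cuspidal automorphic representation $\pi'$ of weight $2$ over $F'$ satisfying every hypothesis of Theorem \ref{thm_main_theorem_preliminary_version} for $\rho|_{G_{F'}}$.

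Invoking Theorem \ref{thm_main_theorem_preliminary_version} then gives the automorphy of $\rho|_{G_{F'}}$. Since $\overline{\rho}|_{G_{F'}}$ is absolutely irreducible, $\rho|_{G_{F'}}$ is irreducible as well, and iterating the descent direction of Lemma \ref{lem_soluble_base_change} step by step through the tower $F'/F$ yields the automorphy of $\rho$ itself. The main obstacle is the initial construction of $F'$: threading a single soluble totally real base change through the entire list of local conditions (semistability at $p$, Steinberg-type reduction and mod-$p$ congruences at ramified primes, local triviality of $\overline{\rho}$, unramifiedness of the nebentype, and parity of the set of ramified places) while preserving the global residual dihedral structure required for the deformation-theoretic input of \S \ref{sec_r_equals_t} is the delicate part; the subsequent application of the preliminary theorem and descent is routine packaging.
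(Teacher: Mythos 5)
Your overall strategy mirrors the paper's proof: reduce by soluble $V_0$-split base change to a situation where Theorem \ref{thm_main_theorem_preliminary_version} applies, then descend via Lemma \ref{lem_soluble_base_change}. However, there is a genuine gap in the step where you produce the auxiliary automorphic representation, namely in your invocation of Lemma \ref{lem_level_raising_while_preserving_ordinarity}.

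That lemma is stated in the context of \S \ref{sec_level_raising}, where the standing hypothesis on the set $Q$ is that $q_v \not\equiv 1 \bmod p$ for all $v \in Q$. This hypothesis is not decorative: the proof of Lemma \ref{lem_level_raising_with_prescribed_eigenvalue}, which Lemma \ref{lem_level_raising_while_preserving_ordinarity} imitates, depends on $\alpha_v \neq \beta_v$ (i.e.\ the two eigenvalues of $\overline{\rho}_\ffrm(\Frob_v)$ are distinct), which is deduced precisely from $q_v \not\equiv 1 \bmod p$. In your application, $Q$ is the set of places $v \nmid p\infty$ at which $\rho|_{G_{F'}}$ is ramified, and as you yourself compute, at these places $\overline{\rho}|_{G_{F'_v}}$ is trivial so $\alpha_v = \beta_v = 1$ and $q_v \equiv 1 \bmod p$ — exactly the excluded case. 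So the lemma is inapplicable to this $Q$, and the argument that $\beta_v/\alpha_v = q_v$ holds \emph{automatically} (because both sides are $1 \bmod p$) is a symptom of the problem, not a resolution: this is the regime of ``trivial'' level-raising, which needs a different tool (Ihara's lemma together with the theory of types, as in \cite[Lemma 3.5.3]{Kis09}, which is what the paper actually invokes) rather than the Ramakrishna-type level-raising of \S \ref{sec_level_raising}. Relatedly, your chain Corollary \ref{cor_existence_of_partially_ordinary_lifts} $\rightarrow$ Lemma \ref{lem_level_raising_while_preserving_ordinarity} does not establish the hypothesis $\pi_v^{U_0(v)} \neq 0$ at the ordinary places $v \mid p$ required both by the input of Lemma \ref{lem_level_raising_while_preserving_ordinarity} and by condition 8(b) of Theorem \ref{thm_main_theorem_preliminary_version}: the output of Corollary \ref{cor_existence_of_partially_ordinary_lifts} is only $\iota$-ordinary at those places, not necessarily with a $U_0(v)$-fixed vector. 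The Kisin-style argument handles both issues at once; without it (or an equivalent substitute), the reduction to Theorem \ref{thm_main_theorem_preliminary_version} is incomplete.
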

\begin{proof}
Replacing $\rho$ by a twist, we can assume that $\epsilon \det \rho$ has order prime to $p$. Let $E/F$ denote the extension of $F(\zeta_p)$ cut out by $\overline{\rho}|_{G_{F(\zeta_p)}}$. Let $V_0$ be a finite set of finite places of $F$ satisfying the following conditions:
\begin{itemize}
\item For any Galois subextension $E/M/F$ with $\Gal(M/F)$ simple and non-trivial, there exists $v \in V_0$ which does not split in $M$.
\item The set $V_0$ does not contain any place dividing $p$ or at which $\rho$ is ramified.
\end{itemize}
To prove the theorem, it will suffice to construct a soluble extension $F'/F$ in which every place in $V_0$ splits, and such that $\rho|_{G_{F'}}$ satisfies the conditions of Theorem \ref{thm_main_theorem_preliminary_version}. Indeed, it then follows that there exists an automorphic representation $\pi'$ of $\GL_2(\bbA_{F'})$ of weight 2 such that $\rho|_{G_{F'}} \cong r_\iota(\pi')$, and the automorphy of $\rho$ follows by Lemma \ref{lem_soluble_base_change}. (We note that since $F'$ is $V_0$-split it is linearly disjoint from the extension $E/F$, and in particular the representation $\rho|_{G_{F'}}$ is irreducible, even after reduction modulo $p$.)

In fact, it will even suffice to construct a possibly non-Galois extension $F'/F$ which is $V_0$-split, which has soluble Galois closure, and such that $\rho|_{G_{F'}}$ satisfies the conditions of Theorem \ref{thm_main_theorem_preliminary_version}. Indeed, the preceding argument can then be applied to the Galois closure of $F'/F$. We now construct such an extension.

Replacing $F$ by a $V_0$-split soluble extension, we can assume that $\rho$ satisfies in addition the following conditions:
\begin{itemize}
\item For each place $v | p$ of $F$, $\rho|_{G_{F_v}}$ is semi-stable and $\overline{\rho}|_{G_{F_v}}$ is trivial.
\item If $v \nmid p$ is a finite place of $F$ at which $\rho$ is ramified, then $\mathrm{WD}(\rho|_{G_{F_v}})^\text{F-ss} \cong \rec^T_{F_v}(\St(\chi_v))$, for some unramified character $\chi_v : F_v^\times \to \overline{\bbQ}_p^\times$, and $\overline{\rho}|_{G_{F_v}}$ is trivial.
\end{itemize}
Fix an isomorphism $\iota : \barqp \to \bbC$. By Corollary \ref{cor_existence_of_partially_ordinary_lifts}, we can assume after again replacing $F$ by a soluble extension that there exists a cuspidal automorphic representation $\pi''$ of weight 2 satisfying the following conditions:
\begin{itemize}
\item There is an isomorphism $\overline{r_\iota(\pi'')} \cong \overline{\rho}$.
\item If $v | p$ and $\rho$ is ordinary, then $\pi''_v$ is $\iota$-ordinary.
\item If $v | p$ and $\rho$ is non-ordinary, then $\pi''_v$ is unramified and $r_\iota(\pi'')|_{G_{F_v}}$ is non-ordinary.
\item If $v \nmid p \infty$ then $\pi''_v$ is unramified.
\end{itemize}
Arguing as in the proof of \cite[Lemma 3.5.3]{Kis09} and replacing $F$ by a further soluble extension, we can assume that there exists a cuspidal automorphic representation $\pi$ of weight 2 satisfying the following conditions:
\begin{itemize}
\item There is an isomorphism $\overline{r_\iota(\pi)} \cong \overline{\rho}$.
\item If $v | p$ and $\rho$ is ordinary, then $\pi_v$ is $\iota$-ordinary and $\pi_v^{U_0(v)} \neq 0$.
\item If $v | p$ and $\rho$ is non-ordinary, then $\pi_v$ is unramified and $r_\iota(\pi)|_{G_{F_v}}$ is non-ordinary.
\item If $v \nmid p \infty$ and $\rho$ is unramified, then $\pi_v$ is unramified. If $v \nmid p \infty$ and $\rho$ is ramified, then $\pi_v$ is an unramified twist of the Steinberg representation.
\end{itemize}
The hypotheses of Theorem \ref{thm_main_theorem_preliminary_version} are now satisfied, and this completes the proof.
\end{proof}

\subsection{Application to elliptic curves}
We now prove Theorem \ref{thm_curve_intro_thm}.
\begin{theorem}\label{thm_main_thm_applied_to_elliptic_curves}
Let $F$ be a totally real number field, and let $E$ be an elliptic curve over $F$. Suppose that:
\begin{enumerate}
\item 5 is not a square in $F$;
\item and $E$ has no $F$-rational 5-isogeny.
\end{enumerate}
Then $E$ is modular.
\end{theorem}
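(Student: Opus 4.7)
The strategy is to combine Theorem \ref{thm_main_theorem} with the reduction proved by Freitas, Le Hung and Siksek in \cite{Fre13}. By the main results of \cite{Fre13}, any elliptic curve $E$ over a totally real field $F$ whose mod-$p$ Galois representation $\overline{\rho}_{E,p}$ satisfies the Taylor--Wiles hypothesis at some $p \in \{3, 5, 7\}$ (i.e.\ $\overline{\rho}_{E,p}|_{G_{F(\zeta_p)}}$ is irreducible) is already known to be modular, via Kisin's modularity lifting theorem and residual automorphy inputs obtained by the classical 3-5 switch. Accordingly, we may assume that $\overline{\rho}_{E,5}|_{G_{F(\zeta_5)}}$ is reducible, i.e.\ we are in the residually dihedral case at $p = 5$.

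We now plan to apply Theorem \ref{thm_main_theorem} to the 5-adic Tate module representation $\rho = \rho_{E,5} : G_F \to \GL_2(\bbZ_5)$. Conditions (1)--(3) hold automatically for elliptic curves: $\rho$ is unramified outside a finite set, is de Rham with $\mathrm{HT}_\tau(\rho) = \{0, 1\}$ for every $\tau: F \hookrightarrow \barqp$, and satisfies $\det \rho = \epsilon^{-1}$, which is totally odd. The assumption that $E$ has no $F$-rational 5-isogeny is precisely the absolute irreducibility of $\overline{\rho} = \overline{\rho}_{E,5}$. Since $\overline{\rho}|_{G_{F(\zeta_5)}}$ is reducible by the reduction above, $\overline{\rho}$ must be induced from a character $\overline{\chi}$ of $G_K$, where $K$ is the unique quadratic subfield of $F(\zeta_5)/F$. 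The unique quadratic subfield of $\bbQ(\zeta_5)$ is the totally real field $\bbQ(\sqrt{5})$; since $5$ is not a square in $F$ by hypothesis, $K = F(\sqrt{5})$ is a totally real quadratic extension of $F$, verifying the final part of condition (4).

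It remains to verify that $\overline{\rho}|_{G_{F(\zeta_5)}}$ decomposes as a sum of two \emph{distinct} characters. Were they equal, $\overline{\rho}|_{G_{F(\zeta_5)}}$ would be scalar, so the projective image of $\overline{\rho}$ would factor through the cyclic group $\Gal(F(\zeta_5)/F)$ and therefore be cyclic; but a cyclic subgroup of $\PGL_2(\overline{\bbF}_5)$ cannot act absolutely irreducibly, contradicting the irreducibility of $\overline{\rho}$. Hence all the hypotheses of Theorem \ref{thm_main_theorem} are satisfied, and we conclude that $\rho_{E,5}$ is automorphic, which is precisely the modularity of $E$. The main obstacle is the preliminary reduction provided by \cite{Fre13}, which is what allows us to place ourselves in the residually dihedral situation where the new automorphy lifting theorem of this paper supplies the missing input.
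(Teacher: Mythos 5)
Your proof is correct and follows the same overall strategy as the paper: use \cite[Theorem 3]{Fre13} to dispose of the case where $\overline{\rho}_{E,5}|_{G_{F(\zeta_5)}}$ is absolutely irreducible, and in the remaining residually dihedral case verify the hypotheses of Theorem \ref{thm_main_theorem}. The one point where you diverge is in showing that $\overline{\rho}|_{G_{F(\zeta_5)}}$ is a direct sum of two \emph{distinct} characters, i.e.\ non-scalar. The paper invokes \cite[Proposition 9.1(b)]{Fre13} to identify the projective image of $\overline{\rho}$ explicitly as $(\bbZ/2\bbZ)^2$, which is non-cyclic and hence cannot factor through the cyclic quotient $\Gal(F(\zeta_5)/F)$. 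Your argument is more elementary and self-contained: a scalar restriction would force the projective image to be a quotient of the cyclic group $\Gal(F(\zeta_5)/F)$, so the image of $\overline{\rho}$ in $\GL_2$ would be abelian (a central extension of a cyclic group is abelian), and an abelian group cannot act absolutely irreducibly on a 2-dimensional space. Both are valid; yours avoids the specific group-theoretic classification from \cite{Fre13}, while the paper's citation is perhaps faster for a reader who already has that reference open. You also make explicit why the hypothesis that $5$ is not a square in $F$ guarantees $K = F(\sqrt{5})$ is a totally real quadratic extension (equivalently $[F(\zeta_5) : F] = 4$), a point the paper leaves to the reader. One small imprecision worth tightening: absence of an $F$-rational $5$-isogeny gives irreducibility of $\overline{\rho}$ over $\bbF_5$; absolute irreducibility then follows because $\overline{\rho}$ is odd (as the paper notes), rather than being literally equivalent to the isogeny condition.
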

\begin{proof}
Let $\rho: G_F \to \GL_2(\bbQ_5)$ denote the representation associated to the action of Galois on the \'etale cohomology $H^1(E_{\overline{F}}, \bbZ_5)$, after a choice of basis. We must show that $\rho$ is automorphic. The condition that $E$ has no $F$-rational 5-isogeny is equivalent to the assertion that $\overline{\rho}$ is irreducible, hence absolutely irreducible (because of complex conjugation). If $\overline{\rho}|_{G_{F(\zeta_5)}}$ is absolutely irreducible, then $\rho$ is automorphic by \cite[Theorem 3]{Fre13}. 

We therefore assume that $\overline{\rho}|_{G_{F(\zeta_5)}}$ is absolutely reducible. It then follows from \cite[Proposition 9.1, (b)]{Fre13} that the projective image of $\rho$ in $\PGL_2(\bbF_5)$ is isomorphic to $\bbF_2 \times \bbF_2$. In particular, $\overline{\rho}|_{G_{F(\zeta_5)}}$ is non-scalar, as $\Gal(F(\zeta_5)/F)$ is cyclic. The hypotheses of Theorem \ref{thm_main_theorem} are now satisfied, and we deduce the automorphy of $\rho$ in this case as well.
\end{proof}
As an example of Theorem \ref{thm_main_thm_applied_to_elliptic_curves}, we consider the elliptic curve over $\bbQ$:
\[ E : y^2 + y = x^3 - 350x + 1776. \]
Let $\rho = \rho_{E, 5} : G_\bbQ \to \GL_2(\bbQ_5)$ denote the representation afforded by  $H^1(E_{\overline{\bbQ}}, \bbZ_5)$ after a choice of basis. The $j$-invariant of $E$ is $552960000/161051 = 2^{15} \times 3^3 \times 5^4 \times 11^{-5}$. The curve $E$ acquires good supersingular reduction over the extension $\bbQ_5(\sqrt[6]{5})$ of $\bbQ_5$. One can check that the curve $E$ has no 5-isogeny defined over $\bbQ$, but acquires a 5-isogeny over $\bbQ(\sqrt{5})$. (We performed these calculations using {\tt sage} \cite{sage}.)

More generally, if $F$ is any totally real number field such that $F \cap \bbQ(\zeta_5) = \bbQ$ and $\overline{\rho}|_{G_{F}}$ is irreducible, then the elliptic curves $E'$ over $F$ with $\overline{\rho}_{E',5} \cong \overline{\rho}|_{G_{F}}$ are parameterized by $\bbP^1(F)$ (see the proof of \cite[Lemma 3.49]{Dar97}), and Theorem \ref{thm_main_thm_applied_to_elliptic_curves} implies that all of these curves are modular. 

\section*{Acknowledgments}

During the period this research was conducted, Jack Thorne served as a Clay Research Fellow. I would like to thank Chandrashekhar Khare for inspiring conversations, and for showing me his papers \cite{Kha03}, \cite{Kha04}. I would also like to thank Toby Gee and David Geraghty for useful conversations, and the anonymous referee for their detailed comments and corrections.

\newcommand{\etalchar}[1]{$^{#1}$}
\def\cprime{$'$}

\end{document}